\newtheorem{theorem}{Theorem}[section]
\newtheorem{lemma}[theorem]{Lemma}
\newtheorem{cor}[theorem]{Corollary}
\newtheorem{prop}[theorem]{Proposition}
\theoremstyle{definition} 
\newtheorem*{rem}{Remark}
\newtheorem*{nota}{Notation}
\newtheorem{example}[theorem]{Example}
\newtheorem{defn}[theorem]{Definition} 
\newtheorem{axiom}{Axiom} 
\newtheorem{hyp}{Hypothesis}
\newtheorem*{*hyp}{Temporary Hypothesis}
\DeclareMathOperator{\hh}{h}
\DeclareMathOperator{\vv}{v}
\DeclareMathOperator{\intr}{int}
\DeclareMathOperator{\fr}{Fr}
\DeclareMathOperator{\s}{s}
\DeclareMathOperator{\T}{T}
\DeclareMathOperator{\E}{E}
\DeclareMathOperator{\thh}{th}
\DeclareMathOperator{\id}{id}
\DeclareMathOperator{\card}{card}
\DeclareMathOperator{\g}{g}
\newcommand{\ra}{\rightarrow} 
\newcommand{\Ra}{\Rightarrow} 
\newcommand{\bd}{\partial} 
\newcommand{\AAA}{\mathcal{A}} 
\newcommand{\BB}{\mathcal{B}} 
\newcommand{\trb}{\bd_{\pitchfork}} 
\newcommand{\tr}{\pitchfork} 
\newcommand{\tb}{\bd_{\tau}}
\newcommand{\sm}{\smallsetminus} 
\newcommand{\FF}{\mathcal{F}} 
\newcommand{\EE}{\mathcal{E}} 
\newcommand{\CC}{\mathcal{C}} 
\newcommand{\QQ}{\mathcal{Q}}
\newcommand{\GG}{\mathcal{G}}
\newcommand{\II}{\mathcal{I}}
\newcommand{\LL}{\mathcal{L}} 
\newcommand{\MM}{\mathcal{M}} 
\newcommand{\NN}{\mathcal{N}} 
\newcommand{\PP}{\mathcal{P}} 
\newcommand{\RR}{\mathcal{R}} 
\newcommand{\TT}{\mathcal{T}} 
\newcommand{\UU}{\mathcal{U}}
\newcommand{\VV}{\mathcal{V}}
\newcommand{\WW}{\mathcal{W}}
\newcommand{\XX}{\mathcal{X}}
\newcommand{\YY}{\mathcal{Y}}
\newcommand{\ZZ}{\mathcal{Z}}
\newcommand{\KK}{\mathcal{K}}
\newcommand{\HH}{\mathcal{H}}
\newcommand{\JJ}{\mathcal{J}}
\newcommand{\R}{\mathbb{R}}
\newcommand{\Z}{\mathbb{Z}}
\newcommand{\D}{\mathbb{D}}
\newcommand{\A}{\mathfrak{A}}
\newcommand{\B}{\mathfrak{B}} 
\newcommand{\C}{\mathfrak{C}}
\newcommand{\G}{\mathfrak{G}}
\newcommand{\F}{\mathbb{F}}
\renewcommand{\L}{\mathfrak{L}}
\renewcommand{\S}{\mathfrak{S}}
\newcommand{\X}{\mathfrak{X}}
\renewcommand{\ss}{\subset}
\renewcommand{\SS}{\mathcal{S}}
\newcommand{\sseq}{\subseteq}
\newcommand{\0}{\emptyset}
\newcommand{\wt}{\widetilde} 
\renewcommand{\tilde}{\wt}
\newcommand{\ol}{\overline} 
\newcommand{\wh}{\widehat} 
\newcommand{\x}{\times} 
\newcommand{\CII}{C^{2}} 
\newcommand{\CI}{C^{1}} 
\newcommand{\Ci}{C^{\infty}} 
\newcommand{\CO}{C^{0}}
\newcommand{\COO}{C^{0+}} 
\newcommand{\Si}{S^{1}_{\infty}} 
\newcommand{\SI}{S^{1}} 
\newcommand{\Se}{S_{\infty}} 
\renewcommand{\tilde}{\wt}
\newcommand{\vs}{\vspace{.15in}}
\renewcommand{\ni}{\noindent}
\renewcommand{\phi}{\varphi}
\renewcommand{\o}{\circ}
\renewcommand{\epsilon}{\varepsilon}
\newcommand{\hra}{\hookrightarrow}
\newcommand{\lra}{\leftrightarrow}
\begin{document}

\title[Endperiodic Automorphisms]{Endperiodic Automorphisms of Surfaces and Foliations}

\author[J. Cantwell]{John Cantwell}
\address{Saint Louis University, St. Louis, MO 63103}
\email{cantwelljc@slu.edu}

\author[L. Conlon]{Lawrence Conlon}
\address{Washington University,  St. Louis, MO  63130 }
\email{conlonlawrence@icloud.com}

\author[S. Fenley]{Sergio R. Fenley}
\address{Florida State University,Tallahassee, FL 32306-4510 }
\email{fenley@math.fsu.edu}

\subjclass[2010]{Primary 37E30; Secondary 57R30.}
\keywords{endperiodic, lamination, finite depth}

\begin{abstract}
We   extend   the  unpublished work of M.~Handel and R.~Miller on the classification, up to isotopy, of endperiodic  automorphisms of surfaces.  We give the Handel-Miller construction of the geodesic laminations, give an axiomatic theory for pseudo-geodesic laminations, show the geodesic laminations satisfy the axioms, and prove that pseudo-geodesic laminations satisfying our axioms are ambiently isotopic to the geodesic laminations. The axiomatic approach allows us to show that the given endperiodic automorphism is isotopic to a smooth endperiodic automorphism preserving  smooth laminations ambiently isotopic to the original ones. Using the axioms, we also  prove the  ``transfer theorem'' for foliations of 3-manifolds, namely that, if two depth one foliations $\FF$ and $\FF'$ are transverse to a common one-dimensional foliation $\LL$ whose monodromy on the noncompact leaves of $\FF$ exhibits the nice dynamics of Handel-Miller theory, then $\LL$ also induces   monodromy on the noncompact leaves of $\FF'$ exhibiting the same nice dynamics.   Our theory also applies to surfaces with infinitely many ends.
\end{abstract}

\maketitle


\section{Introduction}

The Nielsen-Thurston theory of automorphisms of compact surfaces~\cite{bca, th:surfaces,ha:th,mill}  classifies the isotopy class of an  automorphism $f$ of a compact, hyperbolic surface. 
For endperiodic automorphisms of noncompact surfaces, M.~Handel and R.~Miller outlined an analogous theory (unpublished).

Both theories produce a pair of transverse geodesic laminations and a map $h$ (endperiodic in the Handel-Miller case), isotopic to $f$  and preserving the laminations.

In the compact case, there are $h$-invariant reducing circles which decompose the surface into periodic pieces and pseudo-Anosov pieces.  Similarly, in the endperiodic case, there  are reducing circles and reducing lines.  These reduce the surface into finitely many (finite or infinite) $h$-orbits of compact subsurfaces, finitely many noncompact pieces on which a power of $h$ is a translation, and finitely many noncompact ``pseudo-anosov'' pieces. (The lower case ``a'' indicates that the analogy with pseudo-Anosov automorphisms of compact surfaces is weak.) In our exposition, reduction arises late in the game when an analysis of the laminations yields the reducing curves in a very natural way.

The first nine sections of this paper treat the basics, defining endperiodicity, presenting the Handel-Miller construction of the geodesic laminations arising from an endperiodic automorphism,  giving a detailed analysis of their structure, constructing the endperiodic automorphism preserving the laminations, and analyzing its dynamics. These sections fill in roughly a thirty year gap in the literature, the first goal of this paper.  

Our second goal is to  prove two important new theorems, Theorem~\ref{HMsmooth}  and Theorem~\ref{transfer} (see below), which are critical for    applications to foliation theory.  For this, it becomes necessary to relax the condition that the laminations be geodesic. We do this in Section~\ref{uniq} where we state four axioms for the ``pseudo-geodesic'' laminations and show that these laminations are ambiently isotopic to the geodesic laminations of the Handel-Miller theory.  We note that the geodesic laminations satisfy our axioms, so we are not axiomatizing the empty set, and the isotopy theorem then shows that our axioms are complete.  The entire theory developed in the geodesic case becomes immediately available in the pseudo-geodesic case.

As in the Nielsen-Thurston theory, smoothness is a problem.  Thurston's technique of ``blowing down'' the laminations to produce a pair of transverse foliations with finitely many $p$-pronged singularities made it possible to smooth $h$ and the foliations except at the singularities.  This used a pair of projectively invariant measures to produce the smooth coordinate atlas on the complement of the singular set.  In our case, the blow-down does not yield foliations  and the projectively invariant measures may not have full support.  In our ``Smoothing Theorem'' (Theorem~\ref{HMsmooth}), having relaxed the geodesic condition on the laminations, we directly construct a pair of transverse smooth laminations preserved by an endperiodic diffeomorphism $h$ and  verify the axioms.

Handel-Miller theory has applications to  foliations analogous to the applications of Nielsen-Thurston theory to fibrations (cf.~\cite{fried}).  In smooth foliations of depth one,  the monodromy $f$ of the noncompact leaves is endperiodic and there is a smooth   representative $h$ of the isotopy class of $f$ preserving the laminations.  We will prove the fundamental ``Transfer Theorem'' (Theorem~\ref{transfer}).  By this theorem, if $\FF$ and $\FF'$ are depth one foliations on $M$, both transverse to a $1$-dimensional foliation $\LL$, and if the first return map (monodromy) induced by $\LL$ on a depth one leaf $L$ of $\FF$ preserves a pair of pseudo-geodesic  laminations satisfying the axioms, then the monodromy it induces on a depth one leaf $L'$ of $\FF'$  also preserves such a pair of laminations. For this, the  laminations on  the leaf of $\FF'$ cannot be assumed to be geodesic, even if those on the leaf of $\FF$ are, necessitating our axiomatic characterization.   The proof of the theorem  proceeds by showing that the the truth of our axioms for the monodromy of $\FF$ implies their truth for  the monodromy of $\FF'$.


\section{Endperiodic Automorphisms}\label{endp}  

We fix a temporary hypothesis,
\begin{*hyp}

\textbf{Until Section~\ref{jnctrs}, we assume that $L$ is a noncompact, connected  $n$-manifold with finitely or infinitely many ends and possibly with boundary}.

\end{*hyp}
\ni Let $f:L\to L$ be a homeomorphism. We  do not assume that $L$ is orientable nor, if it is, that $f$ is orientation preserving.

Suppose $\displaystyle L \supset V_1 \supset \ldots \supset V_n 
\supset \overline V_{n+1} \supset V_{n+1} \supset \ldots$ with the 
$V_n$ open, connected, $\bigcap_{n=1}^{\infty} V_n = \emptyset$, and 
$\overline V_n \sm V_n$ compact. Then the nested sequence of sets $\{V_n\}$ defines an 
{\it end} of $L$. 

If $\{V_n\}$ and $\{U_n\}$ define ends of $L$, then $\{V_n\}$ is said to be 
equivalent to $\{U_n\}$ if for every $n$ there exists an $m$ such 
that $V_n \supset U_m$ and for every $m$ there exists an $n$ 
such that $U_m \supset V_n$.
The equivalence classes, $e = [\{V_n\}]$, are called the \emph{ends}\label{enddefn} of $L$ and the set $\EE(L)$ of equivalence classes is called the {\it endset} of $L$.\label{endendset}

Often one gives an ``exhaustion''   $K_{0}\ss K_{1}\ss\cdots\ss K_{n}\ss\cdots\ss L$\label{exhaust} where the $K_{n}$ are compact and $\bigcup_{n=0}^{\infty} K_{n} = L$. Then for any $e\in\EE(L)$, $e=[\{U_{n}\}]$ where $U_{n}$ is an unbounded component of the complement of $K_{n}$.

Let $\TT$ be the topology on $L$, that is $\TT$ is the set of open sets in $L$. For  $V\in\TT$   let,
  $$\wh V = V\cup\{e = [\{V_n\}]\in \EE(L)\ | {\rm\ there\ exists\ an\ } n {\rm\ with\ } V\supset V_n\}.$$
Then   it is well known that  $\wh\BB = \TT\cup\{\wh V\ |\ V\in\TT\}$  is  a base for a   compact,  separable metrizable  topology on $L\cup \EE(L)$ which restricts to a totally disconnected topology on the closed set $\EE(L)$.\label{endsetL}

If $e\in\EE(L)$, we will  say that $U\ss L$ is a neighborhood of the end $e$ if $\wh U$ is a neighborhood of $e$ in the space $(L\cup\EE(L),\wh\TT)$.\label{nhbend}

Notice that $f$ induces an automorphism on the space of ends of $L$ which, by abuse, we will also denote by $f$.

\begin{defn}[$p_{e}$]\label{perend}
 An end $e$ of $L$ is \emph{periodic} of \emph{period} $p_{e} > 0$ if $f^{p_{e}}(e) = e$ and $p_{e}$ is the  least   positive integer with this property.
\end{defn}

\begin{rem}
If there are finitely many ends, every end is periodic.

\end{rem}

\begin{defn}[positive and negative  ends, $\EE_{\pm}(L)$]\label{perends}
 An end $e$ of period $p_{e}$ is a  \emph{positive end} if there is a closed, connected neighborhood $U_{e}$ of $e$ such that $L\sm U_{e}$ is connected and
 \begin{enumerate}
 \item $f^{p_{e}}(U_{e})\ss U_{e}$;\label{firstitem}
 \item $\bigcap_{n=0}^{\infty}f^{np_{e}}(U_{e})=\0$;\label{seconditem}
 \item $\fr U_{e}$ is compact.\label{thirditem}
 \end{enumerate}
 The end $e$ is a \emph{negative end} if the parallel assertions hold with $p_{e}$ replaced by $-p_{e}$. We denote the set of positive (respectively negative) ends by $\EE_{+}(L)$ (respectively $\EE_{-}(L)$).
 \end{defn}
 
 \begin{rem}\label{bdfr}
We will write $\partial M$ for the boundary  of a manifold $M$ and the symbol $\fr A$ for the topological boundary (frontier) of the subset $A$ of a topological space. 
\end{rem}

\begin{rem}\label{attrrep}
 The positive ends are the attracting ends and the negative ends are the repelling ends.
 
 \end{rem}
 
Ends may fail to be positive or negative.  For instance, on a four times punctured sphere, one easily produces a homeomorphism  $f$ that cyclically permutes three punctures,  fixes the fourth and satisfies $f^3=\id$.  These ends are   neither positive nor negative.  Similar examples can be produced  in which the ends are
nonplanar.

\begin{defn}[$f$-neighborhood of an end]\label{fnbhd}

A set $U_{e}$  as in Definition~\ref{perends} will be called an \emph{$f$-neighborhood} of $e$.

\end{defn}

\begin{defn}[endperiodic automorphism]\label{epdefn}
 The homeomorphism $f:L\to L$ is called an \emph{endperiodic automorphism} of $L$ if all periodic ends are positive or negative.
\end{defn}

\begin{figure}[ht]
\begin{center}
\begin{picture}(300,150)(-35,-140)

\rotatebox{270}{\includegraphics[width=140pt]{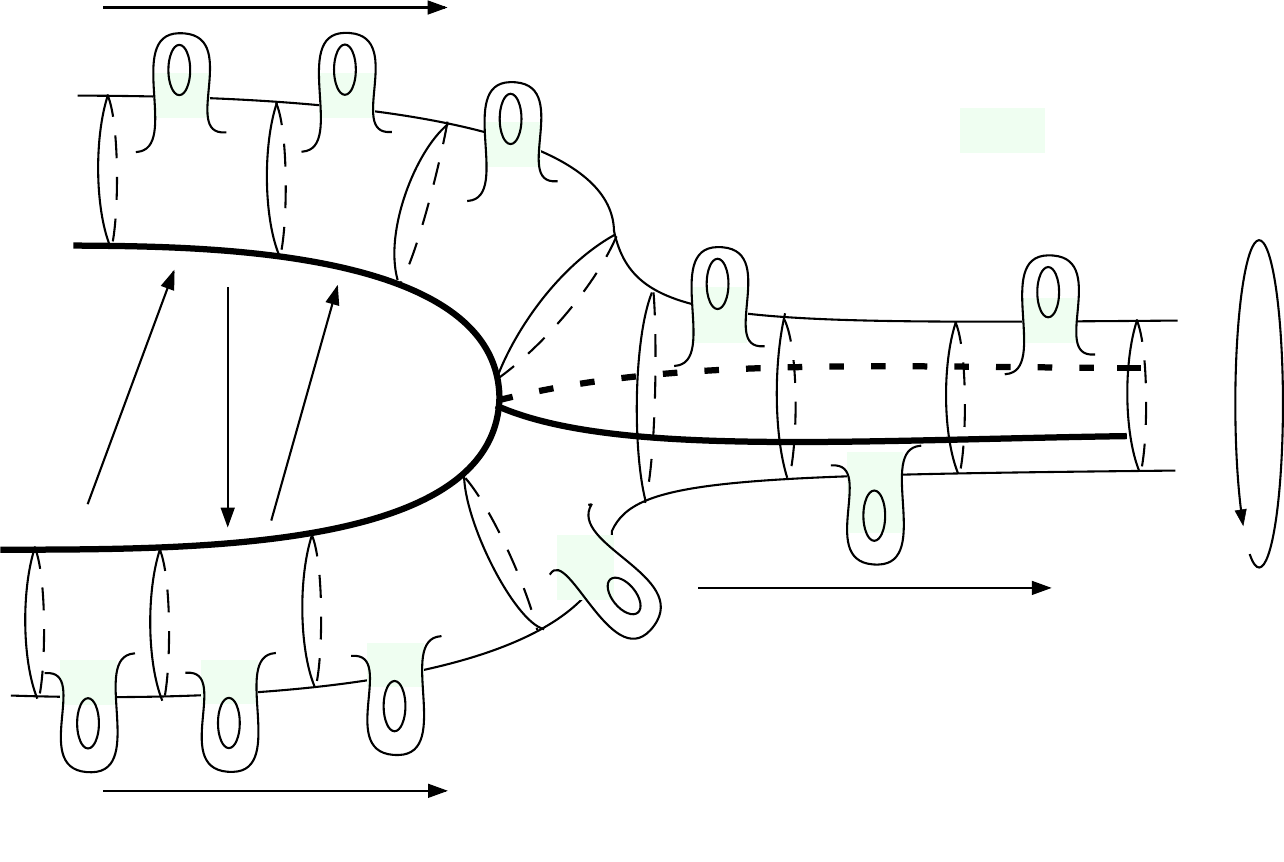}}

\put(-210,-25){$e_{1}$}
\put(-220,-100){$e_{2}$}
\put(-20,-70){$e$}

\end{picture}
\caption{An example with two negative ends}\label{TwoEnds}
\end{center}
\end{figure}

\begin{example}\label{2e}
  A simple example of an endperiodic automorphism is depicted in Figure~\ref{TwoEnds}.  Here, the ends $e_{1}$ and $e_{2}$ are periodic of period $2$ and negative, with $p_{e_{2}}=p_{e_{1}}=2$.  The end $e$ is periodic of period~$1$,  positive, with $p_{e}=1$. The circle $f$-junctures (Definition~\ref{fjunct}) separate the neighborhoods of the ends into  $f$-domains (Definition~\ref{fdom}) having negative Euler characteristic. The arrows indicate the action of $f$. Notice that all of the ``interesting'' dynamics of $f$ occurs in the compact region complementary to neighborhoods of the three ends.  This is called the ``core'' and it is only there that fixed points, periodic points, other invariant sets can occur.  Actually, this example can be constructed so that the only interesting dynamics is a single fixed point.  The two boldface curves will be explained later.
\end{example}

\begin{rem}
In this paper, we study surfaces with finitely many ends but the definition of endperiodic automorphism also makes sense for surfaces with infinitely many ends and for $n$-manifolds with $n>2$. In Section~\ref{endpinf} we show how  the theory of endperiodic automorphisms of surfaces $L$ with infinitely many ends reduces to the case of a surface $L$ with finitely many ends. 

\end{rem}

\begin{rem}
In~\cite[Section~4]{cc:tisch}, we give an infinite family of examples of endperiodic automorphisms of $1$-ended $3$-manifolds in which the end is negative. Included are Whitehead's example of a contractible open three manifold which is not $\R^{3}$~\cite{rolf,whjhc} and $3$-manifolds with nontrivial fundamental group.

\end{rem}

\begin{lemma}\label{emptint}

If $e$ and $e'$ are distinct positive or distinct negative ends of $L$ and $U_{e}$ and $U_{e'}$ are $f$-neighborhoods of $e$ and $e'$ respectively, then $U_{e}\cap U_{e'} = \0$.

\end{lemma}

\begin{proof}
Suppose $e$ and $e'$ are both positive ends. Let $V$ and $V'$ be disjoint neighborhoods of $e$ and $e'$ respectively. If there exists $x\in U_{e}\cap U_{e'}$, then for $k$ sufficiently large, $f^{kp_{e}p_{e'}}(x)$ lies in both $V$ and $V'$ which is a contradiction. The case where $e$ and $e'$ are negative ends is parallel.
\end{proof}

\begin{lemma}\label{power}
Let $f:L\to L$ be a homeomorphism. Then $f$ is endperiodic if and only if $f^{p}$ is endperiodic, for some integer $p>0$.
\end{lemma}

\begin{proof}
The ``only if'' direction is trivial. 
Assume that $f^{p}$ is endperiodic and let $e$ be an   end with period $p_{e}$. Then, $f^{pp_{e}}(e)=e$.  Since $f^{p}$ is endperiodic, it follows that $e$ is either a positive or negative end under the homeomorphism $f^{p}$. Without loss, assume that $e$ is positive. Thus, there is a closed neighborhood $U_{e}$ of $e$ such that $L\sm U_{e}$ is nonempty and connected, $f^{pp_{e}}(U_{e})\ss U_{e}$, and
$$
  \bigcap_{n=1}^{\infty}f^{npp_{e}}(U_{e})=\0.
$$
Set
$$
V_{e}=U_{e}\cap f^{p_{e}}(U_{e})\cap f^{2p_{e}}(U_{e})\cap\cdots\cap f^{(p-1)p_{e}}(U_{e}).
$$
Then,
$$
L\sm V_{e}=\bigl(L\sm U_{e}\bigr)\cup\bigl(L\sm f^{p_{e}}(U_{e})\bigr)\cup\cdots\cup \bigl(L\sm f^{(p-1)p_{e}}(U_{e})\bigr)
$$
is connected as the union of connected sets with  intersection 
$$\bigcap_{i=0}^{p-1}\bigl(L\sm f^{ip_{e}}(U_{e})\bigr)$$
which is nonempty as the finite intersection of open neighborhoods of each negative end and
$$
f^{p_{e}}(V_{e})=f^{p_{e}}(U_{e})\cap f^{2p_{e}}(U_{e})\cap\cdots\cap f^{pp_{e}}(U_{e})\ss V_{e}.
$$
Since $V_{e}$ is a neighborhood of $e$, it has exactly one noncompact component $V'_{e}$ which is a neighborhood of the end $e$. Hence $f^{p_{e}}(V'_{e})\ss V'_{e}$ and we can replace $V_{e}$ with $V'_{e}$.
Finally, $L\sm V'_{e}$ is connected and,
$$
\bigcap_{n=1}^{\infty}f^{np_{e}}(V'_{e})\ss\bigcap_{n=1}^{\infty}f^{np_{e}}(U_{e})\ss\bigcap_{n=1}^{\infty}f^{npp_{e}}(U_{e})=\0.
$$
Thus $e$ is a positive   end.  A similar argument works for negative ends, proving that $f$ is endperiodic. 
\end{proof}

\begin{lemma}\label{ne}
If $e$ is a positive \upn{(}respectively negative\upn{)} end and $U_{e}$ is an $f$-neighbor-hood of $e$, then there is an integer $p>0$, divisible by $p_{e}$, such that $f^{p}( U_{e})\ss\intr U_{e}$ \upn{(}respectively $f^{-p}( U_{e})\ss\intr U_{e}$\upn{)}.
\end{lemma}

\begin{proof}
Recall from Definition~\ref{perends} that $U_{e}$ has compact frontier. If $f^{np_{e}}(U_{e})$ meets $\fr U_{e}$ for all $n\ge 0$ then, by the compactness of $\fr U_{e}$, there exists 
$$x\in\Bigl(\bigcap_{n=0}^{\infty} f^{np_{e}}(U_{e})\Bigr)\cap\fr U_{e}$$
 contradicting $\bigcap_{n=0}^{\infty}f^{np_{e}}(U_{e})=\0$. Thus there exists an $n>0$ such that $f^{np_{e}}(U_{e})\ss   \intr U_{e}$.
\end{proof}

\begin{defn}[$f$-junctures]\label{fjunct}
For  each  $f$-neighborhood $U_{e}$ (Definition~\ref{fnbhd})  of a positive or negative end $e$, the set  $J   = \fr U_{e}$ is called an  \emph{$f$-juncture} for $e$. The $f$-juncture is \emph{positive \upn{(}respectively negative\upn{)}} if $U_{e}$ is an $f$-neighborhood of a positive (respectively negative) end. 
\end{defn}

\begin{rem}
A given $f$-juncture $J$ for a positive or negative end $e$ gives rise to a whole bi-infinite sequence $\{J_{n} = f^{n}(J)\}_{n\in\Z}$ of $f$-junctures for the ends in the $f$-cycle of $e$.
\end{rem}

\begin{rem}
Given an end $e$, there are uncountably many choices of $f$-neighborhood $U_{e}$ and $f$-juncture $J = \fr U_{e}$. In Section~\ref{definjunct}, we will pick and fix a countable set of $f$-junctures.

\end{rem}

\begin{figure}[ht]
\begin{center}
\begin{picture}(300,200)(-60,0)

\includegraphics[width=150pt]{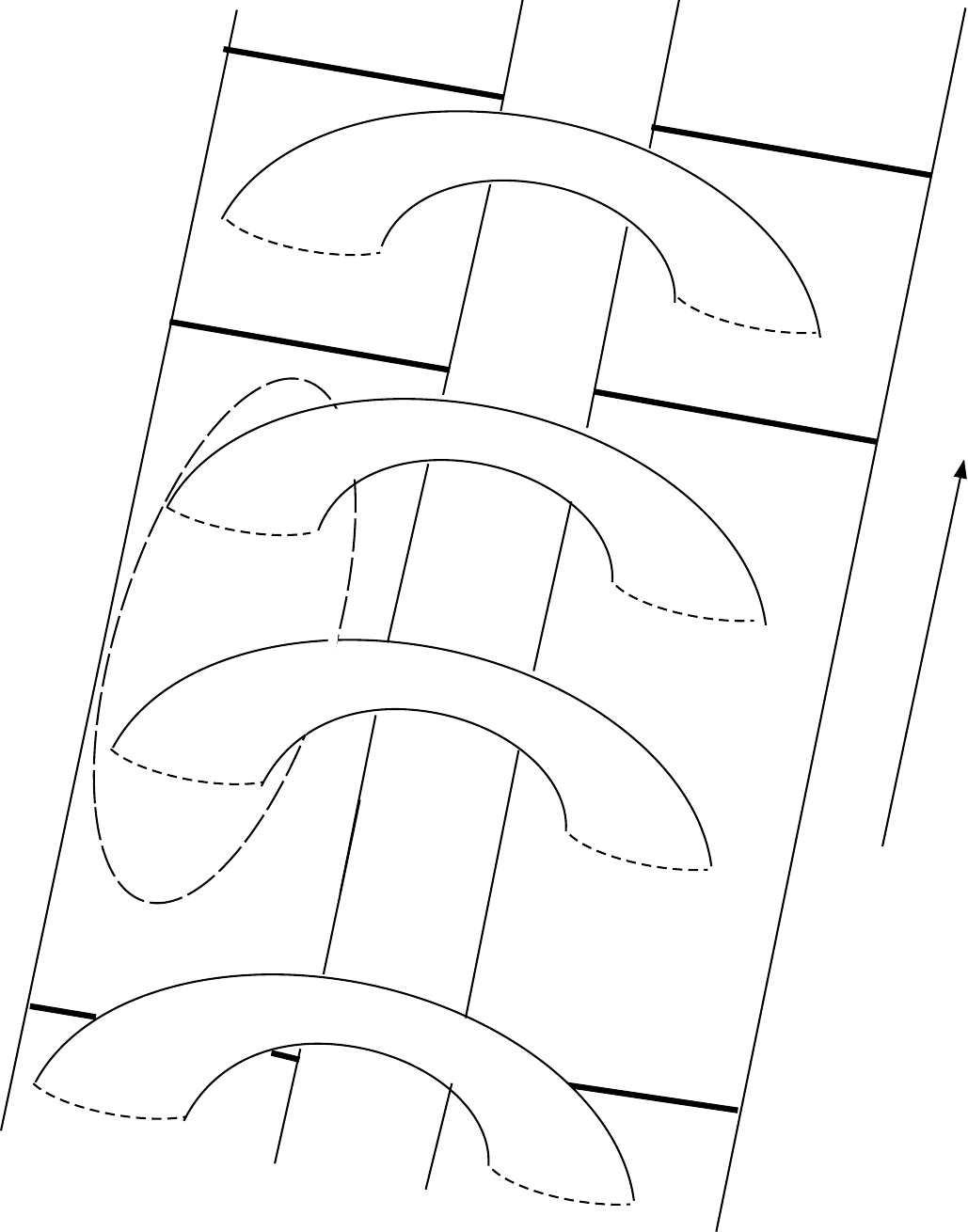}
\put(-130,45){$_{C}$}

\end{picture}
\caption{$f$ is not a translation but some components of $f$-junctures escape}\label{oneescapes}
\end{center}
\end{figure}

\begin{example}\label{itj}
We give an example to show how the images of $f$-junctures behave under iteration of $f$. Let $L$ be the surface depicted in Figure~\ref{oneescapes}, two strips connected by an infinite sequence of tubes.  The endperiodic automorphism is $f=\tau\o g$, where $g$ is the translation from the negative end to the positive end moving each handle to the next handle as indicated by the arrow and $\tau$ is a Dehn twist in the oval $C$.   The $f$-junctures are each the pair of properly embedded arcs as pictured in boldface.  Under forward iteration of $f$, the left component of an $f$-juncture in the negative end gets caught by the Dehn twist and starts stretching unboundedly into the positive end.  Note that the right component of the $f$-juncture escapes without distortion to the positive end.  A similar description holds for $f$-junctures in the positive end and their images under backward iteration of $f$.  This distortion of $f$-junctures (and of the geodesic and pseudo-geodesic junctures defined in Sections~\ref{constr} and~\ref{uniq}) is typical behavior and is what will create the stable and unstable laminations of Handel-Miller theory.
\end{example}

We replace our temporary hypothesis by the hypothesis,

\begin{hyp}\label{hyp1}
\textbf{Hereafter, unless we explicitly state otherwise, $L$ is a non-compact  surface with finite endset}.

\end{hyp}

 \subsection{Asymptotic construction of $f$-junctures}\label{jnctrs}

 We are going to give a method of constructing $f$-junctures which, being  closely related to the way that they arise in foliations, will be called an \emph{asymptotic} construction.  The $f$-junctures constructed in this way will intersect only in common components.  We will see this in Example~\ref{weird} which, in fact, is a thinly disguised  example of the asymptotic construction.

The  junctures in the following definition are either  $f$-junctures or sets of geodesic or pseudo-geodesic junctures as defined in Sections~\ref{constr} or~\ref{uniq}.

 \begin{defn}[juncture intersection property]\label{intprop}
 
 A  juncture $J$ has the \emph{juncture intersection property} if any two  junctures in the set  $\{J_{n} = f^{n}(J)\ |\ n\in\Z\}$ of junctures   intersect, if at all, in common components.  A set of junctures has the \emph{juncture intersection property} if every juncture in the set has the juncture intersection property.
 \end{defn}

Assume that $f:L\ra L$ is endperiodic and $e$ is a positive or negative end of period  $p_{e}$.  

\begin{prop}\label{comcomp}The choice of $f$-neighborhood $U_{e}$ can be made so that the $f$-juncture $J = \fr U_{e}$   is a compact, properly embedded, transversely oriented $1$-manifold satisfying the juncture intersection property.  
\end{prop}

 Proposition~\ref{comcomp} will be proven in a series of lemmas.  Let $e$ be a positive end of $L$ and set $$c=\{e_{0}=e,e_{1}=f(e),e_{2}=f^{2}(e),\dots,e_{p_{e}-1}=f^{p_{e}-1}(e)\},$$ the complete $f$-cycle  of ends containing $e$.  The reader can adapt the following discussion for the case that $e$ is a negative end.  

For $e'$ an end in the cycle $c$, set $\UU_{e'}=\bigcup_{n=0}^{\infty} f^{-np_{e'}}(U_{e'})$, where $p_{e'}=p_{e}$ and $U_{e'}$ is any  $f$-neighborhood of $e'$.  Set 
$$\UU_{c}=\bigcup_{n=-\infty}^{\infty}f^{n}(U_{e})=\UU_{e_{0}}\cup\UU_{e_{1}}\cup\cdots\cup\UU_{e_{p_{e}-1}}.$$  
Remark that $\UU_{c}$ is an open, $f$-invariant set with no periodic points.  The connected components $\UU_{e_{i}}$ of $\UU_{c}$ are permuted cyclically by $f$.

The action of $f$ partitions $\UU_{c}$ into orbits $x=\{x_{n}\}_{n\in\Z}$, where $x_{n}=f^{n}(x_{0})$.  Let $F$ be the space of orbits with the quotient topology, and remark that the quotient map $q:\UU_{c}\to F$ is a regular covering map.  The group of deck transformations is infinite cyclic generated by $f$.  In particular $F$ is a surface.

\begin{rem}
Let $e$ be a positive (respectively negative) end of $L$ and $U_{e}$ be any  $f$-neighborhood of $e$. Since $L$  has a finite endset, the set $X = U_{e}\sm \intr f^{p_{e}}(U_{e})$ (respectively $X = U_{e}\sm \intr f^{-p_{e}}(U_{e})$) is compact. Since the  compact set   $X$  and the connected set $U_{e}$ both surject onto $F$, it follows that $F$ is compact and connected. 

\end{rem}

\begin{nota}
Fix a basepoint $*\in F$, $*=\{x_{n}\}_{n\in\Z}$, where $x_{n+1}=f(x_{n})$, $-\infty< n<\infty$, and $x_{0}\in\UU_{e}$.  
 
 \end{nota}
 
 If $\sigma$ is a directed (oriented) loop in $F$ based at $*$, $\sigma$ lifts to a directed path $\tau$ in $\UU_{e}$ starting at $x_{n}$ and ending at a point $x_{n+\kappa(\sigma)}$.  Here, one sees that $\kappa(\sigma)\in\Z$ is independent of the choice of  $n$.  Simply apply powers of $f$ to $\tau$.  Furthermore, a basepoint-preserving homotopy of $\sigma$ lifts to an endpoint preserving homotopy of all  lifts $\tau$.  This construction defines a group homomorphism
 $$
 \kappa:\pi_{1}(F,*)\to\Z,
 $$
which can be viewed as a cohomology class $\kappa\in H^{1}(F;\Z)$. The period of such a class $\kappa$ is the least positive value taken by $\kappa$, evidently the greatest common divisor of the  set of values of $\kappa$.  The class is divisible if its period is greater than $1$.

\begin{lemma}\label{perper}
The class $\kappa$ is divisible if and only if $p_{e}>1$, in which case the period of $\kappa$  is the period $p_{e}$ of $e$.
\end{lemma}
 
 Indeed, for $n\in\Z$, there is a path in $\UU_{c}$ from $x_{n}$ to $x_{n+p_{e}}$ which projects by $q$ to a loop $\sigma$ in  $F$. Thus $\kappa(\sigma)=p_{e}$.  Evidently, $p_{e}$ is the smallest positive value of $\kappa$.
 
 \begin{lemma}\label{Poincdu}
 There is a compact, transversely oriented, properly embedded $1$-manifold $J_{\kappa}$ in $F$ such that $\kappa(\sigma)=\sigma\cdot J_{\kappa}$, the algebraic intersection number.
 \end{lemma}
 
 \begin{proof}
  As is well known, there is a map $f_{\kappa}:F\to\SI$, unique up to homotopy, such that $f_{\kappa}^{*}[\SI]= \kappa$,  $[\SI]\in H^{1}(\SI)$ being the fundamental class.  We may take $f_{\kappa}$ to be smooth.   By Sard's theorem, there is a regular value $p$ both for $f_{\kappa}$ and for $f_{\kappa}|\bd F$.  Then, $J_{\kappa}=f_{\kappa}^{-1}(p)$ is a properly embedded, compact 1-manifold, transversely oriented by the orientation of $\SI$.    Let $\sigma:\SI\to F$ be in general position relative to $J_{\kappa}$.  Then $f_{\kappa}\o\sigma:\SI\to\SI$ has $p$ as a regular value and has degree equal to the number of times (counted with sign) that it crosses $p$.  Clearly, this degree is equal to $\sigma\cdot J_{\kappa}$ and is the value of $ \kappa$ on $\sigma$. 
 \end{proof}

 \begin{rem}
 If $F$ is oriented, the transverse orientation of $J_{\kappa}$ induces an orientation, allowing us to view $J_{\kappa}$ as a 1-\emph{cycle}. In this case, Lemma~\ref{Poincdu} is really just Poincar\'e duality.  But we do not require orientability and 
 the  transversely oriented 1-manifold $J_{\kappa}$ can only be thought of as a $1$-\emph{cocycle}, evaluating on $1$-cycles via the algebraic intersection product.  Throughout this discussion, cocycles typically will be transversely oriented, properly embedded 1-manifolds.
 \end{rem}

 \begin{defn}[$\kappa$-juncture]\label{kapJunct}
 We call $J_{\kappa}$ a \emph{$\kappa$-juncture}.
 \end{defn}
 
 \begin{rem}
 Note that a $\kappa$-juncture is a   compact submanifold of the compact surface $F$ and is not a juncture in the sense that we usually use the term juncture in this paper.
 
 \end{rem}
 
 \begin{rem}
  The $\kappa$ in the symbol $J_{\kappa}$ for a $\kappa$-juncture refers to the fact that the geometric object $J_{\kappa}$ represents the specific cohomology class $\kappa\in H^{1}(F;\Z)$.
 
 \end{rem}

 One sometimes calls the cohomology class $\kappa\in H^{1}(F;\Z)$ a ``juncture'', but we prefer to reserve this term for a  geometric object representing $\kappa$.   
 
Notice that two components of $J_{\kappa}$ might be ``parallel'' circles if they cobound an annulus  $A\ss\intr F$. They will be parallel, properly embedded arcs if, together with two arcs in $\bd F$, they bound a rectangle $A$ with   $\intr A\ss\intr F$.  In either case, their transverse orientations are said to be coherent if one is oriented out of $A$ and the other into $A$.

 We want to modify a $\kappa$-juncture $J_{\kappa}$ to be \emph{weakly groomed} in the following sense.
 
 \begin{defn}[weakly groomed]\label{groomed}
 A compact, properly embedded, transversely oriented $1$-manifold $J\ss L$ is \emph{weakly groomed} if every pair of parallel circle (respectively arc) components have coherent transverse orientations.
 \end{defn}
 
 \begin{rem}
 The term ``groomed'' is already in use by $3$-manifold topologists, having been introduced by D.~Gabai in~\cite{ga3}.  It places stronger conditions on $J$ than we require, but includes our condition of ``weakly groomed''. It also assumes that $F$ is orientable, which we do not. The metaphor, of course, has to do with a nicely combed head of hair.
 \end{rem}
 
 \begin{lemma}\label{groomed'}
 Every $\kappa$-juncture $J_{\kappa}$ is cohomologous to a weakly groomed $\kappa$-juncture.
 \end{lemma}
 
 \begin{proof}
  If $J_{\kappa}$ is not weakly groomed, let $\tau_{1}$ and $\tau_{2}$ be parallel components with noncoherent transverse orientation.  It is clear that the algebraic intersection number of any closed, oriented curve $\sigma$ with $\tau_{1}\cup\tau_{2}$  is zero, hence these components can be removed.  Repeating this procedure finitely often produces the desired weakly groomed $\kappa$-juncture.
 \end{proof}
  
Thus, one can assume that the properly embedded arc components of $J_{\kappa}$ fall into ``packets'' of parallel, coherently transversely oriented arcs and that no two parallel components of $J_{\kappa}$ have opposing   transverse orientation.  Similarly, the components that are essential, embedded, transversely oriented circles fall into packets  of parallel, coherently transversely oriented circles with no oppositely oriented parallel packets.  Each of these packets  can be represented by one of its elements together with a positive integer weight (the number of components of the packet).  That is, the cohomology class $\kappa$ can be represented as a union of disjoint, transversely oriented arcs and circles $s$, no two of which are parallel, each with an attached integer weight $w_{s}>0$.  The union of these arcs and circles, neglecting the weights and transverse orientations, is called the \emph{support} of the $\kappa$-juncture and will be denoted by $|J_{\kappa}|$. 

\begin{rem}
Note that the term support and notation $|\cdot|$ for support has a different meaning for laminations in the rest of the paper.

\end{rem}
 
\begin{lemma}\label{nosep}
If $|J_{\kappa}|$ separates $F$, there is a weakly groomed $\kappa$-juncture  $J_{\kappa}^{*}$ such that  $|J^{*}_{\kappa}|$ does not separate $F$ and the set of its components  is a subset of the set of components of $|J_{\kappa}|$.
\end{lemma}

\begin{proof}
Let $s_{1},s_{2},\dots,s_{r}$ be oriented components of $|J_{\kappa}|$ that separate off a connected subsurface $S$ of $F$. Let $w_{i}$ be the weight associated to $s_{i}$ and let $w_{j}$ be the minimum, $1\le j\le r$. Let $s_{i}'$ be the same arc or circle as $s_{i}$, but  with  transverse orientation inward to $S$.  Then the coboundary $\sum_{i=1}^{r}w_{j}s'_{i}$ can either be added to or subtracted from $J_{\kappa}$, reducing the number of components of $|J_{\kappa}|$.  Finite repetition of this process produces the desired weakly groomed $\kappa$-juncture $J_{\kappa}^{*}$.
\end{proof}

We fix a choice of $J_{\kappa}$ satisfying the conclusions of Lemma~\ref{nosep}. In particular, through each component of $|J_{\kappa}|$ there is a transverse loop that does not intersect any other component of $|J_{\kappa}|$.
The following is a consequence of Lemma~\ref{nosep} and Lemma~\ref{perper}.

\begin{cor}\label{divbype}
Each weight $w_{s}$ is divisible by $p_{e}$.
\end{cor}

We can assume that our basepoint $*$ is disjoint from $|J_{\kappa}|$.  Let $F'$ denote the compact, connected surface with boundary (and possibly corners) obtained by cutting $F$ apart along the components of $|J_{\kappa}|$.  Then each component $s$ of $|J_{\kappa}|$ determines two copies $s_{\pm}$ of itself in $\bd F'$, $s_{+}$ being the copy along which the transverse orientation points out of $F'$ and $s_{-}$ the one along which the transverse orientation is inward.

Loops in $F$ based at $*$ which do not properly intersect $|J_{\kappa}|$ remain loops in $F'$ based at $*$.  The following is an easy consequence.

\begin{lemma}\label{F'embedded}
For each $n\in\Z$, there is a unique copy $F_{n}$ of $F'$ embedded in $\UU_{c}$ and containing $x_{n}$. 
\end{lemma}

The projection $q:F_{n}\to F$ induces a homeomorphism $q':F_{n}\to F'$.  The curve (an arc or circle) $s^{n}_{\pm}\ss\bd F_{n}$ is the one carried by $q'$ onto $s_{\pm}\ss\bd F'$.

\begin{lemma}\label{F'attached}
For each $n\in\Z$ and each component $s$ of $|J_{\kappa}|$, $F_{n}$ is attached to $F_{n+w_{s}}$ by an identification $s^{n}_{+}\equiv s^{n+w_{s}}_{-}$.
\end{lemma}

\begin{proof}
By Lemma~\ref{nosep}, there is a loop $\sigma$ in $F$, based at $*$, which intersects $s$ once and has algebraic intersection number $+1$ at that point.  Thus, $\sigma$ lifts to a path joining $x_{n}$ to $x_{n+w_{s}}$, exiting $F_{n}$ through $s^{n}_{+}$ and entering $F_{n+w_{s}}$ through $s^{n+w_{s}}_{-}$.
\end{proof}

List the conponents of $|J_{\kappa}|$ as $s_{1},s_{2},\dots,s_{p}$. Write $w_{s_{i}}=w_{i}$.  The juncture components $s^{n}_{i-}$, $1\le i\le p$, are inwardly oriented components of $\fr F_{n}$ and the juncture components $s^{n}_{i+}$, $1\le i\le p$, are outwardly oriented ones.

Let
$$
V_{0}=F_{0}\cup F_{p_{e}}\cup F_{2p_{e}}\cup\cdots\cup F_{kp_{e}}\cup\cdots.
$$

\begin{lemma}
For each integer $1\le i\le p$, the number of inwardly oriented components of $\fr V_{0}$ of the form $s^{j}_{i-}$ is $w_{i}/p_{e}$.
\end{lemma}

\begin{proof}
Fix $s_{i}$.  Since $F_{kp_{e}}$ is attached to $F_{kp_{e}+w_{i}}$ by identifying $s^{kp_{e}}_{i+}$ with $s^{kp_{e}+w_{i}}_{i-}$, it is clear that exactly the $w_{i}/p_{e}$  inwardly oriented components $s^{j}_{i-}$ of $F_{j}$, 
$$j = 0,p_{e},2p_{e},\ldots, w_{i} - p_{e},$$
 are in $\fr V_{0}$.
\end{proof}

Thus, the union of the  components $s^{j}_{i-}$ in $\fr V_{0}$ is a compact, properly embedded, transversely oriented $1$-manifold. We take $U_{e} = V_{0}$ as $f$-neighborhood of $e$ in Proposition~\ref{comcomp}  and $J = \fr V_{0}$.

\begin{cor}\label{Je0}
The image of $J$ under the covering projection $q:\UU_{c}\to F$ is exactly $|J_{\kappa}|$, each component $s$ of $|J_{\kappa}|$ being the image of exactly $w_{s}/p_{e}$ components of $J$.  
\end{cor}

Since $f$ is a deck transformation, this remains true for all $f^{n}(J)$, all $n\in\Z$, and it is clear that any two of these positive $f$-junctures intersect, if at all, only in common components. 

The proof of Proposition~\ref{comcomp} is now complete.

 \begin{hyp}\label{hypjunct}
\textbf{Hereafter, we will require that any $f$-juncture $J$ is a compact $1$-manifold and has the juncture intersection property}.
\end{hyp}

That is,  hereafter we are modifying the definition of $f$-juncture (Definition~\ref{fjunct}) to require that any $f$-juncture $J$ be a compact $1$-manifold and have the juncture intersection property.

\begin{rem}
Note that $q:V_{0}\to F$ is a semi-covering with covering semi-group $\Z^{+}$ generated by $f$.  It mirrors perfectly the way that an end $e$ of a depth one leaf of a foliation semi-covers the compact leaf $F$ to which it is asymptotic. (One commonly says that $e$ ``spirals'' on $F$.)
\end{rem}

\subsection{Behavior of an endperiodic automorphism near the ends of $L$}

An endperiodic automorphism is well behaved near the ends of $L$. In this subsection we give some lemmas that show the behavior can be more  complex than expected. Example~\ref{weird} shows how complicated the structure of a positive or negative end can be.

\begin{rem}
The notation and terminology  defined in the next paragraph is used only  in Section~\ref{endp}. 

\end{rem}

 It is sometimes necessary to work with a positive multiple $kp_{e}$ of $p_{e}$ as power of $f$.  If $e$ is a positive (respectively negative) end, choose an $f$-neighborhood $U_{e}$. Let $p$ be a positive multiple of $p_{e}$ and define $U_{e}^{i}= f^{ip}(U_{e})$ (respectively $U_{e}^{i}= f^{-ip}(U_{e})$), $J_{e}^{i}=\fr U_{e}^{i}$, and $B_{e}^{i} = \ol{U_{e}^{i}\sm   U_{e}^{i+1}}$, $i\in\Z$.

\begin{rem}
This notation depends on the choice of  positive multiple $p$ of $p_{e}$.

\end{rem}
 
\begin{rem}
The notation is such that all the sets $U_{e}^{n}, B_{e}^{n}, J_{e}^{n}\ss U_{e}$ and  tend to $e$ in the topology of $L\cup\EE(L)$ as $n\to+\infty$ whether $e$ is a positive or negative end. We will also use the notation introduced above, $J_{n}= f^{n}(J)$, for $J$ an $f$-juncture. For $J$ a positive $f$-juncture, $J_{n}$ will approach a cycle of positive ends as $n\to+\infty$ and for $J$ a negative $f$-juncture, $J_{n}$ will approach a cycle of negative ends as $n\to-\infty$

\end{rem}

\begin{defn}[$f$-domain]\label{fdom}

The  set $B_{e}^{i}$, $i\in\Z$, is the \emph{$f$-domain for $e$ corresponding to $p$}.

\end{defn}

\begin{lemma}\label{disjointequal}

If $f$ be an endperiodic automorphism, then the following are equivalent,

\begin{enumerate}

\item If $e$ is a positive end, $f^{p}( U_{e}) = U_{e}^{1}\ss\intr U_{e}$. \\If $e$ is a negative end, $f^{-p}( U_{e}) = U_{e}^{1}\ss\intr U_{e}$.\label{ctone}

\item $U_{e}^{i+1}\ss\intr U_{e}^{i}$, $i\in\Z$.\label{cttwo}

\item $J_{e}^{i}\cap J_{e}^{i+1} = \0$, $i\in\Z$.\label{ctthree}

\item $\fr B_{e}^{i} = J_{e}^{i}\cup J_{e}^{i+1}$, $i\in\Z$.\label{ctfour}

\item $B_{e}^{i}$ separates $L$, $i\in\Z$.\label{ctfive}

\end{enumerate}

\end{lemma}

\begin{proof}
Clearly $(\ref{cttwo})\Leftrightarrow(\ref{ctone})$. In fact $(\ref{ctone})$ is a special case of $(\ref{cttwo})$ with $i=0$ and $(\ref{cttwo})$ follows from $(\ref{ctone})$ since $f$ is a homeomorphism. We will show that $(\ref{cttwo})\Ra (\ref{ctfive})\Ra (\ref{ctfour})\Ra (\ref{ctthree})\Ra (\ref{cttwo})$.  
\medskip

\ni$(\ref{cttwo})\Ra(\ref{ctfive})$. Since $U_{e}^{i+1}\ss\intr U_{e}^{i}$, it follows that $J_{e}^{i} = U_{e}^{i} \sm \intr U_{e}^{i} \ss U_{e}^{i}\sm   U_{e}^{i+1}\ss B_{e}^{i}$. Since $J_{e}^{i}$,  separates $L$, $B_{e}^{i}$ separates $L$.
\medskip

\ni$(\ref{ctfive})\Ra(\ref{ctfour})$.  First remark that since $U_{e}^{i+1}$ is a connected subset of a surface with $\fr U_{e}^{i+1}$ a compact $1$-manifold, $\intr U_{e}^{i+1}$ is connected. 
Further it follows from the definition of $f$-neighborhood that $L\sm U_{e}^{i}$ is connected. 
By (\ref{ctfive}), $L\sm B_{e}^{i}$ is not connected so $L\sm B_{e}^{i}$ is the  union of nonempty disjoint open sets $U,V$. Since  $\intr U_{e}^{i+1}$ and $L\sm U_{e}^{i}$ are connected, only one of $U,V$ can meet each of $\intr U_{e}^{i+1}$ and $L\sm U_{e}^{i}$. It follows that $U = L\sm U_{e}^{i}$ and $V = \intr U_{e}^{i+1}$.  Therefore 
$$L\sm B_{e}^{i} = U\cup V = (L\sm U_{e}^{i})\cup\intr U_{e}^{i+1} = L\sm(U_{e}^{i}\sm\intr U_{e}^{i+1})$$
so  $B_{e}^{i} = U_{e}^{i}\sm\intr U_{e}^{i+1}\supset J_{e}^{i}\cup J_{e}^{i+1}$. Further, if $x\in J_{e}^{i}$ (respectively $x\in J_{e}^{i+1}$) then $x$ has a neighborhood meeting $L\sm U_{e}^{i}$ (respectively $\intr U_{e}^{i+1}$). Thus, every $x\in J_{e}^{i}\cup J_{e}^{i+1}$ has a neighborhood meeting both $B_{e}^{i}$ and its complement so $J_{e}^{i}\cup J_{e}^{i+1}\ss\fr B_{e}^{i}$. The reverse containment is clear so (\ref{ctfour}) follows.
\medskip

\ni$(\ref{ctfour})\Ra(\ref{ctthree})$. If $J_{e}^{i}\cap J_{e}^{i+1} \ne \0$, then by assumption $J_{e}^{i}$ and $J_{e}^{i+1}$ have a component $\sigma$ in common. If $x\in\intr\sigma$, then the point  $x$ has a neighborhood $V$ disjoint from $U_{e}^{i}\sm   U_{e}^{i+1}$. Thus, $x\notin B_{e}^{i}$ so $x\notin\fr B_{e}^{i}$  contradicting $(\ref{ctfour})$.
\medskip

\ni$(\ref{ctthree})\Ra(\ref{cttwo})$. If $J_{e}^{i}\cap J_{e}^{i+1} = \0$, then $J_{e}^{i+1}\ss\intr U_{e}^{i}$ so $U_{e}^{i+1}\ss\intr U_{e}^{i}$.
\end{proof}

\begin{lemma}\label{fdconn}
The integer $p >0$ of \emph{Lemma~\ref{ne}} can be chosen so that the   $B_{e}^{i}$ are connected.
\end{lemma}

\begin{proof}
Without loss assume that $e$ is a positive end. Suppose that  
$$B_{e}^{0} = \ol{U_{e}\sm   f^{p}(U_{e})}$$     
has more than one connected component.  By  Definition~\ref{fnbhd}, $ \fr B_{e}^{0}$ is a compact $1$-manifold, hence has finitely many components. Since $U_{e}$ is connected, we find finitely many paths $s_{1},\dots,s_{r}$ in $U_{e}$ that connect the components of $B_{e}^{0}$.   That is $B_{e}^{0}\cup s_{1}\cup\cdots \cup s_{r}$ is connected.  For a minimal integer $m\ge1$, these paths all lie in $B=B_{e}^{0}\cup B_{e}^{1}\cup\cdots\cup B_{e}^{m}$ and we claim that $B$ is connected.  Indeed, each component of $B_{e}^{1}$ attaches to $B_{e}^{0}$ along at least one component of $J_{e}^{1}$, and so $B_{e}^{0}\cup B_{e}^{1}\cup s_{1}\cup\cdots\cup s_{r}\ss B$ is connected. Repeating this reasoning finitely often, we obtain that $B$ is connected.  If we replace the integer $p$ of Lemma~\ref{ne} with $q=p(m+1)$, then $B = \ol{U_{e}\sm  f^{q}(U_{e})}$ can be taken as a new   connected  $B_{e}^{0}$.  
\end{proof}

We introduce the following nonstandard term which will come up frequently throughout this paper.

\begin{defn}[simple end]\label{simpend}
 An end of $L$ is \emph{simple} if it has a neighborhood homeomorphic to $S^{1}\x[0,\infty)$ or $[0,1]\x[0,\infty)$.
\end{defn}

The next lemma follows from the fact that $L$ has no simple ends. 

\begin{lemma}\label{negeuler}

If $L$ has no simple ends and $p >0$ is large enough, then the   $B_{e}^{i}$ have negative Euler characteristic.

\end{lemma}

\begin{example}\label{weird}
This example is meant to illustrate how complex the structure of a positive or negative end might be.

\begin{figure}[ht]
\begin{center}
\begin{picture}(300,90)(-65,0)

\scalebox{.8}{\includegraphics[width=220pt]{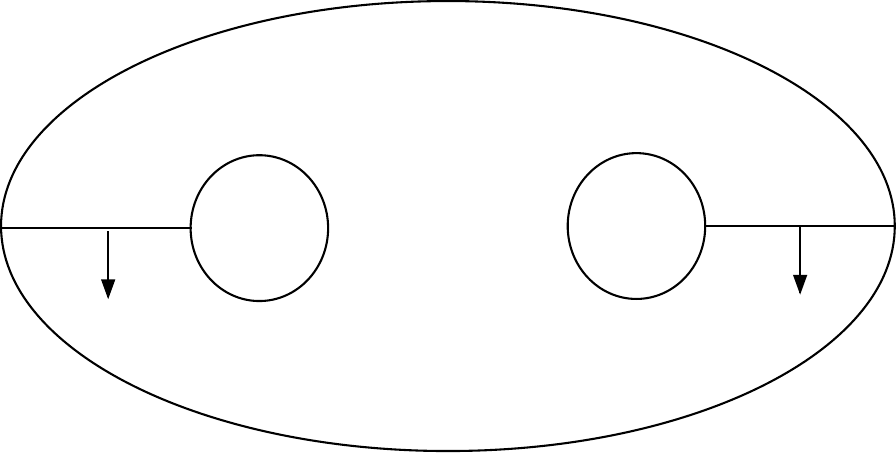}}
\put (-160,50){$_{A_{i}}$}
\put (-25,50){$_{D_{i}}$}
\put(-92.5,45){$_{P_{i}}$}

\end{picture}
\caption{A pair of pants $P_{i}$}\label{AD}
\end{center}
\end{figure}

We describe a family of two-ended surfaces  and an endperiodic automorphism on each which is a  translation (this term will be carefully defined in Section~\ref{translations}).  The surface $L = \bigcup_{i\in\Z}P'_{i}$ will be formed by cutting  pairs of pants $P_{i}$ along certain essential, properly embedded subarcs and then pasting the resulting disks $P'_{i}$ to one another along these subarcs.  The endperiodic automorphism $f:L\to L$ will take $P_{i}'$ to $P'_{i+1}$.

In Figure~\ref{AD}, we depict the typical pair of pants and the essential arcs with transverse orientation.  After cutting, $P_{i}$ becomes a disk $P_{i}'$, with $A_{i}$ and $D_{i}$ split and indexed   as indicated in Figure~\ref{indices}. Here $i$ varies over the integers. Let $m$ and $n$ be fixed, relatively prime integers, positive and/or negative.  The index $i$ on $A^{+}_{i},D^{+}_{i}$ indicates that these arcs are identified with the original $A_{i}$ and $D_{i}$, while the index on $A^{-}_{i+m}$  indicates that it is to be attached to $A^{+}_{i+m}$, forming a single arc to be labeled $A_{i+m}$, and the index on $D^{-}_{i+n}$ indicates that it is to be attached to $D^{+}_{i+n}$, forming a single arc to be labeled $D_{i+n}$.  It will be convenient to represent $P'_{i}$ and its boundary arcs symbolically as $X_{i}$ and its vertices as in Figure~\ref{XX}.

\begin{figure}[ht]
\begin{center}
\begin{picture}(300,100)(-60,0)

\rotatebox{90}{\includegraphics[width=100pt]{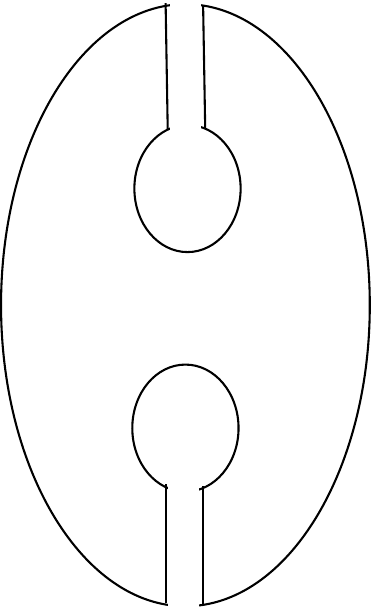}}
\put (-150,63){$_{A^{+}_{i}}$}
\put (-150,40){$_{A_{i+m}^{-}}$}
\put (-25,63){$_{D^{+}_{i}}$}
\put (-25,40){$_{D^{-}_{i+n}}$}
\put(-85,51.5){$_{P'_{i}}$}

\end{picture}
\caption{Arcs $A^{\pm},D^{\pm}$ with indices}\label{indices}
\end{center}
\end{figure}

\begin{figure}[ht]
\begin{center}
\begin{picture}(300,120)(-100,0)

\scalebox{.8}{\includegraphics[width=120pt]{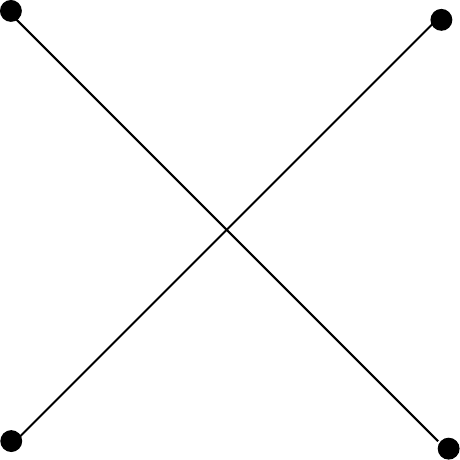}}
\put (0,0){$_{D_{i+n}^{-}}$}
\put (-107,0){$_{A_{i+m}^{-}}$}
\put (-108,90){$_{A_{i}^{+}}$}
\put (0,90){$_{D_{i}^{+}}$}
\put(-53,40){$_{X_{i}}$}

\end{picture}
\caption{A symbolic representation of Figure~\ref{indices}}\label{XX}
\end{center}
\end{figure}

\begin{figure}[ht]
\begin{center}
\begin{picture}(250,100)(50,-150)
\rotatebox{270}{\includegraphics[width=270pt]{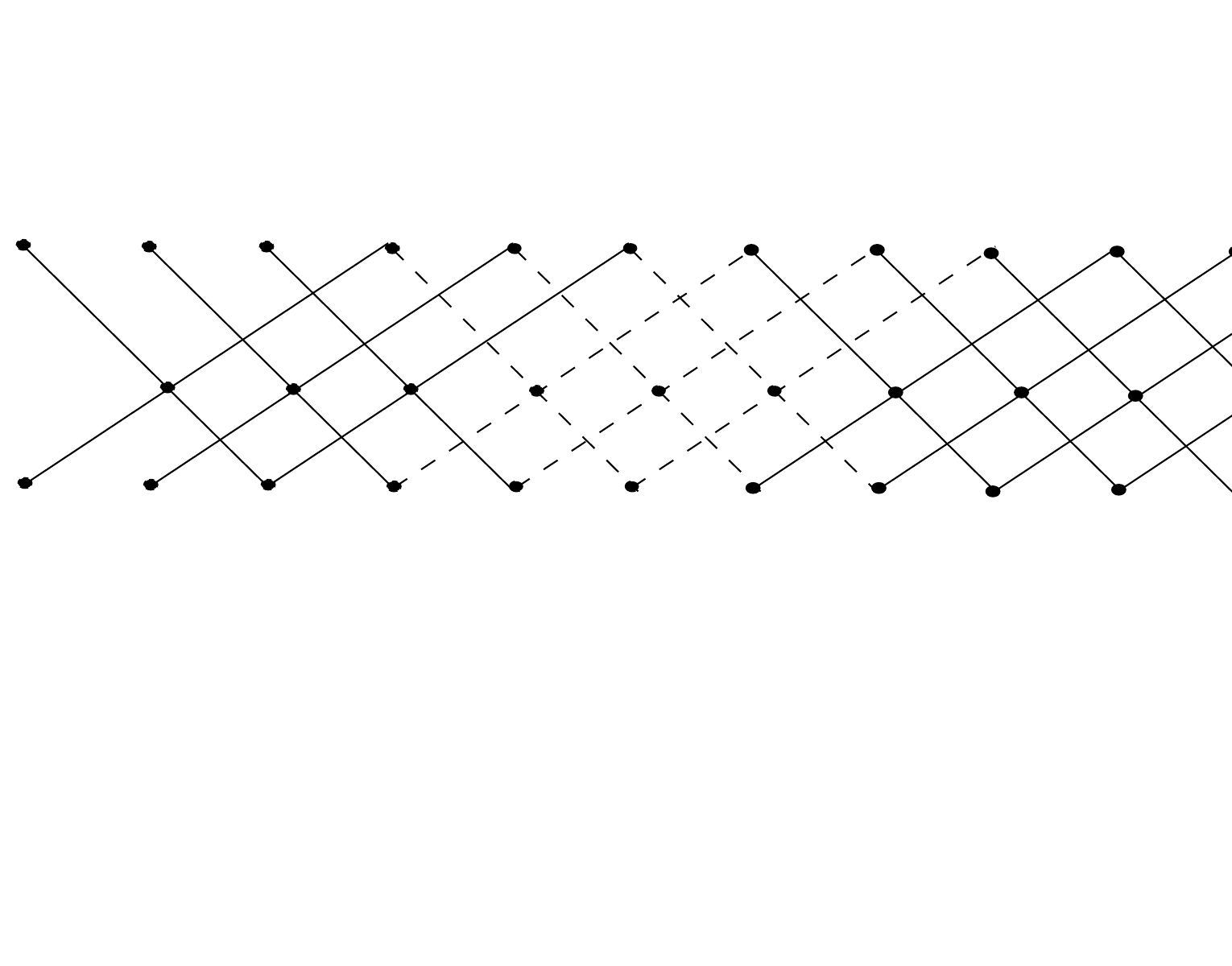}}

\put (-340,-64){\small$D_0$}
\put (-306,-64){\small$D_1$}
\put (-273,-64){\small$D_2$}
\put (-240,-64){\small$D_3$}
\put (-205,-64){\small$D_4$}
\put (-173,-64){\small$D_5$}
\put (-140,-64){\small$D_6$}
\put (-106,-64){\small$D_7$}
\put (-70,-64){\small$D_8$}
\put (-35,-64){\small$D_9$}

\put (-340,-145){\small$A_0$}
\put (-306,-145){\small$A_1$}
\put (-273,-145){\small$A_2$}
\put (-240,-145){\small$A_3$}
\put (-205,-145){\small$A_4$}
\put (-173,-145){\small$A_5$}
\put (-140,-145){\small$A_6$}
\put (-106,-145){\small$A_7$}
\put (-70,-145){\small$A_8$}
\put (-35,-145){\small$A_9$}

\put (-315,-110){\small$X_0$}
\put (-278,-110){\small$X_1$}
\put (-245,-110){\small$X_2$}
\put (-210,-110){\small$X_3$}
\put (-176,-110){\small$X_4$}
\put (-143,-110){\small$X_5$}
\put (-110,-110){\small$X_6$}
\put (-74,-110){\small$X_7$}
\put (-43,-110){\small$X_8$}

\end{picture}
\caption{Symbolic picture of the  surface with  $m=2, n=3$}\label{WeirdEx}
\end{center}
\end{figure}

The case $m=2$  and $n=3$ is indicated symbolically by the graph in Figure~\ref{WeirdEx}. The endperiodic automorphism carries $X_{i}$ to $X_{i+1}$. An  $f$-neighborhood of the positive end $e$ is given by $U_{e} = P'_{0}\cup P'_{1}\cup\cdots$ and is represented symbolically as $X_{0}\cup X_{1}\cup\cdots$ in Figure~\ref{WeirdEx}. Thus, $p_{e}=1$.  The $f$-juncture $J_{e}^{0} = \fr U_{e}$ consists of the five properly embedded arcs represented by the points $A_{0},A_{1},D_{0},D_{1},D_{2}$ in Figure~\ref{WeirdEx}. As usual, let $U_{e}^{i} = f^{ip}(U_{e})$, $J_{e}^{i}=f^{ip}(J_{e}^{0}) = \fr U_{e}^{i}$, and $B_{e}^{i} = \ol{U_{e}^{i}\sm   U_{e}^{i+1}}$, $i\ge 0$ where $p$ is a positive integer multiple of $p_{e}=1$.

If $p=1$ or $2$, the  $U_{e}^{i}$, $J_{e}^{i}$, and $B_{e}^{i}$ do not satisfy the conditions of Lemma~\ref{disjointequal}. In particular, $J_{e}^{0}\cap J_{e}^{1}\ne\0$ and $B_{e}^{0}$ does not separate $L$.

If $p=3$, then   $B_{e}^{0}$ is symbolically represented by $X_{0}\cup X_{1}\cup X_{2}$ in Figure~\ref{WeirdEx} where it is drawn with solid lines, $B_{e}^{1}$ is represented by $X_{3}\cup X_{4}\cup X_{5}$ in Figure~\ref{WeirdEx} where it is drawn with  dotted lines and $B_{e}^{2}$ is represented by $X_{6}\cup X_{7}\cup X_{8}$ in Figure~\ref{WeirdEx} where it is drawn with  solid lines. In this case,  $B_{e}^{0}$ does  separate $L$, so  the  $U_{e}^{i}$, $J_{e}^{i}$, and $B_{e}^{i}$ do  satisfy the conditions of Lemma~\ref{disjointequal}. However, $B_{e}^{0}$ has two components.

If $p\ge 4$,  the  $U_{e}^{i}$, $J_{e}^{i}$, and $B_{e}^{i}$ do  satisfy the conditions of Lemma~\ref{disjointequal} and $B_{e}^{0}$ is connected. If $p=4$, $B_{e}^{0}$ is symbolically represented by $X_{0}\cup X_{1}\cup X_{2}\cup X_{3}$ in Figure~\ref{WeirdEx} and is a disk. If $p=5$, $B_{e}^{0}$ is symbolically represented by $X_{0}\cup X_{1}\cup X_{2}\cup X_{3}\cup X_{4}$ in Figure~\ref{WeirdEx} and is an annulus. If $p\ge 6$, $B_{e}^{0}$ has negative Euler charateristic, separates $L$, and is connected and $\fr B_{e}^{0} = J_{e}^{0}\cup J_{e}^{1}$ with $J_{e}^{0}\cap J_{e}^{1} = \0$ .

\end{example}

\subsection{Properties of endperiodic automorphisms}

We  prove some elementary facts about endperiodic automorphisms.  

\begin{prop}\label{att-rep}
If  $L$ has at least one nonsimple end, then the endperiodic automorphism $f:L\to L$ has both positive  and negative ends.  
\end{prop}

\begin{proof}
Assume that $L$ has no negative ends. Let $e$ be a positive end and let $U_{e}$ be an $f$-neighborhood of $e$.  Let $B_{e}^{i}$, $i\in\Z$,  be the   $f$-domains (Definition~\ref{fdom}) for $e$ corresponding to a choice of $p$, a large enough multiple of $p_{e}$ that it can serve as a choice of $p$ in Lemmas~\ref{ne},~\ref{fdconn} and~\ref{negeuler}. Thus, $B_{e}^{i+1}=f^{p}(B_{e}^{i})$.  By Lemma~\ref{negeuler},   the   $f$-domains $B_{e}^{i}$ for the end $e$ have negative Euler characteristics. Then the sequence $\{B_{e}^{i}\}_{i=0}^{-\infty}$ is a sequence of compact surfaces with disjoint interiors, each of negative Euler characteristic, which does not accumulate at any end.  This gives the contradiction that some compact subsurface $S\ss L$ has infinite Euler characteristic.  
\end{proof}

\begin{rem}
In particular, $L$ must have at least two ends. The argument  of this proof obviously fails if all ends  are simple.  For example, it is easy to produce an endperiodic automorphism on $\R^{2}$ with one negative  end and one on $I\x\R$ with two negative ends.  
\end{rem}

\begin{rem}
Each end of every noncompact boundary component of a surface $L$ limits on some end of $L$.

\end{rem}

\begin{rem}
One might think that at most finitely many noncompact boundary components can have an end limiting on a given end of $L$.  But see~\cite[Figure~12.5.11]{condel1} for a  surface with one end $e$	 and infinitely many noncompact boundary components with both ends limiting on $e$.  Of course, this surface does not admit an endperiodic automorphism.
\end{rem}

\begin{lemma}\label{noncompactbd}
If $f:L\to L$ is endperiodic, then, only finitely many noncompact boundary components can limit on a given end  $e$ of $L$.
\end{lemma}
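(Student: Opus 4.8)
The plan is to reduce the whole statement to a pigeonhole count on the finitely many points in which the junctures of the given periodic end meet $\bd L$. Fix a periodic end $e$; as elsewhere in this section we treat the attracting case, the repelling case being entirely analogous. By Lemma~\ref{perends'} choose a fundamental system of open neighborhoods $U_0\supset U_1\supset\cdots$ of $e$ with $f^{n_e}(U_i)=U_{i+1}$, so that $\bigcap_i U_i=\0$, and by Lemma~\ref{unmixed} arrange that the junctures $J_i=\ol U_i\sm U_i$ are finite unions of circles together with compact arcs having their endpoints on $\bd L$; then $f^{n_e}(J_i)=J_{i+1}$. With these choices $P_i:=J_i\cap\bd L=\bd J_i$ is a finite set, $f^{n_e}(P_i)=P_{i+1}$, and $|P_i|$ is independent of $i$ once $i$ is large.

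The key step -- and, I expect, the only delicate one -- is a crossing claim: \emph{if $\ell$ is a noncompact component of $\bd L$ issuing from $e$, i.e.\ one of the two subrays of $\ell\cong\R$ is a properly embedded ray $r$ converging to $e$, then $r\cap P_i\neq\0$ for all sufficiently large $i$.} To prove it, first use $\bigcap_i U_i=\0$ to find $i_0$ with $r\not\subseteq U_{i_0}$; since the $U_i$ decrease, $r\not\subseteq U_i$ for every $i\ge i_0$. On the other hand $r$ converges to $e$, so $r$ is eventually, hence somewhere, inside $U_i$. If $r$ also meets the open set $L\sm\ol U_i$, then, $r$ being connected, it must meet the frontier $\bd U_i=\ol U_i\sm U_i=J_i$; and if instead $r\subseteq\ol U_i$, then since $r\not\subseteq U_i$ we again get $r\cap J_i\neq\0$. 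As $r\subseteq\bd L$, this intersection lies in $J_i\cap\bd L=P_i$, which proves the claim.

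Finiteness is then immediate: given distinct noncompact boundary components $\ell_1,\dots,\ell_m$ all issuing from $e$, choose $i$ large enough that each $\ell_n$ meets $P_i$; disjointness of distinct components of $\bd L$ then yields $m$ distinct points of $P_i$, so $m\le|P_i|$, and hence at most $|P_i|$ noncompact boundary components issue from $e$. For the second assertion, let $\ell$ issue from $e$ and let $e'$ be the unique end of $L$ to which the opposite subray of $\ell$ converges. Since $f^{n_e}(e)=e$, every $f^{kn_e}(\ell)$, $k\in\Z$, is again a noncompact boundary component issuing from $e$, so by the finiteness just proved the orbit $\{f^{kn_e}(\ell)\}_{k\in\Z}$ is finite; hence $f^{mn_e}(\ell)=\ell$ for some $m\ge1$. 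The homeomorphism $g=f^{mn_e}$ then maps $\ell$ onto itself and fixes the end $e$. If $g$ preserves the two ends of $\ell$ individually, it carries the subray converging to $e'$ onto a subray converging both to $e'$ and to $g(e')$, so $g(e')=e'$, whence $e'$ is cyclic, hence periodic by Definition~\ref{epdefn}. If $g$ interchanges the two ends of $\ell$, it carries the subray converging to $e$ onto a subray converging both to $e'$ and to $g(e)=e$, so $e'=e$, again periodic. Either way $\ell$ terminates at a periodic end.

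I expect the crossing claim of the second paragraph to be the only point that needs real care; everything after it is bookkeeping -- disjointness of boundary components, and finiteness of an orbit sitting inside an already finite set. Note that, unlike several arguments earlier in this section, this one uses no Euler characteristic estimate and nothing about the fundamental domains beyond Lemmas~\ref{fdconn} and~\ref{unmixed}; in particular simple periodic ends need no separate treatment.
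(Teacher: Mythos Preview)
Your proof is correct and follows essentially the same approach as the paper's: both use the pigeonhole count against the finitely many points of $J_i\cap\bd L$, and both deduce periodicity of the terminal end from the fact that $f$ (or a suitable power) permutes a finite set of boundary components. Your crossing claim is spelled out more carefully than the paper's one-line assertion, and for the second assertion you work with $f^{n_e}$ on a single end and handle the orientation-preserving/reversing dichotomy on $\ell$, whereas the paper argues globally that $f$ permutes the finite set of all noncompact boundary components issuing from \emph{all} periodic ends, hence permutes their terminal ends; but these are cosmetic differences. One small quibble: Lemma~\ref{unmixed} says the junctures can be taken to be \emph{either} all circles \emph{or} all arcs, not a mixture; your argument does not actually need this reduction (you only use that $J_i$ is a compact properly embedded $1$-manifold, which is automatic), so the citation is superfluous rather than wrong.
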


\begin{proof}
 Since $L$ has finitely many ends, $e$ is a periodic end. Let $U_{e}$ be an $f$-neighborhood of $e$. Consider the $f$-junctures $\{J_{e}^{i}\}_{i=1}^{\infty}$, a sequence of mutually homeomorphic $1$-manifolds.  If  $J_{e}^{i}$ is a collection of simple closed curves, no noncompact boundary component limits on  $e$.  In general, $J_{e}^{i}$ contains finitely many arc components and the finite number of endpoints of these components is independent of $i$. Each noncompact boundary component $\sigma$ of $L$ with an end  limiting on $e$ contains an endpoint of an arc component of $J_{e}^{i}$ for some $i$ and if $\sigma$ contains an endpoint of an arc component of $J_{e}^{i}$, it contains an endpoint of an arc component of $J_{e}^{i+r}$, $r\ge0$. Thus, if infinitely many boundary components of $L$ have an end limiting on $e$, the number of endpoints of arc components of $J_{e}^{i}$ is unbounded which is a contradiction.
\end{proof}

\begin{rem}
 It should be noted that it is possible that a component of $\bd L$ might be a line $\ell$ joining a positive end to a positive end (possibly the same end) with the parallel possibility for negative ends.  In that event, $f$ will have at least one periodic point on $\ell$.  It will turn out that $\ell$ will then be a leaf of one of the two transverse laminations to be constructed in Section~\ref{constr}, while a ray issuing from the periodic point will be a leaf of the  other.  This causes some difficulties for us, but the problem can be eliminated without loss of generality by doubling along all such components $\ell$ (and only along such components). Denote the partially doubled surface by $L'$ and the corresponding partial double of $f$ by $f'$.   The original surface remains as an $f'$-invariant subsurface in $L'$.  The remaining noncompact boundary components which issue from periodic ends will join negative ends to positive ones and it is easy to modify the endperiodic automorphism near these boundary lines so that there are no periodic points on them.  Accordingly, we make the following assumption.
 \end{rem}

\begin{hyp}\label{noperpt}
 \textbf{Each noncompact component of $\bd L$,  joins a negative end to a positive end and contains no periodic point}.
\end{hyp}

 \subsection{Tunneling}\label{Tunnel}
 
 A certain special class of geometric modifications of junctures that leave them unchanged homologically is a process which we call ``tunneling''. In Figure~\ref{tunneling} we represent tunneling between components $\tau_{1}$ and $\tau_{2}$ of $J_{\kappa}$ along an arc $\alpha$ issuing from $\tau_{1}$ and ending at $\tau_{2}$ and not otherwise meeting $J_{\kappa}$ so that, whatever orientation is given to $\alpha$, it agrees with the transverse orientation of $\tau_{1}\cup\tau_{2}$ at one end and opposes it at the other.  In this paper we only need this  in the proof of the transfer theorem (Theorem~\ref{transfer}).
 
 \begin{figure}[ht]
\begin{center}
\begin{picture}(100,50)(70,0)

\includegraphics[width=250pt]{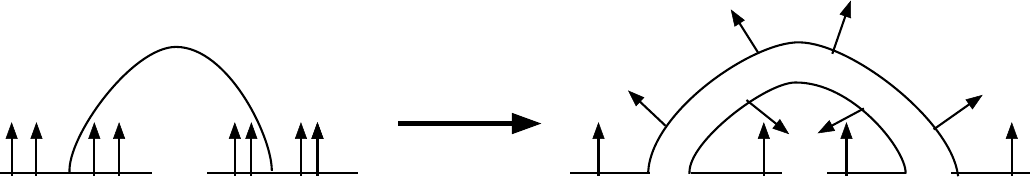}

\put(-194,25){\small$\alpha$}
\put(-240,-8){$\tau_{2}$}
\put(-190,-8){$\tau_{1}$}

\end{picture}
\caption{A tunneling cohomology}\label{tunneling}
\end{center}
\end{figure}

Let $\sigma\ss \intr F$ be an embedded,  oriented circle.  Suppose that $\sigma$ intersects $J_{\kappa}$ transversely and in finitely many points.  Let $\sigma\cdot J_{\kappa}$ denote the algebraic intersection number. Let the transversely oriented components of $|J_{\kappa}|$ be $\tau_{j}$ with respective weights $w_{j}$, $1\le j\le r$. 

By tunneling we will prove the following.

\begin{prop}\label{tunnel}
For any  $\sigma\ss \intr F$, an embedded,  oriented circle  that intersects a $\kappa$-juncture  $J_{\kappa}$ transversely and in finitely many points, there is a new choice of $\kappa$-juncture $J_{\kappa}$, also intersecting $\sigma$ transversely and in finitely many points, such that 
\begin{enumerate}
\item $|\sigma\cdot J_{\kappa}|=\sum_{j=1}^{r}w_{j}\card(\sigma\cap\tau_{j})$,
\item $J_{\kappa}$ is weakly groomed, 
\item $|J_{\kappa}|$ does not separate $F$.
\end{enumerate}
\end{prop}

\begin{proof}
 Realize $J_{\kappa}$ not as a union of weighted, transversely oriented arcs and circles, but of packets of parallel, coherently transversely oriented arcs and circles.  If the asserted equality fails, there is a subarc $\alpha$ of $\sigma$ with endpoints $x_{1},x_{2}$ on respective component(s) $\tau_{1}, \tau_{2}$ of $J_{\kappa}$, not intersecting $J_{\kappa}$ in any other points, such that the  orientation of $\alpha$ at, say, $x_{1}$ disagrees with the transverse orientation of $\tau_{1}$ at that point and, at $x_{2}$, the orientation of $\alpha$ and the transverse orientation of $\tau_{2}$ agree.

 Let $A$ be a rectangle with a pair of opposite sides $\beta_{i}\ss\tau_{i}$ containing the points $x_{i}$ in their interiors, $i=1,2$, and a pair of opposite sides $\alpha_{i}$ parallel to $\alpha$ with $\intr\alpha\ss\intr A$. Clearly $A$ can be chosen to be disjoint from $\sigma\sm\alpha$. Transversely orient the circle $\bd A$ so that the orientation points outward from $A$ if $\alpha$ meets $\tau_{1}$ and $\tau_{2}$ on the positive sides and, in the alternative case, let $\bd A$ be transversely oriented into $A$.  Let $\tau$ denote the cocycle which is the transversely oriented $\bd A$.  This cocycle is clearly a coboundary and so the cocycle $J_{\kappa}+\tau$ is cohomologous to $J_{\kappa}$.  The overlap  of $\tau$ and $\tau_{i}$ is two copies of $\beta_{i}$ with opposite transverse orientation, hence can be ``erased'', $i=1,2$.  The resulting transversely oriented, properly embedded 1-manifold  $J^{*}_{\kappa}$ is as in Figure~\ref{tunneling} and the intersection points $x_{1},x_{2}$ which introduced cancelling intersection numbers $\pm1$ have been eliminated without changing the other intersections of $\sigma_{k}$ with $J^{*}_{\kappa}$, $1\le k\le n$.  Finite repetition produces $J_{\kappa}^{*}$ such that the algebraic intersection numbers at each point of $\sigma_{i}\cap J^{*}_{\kappa}$ all have the same sign. 
 
 Finally, the modifications in Lemma~\ref{groomed'} and Lemma~\ref{nosep} only involve throwing away components of $\kappa$-juncture, hence do not affect the intersection properties already established.
\end{proof}

 As an example of tunneling, the reader might try proving the following result which is not consequential for this paper but might be useful in studying examples.
 
 \begin{lemma}\label{arcsorcircle}
 The $\kappa$-juncture, chosen with all the above properties, can be assumed to be represented entirely by weighted, transversely oriented, properly embedded arcs, or one weighted, transversely oriented circle.
 \end{lemma}

\subsection{Translations}\label{translations}

The simplest endperiodic automorphisms are the translations, although even these can be surprisingly complicated (check out Example~\ref{weird}).

\begin{defn}[translation]\label{U'=L}
The endperiodic automorphism is a \emph{translation} if (in the notation of Section~\ref{jnctrs}) $L$ has an end $e$  such that $\UU_{e}=L$.
\end{defn}

\begin{rem}
In the case of a translation, $L$ has one positive end and one negative end.
\end{rem}

Suppose that $f$ is a translation.  It is evident that $L$ has two ends, an attracting end $e$ and a repelling end $e'$ and $p_{e}=1$.  Lemma~\ref{F'embedded} gives  a sequence $\{F_{n}\}$ of copies of $F'$ embedded in $\UU_{e} = L$ with $f(F_{n}) = F_{n+1}$ for $n\in\Z$ and   $L=\bigcup_{n=-\infty}^{\infty}F_{n}$. It is natural to think of the $F_{n}$'s as ``fundamental domains'' for the homeomorphism  $f$.   Example~\ref{weird} shows that,  in general, $F_{n}$ does not separate $L$ and can be attached to more than two of the $F_{j}$'s along common boundary components.  

We make the following definition only for use in the next two  lemmas 

\begin{defn}
If the $F_{n}$'s can be chosen   to separate $L$ and to be attached to exactly two of the $F_{j}$'s along common boundary components, we say that the translation $f$ is a \emph{simple translation}.
\end{defn}

An immediate consequence of Lemmas~\ref{disjointequal} and~\ref{ne}  is the following.

\begin{lemma}
If $f$ is a translation and $k$ is sufficiently large, then  $f^{k}$ is a simple translation.
\end{lemma}

In case $\bd L=\0$, hence $\bd F=\0$, and $f$ is a translation, the $\kappa$-juncture can always be chosen to be a simple closed curve $s$ (Lemma~\ref{arcsorcircle}).   As a consequence, we note the following.

\begin{lemma}
If $\bd L=\0$ and $f:L\to L$ is a translation, then $f$ is a simple translation.
\end{lemma}

\subsection{Avoiding the juncture intersection property} It was pointed out to us by the anonymous referee of~\cite{cc:almostnohol} that, by a small isotopy of the $f$-junctures in $\UU_{c}$, it can be assumed that every two $f$-junctures are disjoint.  This has the effect of allowing part~$(1)$ of Definition~\ref{perends} to be strengthened to read:
$$
f^{p_{e}}(U_{e})\ss \intr U_{e}.
$$
While this may seem desirable, Example~\ref{weird} suggests that it is a bit contrived and in the context of depth one foliations it is decidedly unnatural. Our asymptotic construction of $f$-junctures is entirely motivated by depth one foliations. We record the fact for its possible usefulness, but continue to stick with Definition~\ref{perends} as stated and the juncture intersection property.

 
 \section{Preliminaries to the Handel-Miller Theory}\label{hyppre}

We present here some material that will be appealed to repeatedly.

\begin{*hyp}

\textbf{In Section~\ref{hyppre}, we assume that $L$ is a noncompact, connected  surface   with finitely or infinitely many ends and possibly with boundary}.
 
\end{*hyp}

 \subsection{Some remarks on isotopies}\label{epsteinbaer}

In constructing isotopies in this paper, we will frequently use the Epstein-Baer theorems~\cite[Theorem~2.1 and~3.1]{Epstein:isotopy} about homotopies and isotopies of curves.  For convenient reference we state them here and refer the reader to~\cite{Epstein:isotopy} for the proofs.  In what follows, $L$ is a connected surface, compact or noncompact, with or without boundary and orientable or nonorientable. Recall that an ambient isotopy on $M$ is a continuous map $\Phi:M\x I\to M$, written $\Phi(x,t)=\Phi^{t}(x)$, where $\Phi^{0}=\id$,   with $\Phi^{t}:M\to M$ a homeomorphism, $0\le t\le1$.  

\begin{rem}
Whether we say so or not, all of our isotopies will be ambient.\label{allisotamb}

\end{rem}

\begin{theorem}[Epstein-Baer]\label{2.1}
Let $\alpha,\beta:\SI\to\intr L$ be freely homotopic, embedded, $2$-sided, essential circles.  Then there is  an ambient isotopy $\Phi:L\x I\to L$, compactly supported in $(\intr L)\x I$, such that $\Phi^{1}\o\beta=\alpha$.
\end{theorem}

\begin{theorem}[Epstein]\label{3.1}
Let $\alpha,\beta:[0,1]\to L$ be properly embedded arcs with the same endpoints which are homotopic modulo the endpoints.   Then there is  an ambient isotopy $\Phi:L\x I\to L$, compactly supported in $L\x I$ with $\Phi(x,t)=x$  for $(x,t) \in\bd L\x I$, such that $\Phi^{1}\o\beta=\alpha$.
\end{theorem}

Epstein proves these first in the PL category, then extends them to the category TOP by an approximation argument.  The PL argument adapts very well to the smooth category DIFF, giving the following.

\begin{theorem}\label{epsteinsmooth}
In the above theorems, if the curves in question are smooth embeddings, then the isotopies can be chosen to be smooth.
\end{theorem}

\begin{rem}\label{trick}
One will frequently want to perform these isotopies sequentially on a possibly infinite sequence of disjoint curves.  In order to make sure that already completed isotopies are not undone by a subsequent one, one resorts to the following inductive trick.  If $\sigma_{1},\sigma_{2},\dots,\sigma_{n}$ are the resulting curves from isotopies already performed and $\tau$ is a curve to be isotoped to $\sigma_{n+1}$, both of these curves being disjoint from $S=\sigma_{1}\cup\sigma_{2}\cup\cdots\cup\sigma_{n}$, then cut $L$ apart temporarily along $S$ and perform the Epstein-Baer isotopies in the resulting surface $L'$.  Of course, we are assuming that $\tau$ and $\sigma_{n+1}$ lie in a common component of $L'$.  
\end{rem}

\subsection{Standard hyperbolic metrics}

In order to construct and analyze the Handel-Miller laminations, it is necessary to introduce a hyperbolic metric on $L$.  Here we give necessary definitions and prove a key result (Theorem~\ref{essc}). In~\cite{cc:epstein} we generalized to arbitrary noncompact  surfaces, with suitable hyperbolic metric, theorems we need that are well-known for hyperbolic surfaces of  finite area. 

Let  $L$  be a  connected surface, compact or not, with or without boundary and orientable or not. If $L$ has boundary and a complete hyperbolic metric making all components of $\bd L$ geodesics, it is well known that the double $2L$ has a canonical hyperbolic metric which is complete and agrees with the given one on $L\ss2L$. 

If $L$ has empty boundary,  the open unit disk $\Delta$, with its canonical hyperbolic metric, is the universal cover $\wt L$.  If $\bd L\ne\0$, view $L\ss2L$, identify $\wt{2L}=\Delta$ and choose a lift $\wt L\ss\Delta$ of $L$.  The projection $\pi:\Delta\to2L$ restricts to give the universal cover $\pi:\wt L\to L$.  The lifts  $\wt L$ are permuted transitively by the group of deck transformations of $2L$, giving all the choices of  embeddings of the universal cover of $L$ in $\Delta$.  Those deck transformations leaving a given $\wt L$ invariant restrict to define the group of deck transformations for $\pi:\wt L\to L$.

Let $\D^{2}=\Delta\cup\Si$, the closed unit disk with boundary $\Si$\label{s1inf} the unit circle, known as the ``circle at infinity''.   If $A\ss\D^{2}$, let $\wh A$ denote the closure of $A$ in $\D^{2}$. In particular, if  $\sigma\ss\Delta$ is a geodesic, $\wh\sigma$ will be the compact, properly embedded arc in $\D^{2}$ obtained by adjoining to $\sigma$ its endpoints in $\Si$.  This is called the completion of $\sigma$. We also let $\wh L$ denote the closure of $\wt L$ in $\D^{2}$.  This is called the completion of $\wt L$.

\begin{defn}[ideal boundary $E$]\label{idbd}
The set $E=\wh{L}\cap\Si$ is called the \emph{ideal boundary} of $L$ or of $\wt L$. 
\end{defn}

Either $\bd L=\0$ and $E=\Si$, or $\bd L\ne\0$ and $E$ is a compact subset of $\Si$.  Note that the completions  $\wh\gamma$ of the components $\gamma$ of $\bd\wt L$ have endpoints in $E$.

\begin{defn}[standard metric, standard surface]\label{standard}
A Riemannian metric $\mu$ on $L$ is \emph{standard} if it is a complete hyperbolic metric making all components of $\bd L$ geodesics  and admitting  no isometrically embedded, hyperbolic  half-planes (for short, no half-planes).  The pair $(L,\mu)$ is called a \emph{standard hyperbolic surface}. If $L$ is a surface admitting a standard metric $\mu$, we say that $L$ is a \emph{standard surface}.
\end{defn}

This is nonstandard terminology.  

By a hyperbolic half-plane, we mean, of course, the union in the Poincar\'e disk $\Delta$ of a geodesic $\gamma$ and one of the components of $\Delta\sm\gamma$.  In~\cite[Theorem~8]{cc:epstein}, we prove that, up to homeomorphism, there are exactly   13 nonstandard surfaces.  None of these are interesting from the point of view of endperiodic theory.  

In~\cite[Lemma~1]{cc:epstein} we show that the hyperbolic metric on  $2L$ is standard  if and only if the hyperbolic metric on $L$ is standard.

\begin{hyp}\label{hypstan}
\textbf{In this paper, all hyperbolic metrics are standard, save mention to the contrary.}
\end{hyp}

\subsection{Escaping curves}

It is well known that if $\sigma\ss L$ is an essential closed curve  which does not bound a cusp, then it is freely homotopic to a unique closed geodesic $\sigma^{\g}$. Similarly, if $\sigma$ is a boundary incompressible, properly embedded arc, it is homotopic to a unique, properly embedded geodesic arc $\sigma^{\g}$, where the homotopy keeps  the endpoints fixed.

 \begin{defn}[geodesic tightening]\label{geodtite}
 In either of the above cases, we say that $\sigma^{\g}$ is the geodesic tightening of $\sigma$. 
 \end{defn}
 
\begin{defn}[escapes]\label{seqesc}
Let $\{A_{k}\}$, be a sequence of subsets of $L$, indexed either by the nonnegative integers $k\ge0$, the nonpositive integers $k\le0$, or all integers $k\in\Z$.  We say that $\{A_{k}\}$   \emph{escapes} if, for every compact subset $K\ss L$,  $A_{k}\cap K=\0$ for all but finitely many values of $k$.  
  \end{defn}
 
   \begin{example}
If $e$ is a positive  end and  $J$ is an $f$-juncture corresponding to $e$,   then $\{f^{k}(J)\}_{k\ge0}$ escapes, but, except in trivial cases, $\{f^{k}(J)\}_{k\le0}$ does not escape.  A similar remark, with opposite signs, applies to negative ends.  
\end{example}

 Note that, by Hypothesis~\ref{noperpt}, if an $f$-juncture $J$ is an arc, then the countable set of endpoints of arc components of the set of $f$-junctures $\{f^{k}(J)\}_{k\in\Z}$ escapes.

\begin{defn}[virtually escapes]\label{vescapes}
Let $\{\sigma_{k}\}$ be a sequence of essential  simple loops not bounding a cusp or  a sequence of properly embedded, boundary incompressible arcs,  indexed either by the nonnegative integers $k\ge0$, the nonpositive integers $k\le0$, or all integers $k\in\Z$. We say that $\{\sigma_{k}\}$ \emph{virtually escapes}  if the sequence  of geodesic tightenings $\{\sigma^{\g}_{k}\}$ escapes.   
\end{defn}

\begin{example}
A hyperbolic half-plane  causes behavior we need to exclude.  In Figure~\ref{hypsemiplane}, suppose $f$ is a translation as indicated. The sequence of positive iterates under $f$ of an $f$-juncture escapes but does not virtually escape. Their geodesic tightenings (see Definition~\ref{geodtite}), which cannot intersect the hyperbolic half-plane $H$, accumulate on $\bd H$.  The $f$-junctures are depicted by dashed circles and the geodesic tightening of one of them is depicted by the boldfaced dashed circle.
\end{example}

\begin{figure}[ht]
\begin{center}
\begin{picture}(300,200)(-70,-10)

\includegraphics[width=160pt]{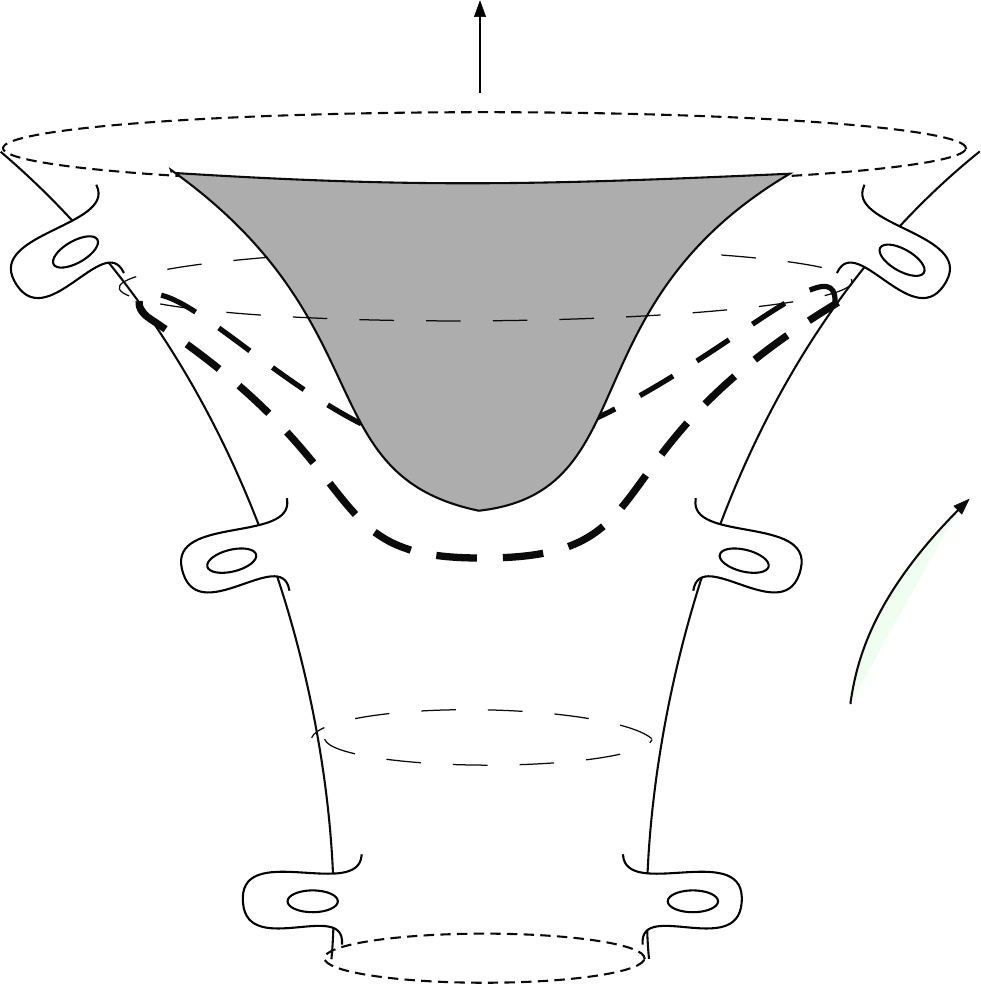}
\put(-85,90){\small$H$}
\put(-83,167){\small$e$}
\put(-20,70){\small$f$}

\end{picture}
\caption{A half-plane $H$ prevents the escaping sequence $\{f^{k}(J)\}_{k\ge0}$ of $f$-junctures from virtually escaping}\label{hypsemiplane}
\end{center}
\end{figure}

\begin{theorem}\label{essc}
In a standard hyperbolic surface $L$, if $\{\sigma_{k}\}$ is a sequence of essential  simple loops not bounding a cusp or a sequence of properly embedded, boundary incompressible arcs and $\{\sigma_{k}\}$  escapes, then $\{\sigma_{k}\}$ virtually escapes.
\end{theorem}

\begin{rem}
The converse is obviously false.   
\end{rem}

Evidently, it will be enough to carry out the proof for the case of a sequence $\{\sigma_{k}\}_{k\ge0}$.

Theorem~\ref{essc} will be proven in a series of lemmas.  We will suppose that the assertion fails and show that $L$ then contains an isometrically embedded hyperbolic half-plane.  This will contradict our assumption that $(L,\mu)$ is a standard hyperbolic surface. 

Fix a realization of the universal cover $\wt L\sseq\Delta$.  

Suppose that  the sequence $\{\sigma_{n}\}$ escapes but  does not virtually escape.  Let $\sigma^{\g}_{n}$ be the geodesic tightening of $\sigma_{n}$. Then there is a compact set $K$ such that infinitely many of the  $\sigma^{\g}_{n}$'s meet $K$. Passing to a subsequence, assume that they all do.  

If only finitely many of the $\sigma^{\g}_{n}$ are distinct, then there exists a subsequence of the sequence $\{\sigma_{n}\}$ consisting of freely homotopic loops.  It follows that $L$ has an end $e$ with a neighborhood homeomorphic to an open annulus. Then $e$ is either a  cusp bounded by all the $\sigma_{n}$ in the subsequence contrary to the hypotheses of the theorem or a flaring end  contrary to Hypothesis~\ref{hypstan}. Thus, infinitely many of the $\sigma^{\g}_{n}$ are distinct and, by passing to a subsequence, we assume they all are.  Since the sequence $\{\sigma_{n}\}$ escapes and each $\sigma_{n}$ is compact, by passing to a subsequence, we can further assume that the $\sigma_{n}$ are pairwise disjoint.

Pick $y_{n}\in \sigma^{\g}_{n}\cap K$. Then $\{y_{n}\}$ has a subsequence that converges to $x\in K$ and we reindex so that this subsequence is indexed by $n\in\Z^{+}$.  Let $D$ be a disk neighborhood of $x$. 
  
  Let $s$ be a geodesic arc through $x$ in $D$ which meets every $\sigma^{\g}_{n}$ transversely.  On at least one side of $x$ in $s$, infinitely many $\sigma^{\g}_{n}$ intersect $s$ in a closest point $x_{n}$ to $x$ and, again passing to a subsequence if necessary, we can assume that $x_{n}\to x$ monotonically as $n\to\infty$.     Fix a lift $\wt{x}\in\wt{L}$ of $x$. This fixes lifts $\wt D$, $\wt s$, $\wt x_{n}$ and  $\wt\sigma^{\g}_{n}$. If $\bd L=\0$, then $\bd\wh{L}=\Si$, and if $\bd L\ne\0$, then $\bd\wh{L}$ is a fractal curve homeomorphic to a circle. 

 \begin{defn}[$\Se$]\label{se}
  Whether or not $\bd L=\0$, the circle $\bd\wh L$ will be denoted by $\Se$.
  \end{defn}
  
 Each $\wt\sigma^{\g}_{n}$ cuts off two regions in $\wt L$, one of which contains  $\wt x$ and will be denoted by $\wt H_{n}$.  The $\sigma_{n}$ were chosen above to be disjoint. It follows that the $\wt\sigma^{\g}_{n}$ are disjoint and have no common endpoints on $\Se$. If the $\sigma^{\g}_{n}$ are arcs this is clear and if the $\sigma^{\g}_{n}$ are closed loops this follows since distinct closed geodesics in hyperbolic geometry can not share ideal points at infinity.  Therefore, $\wt H_{n+1}\ss\wt H_{n}$, a proper inclusion for all $n\ge0$.  The following, then, is evident.

\begin{lemma}\label{largen}
The endpoints $a_{n},b_{n}$ of\, $\wh\sigma^{\g}_{n}$ lie in $\bd\wh{L}$ and are the endpoints of a nested sequence of arcs,
$$\Se\supset[a_{0},b_{0}]\supset[a_{1},b_{1}]\supset\cdots\supset[a_{n},b_{n}]\supset\cdots,$$ where each $[a_{n},b_{n}]\ss(a_{n-1},b_{n-1})$. Consequently, the sequences $\{a_{n}\}, \{b_{n}\}$ converge strictly monotonically to points $a, b\in\bd\wh{L}$. 
\end{lemma}

\begin{lemma}\label{anotb}
The points $a,b$ are in $E$ and distinct.
\end{lemma}

\begin{proof}
The $\wt\sigma^{\g}_{n}$ are geodesics in $\wt L$ passing through an arbitrarily small neighborhood of $\wt x$.  By elementary properties of the Poincar\'e model, the endpoints cannot be converging to the same point on $\Se$.  If all $\sigma^{\g}_{n}$ are closed, all $a_{n},b_{n}\in E$, hence $a,b\in E$.  If they are properly embedded arcs, the sequences of iterates of their endpoints escape, and so neither $a_{n}$ nor $b_{n}$ can be approaching a point on $\bd\wt L$.
\end{proof}

The points $a,b\in E$ are endpoints of a geodesic $\wt\gamma\ss\wt{L}$ which projects to a geodesic $\gamma\ss L$ containing the point $x$.

\begin{lemma}\label{gammasimple}
The geodesic $\gamma$ is simple.
\end{lemma}

\begin{proof}
If $\gamma$ is not simple, another lift $\ol\gamma$ properly intersects $\wt\gamma$.  Then the sequence of  lifts $\{\wt\sigma_{n}\}$ converging on $\wt\gamma$ and corresponding sequence of lifts $\{\ol\sigma_{n}\}$ converging on $\ol\gamma$ will force proper intersections $\wt\sigma_{n}\cap\ol\sigma_{n}$. This forces $\sigma_{n}$ to self intersect for $n$ large enough.
\end{proof}

\begin{lemma}\label{gammanotclosed}
The geodesic $\gamma$ is not closed.
\end{lemma}

\begin{proof}
If $\gamma$ is closed, then $\wt{\gamma}$ is the axis of a deck transformation $T$. Passing to $T^{2}$, if necessary, assume that $T$ is orientation preserving.  Recall that the endpoints of $\wt{\gamma}$ are $a,b\in\Si$. Assume that $b$ attracts and $a$ repels. Let $\wt{\sigma}^{\g}_{n}$ be the lift of the geodesic tightening of $\sigma_{n}$ with endpoints $a_{n},b_{n}\in\Se$. The sequence   $\{\wt{\sigma}^{\g}_{n}\}$    converges uniformly to $\wt{\gamma}$ in the Euclidean metric on $\Delta$. The point $T(a_{n})$ is further from $a$ on the circle $\Se$ and the  point $T(b_{n})$ is closer to $b$ on the circle $\Se$. Thus, for $n$ sufficiently large, $\wt{\sigma}^{\g}_{n}\cap T(\wt{\sigma}^{\g}_{n})\ne\0$. This implies that $\sigma^{\g}_{n}$ is not a simple curve contrary to assumption. 
\end{proof}

Let $H\ss\Delta$ be the half-plane with boundary $\wt\gamma$ that is disjoint from every $\wt \sigma^{\g}_{k}$.
The intersection $\wh H\cap\Si$ is a compact, nondegenerate subarc $A\ss\Si$. We will denote this subarc $A$ by $[a,b]_{\infty}$ to distinguish it from the arc $[a,b]\ss \bd\wh L$ which are different if $\bd L\ne\0$.  

\begin{lemma}\label{HL}
$H\ss\wt{L}$
\end{lemma}

\begin{proof}
The lemma is trivial if $\bd L = \0$ so we assume that $\bd L\ne\0$. Let $\tau$ be the geodesic in $\Delta$ which contains the geodesic $\wt\sigma_{0}^{\g}\ss\wh L$. Let $H^{*}$ be the half-plane in $\Delta$ with boundary $\tau$ containing the half-plane $H$. We will work entirely in $H^{*}$. Since the sequence $\{\sigma_{n}\}$ escapes and $\sigma_{0}^{\g}$ is compact, for $n$ sufficiently large, $\sigma_{n}\cap\sigma_{0}^{\g}=\0$ so $\wt\sigma_{n}\cap\wt\sigma_{0}^{\g}=\0$. Since the endpoints $a_{n},b_{n}$ of $\wt\sigma_{n}$ lie in $H^{*}$ it follows that $\wt\sigma_{n}$ lies in $H^{*}$ for $n$ sufficiently large. We restrict ourselves to such sufficiently large $n$.

Let $S_{\epsilon}\ss\Delta$ be the Euclidean circle, concentric with $\Si$, of Euclidean radius $1-\epsilon$, $0<\epsilon<1$.  This circle and $\Si$ cobound an annulus $V_{\epsilon}$ in $\D^{2}$. Let $D_{\epsilon}\ss\Delta$ denote the closed disk of radius $1-\epsilon$ so $\bd D_{\epsilon}=S_{\epsilon}$. For a fixed $\epsilon>0$, if infinitely many $\wt\sigma_{n}$'s intersect the disk $D_{\epsilon}$, then projecting down into $ L$ produces infinitely many $\sigma_{n}$'s meeting the compact set $p(D_{\epsilon})\cap L$, contradicting the fact that the sequence $\{\sigma_{n}\}$ escapes. Thus, for $n$ sufficiently large,  $\wt\sigma_{n}\cap D_{\epsilon}=\0$ so $\wt\sigma_{n}\ss V_{\epsilon}$.

Recall that $[a,b]_{\infty}$ denotes the interval $\ol H\cap\Si$ in $\Si$ with endpoints $a,b$. We are required to show that no   boundary component of $L$ can   lift to an arc $\alpha$ issuing transversely from $(a,b)_{\infty}$. If such an arc $\alpha$ exists then there is an $\epsilon>0$ such that $S_{\epsilon}\cap H\cap\alpha\ne\0$. By the previous paragraph there is a $\wt\sigma_{n}$ (in fact infinitely many) meeting $\alpha$ which is a contradiction. Since no lift of a boundary component of $L$ can have endpoint in $(a,b)_{\infty}$ it follows that $H\ss\wh L$.
\end{proof}

\begin{lemma}\label{Lproj}
The projection $\pi:\wt{L}\to L$  embeds $\intr H$ isometrically in $L$. Thus, $L$ contains an embedded  half-plane.
\end{lemma}

\begin{proof}
If an endpoint of an axis of a deck transformation lies in $[a,b]_{\infty}$, then, as in the proof of Lemma~\ref{HL}, that axis meets infinitely many $\wt\sigma_{n}$. This is a contradiction since the sequence $\{\sigma_{n}\}$ escapes and the axis projects to a compact subset of $L$.  Thus the axis of every deck transformation $\psi$ has both endpoints outside of $[a,b]_{\infty}$. If $(a,b)_{\infty}$ properly overlaps its $\psi$-image for some deck transformation $\psi$, then $\gamma$ has two distinct lifts that intersect. Thus $\gamma$  intersect itself contradicitng Lemma~\ref{gammasimple}.  If there is a deck transformation that takes $(a,b)_{\infty}$ to itself then $\wh\gamma$ is that axis. Lemma~\ref{gammanotclosed} rules out  that possibility.  Thus, the images of $(a,b)_{\infty}$ under the deck transformations are disjoint and the projection $\pi:\wt{L}\to L$ embeds $\intr H$ isometrically in $L$ so  $L$ contains a half-planes.
\end{proof}

Theorem~\ref{essc} is proven. 

Our definition of virtually escaping is metric dependent.  We show that, for standard hyperbolic metrics, the notion of virtually escaping is independent of choice of metric.

\begin{cor}\label{indepofchoice}
If  $\mu_{i}$ are standard hyperbolic metrics on $L$, $i=1,2$, then the sequence $\{\sigma_{k}\}$ virtually escapes relative to $\mu_{1}$ if and only if it virtually escapes  relative to $\mu_{2}$.
\end{cor}

\begin{proof}
Let $\sigma_{k}^{\g}$ be the tightening of $\sigma_{k}$ to a $\mu_{1}$-geodesic, $\sigma_{k}^{\g*}$  its $\mu_{2}$-tightening.  Then $\sigma_{k}^{\g*}$ is the $\mu_{2}$-tightening of $\sigma_{k}^{\g}$ which, in turn, is the $\mu_{1}$-tightening of  $\sigma_{k}^{\g*}$. By Theorem~\ref{essc}, the sequence $\{\sigma_{k}^{\g}\}$ escapes  if and only if the sequence $\{\sigma_{k}^{\g*}\}$  escapes.
\end{proof}

\begin{rem}
Theorem~\ref{essc} is used in the proof of Theorem~\ref{esctoe} whose most important application is to ensure there are no spurious leaves in the geodesic laminations, constructed in Section~\ref{HMconstruct}, that are associated to the endperiodic automorphism $f$.

\end{rem}

 
\section{The Laminations and  endperiodic automorphism preserving them}\label{constr}

In the Handel-Miller theory, given an endperiodic automorphism $f:L\ra L$, one tightens the $f$-junctures to geodesic junctures, and uses these to construct a pair of transverse geodesic laminations and an endperiodic automorphism $h:L\ra L$, preserving the geodesic laminations  and  the geodesic junctures. This is reminiscent of the Nielsen-Thurston theory for automorphisms of compact surfaces. 

\subsection{Laminations}\label{LAMS}

Roughly speaking, a $p$-dimensional lamination of an  $n$-manifold $M$ is a  foliated subset of leaf dimension~$p$. The codimension is $q=n-p$. The leaves are one-one immersed $p$-dimensional submanifolds, but transverse $q$-disks intersect the lamination in relatively closed  subsets which may  be quite messy and typically are totally disconnected.  

In this paper we are interested in laminations of surfaces by curves.  We give careful definitions here in that setting, but everything works for laminations of  arbitrary dimension~$p$ in manifolds of arbitrary dimension $n>p$. (cf.~\cite[pp.~404-405]{ms} for codimension~$1$ laminations of $n$-manifolds.  This easily adapts to codimension $n-p$.) We are interested in surfaces with boundary and in laminations with some leaves transverse to the boundary.  We will use ``laminated charts'' in analogy with ``foliated charts'' in foliation theory.  Since we allow boundary, these charts will either be homeomorphic to $(a,b)\x(c,d)\ss\R^{2}$ or to $(a,0]\x(c,d) \ss\R^{2}_{-}$, where $\R^{2}_{-}=(-\infty,0]\x\R$ is the Euclidean half plane.  We let $\F^{2}$ denote either $\R^{2}$ or $\R^{2}_{-}$ and we denote by $I$ either $(a,b)$ or $(a,0]$ and $J=(c,d)$.  There is no reason to  require $I$ and $J$ to be finite intervals but there is also no good reason not to, so we assume $a,b,c,d\in \R$.

\begin{defn}[laminated chart]\label{lamchart}
A \emph{laminated chart} in a surface $L$ (possibly with boundary)  is a triple $(U,Y,\phi)$, where $U\ss L$ is open, $\phi$ is a homeomorphism of $U$ onto $I\x J\ss\F^{2}$ and $Y\sseq J$ is a relatively closed subset.
\end{defn}

Let $N\sseq L$ be a subset which is the union of a disjoint set $\Lambda=\{\lambda_{\beta}\}_{\beta\in\B}$ of one-one immersed, connected $1$-manifolds in $L$.

\begin{defn}[lamination]\label{lamination}
We say that $\Lambda$ is a \emph{lamination} of $L$ by curves if there is a set $\{(U_{\alpha},Y_{\alpha},\phi_{\alpha})\}_{\alpha\in\A}$ of  laminated charts such that $\{U_{\alpha}\}_{\alpha\in\A}$ covers $N$ and, for each $\alpha\in\A$, the path connected components of $\lambda\cap U_{\alpha}$, where $\lambda$ ranges over $\Lambda$, called \emph{plaques} of the laminated chart $(U_{\alpha},Y_{\alpha},\phi_{\alpha})$, are carried by $\phi_{\alpha}$ exactly onto the sets $I\x\{y\}$, $y\in Y_{\alpha}$.  The \emph{support} of the lamination is $|\Lambda|=N$ and each $\lambda\in\Lambda$ is called a \emph{leaf} of the lamination.  If $|\Lambda|$ is closed in $L$ we will say that the lamination is \emph{closed}.
\end{defn}

\begin{defn}[laminated (partial) atlas]\label{lamatlas}
The set $\AAA_{\Lambda}=\{(U_{\alpha},Y_{\alpha},\phi_{\alpha})\}_{\alpha\in\A}$ is called a \emph{laminated partial atlas}.  If $\{U_{\alpha}\}_{\alpha\in\A}$ covers $L$, we say that $\AAA_{\Lambda}=\{(U_{\alpha},Y_{\alpha},\phi_{\alpha})\}_{\alpha\in\A}$ is a \emph{laminated atlas}.
\end{defn}

\begin{rem}
$\AAA_{\Lambda}$ is a \emph{partial} atlas because it may not cover $L$.  If, $\Lambda$ is a \emph{closed} lamination, one can produce a laminated atlas.  For each point $x\not\in|\Lambda|$, choose a trivially laminated chart $(U,\0,\phi)$ about $x$ such that $U\cap|\Lambda|=\0$.  In this paper, we work mainly with closed laminations and associated laminated atlases.
\end{rem}

\begin{rem}
If a plaque $P$ of $U_{\alpha}$ meets a plaque $Q$ of $U_{\alpha'}$, then it is clear that $P\cap Q$ is an open subset of $P$ and of $Q$.  Without being given $N$ and $\Lambda$, one can define an abstract  ``laminated atlas'', by this property.  Then, as in the case of foliations, one can recover the leaves via chains of overlapping plaques.  A lamination can then be defined to be  an (abstract) laminated atlas. 
Note that, in our definition, two laminated atlases for the  same lamination have union a laminated atlas for that lamination.

Two (abstract) laminated atlases are said to be \emph{coherent} if their union is a laminated atlas.  This is an equivalence relation,  hence a lamination can be defined as a coherence class of laminated atlases.  The union of all atlases in the coherence class is the maximal element of the class and can also be identified as the lamination.

All of this is closely analogous to foliations and foliated atlases, a very detailed treatment of which will be found in~\cite[pp.~19-31]{condel1}.
\end{rem}

\begin{rem}
Note that, for a laminated chart $(U,Y,\phi)$, $\phi$ restricts to a homeomorphism $\psi:N\cap U\to I\x Y$.  Thus $N$, equipped with an ``atlas'' $\{N\cap U_{\alpha}, \psi_{\alpha}\}_{\alpha\in\A}$, satisfies the standard definition of a ``foliated space''~\cite[Chapter~11]{condel1}.  A lamination, therefore, is a topological embedding of a foliated space, but a very special embedding admitting continuous sets of local transversals. The lamination community is typically vague on this point, commonly defining a lamination as a foliated space embedded in a manifold as a closed subset, each leaf of which is complete in its path metric (cf.~\cite[Definition~6.9]{calegari}),  but for some delicate arguments in this paper that definition seems to be inadequate. Note also that laminations in our sense locally extend to foliations.
\end{rem}

In this paper we study a special class of $1$-dimensional laminations of surfaces $L$.  The transverse sets $Y_{\alpha}\ss \R^{1}$ will be totally disconnected.  As a result, the support $\left|\Lambda\right|$ determines the lamination and it is fairly customary to make no distinction between the lamination and its support.  We prefer to maintain the distinction throughout this paper.

\begin{defn}[transversely totally disconnected]\label{trtotdis} 
A lamination $\Lambda$ is \emph{transversely totally disconnected} if, for each of its laminated charts $(U_{\alpha},Y_{\alpha},\phi_{\alpha})$, the space $Y_{\alpha}$ is totally disconnected.
\end{defn}

All the  laminations we study will be transversely totally disconnected. 

\begin{rem}\label{longchart}
By abuse of notation, we may denote a laminated chart containing a plaque $P$ by $P\x (-1,1)$.  We can then denote by $P\x(-\delta,\epsilon)$ subcharts with $-\delta<0$ and $0<\epsilon$, identifying $P\x\{0\}$ with $P$.  The idea is that by making $\delta$ and $\epsilon$ sufficiently ``small'', we get arbitrarily thin normal neighborhoods of $P$.  By passing to the maximal laminated partial atlas, the plaque $P$ may be as ``long'' as desired, generally requiring that $\delta,\epsilon$ be sufficiently small.
\end{rem}

If $x\in \lambda\in\Lambda$ and if $\lambda$ is not isolated on at least one side, then points of $|\Lambda|$ accumulate on $x$ from a non-isolated side of $\lambda$.  The leaves containing these points accumulate \emph{locally uniformly} on $\lambda$ in the following sense.  

\begin{defn}[locally uniform accumulation]\label{locunif}
Let $\lambda\in\Lambda$.   We say that $\{\lambda_{\alpha}\}_{\alpha\in\A}\ss\Lambda$ \emph{accumulates locally uniformly} on $\lambda$ if, for every bounded subarc $P\ss\lambda$, there is a lamination chart $V_{P}\cong P\x(-\delta,\epsilon)$, having $P=P\x\{0\}\}$, such that  the set of plaques of $V_{P}\cap\bigcup_{\alpha\in\A}\lambda_{\alpha} = \{P\x\{t_{\beta}\}\}$  is a set of plaques accumulating uniformly on  $P$.  
\end{defn}

Remark that $\A$ might be a singleton, but the single leaf $\lambda_{\alpha}$ might accumulate locally uniformly on $\lambda$.  Indeed, $\lambda$ might accumulate locally uniformly on itself.  The laminations that we study in this paper typically exhibit such behavior.

\subsubsection{Bilaminations}

We will be studying a pair of mutally transverse laminations.  In the theory of foliations of $n$-manifolds, a pair of mutually transverse foliations $\FF$ and $\FF'$, of respective leaf dimensions $p$ and $q$, $p+q=n$, gives rise to a ``bifoliated atlas'', but for $\CO$ laminations, a proof of this eludes the authors.  We will need such a ``bilaminated (partial) atlas'', so we will define the term ``bilamination'' accordingly.  This will not get us into trouble since we are generalizing the Handel-Miller theory where the pair of mutually transverse geodesic laminations will be shown to define a bilamination.

In what follows, $I$ may be of the form $(a,b)$ or $(a,0]$ and $J$ may be of the form $(c,d)$ or $[0,d)$.  Thus $L$ might have boundary and corners.

\begin{defn}[bilaminated chart]\label{bichart}
A \emph{bilaminated chart} $(U,X,Y,\phi)$ is an open subset $U$ of $L$ together with a homeomorphism $\phi:U\to I\x J$, where $X\sseq I$ and $Y\sseq J$ are relatively closed subsets.
\end{defn}

Let $\Lambda,\Lambda'$ be a pair of  mutually transverse sets of disjoint, connected, one-one immersed  $1$-manifolds.  Denote by $|\Lambda,\Lambda'|$ the union $|\Lambda|\cup|\Lambda'|$.

\begin{defn}[bilamination]\label{bilam}
The pair $(\Lambda,\Lambda')$ is a  \emph{bilamination} if there is given a set $\AAA_{(\Lambda,\Lambda')}=\{(U_{\alpha},X_{\alpha},Y_{\alpha},\phi_{\alpha})\}_{\alpha\in\A}$ of bilaminated charts such that $\{U_{\alpha}\}_{\alpha\in\A}$ covers $|\Lambda,\Lambda'|$ and, for each $\alpha\in\A$, $(U_{\alpha},X_{\alpha},\phi_{\alpha})$ is a laminated chart for $\Lambda'$ and $(U_{\alpha},Y_{\alpha},\phi_{\alpha})$ is a laminated chart for $\Lambda$. We call $\AAA$ a \emph{bilaminated partial atlas}. If it covers $L$, we call it a \emph{bilaminated atlas}. The \emph{support} of the bilamination is $|\Lambda,\Lambda'|$.
\end{defn}

\begin{rem}
In particular, the bilaminated partial atlas can serve as a laminated partial atlas for each of $\Lambda$ and $\Lambda'$, hence these are laminations.
The laminations that make up a bilamination are mutually transverse and extend locally to a pair of transverse foliations.  If $\Lambda$ and $\Lambda'$ are each closed we say that the bilamination is closed. In this case,  the support $|\Lambda,\Lambda'|$ is closed and the bilaminated partial atlas can be extended to a bilaminated atlas by assigning bilaminated charts of the form $(U,\0,\0,\phi)$ as neighborhoods of $x\in\CC|\Lambda,\Lambda'|$ (here $\CC$ denotes ``complement'') where $U\cap|\Lambda,\Lambda'|=\0$.  
\end{rem}

It is sometimes useful to use closed bilaminated charts.  For these, one takes $I=[a,b]$ and $J=[c,d]$.  It is obvious that, for every $x\in |\Lambda,\Lambda'|$ and every bilaminated chart  containing $x$, there is a closed bilaminated subchart containing $x$. The proof of the following is elementary.

\begin{lemma}\label{Rchart}
Let $(\Lambda,\Lambda')$ be a bilamination and $R\ss L$ a compact, simply connected ``rectangle'' with top and bottom edges subarcs   of leaves of $\Lambda$ and right and left edges subarcs   of leaves of $\Lambda'$.  Then $R$ has a natural structure of a closed, bilaminated chart for $(\Lambda,\Lambda')$.
\end{lemma}

\begin{example}

Important examples of  bilaminations are given by a pair of mutually transverse, closed geodesic laminations  on a complete hyperbolic surface $L$.    Such an example is given by a pair $(\Lambda^{s},\Lambda^{u})$ of stable and unstable geodesic laminations in the Neilsen-Thurston theory (see Casson and Bleiler~\cite{bca}). Another example is given by the geodesic bilamination $(\Lambda_{+},\Lambda_{-})$ associated to an endperiodic automorphism $f$ constructed in  Section~\ref{lamconstr}.

\end{example}

\subsubsection{Smoothness}\label{smthnss}

The material in this subsection  is technical and not immediately important to our development  and can be skipped on first reading.  The authors are not aware of a generally agreed upon definition of ``smoothness'' for laminations.  We propose a definition which generalizes the standard definition for foliations.

The surface $L$ has a $C^{r}$ structure, $1\le r\le\infty$ or $r=\omega$ (a real analytic structure), namely a maximal $C^{r}$ atlas $\AAA_{r}$.  

\begin{defn}[smooth lamination, smooth bilamination]\label{Cilam}
The  lamination $\Lambda$ (respectively bilamination $(\Lambda,\Lambda')$) is \emph{smooth of class $C^{r}$} if it admits a laminated partial atlas $\AAA_{\Lambda}\ss\AAA_{r}$ (respectively a bilaminated partial atlas $\AAA_{\Lambda,\Lambda'}\ss\AAA_{r}$). If $r=\infty$, the lamination is just said to be \emph{smooth}.
\end{defn}

As usual, if the lamination or bilamination is closed, a partial laminated atlas of class $C^{r}$ extends to a laminated atlas on $L$ of class $C^{r}$.

\begin{rem}
It would not be equivalent to require only that the leaves be $C^{r}$ immersed curves. This is well known for foliations, but is also true for transversely totally disconnected laminations.   The problem is that gaps in a lamination may accumulate on a leaf, but because the lengths of the gaps at one end of a laminated chart may not compare uniformly to the lengths at the other, the mean value theorem can make it impossible to extend the lamination over the gaps to a foliation smooth  on the whole chart.  It is not  obvious whether the geodesic bilaminations $(\Lambda_{+},\Lambda_{-})$ associated to an endperiodic automorphism $f$ constructed in Section~\ref{HMconstruct} are smooth, but in general we think not. As the following example shows, a geodesic lamination of a hyperbolic surface need not be smooth of class $\CII$.
\end{rem}

\begin{example}\label{Denjoy}
Consider the Denjoy exceptional minimal set $\Lambda$ in $T^{2}$, a transversely totally disconnected lamination.  It is constructed by suspending a $\CI$ diffeomorphism of the circle which has  a minimal invariant Cantor set~\cite[Appendix]{sch:seifert}. While $\Lambda$  is $\CI$, it is not even  homeomorphic to a $\CII$ lamination.  This is classically known. To get this lamination on a hyperbolic surface, proceed as follows.
In the open subset of $T^{2}$ complementary to $|\Lambda|$, attach a handle, making the surface a $2$-holed torus $\Sigma_{2}$ and leaving $\Lambda$ intact.  One can then put a hyperbolic metric $\g$ on $\Sigma_{2}$.  Relative to this metric, each leaf $\lambda\in\Lambda$ is a pseudo-geodesic (Definition~\ref{pseudogeodesic}).  This is seen by noting that an essential circle $\sigma$ in $T^{2}$ which the oriented leaves of the Denjoy lamination periodically cross, always in the same direction, survives as such a circle $\sigma\ss\Sigma_{2}$. We can even take $\sigma$ to be a closed geodesic. For each lift $\wt\lambda$ of a leaf  $\lambda\in\Lambda$, suitable lifts of $\sigma$ crossed by $\wt\lambda$ form a nested family of geodesic arcs defining a unique point on $\Si$,  This gives one ideal endpoint of $\wt\lambda$ and the other endpoint is found similarly by reversing the orientation of $\lambda$.

Thus $\lambda\in\Lambda$ is homotopic to a unique geodesic $\lambda^{\g}$.  The collection of these geodesics is readily shown to be a lamination $\Lambda^{\g}$ of $\Sigma_{2}$.  It is highly doubtful that $\Lambda^{\g}$ is still $\CI$, but we claim that it certainly cannot be $\CII$.  Indeed, if it were $\CII$, the positively directed unit velocity field tangent to $\Lambda^{\g}$ extends locally to a $\CII$ vector field of unit length and these local extensions can be assembled by a $\CII$ partition of unity to get a nowhere zero vector field $v$ defined on an open set $U$ containing $|\Lambda^{\g}|$ and agreeing with the unit velocity field along the leaves of $\Lambda^{\g}$.  This defines a $\CII$ foliation $\FF$ on $U$  which incorporates $|\Lambda^{\g}|$ as a compact $\FF$-saturated subset.  The holonomy pseudogroup of $\FF$ is a (possibly infinitely generated) $\CII$ pseudogroup on $\R$ defined by a foliated atlas $\AAA_{\FF}$ for $\FF$ (e.g., see~\cite[Section~2.2]{condel1} for details).  A finitely generated sub-pseudogroup $\Gamma$ is obtained from a finite subcover of $|\Lambda^{\g}|$ by charts of $\AAA_{\FF}$ and $|\Lambda^{\g}|$ defines a $\Gamma$-invariant Cantor set $X\ss\R$.  A deep result of R.~Sacksteder~\cite[Theorem~1]{sa:pseudo} provides a point $x\in X$ and an element $\gamma\in \Gamma$ such that $\gamma(x)=x$ and $0<\gamma'(x)<1$.  But the leaves of $\Lambda^{\g}$ are simply connected, implying the germinal holonomy group at $x$ (cf.~\cite[Section~2.3]{condel1}) is trivial.  Thus $\gamma'(x)=1$ and this  contradiction proves the claim that $\Lambda^{\g}$ is not $\CII$.
\end{example}

Without smoothness, it is not clear that a pair of transverse laminations is a bilamination.  However, we have the following.

\begin{lemma}\label{cibilam}
If $\Lambda$ and $\Lambda'$ are closed, mutually transverse, $\CI$ laminations, then $(\Lambda,\Lambda')$ is  a bilamination.
\end{lemma}

\begin{proof}
If $x\in L\sm(|\Lambda|\cup|\Lambda'|)$, then there is a $\CI$ coordinate chart $(U,\phi)$ about $x$ such that $U\cap(|\Lambda|\cup|\Lambda'|)=\0$.  We can view $(U,\0,\0,\phi)$ as a bilaminated chart for $(\Lambda,\Lambda')$.  If $x\in|\Lambda|\sm|\Lambda'|$, there is a $\CI$ laminated chart $(U,Y,\phi)$ for $\Lambda$ about $x$ such that $U\cap|\Lambda'|=\0$, hence we can view $(U,Y,\0,\phi)$ as a bilaminated chart for $(\Lambda,\Lambda')$. We argue similarly for $x\in|\Lambda'|\sm|\Lambda|$.  For the case that $x\in|\Lambda|\cap|\Lambda'|$, the $\CI$  hypothesis becomes important.    There are two $\CI$ foliations defined in a rectangular neighborhood $R$ of $x$, $\FF$ having  plaques of $\Lambda$ among its leaves, and $\FF'$ incorporating the plaques of $\Lambda'$.  They may not be transverse throughout $R$.  Let $v$ be a continuous, nowhere vanishing vector field tangent to $\FF$, $v'$ such a field tangent to $\FF'$.  At $x$, the vectors $v_{x}$ and $v'_{x}$ are linearly independent, hence the fields, being continuous, are linearly independent at every point of a small enough neighborhood of $x$.  Thus, the foliations are transverse in a smaller rectangular neighborhood of $x$.  These foliations can be taken as a smooth coordinate grid about $x$ defining a bilaminated chart.
\end{proof}

\subsection{A heuristic example}\label{heur}

 Before presenting the formal construction of the geodesic laminations associated to an endperiodic automorphism, we present an example at an intuitive level.

\begin{figure}[h]
\begin{center}

\begin{picture}(300,120)(-15,-90)
\rotatebox{270}{\includegraphics[width=80pt]{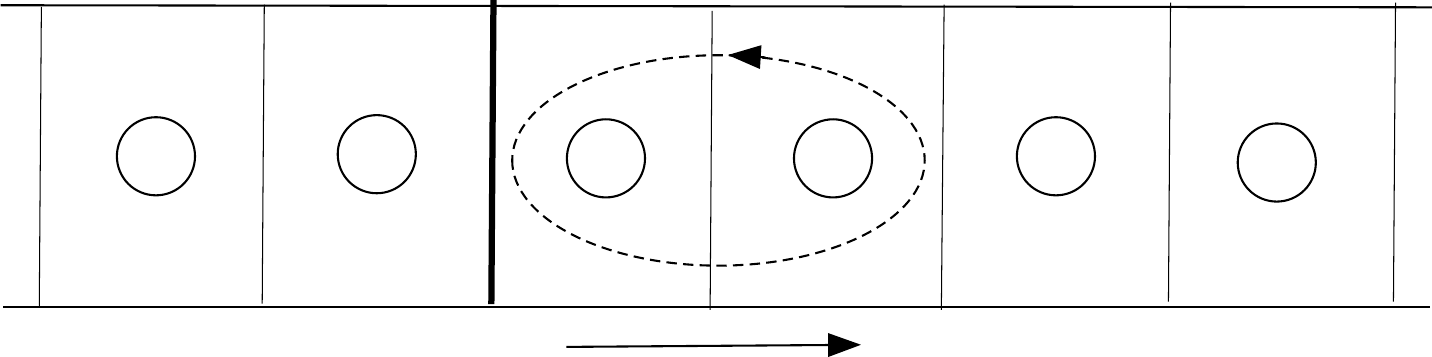}}\break
\put(-208,-60){\small$J$}
\put(-200,-20){\small$\tau$}
\put(-180,-90){\small$f=\tau\o g$}
\end{picture}

\begin{picture}(300,100)(-12,-90)
\rotatebox{270}{\includegraphics[width=78pt]{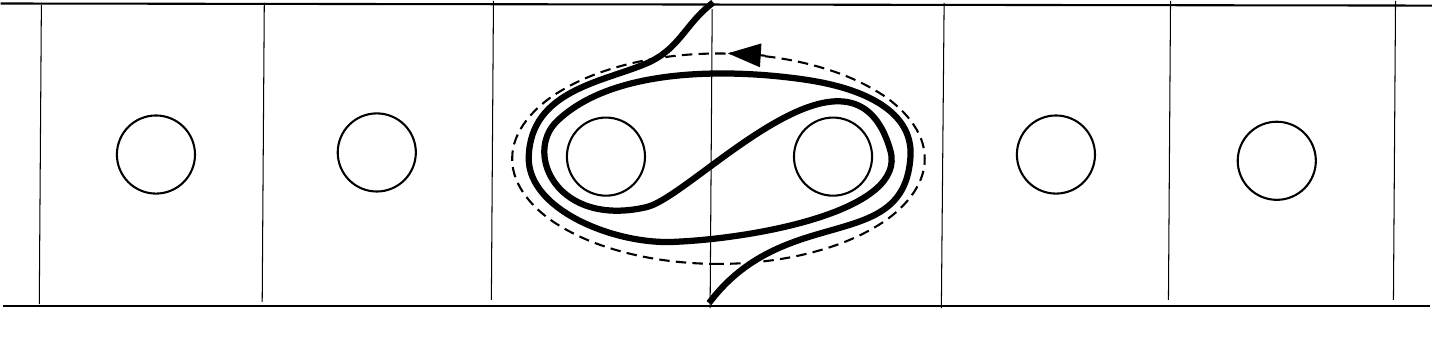}}\break
\put(-150,-65){\small$f(J)$}
\end{picture}

\begin{picture}(300,100)(-15,-90)
\rotatebox{270}{\includegraphics[width=70pt]{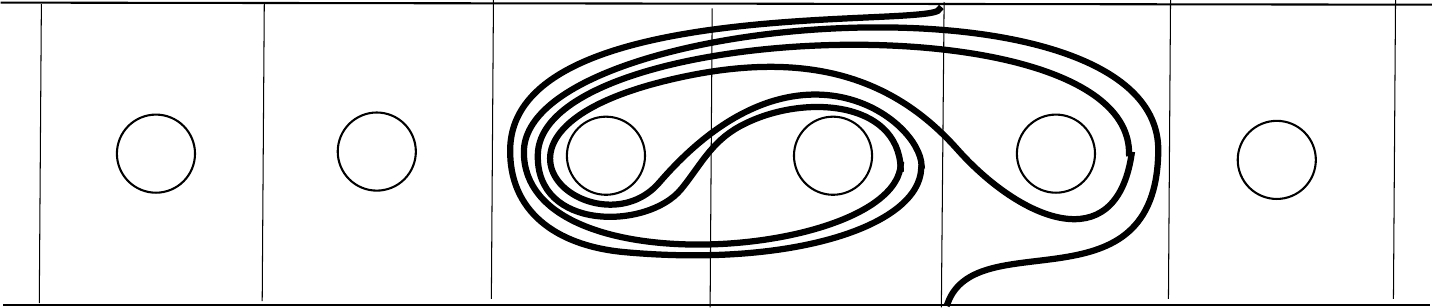}}\break
\put(-80,-65){\small$f^{2}(J)$}
\end{picture}

\caption{Juncture $J$ and geodesic tightenings of  $f(J)$, $f^{2}(J)$}\label{ItJunc}
\end{center}
\end{figure}

\begin{example}\label{simpex0}
 Let $L$ be a two-ended strip with disks removed approaching both ends, $g$ a translation of the strip from left to right which is an isometry relative to a standard hyperbolic metric quasi-isometric to the Euclidean metric implicit in Figure~\ref{ItJunc}.  Let $\tau$ be a Dehn twist in the dashed curve as indicated   in  Figure~\ref{ItJunc} (top). Let the endperiodic automorphism $f=\tau\o g$. A geodesic juncture $J$ is drawn on the left in  Figure~\ref{ItJunc} (top). The Handel-Miller theory studies endperiodic automorphisms by studying the limit laminations of the geodesic tightenings of the distorted $f$-junctures $f(J), f^{2}(J),\ldots$.  We call them ``distorted'' because under iterated applications of $f$ they become longer and longer without a finite upper bound, ``looking'' less and less like junctures and more and more (to a myopic observer) like noncompact leaves of a lamination. The first two distorted geodesic junctures are drawn in  Figure~\ref{ItJunc} (middle and bottom).  The sequence of geodesic tightenings of distorted $f$-junctures accumulates  on  the positive geodesic limit lamination $\Lambda_{+}$ carried by the traintrack in Figure~\ref{ItTT} (top) and disjoint from every  $f$-neighborhood of the negative end. 
 By using $f^{-1}$ instead of $f$, an analogous negative geodesic lamination $\Lambda_{-}$, transverse to $\Lambda_{+}$, is constructed, disjoint from every   $f$-neighborhood of the positive end.
 
 A rigorous presentation of the Handel-Miller construction is given in Section~\ref{HMconstruct}.  
 
 We will replace the endperiodic automorphism $f$ with an isotopic one $h$ which preserves the laminations. 
Figure~\ref{ItTT} (top) represents a traintrack $T$ which carries the positive geodesic lamination $\Lambda_{+}$ for this endperiodic automorphism. Figure~\ref{ItTT} (bottom) represents $h(T)\ss T$.   We leave it as an exercise for the reader to verify that these figures are correct. Note that $h(T)$ is obtained from $T$ by blowing air from $A$ to $A$. Similarly $h^{2}(T)\ss h(T)\ss T$  is obtained by blowing air from $B$ to $B$.  Once one sees $h(T)$, it is much easier to see $h^{2}(T), h^{3}(T),\ldots$. The positive lamination itself is $\Lambda_{+}= \bigcap_{n=1}^{\infty}h^{n}(T)$.  This example exhibits behavior that is typical and will be visited again in Example~\ref{simpex}.

\begin{figure}[ht]
\begin{center}

\begin{picture}(300,90)(-5,-85)
\rotatebox{270}{\includegraphics[width=85pt]{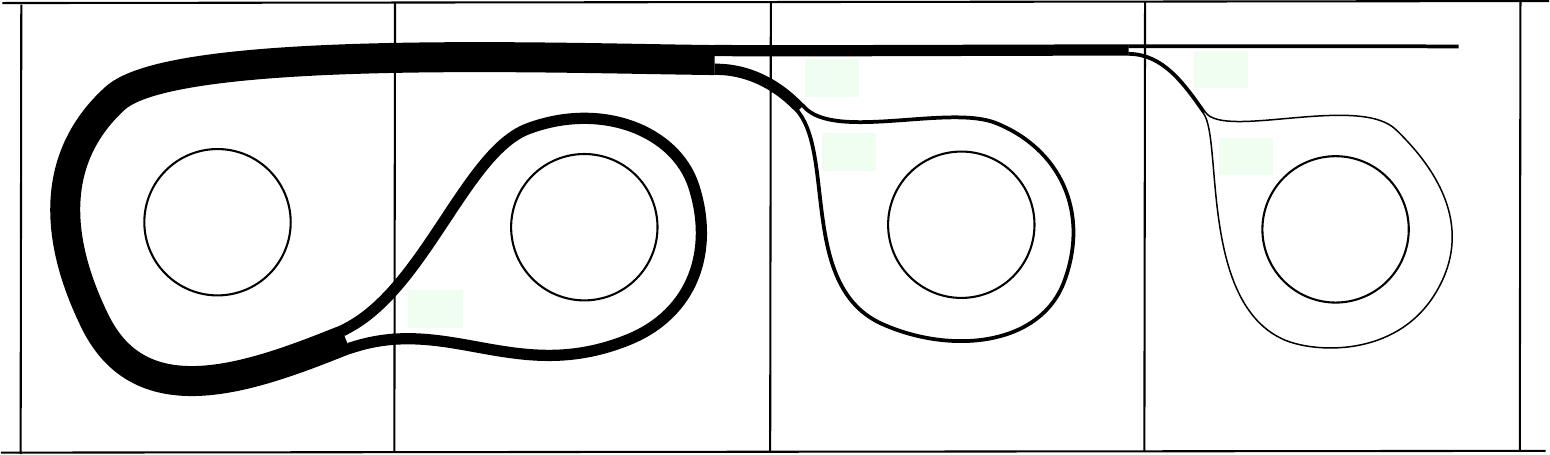}}
\put(-215,-60){\small$A$}
\put(-140,-18){\small$A$}
\put(-135,-30){\small$B$}
\put(-66,-18){\small$B$}
\end{picture}

\begin{picture}(300,90)(-5,-80)
\rotatebox{270}{\includegraphics[width=85pt]{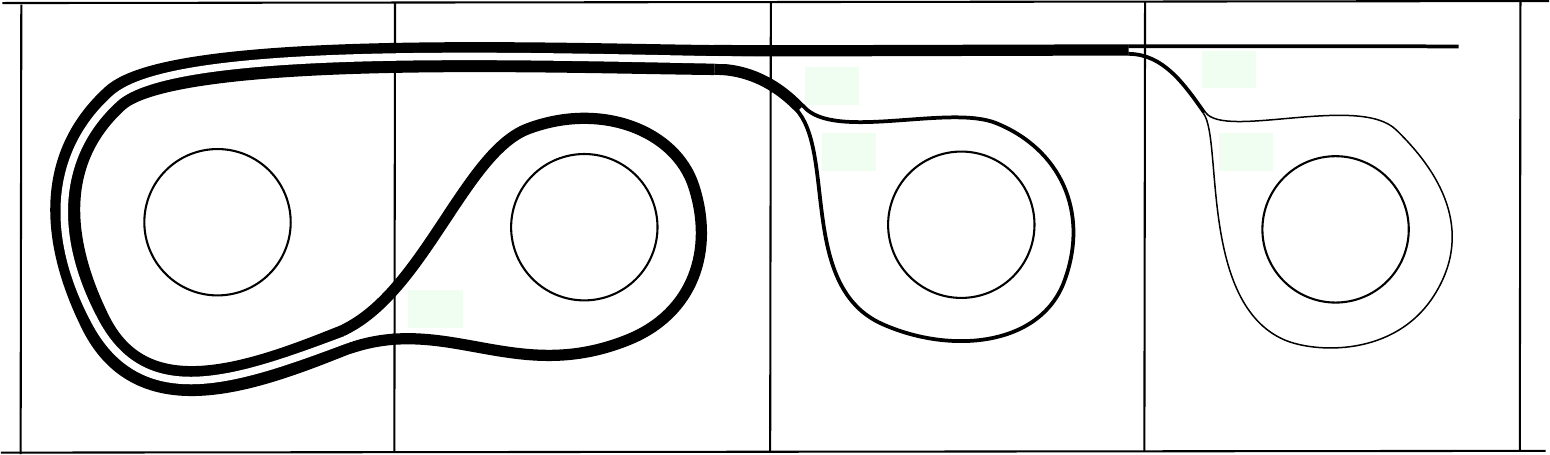}}
\put(-215,-60){\small$A$}
\put(-140,-18){\small$A$}
\put(-135,-30){\small$B$}
\put(-66,-18){\small$B$}

\end{picture}

\caption{The traintracks $T$, $h(T)\ss T$, etc.}\label{ItTT}
\end{center}
\end{figure}

\end{example}

\subsection{The Handel-Miller construction}\label{lamconstr}\label{HMconstruct}

Here we give the simple, classical and unpublished construction of Handel and Miller of the geodesic laminations associated to a given endperiodic automorphism $f:L\to L$. 

\begin{defn}[admissible]\label{admi}
 An \emph{admissible surface} is a standard hyperbolic surface with finitely many ends, none of which are simple (Definition~\ref{simpend}).
 \end{defn}
 
 For an admissible surface $L$, relative to an endperiodic automorphism, $\EE(L) = \EE_{-}(L)\cup\EE_{+}(L)$.
  
 \begin{lemma}\label{densinE}
 If $L$ is an admissible surface and $e\in E$, then the orbit of $e$ under the group of covering transformations    is dense in $E$. 
 \end{lemma}
 
 Indeed, it is standard that  a covering transformation $\psi$ induces a homeomorphism $\wh\psi:E\to E$.   Admissible surfaces satisfy the hypotheses of~\cite[Corollary~1]{cc:epstein}, proving the lemma.

\begin{hyp}\label{hypadmiss}
 \textbf{Hereafter, $L$ is an admissible surface.} 

\end{hyp}

Remark that an admissible surface has no cusps, hence there are no parabolic deck transformations of the universal cover.  All deck transformations are either hyperbolic or are orientation reversing and  have square a hyperbolic transformation.  In any case, they have a unique geodesic axis in $\wt L$.

\begin{defn}[geodesic lamination]\label{geolam}
A lamination of a hyperbolic surface in which all the leaves are geodesics is a \emph{geodesic lamination}.
\end{defn}

\subsubsection{Choosing and fixing the $f$-junctures}\label{definjunct}

We first choose and fix a countable set of $f$-junctures in the uncountable set of all $f$-junctures as follows. Let $e$ be an end of $L$ and set 
$$c=\{e_{0}=e,e_{1}=f(e),e_{2}=f^{2}(e),\dots,e_{p_{e}-1}=f^{p_{e}-1}(e)\},$$
 the  $f$-cycle  of ends containing $e$.  Let $N_{c} = J = \fr U_{e}$ be the $f$-juncture associated to the end $e$ defined in Proposition~\ref{comcomp} (where $U_{e} = V_{0}$ is defined in the proof of Proposition~\ref{comcomp} in the paragraph before Corollary~\ref{Je0}) and consider the set of $f$-junctures $\{f^{n}(N_{c})\ |\ n\in\Z\}$.   Choose and fix such an $N_{c}$ for each of the finitely many $f$-cycles $c$ of ends and  fix as our countable set of $f$-junctures the union of the sets $\{f^n(N_c)| n\in\Z\}$ as $c$ ranges over the set of f-cyclesÊ of ends.
 
\begin{rem}
From now on we will use the symbol $N$ to denote one of these countably many  $f$-junctures  and use the symbol $J$ to denote a juncture which we define below.

\end{rem}

 \begin{defn}[fixed set of $f$-junctures, $\NN$, $\NN_{+}$, $\NN_{-}$]\label{famNNg}
Fix the set of $f$-junctures constructed above.  The   set of all components of the fixed set of $f$-junctures  will be denoted by $\NN$. The subset of $\NN$ consisting of components of positive (respectively negative) $f$-junctures will be denoted by  $\NN_{+}$ (respectively $\NN_{-}$).
\end{defn}

We extend the definition of pseudo-geodesic given in~\cite[Definition~5]{cc:epstein} to include properly embedded, boundary incompressible  compact arc.

\begin{defn}[pseudo-geodesic]\label{pseudogeodesic}
A curve  $\gamma\ss L$ is a  \emph{pseudo-geodesic} if either some (hence every) lift $\wt\gamma$ has two distinct, well defined endpoints on $\Si$  or $\gamma$ is a properly embedded, boundary incompressible  compact arc.
\end{defn}

Remark that  essential embedded circles in $L$ that do not bound cusps and geodesics are pseudo-geodesics. Note that since the surface is admissible, every essential closed curve is a pseudo-geodesic.

In Definition~\ref{geodtite}, we defined the geodesic tightening of  an essential closed curve or a boundary incompressible, properly embedded arc. Here we extend the definition to an arbitrary pseudo-geodesic. 

\begin{defn}[geodesic tightening, $\gamma^{\g}$]\label{geotightpg}

  If $\gamma$ is a pseudo-geodesic, then the \emph{geodesic tightening}   of $\gamma$ is the unique geodesic $\gamma^{\g}$ whose lifts have the same endpoints on $\Se$ as the lifts of $\gamma$. 
  
\end{defn}

\begin{defn}[geodesic tightening map, $\iota$]\label{gtm}

We will call the map that sends a pseudo-geodesic to its geodesic tightening the \emph{geodesic tightening map} and denote it by $\iota$. Thus, if $\gamma$ is a pseudo-geodesic, then $\iota(\gamma)$ is the geodesic whose lifts have the same endpoints on $\Se$ as $\gamma$.

\end{defn}

\begin{defn}[juncture, $\JJ$, $\JJ_{+}$, $\JJ_{-}$]\label{junctdefn}

The set $\JJ$ of \emph{juncture components} consists of the geodesic tightenings of the $f$-juncture components in the set $\NN$. Let  $\iota:\NN\to\JJ$ be the geodesic tightening map. Then $\JJ_{+} = \iota(\NN_{+})$ is the set of \emph{positive juncture components} and $\JJ_{-} = \iota(\NN_{-})$ is the set of \emph{negative juncture components}. Further the map $\iota$ extends in a natural way to the fixed set of $f$-junctures to define a fixed set of \emph{junctures} of the form $J = \iota(N)$ where $N$ is one of the fixed $f$-junctures.

\end{defn}

\begin{rem}
It is important to keep clear the distinction between $f$-junctures and junctures.  The set of $f$-junctures is $f$-invariant, but generally not geodesic.  The set of junctures is geodesic but generally not $f$-invariant.
\end{rem}

We assume  the set of $f$-junctures, the set of junctures, the set $\NN$ of $f$-juncture components, and the set $\JJ$ of juncture components  has been constructed as above and fixed. 

\begin{defn}[$J_{n}$]\label{Jndef}
Given a juncture $J = \iota(N)$, let $J_{n}$ denote the geodesic tightening of $f^{n}(J)$ which is the same as the geodesic tightening of $f^{n}(N)$, $n\in\Z$.  
\end{defn}

 Since the positive (respectively negative) $f$-junctures  are constructed only to intersect in common components, the same is true after tightening. We state this formally for future reference.

\begin{prop}\label{comcomp'}
The set of junctures has the juncture intersection property.
\end{prop}

By Theorem~\ref{essc}, the junctures have the following critically important property.

\begin{theorem}\label{esctoe}
If $J=\iota(N)$ where the $f$-juncture $N$ cuts off a neighborhood of the positive end $e$ of period $p$, then $\{J_{n}\}_{n\ge0}$ escapes.  If the end $e$ is negative, then $\{J_{-n}\}_{n\ge0}$ escapes.
\end{theorem}

\begin{defn}[juncture escapes]\label{escapes}
A component $\sigma$ of a juncture $J$ \emph{escapes} if $\{i(f^{n}(\sigma))\}_{n\in\Z}$ escapes.  The juncture $J$ \emph{escapes} if each of its components escapes or, equivalenly, if $\{J_{n}\}_{n\in\Z}$ escapes.
\end{defn}

\begin{rem}
We will see that a juncture escapes if and only if $f$ is isotopic to a translation (Proposition~\ref{total}).  But escaping components of junctures can easily arise, as illustrated in Example~\ref{itj}.  They do not accumulate anywhere.  We will mainly be interested in the nonescaping components.
\end{rem}

\begin{defn}[$\XX_{\pm}$]\label{nonesc}
We denote by $\XX_{+}$ the set of nonescaping components of positive junctures and by $\XX_{-}$ the corresponding set of nonescaping components of negative junctures.
\end{defn}

\begin{rem}
$\XX_{+}\ss\JJ_{+}$ and $\XX_{-}\ss\JJ_{-}$.

\end{rem}

\subsubsection{The Handel-Miller bilaminations}

Define  $\G_{\pm} = \overline{|\XX_{\mp}|}$ and  set $\L_{\pm} = \G_{\pm} \sm |\XX_{\mp}|$. The goal of this section is to show that $\G_{\pm}$ and $\L_{\pm}$ are the supports of closed geodesic laminations $\Gamma_{\pm}$ and $\Lambda_{\pm}$, respectively.

\begin{lemma}

The space $\G_{\pm}$ consists of the disjoint union of one-one immersed, complete geodesics lines or compact geodesic arcs or circles which are the path components of $\G_{\pm}$.  

\end{lemma}

\begin{proof}
We consider $\G_{+}$. The proof for $\G_{-}$ is analogous. We have already seen that  the union of the positive junctures is a disjoint union of
isolated geodesic circles and/or properly embedded geodesic arcs.  Therefore it remains to show that $\L_{+}$ is a disjoint union of geodesics which are the path components of $|\L_{+}|$.  It will be enough to prove this for the lift $\wt\L_{+}\ss\wt L$.

Choose $x\in\wt\L_{+}$ and $x_{n}\in|\wt\XX_{-}|$ such that $x_{n}\to x$ as $n\to\infty$.  Let $\sigma_{n}$ be the unique element of $\wt\XX_{-}$ containing $x_{n}$.
 Let $v_{n}$ be a vector tangent to $\sigma_{n}$ at $ x_{n}$ of hyperbolic length $1$.   Passing to a subsequence, if necessary, we can assume that $v_{n}\to v$ as $n\to\infty$, where $v$ is a vector at $x$ of hyperbolic length $1$.  Then, there is a unique geodesic $\sigma$ through $x$ tangent to $v$ such that the sequence  $\{\wh\sigma_{n}\}$ converges uniformly to $\wh\sigma$ in the Euclidean metric on $\D^{2}$.  In particular, the endpoints (finite or ideal) on $\Se$ of the $\sigma_{n}$ converge to the endpoints of $\sigma$ on $\Si$. 
 
 Since $x\in\wt\L_{+}$ is arbitrary, we see that $\wt\L_{+}$ is a union of such geodesics and $\wt\XX_{-}$ accumulates on each point of $\wt\L_{+}$.  Thus, $\wt\L_{+}$ has empty interior.  In fact, if $x\in \wt\L_{+}$ and $U$ is a neighborhood of $x$, $\XX_{-}$ meets $U$ and each point of $\XX_{-}$ has a neighborhood disjoint from $\wt\L_{+}$.  If any two of these geodesics in $\wt\L_{+}$ intersect and are not identical, they intersect transversely, implying that two geodesics in $\wt\XX_{-}$ also intersect transversely.  We know this is false, so it follows that $\wt\L_{+}$ is a disjoint union of geodesics with endpoints on $\Si$ and has  empty interior. These geodesics, then, are path components of $\wt\L_{+}$.
 
 Projecting to $L$, we see that $\L$ has no interior and is also a  union of nonintersecting geodesics.  None of these geodesics  can self intersect since this would imply  that two geodesics in $\XX_{-}$ intersect transversely.  Thus, the path components of $\L_{+}$ are one-one immersed geodesics which are complete since the endpoints of their lifts are on $\Si$. 
\end{proof}

From now on we will write the set of geodesics forming the path components of $\L_{\pm}$ as $\Lambda_{\pm}$ and  the set of geodesics forming the path components of $\G_{\pm}$ as $\Gamma_{\pm}$. Thus, $\L_{\pm} = |\Lambda_{\pm}|$ and $\G_{\pm} = |\Gamma_{\pm}|$. These path components will be called ``leaves'' of $\Lambda_{\pm}$ and $\Gamma_{\pm}$, respectively, even before the proof of the following proposition is completed.

\begin{prop}\label{geobilam}
The pairs $(\Gamma_{+},\Gamma_{-})$ and $(\Lambda_{+},\Lambda_{-})$ are bilaminations. 
\end{prop}

\begin{proof}
For $p$ an arbitrary point of $L$ we construct a bilaminated chart $(U,X,Y,\phi)$ for $(\Gamma_{+},\Gamma_{-})$ with $p\in U$.    Since $p$ is arbitrary, we obtain a bilaminated atlas for       $(\Gamma_{+},\Gamma_{-})$.  By ignoring $\XX_{\pm}$, we see that this is also a bilaminated atlas for $(\Lambda_{+},\Lambda_{-})$.

Since $|\Gamma_{-}|\cup|\Gamma_{+}|$ is a closed set, points in its complement have coordinate charts that do not meet it.  These are trivially bifoliated charts for $(\Gamma_{+},\Gamma_{-})$.

The case in which     $p\in|\XX_{\pm}|\sm |\Lambda_{\mp}|$ is also easy. The point $p$ will only lie on the intersection of a positive and negative juncture or on one juncture.  Thus, the bifoliated chart $(U,X,Y,\phi)$ will have $X$ or $Y$ both singletons, or one a singleton and the other empty.

Suppose  $p\in|\Lambda_{+}|\sm|\Lambda_{-}|$.   Then $p\in\lambda\in\Lambda_{+}$ with $\lambda$ approached on one or both sides by geodesics in $\Gamma_{+}$. Thus, there is a convex geodesic quadrilateral $Q$ containing $p$ in its interior and having as one pair of opposite sides geodesic arcs $A$ and $B$ chosen as follows. If both  sides of $\lambda$ are appoached   by geodesics in $\Gamma_{+}$,  choose $A$ and  $B$ to be a  subarc  of $\Gamma_{+}$. Otherwise, choose $A$  on the side of $\lambda$ not approached by elements of $\Gamma_{+}$  in a small neighborhood of $p$ so that no points of $|\Gamma_{+}|$ lie between  $A$   and $\lambda$ and choose $B$ to be a subarc of $\Gamma_{+}$. Take the other pair of opposite sides of $Q$ to be geodesic arcs $\sigma$ and $\tau$  joining the endpoints of $A$ to the corresponding endpoints of $B$.  Choosing the arcs $A$ and $B$ suitably guarantees that $Q\cap|\Gamma_{-}|=\0$ if $p\notin|\XX_{+}|$ or $Q\cap|\Gamma_{-}|$ is one geodesic arc in a positive juncture if $p\in|\XX_{+}|$.  Let  $Y' = \tau\cap(|\Gamma_{+}|\cup A)$ (note that the $A$ in this definition of $Y'$ is redundant in the case that $\Gamma_{+}$ approaches $\lambda$ on both sides). Remark that the compact set  $Y'$ projects continuously and one-one onto $\sigma\cap(|\Gamma_{+}|\cup A)$ along arcs in leaves of $\Gamma_{+}$ (and the arc $A$ if necessary).  The continuity is a consequence of the fact that the geodesic arcs which are path components of $Q\cap|\Gamma_{+}|$ depend continuously on their endpoints.   This map extends linearly  over the gaps in $Y'$ to produce a homeomorphism $f:\tau\to\sigma$.  The geodesics with endpoints $y,f(y)$, $y\in\tau$, foliate $Q$ and the foliation contains among its leaves the path components of $Q\cap|\Gamma_{+}|$. (These leaves cannot intersect since this would produce geodesic digons in $Q$.)  It is then easy to use a linear map (or in the case that $p$ is in a positive juncture a piecewise linear map) from $A$ to $B$ to produce another geodesic foliation of $Q$ transverse to the first.  Coordinatizing $A$ with a coordinate $x$ and $\tau$ with a coordinate $y$ and using these transverse geodesic foliations as a coordinate grid, we obtain a closed coordinate chart $(Q,x,y)$ with $p$ in its interior  which, together with $Y = \tau\cap|\Gamma_{+}|$, defines a closed laminated chart for $\Gamma_{+}$.  The chart is trivially bilaminated by taking $X=\0$ if $p\notin|\XX_{+}|$ or one point if $p\in|\XX_{+}|$.  Similarly, if $p\in|\Lambda_{-}|\sm|\Lambda_{+}|$, we obtain a bilaminated chart with $p$ in its interior.  

Finally, if $p\in|\Lambda_{+}|\cap|\Lambda_{-}|$, the reader can adapt the above construction to obtain a pair of transverse geodesic foliations of $Q$  where, as above, $A,B$ will be arcs in leaves of $\Gamma_{+}$ or disjoint from $|\Gamma_{+}|$ and similarly $\sigma,\tau$ will be arcs in leaves of $\Gamma_{-}$ or disjoint from $|\Gamma_{-}|$.  One of these foliations incorporates the path components of $|\Gamma_{+}|\cap Q$ among its leaves, the other the path components of $|\Gamma_{-}|\cap Q$.  This gives a closed bilaminated chart about $p$, completing our construction of a bilaminated atlas for $(\Gamma_{+},\Gamma_{-})$.
\end{proof}

\begin{cor}\label{totdisc}

The laminations $\Lambda_{\pm}$ are transversely totally disconnected \emph{(Definition~\ref{trtotdis})}.

\end{cor}

\begin{proof}
This follows since the leaves of $\Gamma_{\pm}$ are the path components of $|\Gamma_{\pm}|$.
\end{proof}

\begin{defn}[Handel-Miller bilamination]\label{HMgeobilamin}
The bilamination $(\Lambda_{+},\Lambda_{-})$ is called the \emph{Handel-Miller \upn{(}geodesic\upn{)} bilamination} associated to the endperiodic automorphism $f$. The individual  laminations $\Lambda_{\pm}$ will be called the \emph{Handel-Miller \upn{(}geodesic\upn{)} laminations} associated to $f$. 

The bilamination $(\Gamma_{+},\Gamma_{-})$ is \emph{an extended Handel-Miller bilamination} associated to $f$.
\end{defn}

\begin{rem}
The use of the definite article ``the'' for the Handel-Miller bilamination and the indefinite article ``an'' for an extended one is important.  In Corollary~\ref{indep} we will show that the bilamination $(\Lambda_{+},\Lambda_{-})$ depends only on $f$ (and, of course, on the choice of standard hyperbolic metric). On the other hand $(\Gamma_{+},\Gamma_{-})$ also depends on the choice of $f$-junctures, hence of their geodesic tightenings. 
\end{rem}

\begin{rem}
The analogy of the construction of the geodesic laminations in the Handel-Miller theory and in the Nielsen-Thurston theory is evident, but there are some very significant differences.  The sequence of closed geodesics limiting on a geodesic lamination $\Lambda$ in the Nielsen-Thurston case is not pairwise disjoint nor disjoint from $\Lambda$.  The endperiodic case allows us to choose the connected geodesic $1$-manifolds in $\XX_{\pm}$ to be pairwise disjoint and disjoint from the lamination $\Lambda_{\mp}$ on which they limit.  This is a major simplification and also introduces new laminations $\Gamma_{\pm}$ which will be very useful in developing the theory and for which there are no analogues in the Nielsen-Thurston theory.
\end{rem}

\begin{rem}
The pair $(\Lambda_{+}\cup\JJ_{-},\Lambda_{-}\cup\JJ_{+})$ may not be a bilamination. In fact, an escaping component of a positive juncture may coincide with an  escaping component of a negative juncture. However, it is obvious that $\Lambda_{+}\cup\JJ_{-}$ and $\Lambda_{-}\cup\JJ_{+}$ are each laminations.

\end{rem}

\subsubsection{The strongly closed property}

\begin{defn}[pseudo-geodesic lamination]\label{psgeolam}
A lamination of the surface $L$ in which all the leaves are pseudo-geodesics is a \emph{pseudo-geodesic lamination}.
\end{defn}

\begin{defn}[converges strongly]\label{convstrly}

If $\Lambda$ is a pseudo-geodesic lamination and $\{\wt\lambda_{n}\}$ is a sequence of leaves of $\wt\Lambda$, then $\{\wt\lambda_{n}\}$ \emph{converges strongly} to $\wt\lambda\in\wt\Lambda$ if,

\begin{enumerate}

\item For every bounded subarc $P\ss\wt\lambda$ there is a laminated chart $V_{p}\cong P\times (-\epsilon, \epsilon)$, with $P = P\times\{0\}$, such that the sequence of plaques $V_{P}\cap\wt\lambda_{n} = P\times\{t_{n}\}$ converge to $P$ as $n\to\infty$\upn{;}

\item The endpoints of  the $\wt\lambda_{n}$ in $\SI$ converge to the endpoints of $\wt\lambda$ in $\Si $ in the Euclidean metric of $\D^{2}$ as $n\to\infty$.

\end{enumerate}

\end{defn}

\begin{defn}[strongly closed property]\label{strnglyclsed}
A lamination $\Lambda$ has the \emph{strongly closed property} if,
\begin{enumerate}

\item Whenever a sequence of points $x_{n}\in\ell_{n}\in\wt\Lambda$ converges to $x\in\wt{L}$ and $\ell$ is the leaf, of $\wt\Lambda$ through $x$,  the  sequence  $\{\ell_{n}\}$ converges strongly to   $\ell$;

\item Whenever the endpoints in $\Si$ of a sequence $\{\ell_{n}\}\ss\wt\Lambda$ converge in the Euclidean metric  to a pair of distinct points $a,a'\in \Si$, then these are the endpoints of a  leaf $\ell$ of $\wt\Lambda_{\pm}$ and the sequence $\{\ell_{n}\}$ converges strongly to $\ell$. 

\end{enumerate} 
\end{defn}

The next lemma is obvious from the construction of  the laminations $\wt\Gamma_{\pm}$ and $\wt\Lambda_{\pm}$.

\begin{lemma}\label{LGstrcl}

The  laminations $\Gamma_{\pm}$ and $\Lambda_{\pm}$ are strongly closed.

\end{lemma}

\begin{rem}
 In Section~\ref{uniq} we give an axiomatic treatment of the Handel-Miller theory.  However, the corresponding \emph{pseudo-geodesic} laminations satisfying the axioms are not obviously strongly closed, but, by the isotopy theorem (Theorem~\ref{isotlams}), the strongly closed property  will follow from the geodesic case.  In turn, this property will be needed at a key point of the proof of the transfer theorem (Theorem~\ref{transfer}). 
\end{rem}

\subsubsection{Examples}

\begin{example}\label{simpex}
We continue Example~\ref{simpex0}. The vertical lines in Figure~\ref{PlanarEnd} are  junctures, the ones to the left of the core $K$ being negative junctures and those to the right of the core being positive junctures. The juncture immediately to the  left of the core is the juncture $J$ of Figure~\ref{ItJunc}.   The traintracks representing both $\Lambda_{+}$ and $\Lambda_{-}$ (dashed) have been drawn.   Along the segments of the traintrack joining two consecutive switches, the lamination looks like an uncountable, but totally disconnected packet of parallel arcs. At the switches, the packet splits along a gap,  ``half'' of the curves veering to the left and half to the right. The intersection $\KK=|\Lambda_{+}|\cap|\Lambda_{-}|\ss K$ is a totally disconnected set, homeomorphic to the Cantor set and living in the regions indicated by the two small  circles in the figure.  After $f$ is isotoped to a homeomorphism $h$ preserving the laminations, the set $\KK$ will be invariant and the dynamics of $h|\KK$ will be isomorphic to that of a 2-ended Markov chain (cf.~Section~\ref{cordyn}).  
\end{example}

\begin{figure}[ht]
\begin{center}
\begin{picture}(300,90)(20,-90)
\rotatebox{270.2}{\includegraphics[width=85pt]{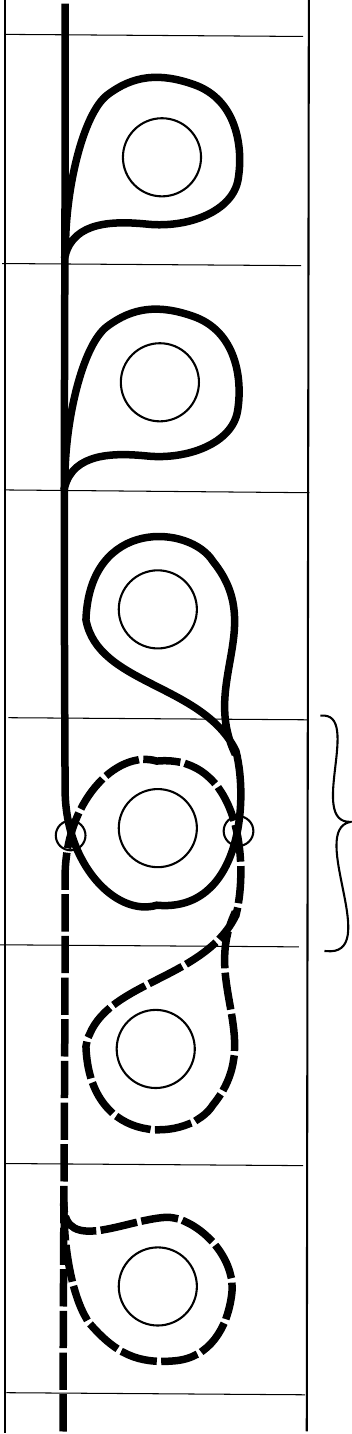}}
\put(-200,-93){\small{$K$}}
\put(-135, -10){\small{$\zeta$}}
\put(-350,-40){\small $e_{-}$}
\put(-5,-40){\small $e_{+}$}
\put(3,-5){\small$\ell$}
\end{picture}
\caption{A simple example}\label{PlanarEnd}
\end{center}
\end{figure}

One easily proves that $\Lambda_{+}$ has a single leaf $\lambda_{+}$ that is isolated on one side. A ray $\zeta$ of $\lambda_{+}$ (the ``top'' ray on the right hand side of the core) makes a ``beeline'' for $e_{+}$, never veering back, but the complementary ray repeatedly delves arbitrarily deeply into the neighborhood of $e_{+}$ and then turns around to revisit the core before going even more deeply into the neighborhood of $e_{+}$.  Every other leaf of $\Lambda_{+}$ behaves like this in both directions, always returning to the core.  The lamination $\Lambda_{-}$ behaves similarly. 

This illustrates typical behavior that will be proven as theorems in this paper.  However not everything in this example is typical.  Generally, the laminations may not be transversely Cantor as they are here. The laminations may also have isolated leaves as well as limit leaves, be transversely countable, or even finite.  In Example~\ref{2e}, each lamination has a single leaf indicated in Figure~\ref{TwoEnds} in boldface.

\begin{figure}[ht]
\begin{center}
\begin{picture}(300,190)(0,-160)

\rotatebox{270.2}{\includegraphics[width=150pt]{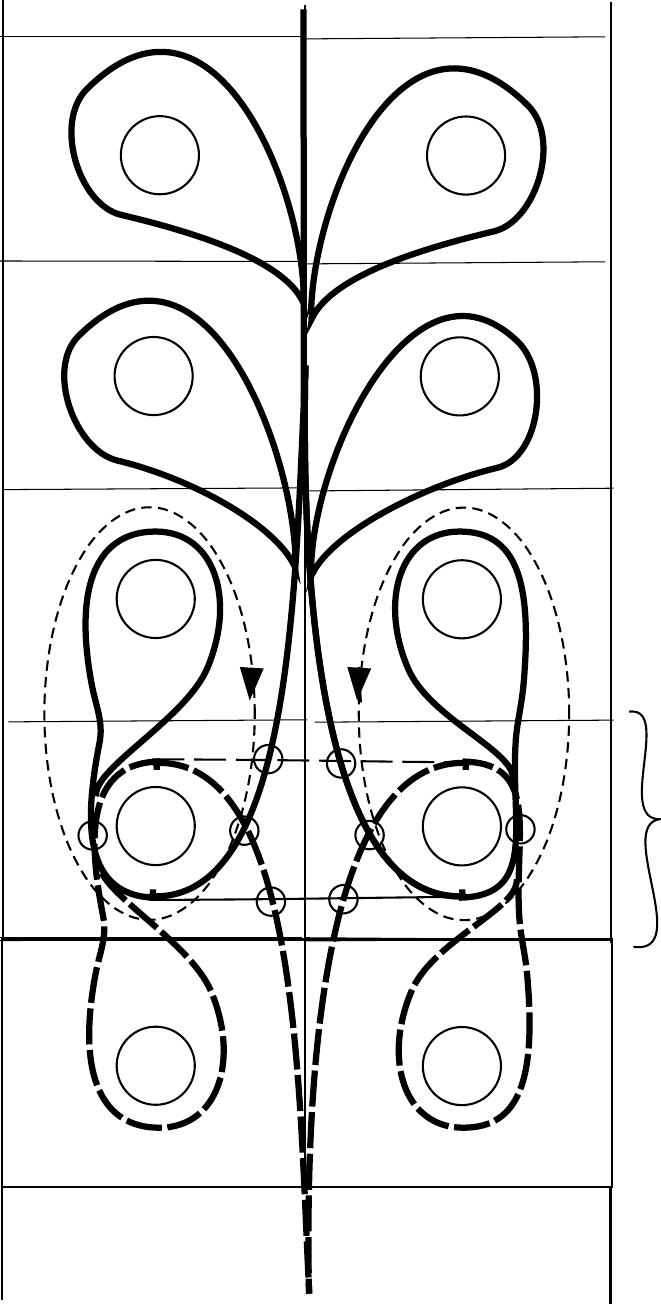}}

\put(2,-70){\small{$\ell$}}
\put(-190,-158){\small{$2K$}}
\end{picture}
\caption{A doubled example}\label{DoubledExample}
\end{center}
\end{figure}

\begin{example}\label{simpex2}
One might think that, if $\bd L\ne\0$, the Handel-Miller laminations for the double $2f:2L\to2L$ would just be two copies of the Handel-Miller laminations on $L$.  But doubling Example~\ref{simpex} shows this to be false. In Figure~\ref{DoubledExample} we draw the train tracks for the laminations $\Lambda_{\pm}$ for the double, where the dashed tracks carry $\Lambda_{-}$ and the solid carry $\Lambda_{+}$. In this figure, the top and bottom boundary lines are to be identified and each boundary circle in the top row should be identified with the corresponding boundary circle in the bottom row. In each lamination, there is an isolated leaf $\lambda_{\pm}$ (not labeled, but easy to spot) that crosses the line $\ell$ which was the top boundary line in Figure~\ref{PlanarEnd}.  This leaf is dense in $\Lambda_{\pm}$. The bold part of the traintracks carries uncountably many leaves, the lighter segments each only carry a segment of $\lambda_{\pm}$. The reason that this new isolated leaf appears  is that a component $\sigma$ of (distorted) juncture which is a properly embedded geodesic arc has double $2\sigma$ which is not generally a geodesic circle.   Tightening it to one distorts $2\sigma $ enormously, pulling it back to a closed geodesic which crosses $\ell$ in a point  in the compact core $2K$.  This sequence of crossings accumulates monotonically on a point $x_{\pm}$, the junctures accumulating locally uniformly near $x_{\pm}$ on the  segment of the isolated leaf $\lambda_{\pm}$ through $x_{\pm}$.  In the figure we have drawn circles where the two laminations intersect.  Remark that $\Lambda_{\pm}$ is equal to the double of its counterpart in Figure~\ref{PlanarEnd} together with the isolated leaf $\lambda_{\pm}$.  Here we have narrowed the gap between $\ell$ and the two copies of $\zeta$ to the point of invisibility.
\end{example}

\subsection{Distinguished  neighborhoods}\label{remarknote}

Suppose $e\in\EE(L)$ and the juncture $J = \iota(N)$ where $N$ is the frontier of a closed $f$-neighborhood $V$ of $e$. Applications of Theorem~\ref{2.1} and/or~\ref{3.1} to the components of $N$, provide an isotopy $\Phi$ such that $\Phi^{1}(N) = J$. Thus, $\Phi^{1}(V)$ is a closed neighborhood $U_{e}$ of $e$ and $J = \fr U_{e}$.

\begin{defn}[distinguished  neighborhood, $U_{e}$]\label{hnbhd}

The set $U_{e}$ as above is called a \emph{distinguished neighborhood of the end $e$}. The set of \emph{distinguished neighborhoods} is the set of all such sets $U_{e}$ with $\fr U_{e}$ a juncture.

\end{defn}

\begin{rem}
The fixed countable  set of $f$-junctures (Definition~\ref{famNNg})  is in one-one correspondence with the set of junctures  (Definition~\ref{junctdefn}) which is in one-one correspondence with the set of distinguished neighborhoods.

\end{rem}

\begin{lemma}\label{intofdom}

If $e\ne e'$ are any two positive \upn{(}respectively  negative\upn{)}  ends and $U_{e}$ and $U_{e'}$ are distinguished neighborhoods of $e$ and $e'$, then $U_{e}\cap U_{e'}=\0$.

\end{lemma}

\begin{proof}
Consider the junctures $J = \fr U_e$ and $J' = \fr U_{e'}$. Let $N$ and $N'$ be the $f$-junctures such that $\iota(N) = J$ and $\iota(N') = J'$. Let $V_e$ and $V_{e'}$ be $f$-neighborhoods of $e$ and $e'$ respectively such that $N = \fr V_{e}$ and $N' = \fr V_{e'}$. By Lemma~\ref{emptint}, $V_{e}\cap V_{e'} = \0$. It follows that $U_{e}\cap U_{e'} = \0$.
\end{proof}

Recall the notation, $J_{n}$ equals the geodesic tightening of $f^{n}(J)$ if $J$ is a juncture, $n\in\Z$.

\begin{defn}[$U^{n}_{e}$]\label{disnbhdsys}

Suppose $e\in\EE_{+}(L)$ \upn{(}respectively $e\in\EE_{-}(L)$\upn{)}, $U_{e}$ is a distinguished neighborhood of $e$, and $J = \fr U_{e}$. For $n\in\Z$, define $U_{e}^{n}$ to be the distinguished neighborhood such that $J_{np_{e}} = \fr U_{e}^{n}$ \upn{(}respectively  $J_{-np_{e}} = \fr U_{e}^{n}$\upn{)}.

\end{defn}

Theorem~\ref{esctoe} immediately implies,

\begin{lemma}\label{disfund}

For an end $e$,   $\{U_{e}^{i}\}_{i=0}^{\infty}$ is a fundamental  neighborhood system of $e$. 

\end{lemma}

\begin{rem}
By reindexing, we can choose the  distinguished neighborhoods $U_{e}$ so that, as $e$ ranges over all ends, positive and negative, the distinguished neighborhoods $U_{e}$ will be pairwise disjoint.  We fix such a choice.

\end{rem}

\subsection{The cores $K,K_{i}$ and the sets $W^{\pm},W^{\pm}_{i}$}\label{thecore}

\begin{defn}[core and $i^{\thh}$ core, $K,K_{i},W^{\pm},W^{\pm}_{i}$]\label{coredef}

For each $i\ge0$,  denote by $W_{i}^{+}$ the union of the $U_{e}^{i}$'s as $e$ ranges over the positive ends.  Similarly define $W_{i}^{-}$, for each $i\ge0$. These choices of the distinguished neighborhoods $U_{e}$ have been made so that $W_{i}^{+}\cap W_{i}^{-}=\0$, $i\ge0$.   The compact submanifold $K_{i}$ complementary to $\intr (W_{i}^{+}\cup W_{i}^{-})$ is called the \emph{$i^{\thh}$ core} of $L$.  We will write $K=K_{0}$ and call it simply the \emph{core}. We will also write $W^{+}$ for $W_{0}^{+}$ and $W^{-}$ for $W_{0}^{-}$.\label{KW}

\end{defn}

\begin{rem}
We will often change the indexing and relabel $K_{i}$ by $K_{0}$, thus choosing a larger \emph{core} $K=K_{0}$. 

\end{rem}

\begin{rem}
Note that $\fr K$ is a finite union of positive and negative junctures.  Accordingly, we will write $\fr K=\fr_{+}K\cup\fr_{-}K$.

\end{rem}

The surface $L$ has decomposition $L = W^{-}\cup K\cup W^{+}$ where,
$$W^{-} = \bigcup_{e\in\EE_{-}(L)}U_{e}\quad{\rm and}\quad W^{+} = \bigcup_{e\in\EE_{+}(L)}U_{e}.$$

\begin{defn}[$W,\JJ_{W}$]\label{jw}

Let $W = W^{-}\cup W^{+}$. Let $\JJ_{W}$ be the set consisting of all components of negative  junctures in $W^{-}$ and all components of positive junctures in $W^{+}$. 

\end{defn}

\begin{lemma}\label{imagejunct}

There exists an endperiodic automorphism $g$ isotopic to $f$ such that $g(\alpha)$ is a geodesic for every $\alpha\in\JJ_{W}$.

\end{lemma}

\begin{proof}
Enumerate the elements of $\JJ_W $ as $\{\tau^{i}\}_{i=1}^{\infty}$ in such a way that those meeting $K_{i}$ are listed before those not meeting $K_{i}$, $i\ge 0$.  Let $\psi_{0} = \id$. Using Theorem~\ref{2.1}, Theorem~\ref{3.1} and the   remark after Theorem~\ref{epsteinsmooth}, inductively find  sequences $\{\Phi_{i}\}$ of isotopies and $\{\psi_{i}\}$ of homeomorphisms with $\psi_{i} = \Phi^{1}_{i}\circ\psi_{i-1}$, $i\ge 1$,     fixing $\bd L$ pointwise and  with $\Phi_{i}$ fixing $(f(\tau^{j}))^{\g}$, $1\le j\le i-1$,  pointwise and moving $\psi_{i-1}\circ f(\tau^{i})$ to its geodesic tightening. That is, $\psi_{i}(f(\tau^{i})) =  (f(\tau^{i}))^{\g}$.  By  the   remark after Theorem~\ref{epsteinsmooth},  the supports of at most finitely many $\Phi_{i}$ meet any compact set. Thus, $\psi_{i}\to\psi$, a well defined homeomorphism isotopic to the identity by an isotopy $\Phi$ and $g=\psi\o f$ is such that $g(\alpha)$ is a geodesic for every $\alpha\in\JJ_{W}$. 
\end{proof}

\begin{lemma}\label{theyesc}
For each $i\ge0$ and each choice $J$ of  negative juncture, all but finitely many components of  $\bigcup_{n\ge0}J_{n}$ meet $W_{i}^{+}$ and, similarly, if $J$ is a positive juncture, all but finitely many components of~$\bigcup_{n\ge0}J_{-n}$ meet $W_{i}^{-}$.  
\end{lemma}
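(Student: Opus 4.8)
The plan is to reduce to honest junctures and then exploit the fundamental--domain structure. Writing a negative $h$-juncture $J$ as the geodesic representative of $f^{m}(J_{e})$ for an honest juncture $J_{e}$ of a negative end $e$, one gets $J^{n}=(J_{e})^{n+m}$, so $\{J^{n}\}_{n\ge0}$ and $\{(J_{e})^{n}\}_{n\ge0}$ differ in only finitely many terms; hence it suffices to treat $J=J_{e}$. Let $p$ be the least common multiple of the integers $|n_{e'}|$ as $e'$ ranges over the periodic ends, so that $f^{p}$ fixes every periodic end of $L$. Applying Lemma~\ref{cbdcpt} repeatedly, $J^{kp}$ and $J^{(k+1)p}$ cobound, for each $k\in\Z$, a compact subsurface $\widehat B_{k}$ (a union of $p/|n_{e}|$ fundamental domains for $e$) with $\widehat B_{k+1}=f^{p}(\widehat B_{k})$; having arranged that fundamental domains have negative Euler characteristic, each $\widehat B_{k}$ has the same Euler characteristic $\chi_{0}<0$, and the $\widehat B_{k}$ have pairwise disjoint interiors. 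Since $f$ is an isometry in the fundamental neighborhoods of the ends, for every periodic end $e'$ there is an integer $s_{e'}\ge1$ with $f^{p}(U^{e'}_{m})=U^{e'}_{m+s_{e'}}$ for all large $m$ when $e'$ is positive, and $f^{-p}(U^{e'}_{m})\ss U^{e'}_{m+s_{e'}}$ for all $m$ when $e'$ is negative; that is, $f^{p}$ drives deep positive--end neighborhoods strictly inward and $f^{-p}$ drives negative--end neighborhoods strictly inward.

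I claim $\widehat B_{k}\ss W^{+}_{i}$ for all sufficiently large $k$. Three observations combine to give this. First, for $k$ large, $\widehat B_{k}$ is disjoint from $W^{-}_{0}$: if $\widehat B_{k}$ met a neighborhood $U^{e^{-}}_{0}$ of a negative end for arbitrarily large $k$, then $\widehat B_{0}=f^{-kp}(\widehat B_{k})$ would meet $f^{-kp}(U^{e^{-}}_{0})\ss U^{e^{-}}_{c_{k}}$ with $c_{k}\to\infty$, contradicting that the fixed compact set $\widehat B_{0}$ reaches only bounded depth into any end. Second, no fixed compact subsurface $S\ss L$ can contain infinitely many $\widehat B_{k}$: these are pairwise disjoint incompressible subsurfaces (their boundaries are geodesics) of constant Euler characteristic $\chi_{0}<0$, so their number inside $S$ is at most $\chi(S)/\chi_{0}$. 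Third, once $\widehat B_{k}$ meets a neighborhood $U^{e^{+}}_{0}$ of a positive end, that ``finger'' is driven strictly deeper into the same $e^{+}$ by each further application of $f^{p}$; since by the first two observations the remainder of $\widehat B_{k}$ can neither settle into a negative--end neighborhood nor persist in the compact core, and $\widehat B_{k}\cong\widehat B_{0}$ has fixed topology, $\widehat B_{k}$ must eventually lie entirely in a single deep neighborhood $U^{e^{+}}_{m(k)}$ with $m(k)\to\infty$. In particular $J^{kp}\ss W^{+}_{i}$ as soon as $m(k)\ge i$.

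It remains to pass from the junctures $J^{kp}$ to all $J^{n}$. Writing $n=kp+r$ with $0\le r<p$, we have $J^{n}=f^{r}(J^{kp})$; since $f^{r}$ is an isometry near the ends and permutes the (finitely many) positive ends among themselves, it changes depth into those ends by at most a constant $c$ independent of $n$, so $J^{n}\ss W^{+}_{m(k)-c}\ss W^{+}_{i}$ for all large $n$. Thus all components of $J^{n}$ with $n$ large lie in $W^{+}_{i}$, and only the finitely many components of $J^{0},\dots,J^{N-1}$ can fail to meet $W^{+}_{i}$. The statement for positive $h$-junctures is obtained by applying everything just proved to $f^{-1}$, whose attracting (positive) ends are the repelling (negative) ends of $f$ and for which $J^{-n}$ plays the role of $J^{n}$.

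The main obstacle is the ``third observation'': ruling out that $\widehat B_{k}$ persists with a long tentacle threading the compact core while another part dives into a positive end. This must be argued from the rigidity of $\widehat B_{k}\cong\widehat B_{0}$ together with the two one--directional facts (positive fingers only deepen; negative--end content is impossible for large $k$) and the Euler--characteristic bound forbidding the bulk from staying in the core. Two mild technicalities also need attention: when the junctures are disconnected one applies the argument to the subsurface $\bigcup_{k\ge0}\widehat B_{k}$, noting that each of its finitely many ends is, by the first observation, a positive end of $L$; and when the junctures are arcs one first doubles $L$ along its noncompact boundary so that all junctures become closed geodesics, whereupon the Euler--characteristic bound (or a Gauss--Bonnet area bound) applies as stated.
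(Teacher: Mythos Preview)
Your argument concludes something strictly stronger than the lemma, and that stronger statement is false.  You claim $\widehat B_{k}\ss W_{i}^{+}$ for all large $k$, hence $J^{n}\ss W_{i}^{+}$ for all large $n$.  But the lemma only asserts that the components of $J^{n}$ \emph{meet} $W_{i}^{+}$; immediately after this lemma the paper observes that (once the total-translation case is excluded) \emph{no} negative $h$-juncture $J^{n}$ can lie entirely in $W_{0}^{+}$, and Corollary~\ref{unbdd} then shows that the lengths of the non-escaping components of $J^{n}$ become unbounded.  So in the generic situation the $J^{n}$ stretch across the core indefinitely, with parts reaching arbitrarily deep into positive ends while other parts remain in $K_{i}$.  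The phenomenon you flag as the ``main obstacle'' --- a long tentacle threading the core while another part dives into a positive end --- is exactly what occurs, and the fixed homeomorphism type $\widehat B_{k}\cong\widehat B_{0}$ does nothing to prevent it: a fixed topological surface can be isometrically embedded with arbitrarily long thin arms.

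The paper's proof avoids this trap by arguing only the weaker (correct) statement, and it is very short.  For arc junctures it invokes Lemma~\ref{endptsesc}: endpoints escape, so all but finitely many lie in $W_{i}^{+}$.  For circle junctures it argues by contradiction: if $J^{n}\cap W_{i}^{+}=\0$ for all $n\ge N$, then since also $J^{n}\cap W_{i}^{-}=\0$ for large $n$, Lemma~\ref{cbdcpt} places infinitely many pairwise non-overlapping compact subsurfaces of negative Euler characteristic inside the compact core $K_{i}$, which is absurd.  Your observations (1) and (2) are essentially this Euler-characteristic step; what you should do is stop there and conclude directly that some (hence, by the pigeonhole on the $h$-orbit, cofinitely many) $J^{n}$ meet $W_{i}^{+}$, rather than pushing on to the false containment claim.
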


\begin{proof}
If a component of the negative juncture $J$ is a properly embedded geodesic arc, the assertion  is immediate  since  the positive iterates of its endpoints escape.  If the component is a simple closed geodesic $\sigma$, denote the corresponding component of $J_{n}$ by $\sigma_{n}$.  Suppose, by contradiction,  that $\sigma_{n}\ss K_{i}$, for infinitely many  values of $n>0$.  Since $K_{i}$ is a compact surface, there exists a positive integer $k$ such that if there exist more than $k$ disjoint, simple closed curves in $K_{i}$ two of them must bound an annulus. If $\sigma_{n}$ and $\sigma_{m}$ cobound an annulus, then, as homotopic geodesics, they must coincide. It follows that infinitely many of the $\sigma_{n}$ coincide which is a contradiction.
\end{proof}

\begin{lemma}\label{junctureK}

\begin{enumerate}

\item  The arcs of $\JJ_{\mp}|K_{i}$ with endpoints on $\fr_{\pm} K_{i}$ are   boundary incompressible and fall into finitely many isotopy classes in $(K_{i},\fr _{\pm}K_{i})$\upn{;}\label{1item}

\item For every such isotopy class $\AAA$, there exists a rectangle $R\ss K_i$ with a pair of opposite edges each  intervals in not necessarily distinct components $\alpha$ and  $\beta$ of $\fr_{\pm} K_{i}$ such that for every pair of arcs $\sigma,\tau\in\AAA$, the track of the isotopy between $\sigma$ and $\tau$ lies in $R$.\label{2item}. 

\end{enumerate}

\end{lemma}

\begin{proof}
Item (\ref{1item})  is an elementary topological consequence of the fact that $K_{i}$ is a compact surface. Let $\AAA$ be such an isotopy class.  There exist not necessarily distinct components $\alpha,\beta$ of $\fr_{\pm} K_{i}$ such that the endpoints of each arc in $\AAA$ lie in $\alpha$ and $\beta$. If $\sigma,\tau\in\AAA$, then there exists a rectangle $R_{\sigma\tau}$ with one pair of opposite edges $\sigma$ and $\tau$ and the other pair of opposite edges subintervals of $\alpha$ and $\beta$ respectively such that the track of the isotopy between $\sigma$ and $\tau$ lies in $R_{\sigma\tau}$.

If $\AAA$ has one element, then  (\ref{2item}) is obviously true. If $\AAA$ has two elements $\sigma, \tau$, then $R = R_{\sigma\tau}$ is the desired rectangle. Therefore we can assume $\AAA$ has at least 3 elements $\tau_-, \tau_0, \tau_+$ such that $\tau_0\in R_{\tau_-\tau_+}$. Let $R_-$ (respectively $R_+$) be the closure of $\bigcup\{R_{\tau_0\tau}\ |\ \tau\in\AAA, R_{\tau_0\tau_-}\ss R_{\tau_0\tau}\}$ (respectively $\bigcup\{R_{\tau_0\tau}\ |\ \tau\in\AAA, R_{\tau_0\tau_+}\ss R_{\tau_0\tau}\}$). If either $R_-$ or $R_+$ is not a rectangle, then we have the contradiction that $K = R_-$ or $K = R_+$ is an annulus with boundary $\alpha$ and $\beta$. Similarly if $R = R_-\cup R_+$ is not a rectangle, we have the contradiction that $K = R$ is an annulus with boundary $\alpha\cup\beta$. Thus, $R$ is the desired rectangle.
\end{proof}

\begin{lemma}\label{nbounded}
Every neighborhood of each end of a leaf of $ \Lambda_{\pm}$ meets $W_{i}^{\pm}$, $i\ge0$.
\end{lemma}

\begin{proof}
For definiteness, assume $ \lambda\in \Lambda_{+}$.    Choose $K=K_{0}$ large enough that $\lambda$ meets $\intr K$, choose    $x\in\lambda\cap\intr K$ and let $\{\sigma_{k}\}_{k=1}^{\infty}$ be a sequence of  negative juncture components with $x_{k}\in\sigma_{k}$ such that $x_{k}\to x$ as $k\to\infty$.  Fix $i\ge 0$.  Using Lemma~\ref{theyesc} and passing to a subsequence, we can assume that a subarc $\tau_{k}\ss\sigma_{k}\cap K_{i}$ contains $x_{k}$ and has endpoints $a_{k},b_{k}\in\fr_{+} K_{i}$.  By passing to a subsequence and using Lemma~\ref{junctureK}, assume that the arcs $\tau_{k}$ are all isotopic. Thus the points  $a_{k}$ belong to the same component $\alpha$ of $\fr_{+} K_{i}$ and converge to $a\in\alpha$ and  the points $b_{k}$ belong to the same component $\beta$ of $\fr_{+} K_{i}$ and converge to $b\in\beta$.  Without loss we can assume $a_{k}\to a$ and $b_{k}\to b$ monotonically. By Lemma~\ref{junctureK} there exists a rectangle $R$ with a pair of opposite edges in $\alpha$ and $\beta$ and containing the sequence of arcs $\{\tau_k\}_{k\ge 0}$.

Choosing a lift $\wt R$ of $R$, where $\wt R$ is a rectangle with a pair of opposite edges in $\wt\alpha$ a lift of $\alpha$ and $\wt\beta$ a lift of $\beta$, determines lifts   $\wt\tau_k$ of $\tau_k$ for all $k\ge 0$ such that  $\wt\tau_k$ has endpoints $\wt a_k\in[\wt a,\wt a_0]\ss\wt\alpha$ and $\wt b_k\in[\wt b,\wt b_0]\ss\wt\beta$ where $\wt a$ is a lift of $a$, $\wt b$ is a lift of $b$, $\wt a_k$ is a lift of $a_k$, and $\wt b_k$ is a lift of $b_k$. The geodesic arcs $\wt\tau_{k}$ nest in $\Delta$  on the geodesic arc $[\wt a,\wt b]\ss\ell\in\wt \Lambda_{+}$ and $\wt a_{k}\to \wt a$, and $\wt b_{k}\to \wt b$.  Since the lift $\wt x_{k}\in\wt\tau_{k}$ converges to a lift $\wt x\in[\wt a,\wt b]$ of $x\in\lambda$, it follows that $\ell = \wt\lambda$. The arc $[\wt a,\wt b]$ projects to a subarc $[a,b] = \gamma_{i}\ss\lambda$ containing the point $x$.

Then $\lambda$ exits $K_{i}$ at $a$ and $b$ and, varying $i$, $x\in\gamma_{0}\ss\gamma_{1}\ss\cdots\ss\gamma_{i}\ss\cdots$ forms an exhaustion of $\lambda$. The lemma follows.
\end{proof}

\begin{cor}\label{passesnear}
 No leaf of  $\Lambda_{\pm}$ is contained in a bounded region of $L$.
\end{cor}

\begin{cor}\label{lambdaline}
Every leaf of $\Lambda_{\pm}$ is a one-one immersed copy of $\R$.
\end{cor}

\begin{proof}
By Corollary~\ref{passesnear}, a leaf of $\Lambda_{\pm}$ can not be homeomorphic to a circle. Hypothesis~\ref{noperpt}   assures that no leaf of $\Lambda_{\pm}$ has an endpoint on $\bd L$.
\end{proof}

\begin{defn}[passes arbitrarily near]\label{arbitrarilynear}
An end $\epsilon$ of a curve $s$ in $L$ \emph{passes arbitrarily near} an end $e$ of $L$ if every neighborhood of $e$ meets every neighborhood of $\epsilon$.  In this case we also say that $s$ passes arbitrarily near $e$.
\end{defn}  

Notice that a curve can pass arbitrarily near $e$ and still return repeatedly to some compact subset $X\ss L$.  A careful analysis of Example~\ref{simpex0} shows that only one end of only one leaf of $\Lambda_{+}$ escapes (cf.~Definition~\ref{defnescpend}) to the positive end.  All other ends of all leaves repeatedly return to the same compact  ``core''.   We will see that this second behavior  is the typical behavior of the leaves of $\Lambda_{\pm}$.  

The following corollary follows immediately from Lemma~\ref{nbounded} since $L$ has finitely many ends.

 \begin{cor}\label{noncpt}
Both ends of every leaf of $\Lambda_{+}$ \upn{(}respectively $\Lambda_{-}$\upn{)} pass arbitrarily near at least one positive \upn{(}respectively  negative\upn{)} end of $L$. 
\end{cor}

\begin{lemma}\label{escesc}
No escaping component of a juncture meets $|\Lambda_{+}|\cup|\Lambda_{-}|$. 
\end{lemma}

\begin{proof}
For definiteness, suppose that $\sigma$ is a negative escaping juncture component.  Since each point of $\sigma$ has a neighborhood that meets no other negative juncture component, it follows that $\sigma\cap|\Lambda_{+}| = \0$. It remains to show that $\sigma\cap|\Lambda_{-}| = \0$. Suppose the contrary that there exists $\lambda\in\Lambda_{-}$ meeting $\sigma$.  Consider  lifts $\wt\sigma$ of $\sigma$ with endpoints (finite or ideal) $a,b\in\Se$ and $\wt\lambda$ of $\lambda$ with endpoints $x,y\in E\ss\Se$ such that the pair $\{a,b\}$  separates the pair $\{x,y\}$ in $\Se$.  The extension of a lift $\wt f$ to a homeomorphism $\wh f:\Se\to\Se$~\cite[Theorem~2]{cc:epstein} either preserves or reverses cyclic order, hence $\{\wh f^{k}(a),\wh f^{k}(b)\}$ separates $\{\wh f^{k}(x),\wh f^{k}(y)\}$,  $k\ge 0$. Thus, $\sigma_{k} = f^{k}(\sigma)$ meets a leaf of $\Lambda_{-}$, $k\ge 0$.   For $k$ sufficiently large, $\sigma_{k}$ is disjoint from $W^{-}$.  Since no leaf of $\Lambda_{-}$ meets $W^{+}$, it follows that, for all $k$ is sufficiently large,  $\sigma_{k}$ meets the compact set $K$, contradicting the hypothesis that $\sigma$ is an escaping juncture component.  An analogous proof shows that a positive escaping juncture component is disjoint from $|\Lambda_{+}|\cup|\Lambda_{-}|$.
\end{proof}

Define laminations $\Lambda_{\pm}|K$ by taking as leaves the path components of $|\Lambda_{\pm}|\cap K$.

\begin{lemma}\label{LambdaK}

\begin{enumerate}

\item  The leaves of $\Lambda_{\pm}|K$ are boundary incompressible arcs which have endpoints on $\fr_{\pm} K$ and which fall into finitely many isotopy classes in $(K,\fr _{\pm}K)$\upn{;}\label{11item}

\item For every such isotopy class $\AAA$, there exists a rectangle $R\ss K_i$ with a pair of opposite edges each  intervals in not necessarily distinct components $\alpha$ and  $\beta$ of $\fr_{\pm} K_{i}$ such that for every pair of arcs $\sigma,\tau\in\AAA$, the track of the isotopy between $\sigma$ and $\tau$ lies in $R$.\label{22item}

\end{enumerate}

\end{lemma}

\begin{proof}
The leaves of $\Lambda_{\pm}|K$  have endpoints on $\fr_{\pm} K$ by Lemma~\ref{nbounded}. The other assertions of the lemma are proven exactly as in the proof of Lemma~\ref{nbounded}.
\end{proof}

\begin{lemma}\label{extremals}
An isotopy class of leaves of $\Lambda_{\pm}|K$ either contains one arc or else contains two extreme arcs $\tau_{1},\tau_{2}$ which, together with two arcs in $\fr_{\pm}K$, form a quadrilateral bounding a simply connected region in $K$ containing all arcs in the isotopy class. The extreme arcs do not cut off another simply connected quadrilateral.
\end{lemma}

\begin{proof}
Consider an  isotopy class $\AAA$ of leaves of $\Lambda_{-}|K$ containing more than one arc.  The case of $\Lambda_{+}|K$ is similar.  Let $\alpha$ and $\beta$ be the components of $\fr_{-}K$ containing the endpoints of leaves in the isotopy class $\AAA$.  By Lemma~\ref{LambdaK}~(\ref{22item}), there exists a rectangle $R$ which contains every arc in $\AAA$ and has  a pair of opposite edges in $\alpha$ and $\beta$. Choosing a lift $\wt R$ of $R$, where $\wt R$ is a rectangle with a pair of opposite edges in $\wt\alpha$ a lift of $\alpha$ and $\wt\beta$ a lift of $\beta$, determines lifts $\wt\tau$ lifts of $\tau$ for every $\tau\in\AAA$ such that $\wt\tau$ has endpoints in $\wt\alpha$ and $\wt\beta$.

Because the set $\wt\Lambda_+$ is closed, the set of arcs $\wt\AAA = \{\wt\tau\ |\ \tau\in\AAA\}$ in $\wt R$ contains two  arcs $\sigma_1, \sigma_2$  which are  extreme arcs of $\wt\AAA$  and are thus lifts of arcs $\tau_1,\tau_2\in\AAA$ which are extreme arcs of $\AAA$.

If the extreme arcs cut off another simply connected quadrilateral the union of the two quadrilateral would form an annulus bounded by $\alpha$ and $\beta$ which would equal $K$ contradicting the fact that $K$ is not an annulus.
\end{proof}

\subsection{An endperiodic automorphism $h$ preserving the laminations}\label{defineh}

In this subsection we prove the following theorem.

\begin{theorem}\label{geodext}

If $f:L\to L$ is an endperiodic automorphism, then there exists  an endperiodic automorphism $h:L\to L$, isotopic to $f$  permuting the elements of each of the sets $\Lambda_{+}$,  $\Lambda_{-}$,  $\JJ_{+}$, and  $\JJ_{-}$. 

\end{theorem}

\begin{rem}
Since $h$ is  isotopic to $f$  and permutes the elements of each of the sets $\Lambda_{+}$,  $\Lambda_{-}$,  $\JJ_{+}$, and  $\JJ_{-}$, it follows that $h(\gamma) = (f(\gamma))^{\g} = (h(\gamma))^{\g}$ if $\gamma\in\Lambda_{+}\cup\Lambda_{-}\cup\JJ_{+}\cup\JJ_{-}$. In particular $J_{n} = h^{n}(J)$ if $J$ is a juncture.

\end{rem}

\begin{rem}
This homeomorphism $h$ is not uniquely determined, but 
$$h:|\Lambda_{+}|\cap|\Lambda_{-}|\to|\Lambda_{+}|\cap|\Lambda_{-}|$$
 is unique. This will be called the core dynamical system and will be analyzed in Section~\ref{cordyn}. 
 
 \end{rem}

We prove Theorems~\ref{geodext} in Section~\ref{geodextpf} after some preliminaries in Sections~\ref{slideisot} and~\ref{prelisotf}.

\subsubsection{Sliding isotopies and other isotopies}\label{slideisot}

 Recall from page~\pageref{allisotamb} that all isotopies are ambient isotopies. By Definition~\ref{geotightpg},  the geodesic tightening $\tau^{\g}$ of a pseudo-geodesic $\tau$ is the geodesic whose lifts are the geodesics in $\wt L$ sharing endpoints on $\Se$ with the lifts of $\tau$.

Suppose $\sigma$ is a complete geodesic with ordered lift $\wt\sigma$ and $\gamma_{1},\ldots,\gamma_{k}$ are   pseudo-geodesics which are either disjoint or coincide, form no digons with $\sigma$, and meet $\sigma$  at the points $x_{1},\ldots,x_{k}$ with lifts $\wt\gamma_{1},\ldots,\wt\gamma_{k}$ meeting $\wt\sigma$  at the points $\wt x_{1}<\cdots<\wt x_{k}$ so that the interval $[\wt x_1,\wt x_k]\ss\wt\sigma$ meets no other lift of the $\wt\gamma_i$. Let $\wt\gamma_i^{\g}$ be the lift of the geodesic tightening of $\gamma_i$ sharing endpoints on $\Se$ with $\wt\gamma_i$, $1\le i\le k$.

\begin{lemma}\label{slide}

Under the above conditions, there exists an isotopy supported in a small neighborhod of $\sigma$ with lift sliding $\wt x_{i}$ along $\wt\sigma$ to the point $\wt x_i^{\g} = \wt\sigma\cap\wt\gamma_i^{\g}$, $1\le i\le k$.

\end{lemma}

\begin{defn}[sliding isotopy]\label{slidingisotopy}

We will refer to an isotopy  as in Lemma~\ref{slide}  as a \emph{sliding isotopy}.

\end{defn}

Suppose $\sigma_i$, $i=1,2$, are   complete geodesics which are either disjoint or coincide and have  disjoint lifts $\wt\sigma_i$,   $\gamma$ is a pseudo-geodesics forming no digons with the $\sigma_i$, and $\alpha =[x_1,x_2]\ss\gamma$ is an arc meeting the $\sigma_i$ only  at the points $x_{1}\in\sigma_1$ and $x_{2}\in\sigma_2$ with lift $\wt\alpha = [\wt x_1,\wt x_2]\ss\wt\gamma$ meeting  $\wt\sigma_1$  at the point $\wt x_{1}$ and $\wt\sigma_2$ at the point $\wt x_{2}$. Let $\wt\gamma^{\g}$ be the lift of the geodesic tightening of $\gamma$ sharing endpoints on $\Se$ with $\wt\gamma$. The next lemma follows from Lemma~\ref{slide} and Theorem~\ref{2.1}.

\begin{lemma}\label{arctight}

Under the above conditions, there is an isotopy, fixing each $\sigma_i$, $i=1,2$, with lift moving the arc $\wt\alpha = [\wt x_1,\wt x_2]\ss\wt\gamma$ to the geodesic arc $\wt\alpha^{\g} = [\wt x_1^{\g},\wt x_2^{\g}]\ss\wt\gamma^{\g}$, $\wt x_i$ sliding along $\wt\sigma_i$ to $\wt{x_i^{\g}}$, $i=1,2$, as in \emph{Lemma~\ref{slide}}.

\end{lemma}

\subsubsection{The tilings used in inductive proofs}\label{prelisotf}

Cutting $L$ apart along the juncture components in $\JJ_{W}$ (Definition~\ref{jw}) decomposes $L$ into a set $\mathfrak T^{\g}$ of compact surfaces. Similarly, cutting $L$ apart along the juncture components in $\{g(\gamma)\ |\ \gamma\in\JJ_{W}\}$ decomposes $L$ into a set $\mathfrak T^{\g}_{*}$ of compact surfaces where $g$ is the endperiodic automorphism of Lemma~\ref{imagejunct}.

\begin{defn} [$\mathfrak T^{\g}$, $\mathfrak T^{\g}_{*}$, tile, tiling]\label{defntile}

The sets $\mathfrak T^{\g}$ and $\mathfrak T^{\g}_{*}$ will be called \emph{tilings} of $L$. The surfaces in $\mathfrak T^{\g}$ and $\mathfrak T^{\g}_{*}$ will be called \emph{tiles}.

\end{defn}

\begin{rem}
The superscript $\g$ on the symbols $\mathfrak T^{\g}, \mathfrak T^{\g}_{*}$ emphasizes that the juncture components which are  frontiers of the tiles are geodesics.

\end{rem}

\begin{rem}
The tilings $\mathfrak T^{\g}, \mathfrak T^{\g}_{*}$ will be used  in the proofs of Theorems~\ref{geodext} and~\ref{geodext1}. The tiling $\mathfrak T^{\g}$ will be used in the proof of Theorem~\ref{isotlams}.

\end{rem}

The following is  immediate.

\begin{lemma}\label{hstartile}
 The tiles of $\mathfrak T^{\g}$ and of $\mathfrak T^{\g}_{*}$ have boundary consisting of arcs and circles in $\bd L$ and juncture components.   
\end{lemma}

The next lemma is an immediate consequence of Lemma~\ref{imagejunct}.

\begin{lemma}\label{preservestiles}
For $P\in\mathfrak T^{\g}$, $g(P)\in\mathfrak T^{\g}_{*}$ where $g$ is the endperiodic automorphism of \emph{Lemma~\ref{imagejunct}}.
\end{lemma}

\subsubsection{The construction of $h$}\label{geodextpf}

Let $g$ be the endperiodic automorphism of Lemma~\ref{imagejunct}.

\medskip
\ni\textbf{Strategy.} We first define $h$ on $\ol U$, where $U$ is any component  of $L\sm(|\Lambda_{+}|\cup|\Lambda_{-}|\cup|\JJ_{+}|\cup|\JJ_{-}|)$  which    is not a rectangle, in such a way that $h(\bd U)$ is  contained in the union of $|g(\JJ_W)|$ and  the circles and extreme arcs of Lemma~\ref{extremeP}. To define $h$ on all such $\ol U$, we modify $g$ by isotopies (Proposition~\ref{hextreme} and Lemma~\ref{hlinear}) so that if we define $h=g$ on $\ol U$ for all such components $U$, then the procedure of Casson-Bleiler~\cite[pp.~89-90]{bca} can be used  to extend $h$ over the rest of $L$.
\medskip

The  following lemma is proven like  Lemmas~\ref{LambdaK} and~\ref{extremals}.

\begin{lemma}\label{extremeP}

If $P\in\mathfrak T^{\g}_*$, then there are finitely many isotopy classes of leaves of each of the laminations $(\Lambda_{+}\cup(\JJ_{-}\sm g(\JJ_W))|P$ and $(\Lambda_{-}\cup(\JJ_{+}\sm g(\JJ_W))|P$. Each isotopy class either contains one circle in $\JJ_{\pm}$, one arc, or  two extreme arcs $\alpha_{1},\alpha_{2}$ which, together with two arcs in $\bd P$, form a quadrilateral bounding a simply connected region in $P$ containing all arcs in the isotopy class. 

\end{lemma}

\begin{rem}
If an isotopy class contains only one arc we will also refer to that arc as an extreme arc.
\end{rem}

\begin{nota}
For $P\in\mathfrak T^{\g}_*$ denote by  
\begin{eqnarray*}
\GG_{+}(P)&=&g(\Lambda_{+}\cup(\JJ_{-}\sm\JJ_W))|P\\
\GG_{-}(P)&=&g(\Lambda_{-}\cup(\JJ_{+}\sm\JJ_W))|P
\end{eqnarray*}
the laminations consisting of the arcs and circles which are the path components of $|g(\Lambda_{+}\cup(\JJ_{-}\sm\JJ_W))|\cap P$ and $|g(\Lambda_{-}\cup(\JJ_{+}\sm\JJ_W))|\cap P$  and by
\begin{eqnarray*}
\GG^{\g}_{+}(P)&=&(\Lambda_{+}\cup(\JJ_{-}\sm g(\JJ_W)))|P\\ 
\GG^{\g}_{-}(P)&=&(\Lambda_{-}\cup(\JJ_{+}\sm g(\JJ_W)))|P, 
\end{eqnarray*}
the  laminations consisting of the geodesic arcs and circles which are the  path components of $|\Lambda_{+}\cup(\JJ_{-}\sm g(\JJ_W))|\cap P$ and $|\Lambda_{-}\cup(\JJ_{+}\sm g(\JJ_W))|\cap P$.

\end{nota}

Choose  lifts $\wt L$ of $L$ and  $\wt f:\wt L\to \wt L$ of $f$. This determines a lift $\wt g:\wt L\to \wt L$. 

A leaf $\alpha\in\GG_{\pm}(P)$ is either a circle in $g(\JJ_{\pm})$ or an arc.  If $\alpha = [x_1.x_2]\in\GG_{\pm}(P)$ is an arc, then $\alpha$ is contained in a leaf $\gamma_{\alpha}\in g(\Lambda_{+}\cup\JJ_-\cup\Lambda_-\cup\JJ_+)$ and  has endpoints $x_i$ in the geodesics $\sigma_i$ which are either juncture components in $\fr P$ or components of $\bd L$, $i=1,2$.  Let $\wt\alpha = [\wt x_1,\wt x_2]\ss\wt{\gamma_{\alpha}}$ be  lifts of $\alpha = [ x_1,x_2]\ss\gamma_{\alpha}$ with $\wt x_i$ in the lift $\wt\sigma_i$ of $\sigma_i$, $i=1,2$. The  geodesic tightening $\gamma_{\alpha}^{\g}$ of $\gamma_{\alpha}$ has lift $\wt{\gamma_{\alpha}^{\g}}$ sharing endpoints on $\Se$ with $\wt{\gamma_{\alpha}}$ and is a leaf of the lamination $\Lambda_{\pm}\cup\JJ_{\mp}$. There is a unique arc $\wt{\alpha^{\g}} = [\wt{x_1^{\g}},\wt{x_2^{\g}}]\ss\wt{\gamma_{\alpha}^{\g}}$ with $\wt{x_i^{\g}}\in\wt\sigma_i$, $ i=1,2$.

The projection of the arc $\wt{\alpha^{\g}}$ is in $\GG^{\g}_{\pm}(P)$ and will be denoted $\alpha^{\g} = [x_1^{\g},x_2^{\g}]$.  If $\alpha$ is a circle, let $\alpha^{\g}$ be the geodesic tightening of $\alpha$. 

\begin{rem}
The correspondence $\alpha\lra\alpha^{\g}$ induces a one-one correspondence between $\GG_{\pm}(P)$ and $\GG_{\pm}^{\g}(P)$.
\end{rem}

\begin{rem}

Suppose $P\in\mathfrak T^{\g}_*$. 

\begin{enumerate}

\item If $P\ss g(W_-)$, then $\GG_{+}(P) = \0$ and $\GG^{\g}_{+}(P) = \0$\upn{;}  

\item If $P\ss g(W_+)$, then $\GG_{-}(P) = \0$ and $\GG^{\g}_{-}(P) = \0$\upn{;}

\item  If $P = g(K)$, $\GG_{-}(P) \ne \0$, $\GG^{\g}_{-}(P) \ne \0$, $\GG_{+}(P) \ne \0$, $\GG^{\g}_{+}(P) \ne \0$. 

\end{enumerate}
It follows that  an element of $\GG_{-}(P)$ can meet an element of $\GG_{+}(P)$ only if $P=g(K)$.

\end{rem}

We will call $\alpha\in\GG_{\pm}(P)$ an extreme arc  of $\GG_{\pm}(P)$ if the corresponding arc $\alpha^{\g}\in\GG_{\pm}^{\g}(P))$  is an extreme arc of $\GG_{\pm}^{\g}(P))$.

\begin{prop}\label{hextreme}

There exists an  isotopy $\Phi$  preserving each $P\in\mathfrak T^{\g}_*$ and  such that $\Phi^1(\alpha) = \alpha^{\g}$ for each circle or extreme arc  $\alpha\in\GG_{\pm}(P)$, all $P\in\mathfrak T^{\g}_*$.

\end{prop}

\begin{proof}
We define $\Phi$  inductively. Enumerate the circles and  extreme arcs of $\GG_{\pm}(P)$, $P\in\mathfrak T_*^{\g}$, in a sequence $\{\alpha_n\}_{n=0}^{\infty}$ in such a way that every such arc or circle lying in the $i^{\rm th}$-core $K_i$ is listed before every such arc or circle meeting $L\sm K_i$, $i\ge 0$. Let $\psi_0 = \id$ and inductively find sequences $\{\Phi_n\}$ of isotopies and $\{\psi_n\}$ of homeomorphisms such that $\psi_n = \Phi_n^1\circ\psi_{n-1}$, $\Phi_n$ preserves each $P\in\mathfrak T^{\g}_*$,  $\Phi_n$ fixes $\psi_{n-1}(\alpha_i)$ for $1\le i\le n-1$, and $\psi_n(\alpha_n) = \alpha_n^{\g}$.

In defining $\Phi_n$ there are three cases to consider. If $\alpha_n\in\JJ_-\cup\JJ_+$ is a circle in $P$ not meeting $\alpha_1\cup\cdots\cup\alpha_{n-1}$, define $\Phi_n$ using Theorem~\ref{2.1}.  

If $\alpha_n\in\GG_\pm(P)$, $P\in\mathfrak T^{\g}_*$,  is an extreme arc not meeting $\alpha_1\cup\cdots\cup\alpha_{n-1}$,   then  $\alpha_n = [x_1,x_2]$ has endpoints $x_1,x_2$ in geodesics  $\sigma_1,\sigma_2$ which are juncture components in $\fr P$ or components of $\bd L$  and  $\alpha_n\ss\gamma_n\in g(\Lambda_{+}\cup\JJ_-\cup\Lambda_-\cup\JJ_+)$.  By Lemma~\ref{arctight}, there is an isotopy, fixing each $\sigma_i$, $i=1,2$, with lift moving the arc $\wt\alpha = [\wt x_1,\wt x_2]\ss\wt{\gamma_n}$ to the geodesic  arc $\wt{\alpha^{\g}} = [\wt{x_1^{\g}},\wt{x_2^{\g}}]\ss\wt{\gamma_n^{\g}}$, $\wt x_i$ sliding along $\wt{\sigma_i}$ to $\wt{x_i^{\g}}$, $i=1,2$, as in Lemma~\ref{slide}. Here $\wt{\gamma_n^{\g}}$ is the lift of $\gamma_n^{\g}$ sharing endpoints on $\Se$ with $\wt{\gamma_n}$.

If $\alpha_n$ is a circle in $\JJ_-\cup\JJ_+$ or extreme arc meeting $\alpha_1\cup\cdots\cup\alpha_{n-1}$ (which, by the Remark after the introduction of the notation $\GG_{\pm}(P),\GG^{\g}_{\pm}(P)$, can happen only when $P = g(K)$ where $K$ is the core), then use Lemma~\ref{arctight} on each of the subarcs $\alpha_n$ is divided into by $\alpha_1,\ldots,\alpha_{n-1}$. In all three cases apply the remark after Theorem~\ref{epsteinsmooth} to the surface $P$.

Thus, $\psi_{i}\to\psi$, a well defined homeomorphism isotopic to the identity by an isotopy $\Phi$ and $\Phi^1(\alpha) = \alpha^{\g}$ for each $\alpha\in\GG_{\pm}(P)$.
\end{proof}

Redefine $g$ as $\Phi^1\circ g$. 

Since there are finitely many  extreme arcs from Lemma~\ref{extremeP} in each tile that can be edges of $\bd U$  for some component $U$ of $K\sm(|\Lambda_{+}|\cup|\JJ_{-}|\cup|\Lambda_{-}|\cup|\JJ_{+}|)$ which   is not a rectangle, these  extreme arcs  do not accumulate. The following is elementary.

\begin{lemma}\label{hlinear}

After a further isotopy of $g$, we can assume that $g$ is linear in the hyperbolic metric on each arc  that is an edge of $\bd U$  for each component $U$ of $L\sm(|\Lambda_{+}|\cup|\JJ_{-}|\cup|\Lambda_{-}|\cup|\JJ_{+}|)$ which   is not a rectangle.

\end{lemma}

At this stage we define $h|\ol U = g|\ol U$ if $U$ is a component  of $L\sm(|\Lambda_{+}|\cup|\JJ_{-}|\cup|\Lambda_{-}|\cup|\JJ_{+}|)$  which   is not a rectangle.

To finish  the proof of Theorem~\ref{geodext}, we must extend $h$ over the rest of $L$. To do this, we mimic the proof of Lemma~6.1 of Casson-Bleiler~\cite[pp.~89-90]{bca}.  Since Casson-Bleiler deal with an irreducible endperiodic automorphism of a closed surface they do not have to handle regions that are not simply connected. 

Choose  lifts $\wt L$ of $L$ and  $\wt g:\wt L\to \wt L$ of $g$. This determines an extension $\wh g:\wh L\to\wh L$.  Let 
$$X = \Bigl((|\Lambda_{+}|\cup|\JJ_{-}|)\cap(|\Lambda_{-}|\cup|\JJ_{+}|)\Bigr)\cup\Bigl((|\JJ_{-}|\cup|\JJ_{+}|)\cap\bd L\Bigr)$$ 
 and $\wt X$ the set of lifts of the points of $X$ to $\wt L$. Define $\wt h:\wt X\to \wt X$ by,
 \begin{enumerate}
 
 \item $\wt h(\wt x) = (\wt g(\wt{\gamma_1}))^{\g}\cap(\wt g(\wt{\gamma_2}))^{\g}$ if $\wt x = \wt{\gamma_1}\cap\wt{\gamma_2}$ with $\wt{\gamma_1}\in\wt{\Lambda_+}\cup\wt{\JJ_-}$ and $\wt{\gamma_2}\in\wt{\Lambda_-}\cup\wt{\JJ_+}$;
 
 \item $\wt h(\wt x) = \wt g(\wt x)$ if $\wt x\in(|\wt{\JJ_{-}}|\cup|\wt{\JJ_{+}}|)\cap\wt{\bd L}$.
 
 \end{enumerate}
The fact that $\wh g:\wh L\to\wh L$ is continuous implies that the map $\wt h:\wt X\to \wt X$ is a homeomorphism.     As in   Casson-Bleiler~\cite[pp.~89-90]{bca}, we extend $\wt h$ linearly and equivariantly over any lift of an arc of $|\Lambda_{+}|\cup|\JJ_{-}|\cup|\Lambda_{-}|\cup|\JJ_{+}|$ with both endpoints in $X$ and interior disjoint from $X$ and equivariantly over the  lifts of rectangular components of $L\sm(|\Lambda_{+}|\cup|\JJ_{-}|\cup|\Lambda_{-}|\cup|\JJ_{+}|)$   using the technique of  Casson-Bleiler~\cite[pp.~90]{bca}. We have already defined $h$ on the nonrectangular components of $L\sm(|\Lambda_{+}|\cup|\JJ_{-}|\cup|\Lambda_{-}|\cup|\JJ_{+}|)$  to match the extensions over the rectangular components on shared boundary edges. These extensions lift to give $\wt h:\wt L\to \wt L$.
 
Both $\wh f,\wh h:\wh L\to \wh L$ are defined and agree on $\Se$. Thus,  Corollary~5 of~\cite{cc:epstein} implies that the maps $h$ and $f$ are isotopic on $L$.  

Theorem~\ref{geodext} is proven.

\subsection{The escaping sets}\label{furthconax}

From now on, $h:L\to L$ is an endperiodic automorphism, isotopic to $f$, and  permuting the elements of each of the sets $\Lambda_{+}$,  $\Lambda_{-}$,  $\JJ_{+}$, and  $\JJ_{-}$.

\begin{defn}[positive/negative escaping set, $\UU_{e}$, $\UU_{\pm}$]\label{pmesc}
For $e\in\EE(L)$, set $\UU_{e}=\bigcup_{n=-\infty}^{\infty}U_{e}^{n}$. The union of the sets $\UU_{e}$ as $e$ ranges over the negative (respectively  positive) ends will be denoted by $\UU_{-}$ (respectively  $\UU_{+}$).  We will call $\UU_{-}$ the \emph{negative escaping set} and $\UU_{+}$ the \emph{positive escaping set}.
\end{defn}

\begin{lemma}

The set $\UU_{+}$ \upn{(}respectively $\UU_{-}$\upn{)} consists of the set of points $x\in L$ such that the sequence $\{h^{n}(x)\}_{n\ge 0}$  \upn{(}respectively $\{h^{n}(x)\}_{n\le 0}$\upn{)} escapes \upn{(}{\rm Definition~\ref{seqesc}}\upn{)}.

\end{lemma}

\begin{lemma}\label{samesc}
The positive escaping set $\UU_{+}$ and the negative escaping set $\UU_{-}$ are each  independent of the choice of the set  of $f$-junctures.
\end{lemma}

\begin{proof}
By the definition of the sets $\UU_{\pm}$ and the remark after Definition~\ref{hnbhd}, it suffices to prove that the set $\UU_{e}=\bigcup_{n=-\infty}^{\infty}U_{e}^{n}$ is independent of the choice of distinguished neighborhood $U_e$ for every end $e\in\EE(L)$. Let $V_e$ be another choice of distinguished neighborhood of $e$ and $\VV_{e}=\bigcup_{n=-\infty}^{\infty}V_{e}^{n}$. By Lemma~\ref{disfund}, $\{U_{e}^{i}\}_{i=0}^{\infty}$ is a fundamental system of neighborhoods of $e$. Therefore, there exists $i\ge 0$ such that $U_e^i\ss V_e$. It follows that $\UU_e\ss\VV_e$. The reverse inequality is proven in the same way. Thus, $\UU_e = \VV_e$ as desired.
\end{proof}

\begin{rem}
The set $\UU_{e}$ is clearly open and connected. By Lemma~\ref{intofdom}, if $e\ne e'$ are both positive ends or both negative ends, $\UU_{e}\cap\UU_{e'}=\0$, hence the $\UU_{e}$'s are the connected components of $\UU_{-}$, as $e$ ranges over the negative ends, and the connected components of $\UU_{+}$, as $e$ ranges over the positive ends. Evidently, every negative juncture lies in $\UU_{-}$ and every positive one in $\UU_{+}$.
\end{rem}

\begin{rem}
$\UU_{-}\cap\UU_{+}\ne\0$.
 
 \end{rem}
 
\begin{defn}[escaping set, $\UU$]\label{espset}
The \emph{escaping set} is $\UU = \UU_-\cap\UU_+$. 
\end{defn}

\begin{lemma}

The set $\UU$ consists of the set of points $x\in L$ such that the sequence $\{h^{n}(x)\}_{n\in\Z}$ escapes.

\end{lemma}

 \begin{lemma}\label{LambdadoesnotmeeUU'}
 The leaves of $\Lambda_{\pm}$ do not meet $\UU_{\mp}$.
 \end{lemma}
 
 \begin{proof}
  If $x\in\UU_{-}$ then, by Definition~\ref{pmesc},  $x\in\UU_{e}=\bigcup_{n=-\infty}^{\infty}U_{e}^{n}$ for some negative end $e$. Thus,  $x$ has a neighborhood that meets at most one negative juncture component. Thus, $x\notin|\Lambda_{+}|$. A parallel argument shows $\UU_{+}\cap|\Lambda_{-}| = \0$.
  \end{proof}
 
 \begin{cor}\label{lambdacapcore}
 $\Lambda_{+}\cap\Lambda_{-}\ss\intr K$.
\end{cor}

\begin{proof}
Since $W^{-}\ss\UU_{-}$, $W^{+}\ss\UU_{+}$, and $\intr K$ is the complement of $W^{-}\cup W^{+}$, the corollary follows immediately from Lemma~\ref{LambdadoesnotmeeUU'}.
\end{proof}

\begin{lemma}\label{front'}
The frontier of $\UU_{\mp}$ is $|\Lambda_{\pm}|$.
\end{lemma}

\begin{proof}
Let $x\in|\Lambda_{\pm}|$.  By construction, there is a sequence $\{x_{n}\}_{n\ge0}\ss|\XX_{\mp}|$ which converges to $x$.  This sequence lies in $\UU_{\mp}$ and, by Lemma~\ref{LambdadoesnotmeeUU'}, $x\not\in\UU_{\mp}$, hence $x\in\fr\UU_{\mp}$. For the reverse inclusion, let $x\in\fr\UU_{\mp}$.  Then  $x\not\in\UU_{\mp}$ but every connected neighborhood of $x$ meets $\UU_{\mp}$, hence by Definition~\ref{pmesc} meets a distinguished neighborhood, hence meets a  juncture.   It follows that $x\in|\Lambda_{\pm}|$.
\end{proof}

Lemma~\ref{front'} and Corollary~\ref{samesc} imply,

\begin{cor}\label{indep}
The laminations $\Lambda_{\pm}$ are independent of the choice of the set  of $f$-junctures.
\end{cor}

  \begin{prop}\label{notcommon}
Leaves $\wt{\lambda}_{-}\in\wt{\Lambda}_{-}$ and $\wt{\lambda}_{+}\in\wt{\Lambda}_{+}$ cannot have a common ideal endpoint.
\end{prop}

\begin{proof}
Suppose $a\in E$ is a common ideal endpoint of $\wt{\lambda}_{-}\in\wt{\Lambda}_{-}$ and $\wt{\lambda}_{+}\in\wt{\Lambda}_{+}$. Let the end $\epsilon$ of $\lambda_{+}$ have a neighborhood $[x,\epsilon)\ss\lambda_{+}$, whose lift approaches $a$. By Corollary~\ref{noncpt} $[x,\epsilon)\ss\lambda_{+}$ must pass arbitrarily near some positive end $e$.  Let $\{U_{e}^{n}\}_{n=0}^{\infty}$ be a fundamental system of distinguished neighborhoods of $e$ (Lemma~\ref{disfund}). Thus, the neighborhood $[x,\epsilon)$ of $\epsilon$ must cross $J_{e}^{n} = \fr U_{e}^{n}$ for all $n\ge 0$.  Let $\sigma_{n}$ be a component of $J_{e}^{n}$ crossed by $[x,\epsilon)$  and $\wt\sigma_{n}$ be a lift of $\sigma_{n}$ that meets $\wt{\lambda}_{+}$.  By Theorem~\ref{esctoe}, the $\wt\sigma_{n}$ have endpoints on $\Se$ which nest on $a$. Since $\wt{\lambda}_{-}$ has ideal endpoint $a$, it follows that $\lambda_{-}$ meets a positive juncture which  violates Lemma~\ref{LambdadoesnotmeeUU'}.
\end{proof}

\begin{prop}\label{atleastonce}

Every leaf of $\Lambda_{\pm}$ meets at least one leaf of $\Lambda_{\mp}$.

\end{prop}

\begin{proof}
Let $\lambda$ be a leaf of $\Lambda_{-}$.  The proof is parallel for $\lambda\in\Lambda_{+}$.  By Lemma~\ref{nbounded}, $\lambda$ meets some negative juncture component $\sigma\in\XX_{-}$. Choose a point $x\in\sigma\cap\lambda$. Let $\lambda_{n} = h^{n}(\lambda)$ and $\sigma_{n} = h^{n}(\sigma)$.  Let  $x_{n}   = \lambda_{n}\cap\sigma_{n}$.  The sequence $\{x_{n}\}$ accumulates at a point $y\in\intr K$. The point $y$ lies in $|\Lambda_{-}|$ since the sequence $\{x_{n}\}$ lie in the closed set $|\Lambda_{-}|$. Since $x_{n}\in h^{n}(\sigma)$,  the sequence $\{x_{n}\}\ss|\XX_{-}|$. Thus, by the construction of $\Lambda_{+}$, the  point $y$ lies in a leaf $\lambda_{+}\in\Lambda_{+}$.

Let $(V,X,Y,\phi)$ be a bilamination chart for the bilamination $(\Lambda_{+},\Lambda_{-})$ where $V$ is an open neighborhood of $y$.  Choose $N$ so that $x_{N}\in V$.  Then the plaque $P\ss \lambda_{N}\cap V$ containing $x_{N}$ meets $\lambda_{+}$. Thus, $\lambda = h^{-N}(\lambda_{N})$ meets $h^{-N}(\lambda_{+})$ which  is a leaf of $\Lambda_{+}$.  The proposition is proven.
\end{proof}

 \begin{cor}\label{leafmeetsK}
 
 Each leaf of $\Lambda_{\pm}$ meets $\intr K$.

\end{cor}

\begin{proof}
The corollary follows since, by Corollary~\ref{lambdacapcore}, $|\Lambda_{-}|\cap|\Lambda_{+}|\ss\intr K$.
\end{proof}

\subsection{Translations}

We next consider the possibility, not yet excluded, that the laminations $\Lambda_{\pm}$ and $\Gamma_{\pm}$ might be empty.

\begin{prop}\label{total}The following are equivalent,
\begin{enumerate}
\item $\Lambda_{+} = \0=\Lambda_{-}$.\label{ttone}
\item All junctures $J$ escape.\label{tttwo}
\item Some juncture $J$ escapes.\label{ttthree}
\item $f$ is isotopic to a translation $g$ \emph{(Definition~\ref{U'=L})}.\label{ttfour}
\end{enumerate}

\end{prop}

\begin{proof}
Clearly, $(\ref{ttone})\Ra(\ref{tttwo})\Ra(\ref{ttthree})$.
We prove $(\ref{ttthree})\Ra(\ref{ttfour})\Ra(\ref{ttone})$.
\vspace{.1in}

\ni$(\ref{ttthree})\Ra(\ref{ttfour})$. Suppose that $J = \fr U_{e_{-}}$ is a  juncture that escapes where $U_{e_{-}}$ is a  distinguished neighborhood (Definition~\ref{hnbhd}) of a negative end $e_{-}$. The proof in the other case is analogous.  By Definition~\ref{escapes}, the juncture $J$ escapes if the set $\{J_{n}\}_{n\in\Z}$ escapes.  Exactly as in the proof of Lemma~\ref{imagejunct}, $f$ is isotopic to an endperiodic automorphism $g$ such that $g^{n}(J) = J_{n}$, $n\in\Z$.    Then there exists an integer $N\ge 0$ such that $J_{N}\ss W_{+}$.      Since $L\sm g^{N}(U_{e_{-}})$ is connected (by Definition~\ref{perends}) and $J_{N} = \fr g^{N}(U_{e_{-}})\ss W_{+}$, it follows that $L\sm g^{N}(U_{e_{-}})$ is contained in a component of $W_{+}$. That is, $L\sm g^{N}(U_{e_{-}})\ss U_{e_{+}}$ where $U_{e_{+}}$ is a distinguished neighborhood of a positive end $e'_{+}$. It follows that $L$ has just two ends $e_{-},e_{+}$. 

Let $V$ be an arbitrarily small  neighborhood of $e_{+}$ such that $\fr V$ separates $L$. Since $J$ escapes,  there exists an integer $n\ge 0$ such that $J_{n}\ss V$.  Since $L\sm g^{n}(U_{e_{-}})$ is  connected and $\fr V$ separates $L$, it follows that $L\sm g^{n}(U_{e_{-}})\ss V$. Since $V$ was an arbitrarily small neighborhood of $e_{+}$, it follows that $\bigcup_{n=0}^{\infty}g^{n}(U_{e_{-}}) = L$.  Since $\{J_{n}\}_{n\le 0}$ escapes and $g^{n}(J) = J_{n}$, $n\le 0$, it follows that  $U_{e_{-}}$ is $g$-neighborhood of $e_{-}$ and that $g$ is a translation.
\vspace{.1in}

\ni$(\ref{ttfour})\Ra(\ref{ttone})$.  Let $g$ be a translation isotopic to $f$.  By a preliminary isotopy of $g$, we can assume that $f|\bd L=g|\bd L$.  Since $\wh f|E=\wh g|E$ for suitable choices of completed lifts~\cite[Corollary 5]{cc:epstein}, we see that $\wh f|\Se=\wh g|\Se$.   Suppose $\sigma\in\JJ_{-}$. Since $g$ is a translation, $\{g^{n}(\sigma)\}_{n\ge 0}$ escapes. Denote by $\sigma_{n}$, the geodesic tightening of $g^{n}(\sigma)$, $n\ge 0$. By Theorem~\ref{essc}, since $\{g^{n}(\sigma)\}_{n\ge 0}$ escapes, then $\{\sigma_{n}\}_{n\ge 0}$ escapes. Since $\wh f|\Se=\wh g|\Se$, $\sigma_{n}$ is also the geodesic tightening of $f^{n}(\sigma)$, $n\ge 0$. Therefore, an arbitrary negative juncture component $\sigma$ escapes under $f$.  It follows that $\Lambda_{+} = \0$. Similarly $\Lambda_{-} = \0$.
\end{proof}

\begin{cor}\label{everymeets}
If $f$ is not isotopic to a translation and $e$ is a positive \upn{(}respectively negative\upn{)} end of $L$, then every neighborhood  of  $e$ meets $|\Lambda_{+}|$ \upn{(}respectively $|\Lambda_{-}|$\upn{)}.
\end{cor}

\begin{proof}
For definiteness, let $e$ be a positive end of $L$ and consider a juncture $J$ which is the frontier of a distinguished neighborhood of $e$.  The sequence $\{h^{k}(J)\}_{k\ge 0}$ escapes, but by Proposition~\ref{total},  for  some component $\sigma$ of $J$, the sequence  $\{h^{k}(\sigma)\}_{k\le 0}$  does not escape and therefore accumulates locally uniformly on at least one leaf $\lambda_{-}\in\Lambda_{-}$. By Proposition~\ref{atleastonce}, $\lambda_{-}$ meets at least one leaf $\lambda_{+}\in\Lambda_{+}$ and  does so transversely. Thus, $\lambda_{+}$ meets  $h^{k}(J)$, for some  $k<0$.  Applying suitable arbitrarily large positive powers of $h$ produces leaves of $\Lambda_{+}$ meeting $h^{np_{e}}(J)$ for arbitrarily large values of $n$ and  the assertion follows.
\end{proof}

\begin{cor}
$\Lambda_{+}=\0$  if and only if $f$ is isotopic to a translation  if and only if $\Lambda_{-}=\0$.
\end{cor}

\begin{proof}
If either lamination is empty, some juncture escapes. Since~(\ref{ttthree}) implies~(\ref{ttfour}) in Lemma~\ref{total}, $f$ is isotopic to a translation.  If $f$ is isotopic to a translation, the implication~(\ref{ttfour}) $\Ra$~(\ref{ttone}) implies $\Lambda_{+} = \0=\Lambda_{-}$. 
\end{proof}

Thus, endperiodic automorphisms that are isotopic to translations are uninteresting from the point of view of Handel-Miller theory.  

\begin{hyp}\label{hyp5}
\textbf{Hereafter, we assume that $f$ is not isotopic to a translation.}

\end{hyp}


\section{The Complementary Regions to the Laminations}\label{L-lams}

We identify the components of $L\sm|\Lambda_{\pm}|$.  They fall into two essentially different types:  the positive and negative escaping sets (Definition~\ref{pmesc}) and the principal regions. First we need some technicalities about open sets, their internal completions and borders.

\subsection{Internal completion and the border of open sets}\label{section51}

Let $U\sseq L$ be an open, connected subset and define the ``path metric''  $d$ as follows.  Given two points $x,y$ in this open, connected set, there are piecewise geodesic paths  connecting these points.  Each such path has a well-defined length and we define $d(x,y)$ to be the greatest lower bound of these lengths.  It is standard that this defines a topological metric on $U$  compatible with the topology of this open set.   Notice that two points can be very far apart in the metric $d$ and very close together in the hyperbolic metric of $L$.   

\begin{defn}[internal completion, $\ddot U$, $\ddot\iota$]\label{intcompl}
Define the \emph{internal completion} $\ddot{U}$ to be the completion of $U$  in the metric $d$ and define the \emph{boundary} of $\ddot{U}$  to be 
$$
\bd\ddot{ U}=(\ddot{U}\sm U)\cup (U\cap\bd L).
$$
\end{defn}

As is standard, the metric $d$ extends to a metric on $\ddot{U}$ which we will again denote by $d$.

While $\ddot{U}$ and $\bd\ddot {U}$ are not generally subsets of $L$, the inclusion map $\iota:U\hra L$ is a topological embedding. It extends canonically to a continuous map $\ddot{\iota}:\ddot{U}\to L$.  This map may be very pathological on $\ddot{U}\sm U$, but in the cases occurring in this paper, it will be an immersion  on this set.

\begin{defn}[border component]\label{bordcomp}
The image under $\ddot{\iota}$ of a component of $\ddot U\sm U$ will be called a 
\emph{border component} of $U$.   The \emph{set }$\delta U$ of border components of $U$ will be called the \emph{border} of $U$. The \emph{support} $|\delta U|\ss L$ of the border is the union of the elements of $\delta U$ as subsets of L.
\end{defn}

\begin{rem}
Distinct components of $\delta U$ can intersect as subsets of $L$. 

\end{rem}

\begin{rem}
We must be careful to distinguish $|\delta U|$ from the set-theoretic \emph{frontier} $\fr U$. Clearly, $|\delta U|\ss\fr U$.

\end{rem}

\begin{example}
Quite often a component $U$ of the escaping set $\UU$ (Definition~\ref{espset}) has $|\delta U|=\fr U$, but not always. 
In Example~\ref{simpex}, if $U$ is the component of $\UU$ containing the bottom boundary component of $L$, then $\fr U$ properly contains $|\delta U|$ and $\ol U$ properly contains $\ddot\iota(\ddot U)$. That is,  $\ddot\iota(\ddot U)$ is not closed in $L$. For all other components $U$  of $\UU$ in this example, $|\delta U| = \fr U$ and $\ol U = \ddot\iota(\ddot U)$.
\end{example}

\begin{rem}
For the open sets $U$ encountered in this paper, $\delta U$ is a set of lines and circles, often pieced together from arcs in leaves of $\Gamma_{+}$ and $\Gamma_{-}$. 
\end{rem}

These notions extend to open sets that are not connected.  If $U$ is such, $\ddot{U}$ denotes the disjoint union of the internal completions of each component.  Correspondingly, $\bd\ddot{U}$ is defined separately for each component, $\ddot{\iota}:\ddot{U}\to L$ is defined componentwise, and $\delta U$ is the set of images under $\ddot{\iota}$ of components of $\ddot{U}\sm U$.

\begin{lemma}
For each point $x\in\bd\ddot U$, there exists a  continuous map $s:[0,1]\to\ddot{U}$  such that $s(1)=x$ and $s(t)\in U$, $0\le t<1$.
\end{lemma}

\begin{proof}
If $x\in\bd\ddot{U}$, there is a Cauchy sequence (relative to the metric $d$) $\{x_{n}\}_{n=1}^{\infty}\ss U$ that converges to $x$.  Let $\epsilon_{n}>d(x_{n},x_{n+1})$ such that $\lim_{n\to\infty}\epsilon_{n}=0$.  Then there is a piecewise geodesic path $s_{n}\ss U$ joining $x_{n}$ to $x_{n+1}$ of length $<\epsilon_{n}$.  Every point in this path is within $\epsilon_{n}$ of $x_{n+1}$.  Joining these paths sequentially and suitably parametrizing produces a continuous map $s:[0,1)\to U$ such that $\lim_{t\to1}s(t)=x$.  Thus, $s$ is extended continuously to $[0,1]$ so that $s(1)=x$.  
\end{proof}

Applying $\ddot{\iota}$ to the above picture gives the following.

\begin{cor}\label{sd}
If $x$ is a point of an element $\ell\in\delta U$, there exists a  continuous map $s:[0,1]\to L$  such that $s(1)=x$ and $s(t)\in U$, $0\le t<1$.
\end{cor}

A leaf $\lambda_{\pm}$ of $\Lambda_{\pm}$ will either be \emph{isolated} in $\Lambda_{\pm}$ (i.e. approached by points of $|\Lambda_{\pm}|$ on neither side) or approached by points of $|\Lambda_{\pm}|$ on  one or both sides.

\begin{defn}[semi-isolated]\label{defnsemiis}
If a leaf $\lambda_{\pm}\in\Lambda_{\pm}$ is approached by points of  $|\Lambda_{\pm}|$ on at most one side, we say $\lambda_{\pm}$ is \emph{semi-isolated}.
\end{defn}

\begin{rem}
Note that our definition of semi-isolated includes all isolated leaves.  Recall that $\UU$ denotes the escaping set (Definition~\ref{pmesc}).

\end{rem}

\begin{cor}\label{bdUU}
An element of $\delta\UU$ is a subset of the union of the semi-isolated leaves in $\Lambda_+\cup\Lambda_-$.
\end{cor}

\begin{proof}
If $x\in|\delta\UU|$, then $x\notin\UU$, but $x$ is the limit of a sequence $\{x_{n}\}$  of points of $\UU$. Thus, either $x\not\in\UU_+$ or $x\not\in\UU_-$.  In either case, the sequence $\{x_{n}\}\ss\UU_+\cap\UU_-$ and so either $x\in\fr\UU_+$ or $x\in\fr\UU_-$.  By Lemma~\ref{front'}, $x\in|\Lambda_+|\cup|\Lambda_-|$.  Then, by Corollary~\ref{sd},  the points of $|\delta\UU|$ must lie on semi-isolated leaves of $\Lambda_\pm$.
\end{proof}

The sets $U$ that we will be considering have   $|\delta U|$ made up piecewise of  subarcs of  $|\Gamma_{+}|\cup|\Gamma_{-}|$.  Since each  such subarc of $|\delta U|$ has two sides, the following lemma is obvious.

\begin{lemma}

If the set $U$ is such that  $|\delta U|$  is made up piecewise of  subarcs of  $|\Gamma_{+}|\cup|\Gamma_{-}|$, then each such subarc is the image of at most two such subarcs in $\ddot{U}\sm U$.  

\end{lemma}

\begin{rem}
Thus a point $x\in|\delta U|$ may be viewed as (one of possibly two preimages) living in $\ddot {U}\sm U$.  
\end{rem}

\subsection{The positive and negative escaping sets}

Recall  that 
 $|\Lambda_\pm | = \fr\UU_\mp$ (Lemma~\ref{front'}).

\begin{lemma}\label{borderUUpm}
An element of $\delta\UU_{\pm}$ is a  semi-isolated leaf in $\Lambda_{\mp}$. 
\end{lemma}

\begin{proof}
We prove the lemma for $\delta\UU_{+}$. The proof for $\delta\UU_{-}$ is analogous.  If $\gamma\in\delta\UU_{+}$, then $\gamma\ss|\delta\UU_{+}|\ss\fr\UU_{+} = |\Lambda_{-}|$. Since the components of $|\Lambda_{-}|$ are the leaves of $\Lambda_{-}$ and $\gamma$ is connected, it follows that $\gamma$ is a subset of a leaf  $\lambda\in\Lambda_{-}$. Let $U_{\gamma}$ be the component of $\UU_{+}$ such that $\gamma\in\delta U_{\gamma}$. By Corollary~\ref{sd} applied to $x\in\gamma\ss\lambda$, it follows that $\lambda$ borders $U_{\gamma}$ and is semi-isolated on the side bordering $U_{\gamma}$. Thus, $\gamma = \lambda$.
\end{proof}

\begin{lemma}
If $e$ is a negative \upn{(}respectively, positive\upn{)} end, then $\UU_{e}$ \emph{(Definition~\ref{pmesc})} is a component of $L\sm|\Lambda_{+}|$ \upn{(}respectively, of $L\sm|\Lambda_{-}|$\upn{)}.
\end{lemma}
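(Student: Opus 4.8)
The plan is to show both that $\UU_{e}$ is a connected subset of $L\sm\Lambda_{+}$ (for $e$ a negative end) and that it is a maximal such subset, i.e.\ it coincides with the component of $L\sm\Lambda_{+}$ containing it. For definiteness take $e$ negative, so $\UU_{e}\sseq\UU_{-}$. First, $\UU_{e}$ is open and connected by the remark following Definition~\ref{pmesc}. Next, $\UU_{e}\cap\Lambda_{+}=\0$: indeed $\UU_{e}\sseq\UU_{-}$, and $\Lambda_{+}\cap\UU_{-}=\0$ was established inside the proof of Lemma~\ref{front} (using Axiom~\ref{locunif}, which gives $\Lambda_{+}\cap\XX_{-}=\0$, together with the fact that $\UU_{-}$ is the union of the $h$-fundamental domains with their bounding junctures). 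Hence $\UU_{e}$ is a connected subset of the open set $L\sm\Lambda_{+}$, so it lies in a single component $V$ of $L\sm\Lambda_{+}$, and it remains to prove $\UU_{e}=V$.

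For maximality, I would argue that the frontier of $\UU_{e}$ in $L$ is contained in $\Lambda_{+}$; since $V$ is connected and $\UU_{e}\sseq V$ is both open and (relatively) closed in $V$ once we know its frontier misses $V$, this forces $\UU_{e}=V$. So the key point is: $\operatorname{Fr}(\UU_{e})\sseq\Lambda_{+}$. Let $x$ be a frontier point of $\UU_{e}$; join a point $y\in\UU_{e}$ to $x$ by a path $s$ as in the proof of Lemma~\ref{front}. Because $x\notin\UU_{e}$, the path must leave $\UU_{e}$, and since $\UU_{e}$ is a union of $h$-fundamental domains $B_{e}^{n}$ meeting consecutively along junctures $J_{e}^{n}\ss\XX_{-}$, the only way to exit is across components of $\XX_{-}$; as in Lemma~\ref{front} these crossing points accumulate at $x$ and can be taken in distinct components of $\XX_{-}$, which cannot accumulate in $\XX_{-}$ itself, so by Axiom~\ref{locunif} $x\in\Lambda_{+}$. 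One subtlety to handle carefully here is distinguishing the frontier of $\UU_{e}$ from the frontier of $\UU_{-}$: Lemma~\ref{front} gives $\operatorname{Fr}(\UU_{-})=\Lambda_{+}$, but a priori a frontier point of $\UU_{e}$ could be an \emph{interior} point of $\UU_{-}$, lying in some other component $\UU_{e'}$. This is exactly where Corollary~\ref{disj} is needed: distinct $\UU_{e'}$ (for $e'$ negative) are disjoint and, being the connected components of $\UU_{-}$ (as noted after Corollary~\ref{disj}), they are mutually separated, so no frontier point of $\UU_{e}$ lies in another $\UU_{e'}$; thus $\operatorname{Fr}(\UU_{e})\sseq\operatorname{Fr}(\UU_{-})=\Lambda_{+}$, which is what we wanted.

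I expect the main obstacle to be this last bookkeeping: pinning down that the frontier of the single component $\UU_{e}$ — rather than of the whole escaping set $\UU_{-}$ — lands in $\Lambda_{+}$, and not accidentally in a neighboring escaping component or in $\XX_{-}$. Everything else (connectedness, openness, disjointness from $\Lambda_{+}$) is essentially recorded already; the real content is that crossing from $\UU_{e}$ into $L\sm\UU_{e}$ can only happen "at infinity along $\XX_{-}$", i.e.\ through a limit of infinitely many junctures, which by Axiom~\ref{locunif} is precisely a point of $\Lambda_{+}$. The positive case is identical with the roles of $\Lambda_{+}$, $\Lambda_{-}$, $\XX_{+}$, $\XX_{-}$, $\UU_{+}$, $\UU_{-}$ interchanged.
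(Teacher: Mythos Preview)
Your proposal is correct and follows essentially the same approach as the paper. The paper's proof is a two-line argument: $\UU_{e}$ is connected and lies in $L\sm\Lambda_{+}$, and $\bd\UU_{e}\sseq\bd\UU_{-}=\Lambda_{+}$; the inclusion $\bd\UU_{e}\sseq\bd\UU_{-}$ is exactly the ``bookkeeping'' point you flag and justify via Corollary~\ref{disj} (that the $\UU_{e}$ are the components of $\UU_{-}$), which the paper leaves implicit.
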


\begin{proof}
Indeed, $\UU_{e}$ is connected and lies in the complement of $|\Lambda_{+}|$.  But $\fr \UU_{e}\sseq\fr\UU_{-}=|\Lambda_{+}|$.
\end{proof}

\begin{prop}\label{side}
If $\lambda\in\Lambda_+$ \upn{(}respectively $\lambda\in\Lambda_{-}$\upn{)} is a border leaf of a component $\UU_e$ of $\UU_-$ \upn{(}respectively of $\UU_+$\upn{)}, then the set of negative junctures   \upn{(}respectively  positive junctures\upn{)} accumulates locally uniformly on $ \lambda$ on any side that borders $\UU_e$.
\end{prop}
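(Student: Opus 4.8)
Here is a plan for proving Proposition~\ref{side}.

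The plan is to exhibit, on the prescribed side of $\lambda_+$, an explicit sequence of negative $h$-junctures that accumulates locally uniformly on $\lambda_+$; the local uniformity itself will then be read off directly from Axiom~\ref{locunif}. Fix a point $x\in\lambda_+$. Since $\lambda_+$ is a border leaf of $\UU_{e}$, it is the $\wh\iota$-image of a component $\beta$ of $\bd\wh{\UU_{e}}$, and a side ``that borders $\UU_{e}$'' is one along which such a $\beta$ accesses $\lambda_+$; near $x$ this singles out one component $S$ of the complement of $\lambda_+$ in a product (normal) neighborhood $N=[a,b]\x(-\epsilon,\epsilon)\ss L$ of a compact subarc $[a,b]\ss\lambda_+$ containing $x$ in its interior, where $N$ is the neighborhood furnished by Axiom~\ref{locunif} (Definition~\ref{locuniflim}). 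It suffices to treat each such $x$ and each such $[a,b]$. By Corollary~\ref{sd} applied to a point of $\beta$ over $x$ there is a path $s\colon[0,1)\to\UU_{e}$ with $s(t)\to x$ as $t\to1$; for $t$ near $1$ its image lies in $\UU_{e}\cap N\ss N\sm\lambda_+$, and being connected it lies in the single component $S\cap N$. Choosing $t_{k}\uparrow1$ I would record points $y_{k}=s(t_{k})\in\UU_{e}\cap S\cap N$ with $y_{k}\to x$.

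The core of the argument is to force the fundamental domains through which the $y_{k}$ pass to change infinitely often. Each $y_{k}$ lies in some negative $h$-fundamental domain $B_{e}^{m_{k}}$ (if $y_{k}$ lies on a juncture, pick either domain it bounds). Were the indices $m_{k}$ bounded, a subsequence would lie in a single $B_{e}^{m}$, which is compact hence closed, forcing $x=\lim y_{k}\in B_{e}^{m}\ss\UU_{-}$; but $x\in\lambda_+\ss\Lambda_+$ and $\Lambda_+\cap\UU_{-}=\0$ (Lemma~\ref{front}), a contradiction. So after passing to a subsequence $|m_{k}|\to\infty$. Writing $y_{k}=(c_{k},h_{k})$ with $h_{k}\in(0,\epsilon)$, I would consider the vertical segment $\{c_{k}\}\x[0,h_{k}]\ss N$: it starts at $y_{k}\in B_{e}^{m_{k}}$ and ends at $(c_{k},0)\in\lambda_+$, which is not in $B_{e}^{m_{k}}$ because $B_{e}^{m_{k}}\ss\UU_{-}$ is disjoint from $\lambda_+$. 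Hence the segment leaves $B_{e}^{m_{k}}$; let $z_{k}=(c_{k},h_{k}')$ with $0<h_{k}'\le h_{k}$ be the first exit point, so $z_{k}\in\bd B_{e}^{m_{k}}$, $z_{k}\in S\cap N$, and $z_{k}\to x$ (since $c_{k}\to c$ where $x=(c,0)$, and $h_{k}'\to0$). By Axiom~\ref{cobd}, $\bd B_{e}^{m_{k}}=J_{e}^{m_{k}}\cup J_{e}^{m_{k}+n_{e}}$, so $z_{k}$ lies on a negative $h$-juncture $J_{e}^{p_{k}}$ with $p_{k}\in\{m_{k},m_{k}+n_{e}\}$; since $|m_{k}|\to\infty$ one has $|p_{k}|\to\infty$, and after a final subsequence the $J_{e}^{p_{k}}$ are pairwise distinct.

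To finish, fix any $\delta\in(0,\epsilon)$. For $k$ large, $z_{k}\in[a,b]\x(-\delta,\delta)$, so $z_{k}$ lies in a component $C_{k}$ of $\XX_{-}\cap([a,b]\x(-\delta,\delta))$ with $C_{k}\ss J_{e}^{p_{k}}$ (the curves in $\XX_{-}$ are disjoint); since the $J_{e}^{p_{k}}$ are distinct the $C_{k}$ are distinct, so by Axiom~\ref{locunif} all but finitely many $C_{k}$ are transverse to the normal fibers and cross from the $\{a\}$-fiber to the $\{b\}$-fiber. Moreover each such $C_{k}$ is connected, disjoint from $\lambda_+=[a,b]\x\{0\}$, and contains the point $z_{k}$ of positive height, hence lies entirely in $[a,b]\x(0,\delta)\ss S$. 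As $\delta$ was arbitrary, this is exactly the statement that the negative $h$-junctures $J_{e}^{p_{k}}$ accumulate locally uniformly on $\lambda_+$ on the side $S$. The case of a border leaf $\lambda_-$ of a component of $\UU_{+}$ is identical after exchanging $(\UU_{-},\Lambda_+,\XX_{-})$ with $(\UU_{+},\Lambda_-,\XX_{+})$.

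The step I expect to be the real obstacle is the middle one: extracting infinitely many \emph{distinct} negative $h$-junctures whose arcs approach $x$ \emph{from the prescribed side}. Distinctness comes from the compactness of fundamental domains together with $\Lambda_+\cap\UU_{-}=\0$, which prevents an approaching sequence from stagnating in a single domain; tracking the side is handled by the ``drop a vertical segment through $N$'' device, whose only delicate point is that the exit point $z_{k}$ has strictly positive height — again guaranteed by $\lambda_+\cap B_{e}^{m_{k}}=\0$ — so that $z_{k}$, and with it the relevant arc of $J_{e}^{p_{k}}$, lies on $S$ rather than on $\lambda_+$. The remaining bookkeeping (that the normal neighborhood is an embedded product, that $\bd B_{e}^{m_{k}}$ is exactly its two bounding junctures, and that ``the side that borders $\UU_{e}$'' is well defined via the border component $\beta$) is routine.
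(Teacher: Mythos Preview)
Your argument is correct and follows essentially the same route as the paper: pick points of $\UU_{e}$ approaching $x\in\lambda_{+}$, observe that if they met only finitely many fundamental domains then $\lambda_{+}$ would meet some $B^{e}_{j}\ss\UU_{-}$ (contradiction), hence infinitely many distinct junctures $J^{e}_{j}$ come arbitrarily close to $x$, and then invoke Axiom~\ref{locunif} for the local uniformity. The paper is considerably terser---it works directly with neighborhoods in the internal completion $\wh{\UU}_{e}$, which automatically selects the correct side and makes your vertical-segment device unnecessary---but the core idea is identical.
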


\begin{proof}
Suppose $\lambda\in\Lambda_+$ is a border leaf of $\UU_e$ where $\UU_e $ is a component of $\UU_-$.  The proof in the alternate case is analogous. If $x\in\lambda$, then there exists a sequence $\{x_{n}\}_{n\ge 0}\ss\UU_{e}\cap|\XX_{-}|$ which converges to $x$. The proposition then follows since $\Gamma_{+}$ is strongly closed.
\end{proof}

\begin{prop}\label{borderdense}

$|\delta\UU_{\mp}|$ is dense in $|\Lambda_{\pm}|$.

\end{prop}

\begin{proof}
We show that $|\delta\UU_{-}|$ is dense in $|\Lambda_{+}|$. The proof that  $|\delta\UU_{+}|$ is dense in $|\Lambda_{-}|$ is analogous. Let $x$ be a point of a leaf of $\Lambda_{+}$ and $\alpha$ a transverse arc containing $x$ in its interior. Since $|\XX_{-}|$ accumulates on every point of $|\Lambda_{+}|$, there exists a point $y\in\alpha$ lying on a negative juncture and thus in $\UU_{e}$ for some negative end $e$. Thus   $\alpha$ meets a border component of $\UU_{-}$. Since $\alpha$ was arbitrary it follows that $|\delta\UU_{\mp}|$ is dense in $|\Lambda_{\pm}|$.
\end{proof}

\subsection{Principal regions}

In the Nielsen-Thurston theory, the connected components of the complements of the laminations are called principal regions.  In the Handel-Miller theory, there are escaping regions which should not be thought of as principal regions.  In fact, the appropriate analogues of principal regions in our situation do  not even exist in many cases.  When they do, they have a nucleus and finitely many arms, analogous to the principal regions of Nielsen-Thurston.

\begin{defn}[positive/negative principal regions]\label{pnprinreg}
The set $\PP_+$ is the union of the components of the complement of  $|\Lambda_+|$ that contain no points in $|\XX_-|$. We define a \emph{positive principal region} to be a component of $\PP_+$. Similarly, the set $\PP_-$  is the union of the components of the complement of  $|\Lambda_-|$ that contain no points in $|\XX_+|$ and we define a \emph{negative principal region} to be a component of $\PP_-$. 
\end{defn}

\begin{lemma}\label{unions}

$L=\UU_-\cup|\Lambda_+|\cup\PP_+$ and $L=\UU_+\cup|\Lambda_-|\cup\PP_-$ where the unions are disjoint.

\end{lemma}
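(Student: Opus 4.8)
The plan is to establish the first identity $L = \UU_- \cup \Lambda_+ \cup \PP_+$ (the second being entirely symmetric, interchanging the roles of positive and negative) and then verify disjointness. Since $\Lambda_+$ is a lamination, $L \sm \Lambda_+$ is an open set, and $L = \Lambda_+ \cup (L\sm\Lambda_+)$. So everything reduces to showing that $L \sm \Lambda_+ = \UU_- \cup \PP_+$ with the two pieces disjoint. Recall that $\Lambda_+ = \bd\UU_-$ by Axiom~\ref{locunif} (as recorded at the start of this section), so $\UU_-$ is an open set disjoint from $\Lambda_+$, and hence $\UU_-$ is a union of components of $L\sm\Lambda_+$; by Lemma~\ref{front} and the remark that $\Lambda_+\cap\UU_- = \0$, every component of $L\sm\Lambda_+$ that meets $\UU_-$ is actually contained in $\UU_-$. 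So the components of $L\sm\Lambda_+$ split into those contained in $\UU_-$ and those disjoint from $\UU_-$.

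First I would observe that every component of $L\sm\Lambda_+$ contained in $\UU_-$ contains points of $\XX_-$: indeed each such component contains some $h$-fundamental domain $B_e^n$, and $B_e^n$ has nonempty boundary pieces lying in $\XX_-$ (its bounding junctures $J_e^n$, $J_e^{n+n_e}$), which lie in $L\sm\Lambda_+$ since $\Lambda_+\cap\XX_- = \0$ (Axiom~\ref{locunif}); being connected, $B_e^n$ together with these interior-meeting junctures lies in one component of $L\sm\Lambda_+$. Hence no component contained in $\UU_-$ is a positive principal region, i.e. $\UU_- \cap \PP_+ = \0$. Conversely, I would show that any component $W$ of $L\sm\Lambda_+$ disjoint from $\UU_-$ contains no point of $\XX_-$, hence is a positive principal region: if $W$ contained a point $x$ of some juncture $\sigma \in \XX_-$, then $\sigma$ itself lies in $L\sm\Lambda_+$ (again $\Lambda_+\cap\XX_- = \0$) and is connected, so $\sigma \ss W$; but $\sigma$ is a bounding juncture of some negative $h$-fundamental domain $B_e^n \ss \UU_e \ss \UU_-$, and points of $B_e^n$ arbitrarily near $\sigma$ lie in $\UU_-$ and in the same component $W$, contradicting $W\cap\UU_- = \0$. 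Therefore the components of $L\sm\Lambda_+$ disjoint from $\UU_-$ are precisely the components of $\PP_+$, and $L\sm\Lambda_+ = \UU_- \sqcup \PP_+$.

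Combining, $L = \Lambda_+ \sqcup \UU_- \sqcup \PP_+$, with disjointness of $\Lambda_+$ from the other two already noted. The identity $L = \UU_+ \cup \Lambda_- \cup \PP_-$ follows by the symmetric argument. The only delicate point is the dichotomy ``a component of $L\sm\Lambda_+$ either lies in $\UU_-$ or meets $\UU_-$ not at all,'' but this is immediate once one knows $\Lambda_+ = \bd\UU_-$ and $\Lambda_+\cap\UU_- = \0$ (Lemma~\ref{front} and Axiom~\ref{locunif}), since then $\UU_-$ is itself open and closed in $L\sm\Lambda_+$. I expect no genuine obstacle here; the lemma is essentially a bookkeeping consequence of the definitions and of Axiom~\ref{locunif} together with Lemma~\ref{front}.
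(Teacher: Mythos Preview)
Your argument is correct and follows essentially the same route as the paper. The paper's proof is a one-liner: having already established that each $\UU_e$ is a full component of $L\sm\Lambda_+$, it observes that any component $V$ of $L\sm\Lambda_+$ meeting a negative $h$-juncture must equal the $\UU_e$ containing that juncture, so the remaining components are exactly $\PP_+$. Your version reaches the same dichotomy via the frontier characterization $\Lambda_+=\bd\UU_-$ (Lemma~\ref{front}), which makes $\UU_-$ clopen in $L\sm\Lambda_+$; this is equivalent and just slightly more explicit.
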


\begin{proof}
Suppose $V$ is a component of $L\sm|\Lambda_{+}|$ that contains a point $x\in|\XX_{-}|$. Since some component $\UU_{e}$ of $L\sm|\Lambda_{+}|$ contains $x$,  $V = \UU_{e}$.
\end{proof}

\begin{lemma}\label{escsetprop}
The escaping set  $\UU = L\sm (|\Lambda_-|\cup\PP_-\cup|\Lambda_+|\cup\PP_+).$ 
\end{lemma}

\begin{proof}
\begin{eqnarray*}
\UU &=& \UU_-\cap\UU_+\\
  &=& (L\sm(|\Lambda_+|\cup\PP_+)) \cap (L\sm(|\Lambda_-|\cup\PP_-))\\
  &=& L\sm (|\Lambda_-|\cup\PP_-\cup|\Lambda_+|\cup\PP_+).
\end{eqnarray*}
\end{proof}

\begin{defn}[invariant set]\label{invset}
The \emph{invariant set} is $\II =L\sm(\UU_-\cup\UU_+)$.
\end{defn}

Of course, $\II$ will be the set of points $x$ such that niether the positive nor negative iterates under $h$ of $x$ escape.

\begin{lemma}\label{ii}
$\II = 
(|\Lambda_+|\cap|\Lambda_-|) \cup (|\Lambda_+|\cap\PP_-) \cup (|\Lambda_-|\cap\PP_+) \cup (\PP_+\cap\PP_-)$
\end{lemma}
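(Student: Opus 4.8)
The plan is to derive this purely formally from Lemma~\ref{unions}, which gives the two disjoint decompositions $L=\UU_-\cup\Lambda_+\cup\PP_+$ and $L=\UU_+\cup\Lambda_-\cup\PP_-$. The only content is set-theoretic bookkeeping: complement, De Morgan, and distributivity of intersection over union.

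First I would use the disjointness in Lemma~\ref{unions} to read off the complements of the escaping sets inside $L$. Since $L$ is the disjoint union of $\UU_-$, $\Lambda_+$, and $\PP_+$, we get $L\sm\UU_-=\Lambda_+\cup\PP_+$; similarly $L\sm\UU_+=\Lambda_-\cup\PP_-$. Next, by definition $\II=L\sm(\UU_-\cup\UU_+)=(L\sm\UU_-)\cap(L\sm\UU_+)$, so substituting the two expressions above yields $\II=(\Lambda_+\cup\PP_+)\cap(\Lambda_-\cup\PP_-)$. Finally, distributing the intersection over the unions gives
\[
\II=(\Lambda_+\cap\Lambda_-)\cup(\Lambda_+\cap\PP_-)\cup(\PP_+\cap\Lambda_-)\cup(\PP_+\cap\PP_-),
\]
which is exactly the asserted identity (after reordering the middle two terms).

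There is essentially no obstacle here; the statement is a direct corollary of Lemma~\ref{unions}. The only point worth flagging for the reader is that we genuinely use the \emph{disjointness} of the decompositions in Lemma~\ref{unions} (not merely that they are covers), since that is what makes $L\sm\UU_\mp$ equal to $\Lambda_\pm\cup\PP_\pm$ on the nose; once that is in hand the rest is automatic. I would present the argument in two or three lines without further comment.
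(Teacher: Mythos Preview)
Your proof is correct and follows exactly the same four-step chain as the paper's own proof: definition of $\II$, De Morgan, substitution via Lemma~\ref{unions}, and distributivity. Your remark that disjointness (not merely covering) is what makes $L\sm\UU_\mp=\Lambda_\pm\cup\PP_\pm$ is apt and is implicit in the paper as well.
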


\begin{proof}
\begin{eqnarray*}
\II  &=& L\sm(\UU_-\cup\UU_+)\\
  &=& (L\sm\UU_-)\cap(L\sm\UU_+) \\
  &=& (|\Lambda_+|\cup\PP_+)\cap(|\Lambda_-|\cup\PP_-) \\
  &=& (|\Lambda_+|\cap|\Lambda_-|) \cup (|\Lambda_+|\cap\PP_-) \cup (|\Lambda_-|\cap\PP_+) \cup (\PP_+\cap\PP_-)
\end{eqnarray*}
\end{proof}

\begin{rem}\label{nonescbd}
If there is a finite set of compact components of $\bd L$ which is permuted by $h$, then each of these components lies in $\PP_{+}\cap\PP_{-}$.
\end{rem}

\begin{lemma}
$\PP_+\subset \II \cup\UU_+$ and $\PP_-\subset \II \cup\UU_-$.
\end{lemma}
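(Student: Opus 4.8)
The two inclusions are symmetric, so I will describe the argument for $\PP_+\sseq\II\cup\UU_+$ and leave the mirror statement to the reader. By definition $\II=L\sm(\UU_-\cup\UU_+)$, so $\II\cup\UU_+=L\sm(\UU_-\sm\UU_+)$, and the claim is equivalent to the assertion that $\PP_+$ is disjoint from $\UU_-\sm\UU_+$; equivalently, every point of $\PP_+$ that happens to lie in $\UU_-$ must also lie in $\UU_+$. So the real content is: $\PP_+\cap\UU_-\sseq\UU_+$.

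\textbf{Key steps.} First I would use Lemma~\ref{unions}: since $\PP_+$ and $\UU_-$ are among the (disjoint) pieces of the decomposition $L=\UU_-\cup\Lambda_+\cup\PP_+$, we have $\PP_+\cap\UU_-=\0$ outright, and hence trivially $\PP_+\cap\UU_-\sseq\UU_+$. Wait — that already gives $\PP_+\sseq L\sm\UU_-=\Lambda_+\cup\PP_+$, which is not yet the statement. The point is rather to feed $\PP_+\sseq\Lambda_+\cup\PP_+$ into the \emph{other} decomposition. So the argument is: by Lemma~\ref{unions}, $\PP_+\cap\UU_-=\0$, hence $\PP_+\sseq L\sm\UU_-$. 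Now apply the second decomposition $L=\UU_+\cup\Lambda_-\cup\PP_-$ to conclude that each point of $\PP_+$ lies in $\UU_+$, in $\Lambda_-$, or in $\PP_-$. Points of $\Lambda_-$ and of $\PP_-$ both lie in $\II$: indeed $\Lambda_-\sseq L\sm\UU_+$ (leaves of $\Lambda_-$ are not in $h$-fundamental domains for positive ends) and $\Lambda_-\sseq L\sm\UU_-$ would need checking — but more cleanly, by Lemma~\ref{ii}, $\II$ contains $\Lambda_-\cap\PP_+$ and $\PP_+\cap\PP_-$, which are exactly the intersections of $\PP_+$ with the two non-$\UU_+$ pieces of the second decomposition. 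Therefore
\begin{align*}
\PP_+ &= \PP_+\cap L = \PP_+\cap(\UU_+\cup\Lambda_-\cup\PP_-)\\
 &= (\PP_+\cap\UU_+)\cup(\PP_+\cap\Lambda_-)\cup(\PP_+\cap\PP_-)\\
 &\sseq \UU_+\cup\II\cup\II = \UU_+\cup\II,
\end{align*}
using Lemma~\ref{ii} for the middle two terms. The inclusion $\PP_-\sseq\II\cup\UU_-$ follows by interchanging the roles of $+$ and $-$ throughout.

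\textbf{Main obstacle.} There is essentially no obstacle: the statement is a formal consequence of the two disjoint decompositions in Lemma~\ref{unions} together with the description of $\II$ in Lemma~\ref{ii}. The only thing to be careful about is bookkeeping — making sure one applies the $\Lambda_+$-decomposition to locate $\PP_+$ away from $\UU_-$ and then the $\Lambda_-$-decomposition to split what remains, and then recognizing the resulting pairwise intersections $\PP_+\cap\Lambda_-$ and $\PP_+\cap\PP_-$ as the terms already identified as subsets of $\II$ in Lemma~\ref{ii}.
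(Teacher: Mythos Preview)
Your argument is correct and rests on the same ingredients as the paper's, namely the disjoint decompositions of Lemma~\ref{unions} and the definition of $\II$. The paper's proof is just the one-line shortcut you almost wrote at the start: if $x\in\PP_+$ then $x\notin\UU_-$ by Lemma~\ref{unions}, so if in addition $x\notin\UU_+$ then $x\in L\sm(\UU_-\cup\UU_+)=\II$; there is no need to invoke the second decomposition or Lemma~\ref{ii}.
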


\begin{proof}
If $x\in\PP_+$ and $x\notin \UU_+$, then $x\in L\sm(\UU_-\cup\UU_+) = \II$. Thus $\PP_+\subset \II \cup\UU_+$. Similarly, $\PP_-\subset \II \cup\UU_-$.
\end{proof}

\subsection{Action of $h$ on semi-isolated leaves of $\Lambda_{\pm}$ and components of $\UU_{\pm}$}

\begin{lemma}
The map $h:\Lambda_{\pm}\to\Lambda_{\pm}$   carries semi-isolated leaves to semi-isolated leaves.
\end{lemma}

This is a consequence of the fact that  $h:L\to L$ is a homeomorphism.

Since $\UU_{\mp}\cup\PP_{\pm}$ consists of all complementary regions of $|\Lambda_{\pm}|$, the semi-isolated leaves of $\Lambda_{\pm}$ are exactly the border leaves of these regions.  In the case of border leaves of $\UU_{\pm}$, it is possible that both sides of the leaf borders this set, in which case the semi-isolated leaf is actually isolated and the natural map $\ddot{\iota}:\bd\ddot{\UU}_{\pm}\to L$ will not be  one-one, but rather will identify some boundary components pairwise.  In the case of $\delta{\PP}_{\pm}$, this cannot happen.

\begin{lemma}
Each leaf $\ell$ of $\delta\PP_{\pm}$ borders $\PP_{\pm}$ on only one side.  Thus, $\ddot{\iota}:\bd\ddot{\PP}_{\pm}\to L$ is one-one.
\end{lemma}

\begin{proof}
Indeed, in the case of $\PP_{+}$, negative $h$-junctures cluster on $\ell$ on one side. An analogous argument holds for  $\PP_{-}$.
\end{proof}

\begin{lemma}\label{hinv}
 The sets of border leaves $\delta\UU_{\pm}$ and $\delta\PP_{\pm}$ are $h$-invariant.
\end{lemma}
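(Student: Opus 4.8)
The plan is to prove that $h$ permutes the sets $\delta\UU_{\pm}$ and $\delta\PP_{\pm}$, using the two facts already established: that $h$ sends semi-isolated leaves of $\Lambda_{\pm}$ to semi-isolated leaves (the remark following the definition of semi-isolated, together with the fact that $h$ is a homeomorphism on $\Lambda_{\pm}$), and that $h$ permutes the $h$-junctures $J_e^n$ exactly as $f$ permutes the $f$-junctures $f^n(J_e)$ (the remark following Axiom~\ref{inducedh}). The key observation, recorded just before the statement, is that the leaves of $\delta\UU_{\pm}$ and $\delta\PP_{\pm}$ are precisely the semi-isolated leaves of $\Lambda_{\pm}$, partitioned according to which kind of complementary region they border. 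So the content of the lemma is that $h$ respects this partition.

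First I would treat $\delta\PP_{\pm}$. Let $\ell$ be a border leaf of a component $P_+$ of $\PP_+$. By Proposition~\ref{side} (or directly by the definition of $\PP_+$ via Axiom~\ref{locunif}), negative $h$-junctures accumulate locally uniformly on $\ell$ from the side facing $P_+$. Since $h$ permutes the negative $h$-junctures and is a homeomorphism on $\Gamma_-$, and since $\wh h = \wh f$ controls the endpoints (Axiom~\ref{inducedh}), the curves $h(\sigma_n)$ are again negative $h$-junctures accumulating locally uniformly on the leaf $h(\ell)\ss\Lambda_+$. Thus $h(\ell)$ is a leaf of $\Lambda_+$ with negative $h$-junctures clustering on one side; by Proposition~\ref{side} and the characterization of $\PP_+$ versus $\UU_-$, $h(\ell)$ borders a component of $\PP_+$ and not of $\UU_-$. (One should check that the side of $h(\ell)$ on which the junctures accumulate does bound a principal region rather than an escaping set — this follows because the other side of $\ell$, where no negative $h$-juncture clusters, maps to the side of $h(\ell)$ where none clusters, so $h(\ell)$ cannot be a border leaf of $\UU_-$, whose border leaves have negative $h$-junctures clustering on the side toward $\UU_-$ by Axiom~\ref{locunif}/Lemma~\ref{front}.) Applying the same argument to $h^{-1}$ gives the reverse inclusion, so $h(\delta\PP_+)=\delta\PP_+$, and symmetrically for $\delta\PP_-$.

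Next I would handle $\delta\UU_{\pm}$, which is slightly more delicate because $h$ is not yet defined off $\Gamma_+\cup\Gamma_-$, so one cannot literally push $\UU_e$ forward. Instead, observe that $\Lambda_+ = \delta\UU_-\cup\{\text{non-semi-isolated leaves}\}$ as a disjoint union, and that the semi-isolated leaves of $\Lambda_+$ are the union $\delta\PP_+\,\dot\cup\,\delta\UU_-$. Since $h$ preserves $\Lambda_+$, preserves the property of being semi-isolated, and (by the previous paragraph) preserves $\delta\PP_+$, it must preserve the complementary piece $\delta\UU_-$ inside the semi-isolated leaves. Symmetrically $h$ preserves $\delta\UU_+$. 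The main obstacle is the bookkeeping in the previous step: making sure the finitely many semi-isolated leaves really do decompose as $\delta\PP_\pm \sqcup \delta\UU_\mp$ with no overlap, and that a leaf bordering $\UU_-$ cannot simultaneously border $\PP_+$ — but this is exactly the content of Lemma~\ref{unions} (the disjointness of $L=\UU_-\cup\Lambda_+\cup\PP_+$), so it is available. One should also note the subtlety flagged in the text that a border leaf of $\UU_-$ may border $\UU_-$ on both sides (being genuinely isolated); this causes no problem, since $h$, being a homeomorphism of $\Lambda_+$, carries such a two-sided border leaf to another one, and the set-theoretic argument above is insensitive to the identification issue for $\wh\iota$.
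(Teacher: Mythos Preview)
Your underlying idea --- characterize border leaves by how junctures accumulate and then use the continuity of $h$ on $\Gamma_{\pm}$ --- is exactly what the paper does, but your execution contains two real errors.

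First, the direction of accumulation is reversed in your treatment of $\delta\PP_{+}$. By definition, a component $P_{+}$ of $\PP_{+}$ contains \emph{no} points of $\XX_{-}$, so negative $h$-junctures cannot accumulate on $\ell$ from the side facing $P_{+}$; they accumulate from the \emph{other} side (this is what the lemma immediately preceding the statement says: ``negative $h$-junctures cluster on $\ell$ on one side''). Proposition~\ref{side} is a statement about border leaves of $\UU_{-}$, and it says junctures accumulate on the side facing $\UU_{-}$, not $\PP_{+}$. This reversal propagates through your parenthetical verification, which then argues from a false premise.

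Second, and more seriously, the complementation argument for $\delta\UU_{-}$ fails. You claim the semi-isolated leaves of $\Lambda_{+}$ decompose as the disjoint union $\delta\PP_{+}\,\dot\cup\,\delta\UU_{-}$ and cite Lemma~\ref{unions}. But Lemma~\ref{unions} only says $L=\UU_{-}\cup\Lambda_{+}\cup\PP_{+}$ is a disjoint union \emph{of points}; it does not prevent a single isolated leaf from bordering $\PP_{+}$ on one side and $\UU_{-}$ on the other (and such leaves do occur). When $\delta\PP_{+}\cap\delta\UU_{-}\ne\emptyset$, invariance of $\delta\PP_{+}$ and of the full set of semi-isolated leaves does not imply invariance of $\delta\UU_{-}$.

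The paper's proof avoids both pitfalls by working at the level of \emph{sides} rather than leaves: an isolated side of a leaf of $\Lambda_{+}$ borders $\UU_{-}$ if and only if curves of $\XX_{-}$ accumulate on $\ell$ from that side; otherwise that side borders $\PP_{+}$. Since $h$ is a homeomorphism on $\Gamma_{+}=\Lambda_{+}\cup\XX_{-}$ and permutes the $h$-junctures, this side-by-side property is $h$-invariant, which simultaneously gives invariance of $\delta\UU_{-}$ and $\delta\PP_{+}$. Your step~1, once the direction is fixed, is essentially this argument; applying it directly to $\delta\UU_{-}$ as well makes the detour through complementation unnecessary.
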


\begin{proof}
 An isolated side of a leaf $\lambda$ borders $\UU_{\pm}$ if and only if a sequence of leaves of $\XX_{\mp}$ accumulates on $\lambda$ from that side. The lemma then follows since  $h:L\to L$ is a homeomorphism.
\end{proof}

\begin{cor}

The set $\delta\UU$ is $h$-invariant.

\end{cor}

\begin{lemma}

$h(\UU_{e}) = \UU_{f(e)}$. 

\end{lemma}

\begin{proof}
Recall that $\UU_{e} = \bigcup_{n=-\infty}^{\infty}U_{e}^{n}$ where $U_{e}^{n} = h^{np_{e}}(U_{e})$ for $U_{e}$ a distinguished neighborhood of $e$ and  that $\UU_{f(e)} = \bigcup_{n=-\infty}^{\infty}U_{f(e)}^{n}$ where $U_{f(e)} = h(U_{e})$ is a distinguished neighborhood of $f(e)$ and $U_{f(e)}^{n} = h^{np_{e}}(U_{f(e)}) = h^{np_{e}}(h(U_{e})) = h(U_{e}^{n})$ (clearly $p_{e} = p_{f(e)}$).  Thus, 
$$h(\UU_{e}) = h\bigl(\bigcup_{n=-\infty}^{\infty}U_{e}^{n}\bigr) = \bigcup_{n=-\infty}^{\infty}h(U_{e}^{n})  = \bigcup_{n=-\infty}^{\infty}U_{f(e)}^{n} = \UU_{f(e)}.$$
\end{proof}

\begin{rem}
We will show in Section~\ref{crown} that there are finitely many principal regions and that the map $h$ permutes the principal regions.

\end{rem}


\section{Semi-isolated Leaves}\label{negpos}

It will turn out that there are only finitely many semi-isolated leaves and that each contains an $h$-periodic point.  Our analysis will cast more light on $\UU_{\pm}$ and $\PP_{\pm}$.

\subsection{Counting the semi-isolated leaves}\label{finsemisolvs}

Recall that the core $K$ is the complement of the union of the interiors of the disjoint distinguished neighborhoods $U_{e}$ as $e$ ranges over the set $\EE(L)$ of ends of $L$, and that the choice of core is not unique. If $J_{e} = \fr U_{e}$, then $\fr_{\pm}K=\bigcup_{e\in\EE_{\pm}(L)}J_{e}$.

\begin{defn}[rectangle, rectangular]\label{rctnglr}

A \emph{rectangle} is a four sided, simply connected set $R\ss L$ with one pair of opposite edges in $|\Gamma_{+}|$ and the other pair of opposite edges in $|\Gamma_{-}|$.  The set $R$ will also be described as  ``rectangular''.
\end{defn}

Note that a rectangle can be open, closed, or neither.

\begin{rem}\label{notgeomrect}
We are not using ``rectangle'' in the geometric sense that the edges meet in right angles.  In hyperbolic geometry, there are no such rectangles.  Here, we are following Casson and Bleiler~\cite[p.~99]{bca} where rectangles had opposite edges in a pair of transverse foliations.  Our rectangles are convex geodesic quadrilaterals.
\end{rem}

Consider an arc $\alpha$ of $ |\Lambda_{\pm}|\cap K$  with endpoints on components of $\fr K$ and consider the isotopy classes of such arcs \emph{where the isotopy is to be through arcs with endpoints on components of $\fr K$}.   By Lemma~\ref{extremals}, this isotopy class contains two extreme arcs. These two extreme arcs together with a pair of arcs in $\fr K$ bound a rectangle $R_{\alpha}$ containing all the arcs in the isotopy class.  Remark that the  rectangle $R_{ \alpha}$ may degenerate into a single arc.

\begin{defn}[extreme rectangle, $R_{\alpha}$]\label{extquad}

We will call $R_{\alpha}$ the \emph{extreme rectangle} associated to the arc $\alpha$.

\end{defn}

\begin{lemma}\label{only}
Let $\alpha$ be an arc of $ |\Lambda_{+}|\cap K$ \upn{(}respectively $ |\Lambda_{-}|\cap K$\upn{)}. The extreme rectangle $R_{\alpha}$ meets $| \Lambda_{+}|$ \upn{(}respectively $ |\Lambda_{-}|$\upn{)} only in arcs isotopic to $ \alpha$.
\end{lemma}

\begin{proof}
We consider the case $\alpha$  an arc of $ |\Lambda_{+}|\cap K$. The case $\alpha$ an arc of $ |\Lambda_{-}|\cap K$ is analogous. The leaves of $ \Lambda_{+}$ cannot properly intersect one another, so any arc  $ \beta$ of $ |\Lambda_{+}|\cap K$ that meets $R_{ \alpha}$ must be contained in that extreme rectangle.  The ends of $ \beta$ cannot lie in the same edge of $R_{ \alpha}$ because then $\beta$ would form a digon with a component of $\fr_{+}K$. But there are no geodesic digons in hyperbolic geometry.  Thus $ \beta$ is isotopic to $ \alpha$.
\end{proof}

The following lemma follows immediately from Lemmas~\ref{LambdaK} and~\ref{extremals}.

\begin{lemma}\label{finman}
The arcs $ \alpha$ of $| \Lambda_{+}|\cap K$ \upn{(}respectively $| \Lambda_{-}|\cap K$\upn{)} fall into finitely many isotopy classes, determining finitely many disjoint extreme rectangles $R_{ \alpha}$.
\end{lemma}

\begin{theorem}\label{finmany}
There are only finitely many semi-isolated leaves. Every semi-isolated leaf is $h$-periodic.
\end{theorem}

\begin{proof}
We will prove the theorem for semi-isolated leaves in $\Lambda_{-}$. The proof in the other case is analogous. Let $\AAA$ be the set of semi-isolated leaves $\lambda\in\Lambda_{-}$ such that there exists  a component $\alpha$ of $\lambda\cap K$  which is an edge of a component $U$ of $\UU_+\cap K$ which is not a rectangle.  Since $U$ is not a rectangle, it follows that $\alpha$ is an edge of the extreme rectangle $R_{\alpha}$. Thus, by Lemma~\ref{finman}, the set $\AAA$ is finite.

Note that if $\lambda\in\AAA$, then $h(\lambda)\in\AAA$. In fact, if $\lambda\in\AAA$, there exists a component $\alpha$ of $\lambda\cap K$ which is the edge of a component $U$ of $\UU_+\cap K$ which is not a rectangle. If $h(\lambda)\notin\AAA$, then   any component  of $\UU_{+}\cap K$ that has $h(\alpha)$ on its boundary  is a rectangular component $R$  of $\UU_{+}\cap K$.  If $\lambda$ is an isolated leaf, there would be two such $R$. Otherwise $R$ is unique.  In either case it follows that the component $U$ of $\UU_{+}\cap K$ with $\alpha$ an edge of $U$ is a  component of $h^{-1}(R)\cap K$ and thus a rectangle. This contradiction implies $h(\lambda)\in\AAA$.

Let $\lambda$ be a semi-isolated leaf of $\Lambda_{-}$. We will prove that there are finitely many semi-isolated leaves by showing that $\lambda$ lies in the finite set $\AAA$, proving that each semi-isolated leaf is $h$-periodic along the way. Let $\lambda_{n} = h^{n}(\lambda)$, $n\in\Z$. We first show that $\lambda_{N}\in\AAA$ for some $N\ge 0$ and therefore by the previous paragraph that $\lambda_{n}\in\AAA$ for all $n\ge N$. Assume on the contrary that $\lambda_{N}\notin\AAA$ for any $N\ge 0$. Since the leaf $\lambda\notin\AAA$ and meets the core $K$, it follows that $\lambda$ contains an edge of a rectangular component $R$ of $\UU_{+}\cap K$.   Again since $\lambda_{1}\notin\AAA$, the component of $\UU_{+}$ meeting $h(R)$ contains a rectangular component $R_{1}$ of $\UU_{+}\cap K$ so that $h(R)\ss R_{1}$.  By iterating this argument, we obtain an infinite increasing nest 
$$
R\ss h^{-1}(R_{1})\ss h^{-2}(R_{2})\ss\cdots\ss h^{-r}(R_{r})\ss\cdots,
$$
where each $R_{r}$ is a rectangular component of $\UU_{+}\cap K$.  Since $R_{r}$ is rectangular with two edges  segments of negative junctures, $h^{-r}(R_{r})$ is rectangular with two edges segments of negative junctures that lie arbitrarily deep in neighborhoods of negative ends as $r\to\infty$.  The increasing union of this nest of rectangles gives an infinite rectangle whose sides are leaves of $\Lambda_{-}$.  By Theorem~\ref{esctoe}, a lift of this infinite rectangle has sides with the same ideal endpoints.   But geodesics with lifts having the same endpoints coincide, so our rectangles all degenerate to arcs in $\lambda$, a contradiction.

Thus, $\lambda_{n}\in\AAA$ for $n\ge N$. Since the set $\AAA$ is finite, so there exists integers $n\ge N$ and $p>0$ such that  $\lambda_{n} = \lambda_{n+p}$. Therefore,
$$\lambda_{k} = h^{k-n}(\lambda_{n}) = h^{k-n}(\lambda_{n+p}) = \lambda_{k+p},\quad\rm{all}\quad k\in\Z.$$
Thus $\lambda$ is $h$-periodic. It follows that $\lambda = \lambda_{ip}\in\AAA$ if $i\in\Z$ is chosen so that $ip\ge N$. 
\end{proof}

\begin{cor}
 For every negative \upn{(}respectively, positive\upn{)} end $e$ of $L$, there is a leaf of $\Lambda_{-}$  \upn{(}respectively, of $\Lambda_{+}$\upn{)} with an end that passes arbitrarily near $e$ \upn{(}see \emph{Definition~\ref{arbitrarilynear})}.
\end{cor}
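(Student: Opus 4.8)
The plan is to produce the required leaf of $\Lambda_-$ as a limit of leaves that reach progressively deeper into the fundamental neighborhoods of $e$, using the core $K$ to anchor the limit and the strong‑closedness axiom to control its behaviour at infinity. I will argue for a negative end $e$ and the lamination $\Lambda_-$; the positive case is identical, with $\Lambda_+$ in place of $\Lambda_-$ (and needs none of the $h$‑theory).

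First I would fix a fundamental neighborhood system $U_0^e\supset U_1^e\supset\cdots$ of $e$, arranged as in Subsection~\ref{Kore} so that $U_0^e\ss W^-$ and hence $K\cap U_0^e=\0$. By the last clause of Axiom~\ref{endss}, every neighborhood of $e$ meets $\Lambda_-$, so I can choose $p_j\in\Lambda_-\cap U_j^e$ and let $\lambda_j$ be the leaf of $\Lambda_-$ through $p_j$. By Lemma~\ref{LmeetsK} each $\lambda_j$ meets $\intr K$; picking $z_j\in\lambda_j\cap\intr K$ and using compactness of $K$ together with closedness of $\Lambda_-$, I would pass to a subsequence with $z_j\to z\in\Lambda_-\cap K$ and let $\lambda$ be the leaf of $\Lambda_-$ through $z$. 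This $\lambda$ is the leaf we want.

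Next I would lift to $\wt L=\Delta$: choose lifts $\wt z_j\to\wt z$ and let $\wt\lambda_j,\wt\lambda\ss\wt\Lambda_-$ be the leaves through them, so that $\wt\lambda_j$ is a definite lift of $\lambda_j$. By the strong closedness of $\Lambda_-$ (Axiom~\ref{strongcl}), $\wt\lambda_j\to\wt\lambda$ uniformly in the Euclidean metric of $\Delta$, whence the endpoints of the $\wt\lambda_j$ on $\Si$ converge to the two endpoints $a,b$ of $\wt\lambda$, which are well defined by Axiom~\ref{muttran}. The lift $\wt p_j\in\wt\lambda_j$ of $p_j$ escapes every compact subset of $\Delta$, since $p_j\to e$ escapes every compact subset of $L$; hence $\wt p_j\to\Si$, and since the Euclidean distance from $\wt p_j$ to $\wt\lambda$ tends to $0$, a further subsequence satisfies $\wt p_j\to a$. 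Letting $\epsilon$ be the end of $\lambda$ corresponding to the ray of $\wt\lambda$ issuing toward $a$, I would conclude that $\epsilon$ passes arbitrarily near $e$: given a fundamental neighborhood $U_k^e$ and a neighborhood $N$ of $\epsilon$, for all large $j$ one has $p_j\in U_j^e\sseq U_k^e$, while $\wt p_j$ lies in an ever smaller Euclidean neighborhood of the part of $\wt\lambda$ running out to $a$, a part whose projection lies in $N$; so $p_j\in N\cap U_k^e\ne\0$.

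The main obstacle is exactly this last step: translating uniform Euclidean convergence of the lifts \emph{near} $\Si$ into a statement about the downstairs end $\epsilon$ and the neighborhoods of $e$. Euclidean proximity near $\Si$ is weak, since it permits arbitrarily large hyperbolic distance, so one must argue that the lifts $\wt p_j$, which project into the deep neighborhoods $U_j^e$, accumulate precisely at the endpoint $a$ of $\wt\lambda$ because the $a$‑directed ray of $\wt\lambda$ lies in the lift of the region determined by $e$; this forces its projection, the $\epsilon$‑ray of $\lambda$, to re‑enter every fundamental neighborhood of $e$. This is the control that Axiom~\ref{strongcl} is designed to provide — it is stated in Euclidean terms precisely so that limits of endpoints are governed — and it is the point I would spell out most carefully.
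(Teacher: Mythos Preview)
Your approach has a genuine gap at precisely the step you flag as the main obstacle, and I do not see how to close it along the lines you sketch. The conclusion ``$p_j\in N\cap U_k^e\ne\0$'' fails because a neighborhood $N$ of the end $\epsilon$ of $\lambda$ is a ray in $\lambda$, while $p_j$ lies on $\lambda_j$, not on $\lambda$. What you actually establish is that $\wt p_j$ is Euclidean-close to the $a$-ray of $\wt\lambda$; but, as you yourself observe, Euclidean proximity near $\Si$ permits arbitrarily large hyperbolic distance, so there is no mechanism for transferring the deep points $p_j\in U_j^e$ from $\lambda_j$ onto $\lambda$. Strong closedness (Axiom~\ref{strongcl}) governs convergence of \emph{leaves} and of their \emph{ideal endpoints}; it says nothing about the covering projection carrying Euclidean neighborhoods of ideal points to neighborhoods of the end $e$ in $L$. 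The assertion that ``the $a$-directed ray of $\wt\lambda$ lies in the lift of the region determined by $e$'' is exactly what must be proved and does not follow from the convergence you have arranged.

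The paper's argument is much shorter and bypasses the universal cover. This corollary is placed immediately after Theorem~\ref{finmany} and is a direct consequence of it: every neighborhood $U_j^e$ is open, meets $\Lambda_-$ by Axiom~\ref{endss}, and is not contained in $\Lambda_-$, so it meets a complementary region of $\Lambda_-$ and hence a semi-isolated border leaf. Since there are only finitely many semi-isolated leaves, pigeonhole gives a single such leaf $\lambda$ meeting every $U_j^e$; the resulting points on $\lambda$ escape to an end of $\lambda$, which therefore passes arbitrarily near $e$. Note that if you attempt to repair your argument by choosing each $p_j$ on a semi-isolated leaf so that infinitely many of the $\lambda_j$ coincide with $\lambda$, you recover exactly this proof, and the limiting and lifting machinery becomes superfluous.
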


\begin{proof}
Consider the case that $e$ is a negative end, the other case is analogous.  By Corollary~\ref{everymeets}, every neighborhood of $e$ meets $|\Lambda_{-}|$, hence meets semi-isolated leaves of $\Lambda_{-}$.  Since there are only finitely many of these leaves, the assertion follows.
\end{proof}

\begin{rem}
Of course, every end of every leaf of the laminations passes arbitrarily near an end of $L$.  The above corollary only points out that no end of $L$ is left out.
\end{rem}

\subsection{Periodic leaves}\label{periodicleaves}

We consider leaves $\lambda_{\pm}\in\Lambda_{\pm}$ which are periodic under $h$. For definiteness, assume that $\lambda_-\in\Lambda_-$ is $h$-periodic of period $p$. If  $h^{p}$ reverses orientation  of $\lambda_-$, replace $p$ by $2p$. Thus, we assume $h^{p}(\lambda_{-})=\lambda_{-}$ and preserves the orientation of $\lambda_{-}$.  

\begin{nota}
Set $g=h^{p}$. 

\end{nota}

We will study this situation in the universal cover $\wt{L}\ss\Delta$. Recall that the ideal boundary of $\wt L$ is $E = \wt L\cap\Si$ (Definition~\ref{idbd}). In the universal cover $\wt L$, we fix a lift $\wt{\lambda}_{-}$ and choose the lift $\wt{g}:\wt{\Gamma}_{\pm}\to\wt{\Gamma}_{\pm}$  so that $\wt{g}(\wt{\lambda}_{-})=\wt{\lambda}_{-}$.   Set $\wh g=\wh h^{p}$ and remark that $\wh{g}$ fixes each endpoint $c$ and $d$  of $\wh\lambda_{-}$ in $E$.

\begin{figure}[b]
\begin{center}
\begin{picture}(300,160)(42,-180)
\rotatebox{270}{\includegraphics[width=300pt]{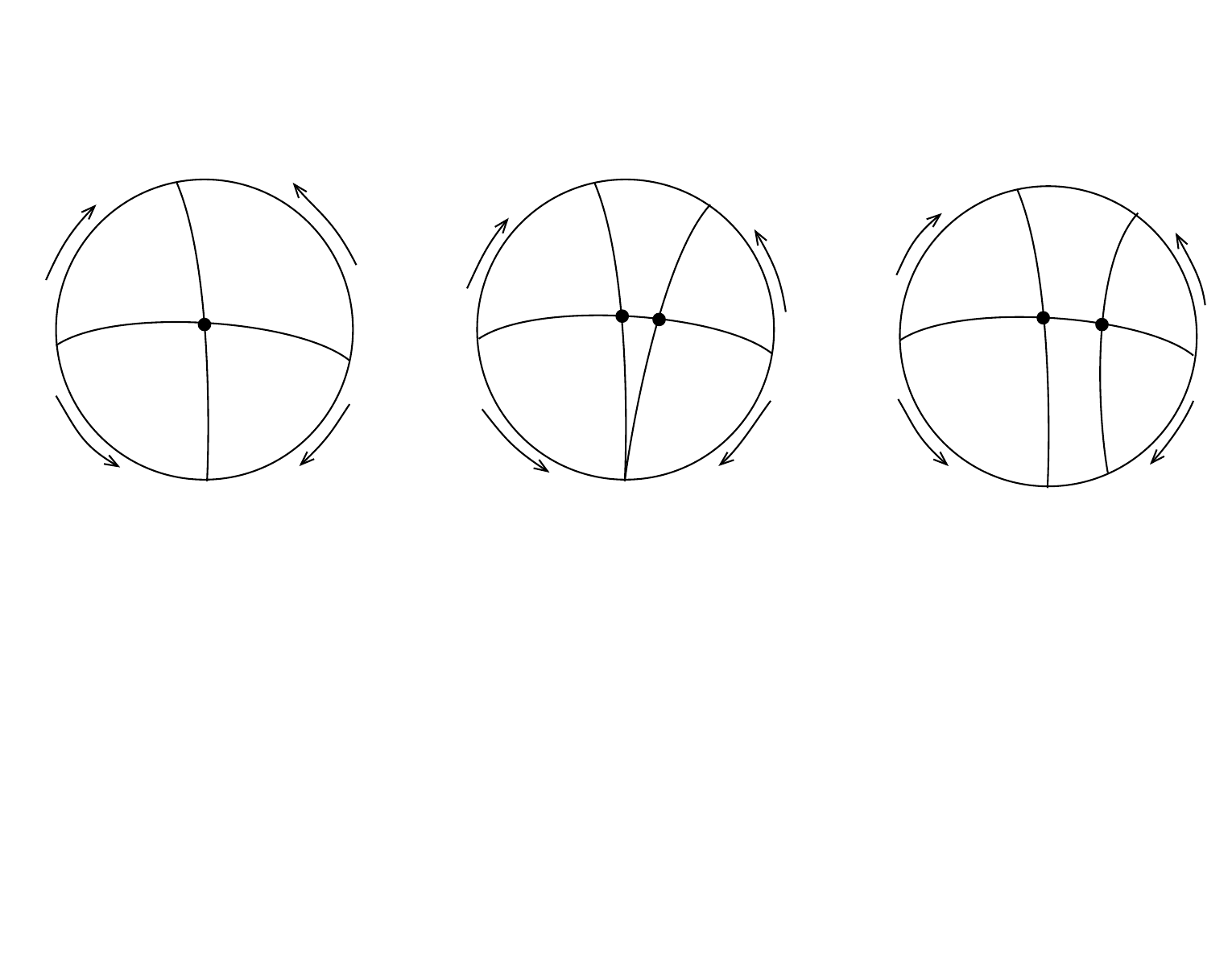}}

\put (-332,-83){\small$\tilde{\lambda}_1$}
\put (-304,-115){\small$\tilde{\lambda}_-$}
\put (-330,-110){\small$\tilde{x}$}
\put (-334,-53){\small$b$}
\put (-319,-157){\small$a$}
\put (-374,-110){\small$c$}
\put (-270,-115){\small$d$}
\put(-322,-175){\small$(i)$}

\put (-203,-83){\small$\tilde{\lambda}_1$}
\put (-169,-89){\small$\tilde{\lambda}_2$}
\put (-160,-115){\small$\tilde{\lambda}_-$}
\put (-175,-110){\small$\tilde{y}$}
\put (-198,-110){\small$\tilde{x}$}
\put (-220,-94){\small$\tilde{P}_-$}
\put (-186,-74){\small$\tilde{P}_+$}
\put (-159,-99){\small$\tilde{P}_-$}
\put (-204,-53){\small$b_{1}$}
\put (-162,-59){\small$b_{2}$}
\put (-190,-157){\small$a$}
\put (-241,-110){\small$c$}
\put (-140,-115){\small$d$}
\put(-194,-175){\small$(ii)$}

\put (-73,-83){\small$\tilde{\lambda}_1$}
\put (-35,-89){\small$\tilde{\lambda}_2$}
\put (-89,-112){\small$\tilde{\lambda}_-$}
\put (-37,-112){\small$\tilde{y}$}
\put (-66,-110){\small$\tilde{x}$}
\put (-70,-53){\small$b_{1}$}
\put (-30,-59){\small$b_{2}$}
\put (-62,-157){\small$a_{1}$}
\put (-38,-154){\small$a_{2}$}
\put (-110,-112){\small$c$}
\put (-9,-118){\small$d$}
\put(-62,-175){\small$(iii)$}

\end{picture}
\caption{Three cases}\label{PR}
\end{center}

\end{figure}

We get  three possibilities by taking the completion $\wh{ \sigma}$ of a lift $\wt \sigma$ of a component $\sigma$ of $\XX_{-}$ which meets  $\tilde{\lambda}_-$ and iterating using $\wt{g}$. Remark that $\sigma$ cannot be an escaping component since it meets a leaf of $\Lambda_{-}$. Thus, depending on where we start, the strongly closed property (Definition~\ref{strnglyclsed}) implies that  the sequence $\{\wh g^{n}(\wh{\sigma})\}_{n=1}^{\infty}$  either strongly converges to (1) the completion  $\wh\lambda_{1}$ of a leaf $\wt\lambda_{1}$ of $\wt{\Lambda}_+$ from the left or (2) the completion $\wh{ \lambda}_{2}$ of a leaf  $\wt{\lambda}_2\in\wt{\Lambda}_+$ from the right. (See Figure~\ref{PR}.  It is drawn explicitly for the case that $\wt{L}=\Delta$, but works just as well when $\wt{L}$ is a proper subset of the unit disk and $E=\wh{L}\cap\Si$ is a Cantor set. In the latter case, the circles in the figure represent $\Se$.) 

\begin{lemma}
The sequence $\{\wt g^{-n}(\wt\sigma)\}_{n=1}^{\infty}$ converges  to  $c$ in case~\emph{(1)} and to $d$ in case~\emph{(2)}, hence the endpoints of $\wh\lambda_{-}$  are repelling under $\wh g:\Se\to \Se$.
\end{lemma}

Indeed, $\{h^{k}(\sigma)\}_{k\le 0}$ escapes (cf. Theorem~\ref{esctoe}). By abuse, we will often say that $\wt g$ fixes the endpoints $c$ and $d$.  Following the endpoints (ideal or finite) of $\wt g^{-n}(\wt\sigma)$, we see that the action of $\wh g$ on $\Se$ is as indicated by the arrows in Figure~\ref{PR}.

The leaf $\tilde{\lambda}_-\in\tilde{\Lambda}_-$ must be approached on at least one side by  lifts of  components of $\XX_{+}$. Without loss, we may assume there exists a component $\tau$ of $\XX_{+}$ so that  the sequence $\{\wh g^{n}(\wh{\tau})\}_{n\le 0}$ strongly converges to  $\wh{\lambda}_-$ from below (in Figure~\ref{PR} as drawn).  

\begin{lemma}\label{caseiiinot}
Case $(iii)$ in \emph{Figure~\ref{PR}} cannot occur.
\end{lemma}

Again, since $\{h^{k}(\tau)\}_{k\ge 0}$ escapes, Theorem~\ref{esctoe}  implies that the sequence $\{\wh g^{n}(\wh{\tau})\}_{n=1}^{\infty}$ must converge to  a point  $a\in E$ as in Figure~\ref{PR}~$(i)$ or~$(ii)$. This proves the impossibility of Figure~\ref{PR}~$(iii)$.  We now analyze Figure~\ref{PR}~$(i)$~and~$(ii)$.

\begin{prop}\label{3cases}

Let $\wt{\lambda}_-$ and its ends be fixed by $\wt{g}$. Then,

\begin{enumerate}
\item[$(i)$] If $\tilde{\lambda}_1 = \tilde{\lambda}_2$, the endpoints $a,b$ of $\tilde{\lambda}_1$ in $E$ are attracting fixed points of $\wt{g}$ and the endpoints $c,d$ of $\tilde{\lambda}_-$ in $E$ are repelling fixed points and no point of $E\sm\{a,b,c,d\}$ is fixed by $\wt{g}$. The point $\wt{x}=\wt{\lambda}_{1}\cap\wt{\lambda}_{-}$ is fixed by $\wt{g}$.
\item[$(ii)$] If $\tilde{\lambda}_1 \ne \tilde{\lambda}_2$,  the endpoints $c,d$ of $\tilde{\lambda}_-$ are repelling fixed points of $\wt{g}$, there is a common endpoint $a$ of $\tilde{\lambda}_1,\tilde{\lambda}_2$ which is is an attracting fixed point, and the other endpoints $b_{1},b_{2}$ of $\tilde{\lambda}_1,\tilde{\lambda}_2$ are attracting on the sides facing the endpoints of $\tilde{\lambda}_-$. No point in the intervals $(c,b_{1}),(c,a),(d,b_{2}),(d,a)$ is fixed by $\wt{g}$.  The points $\wt{x}=\wt{\lambda}_{1}\cap\wt{\lambda}_{-}$ and $\wt{y}=\wt{\lambda}_{2}\cap\wt{\lambda}_{-}$ are each fixed by $\wt{g}$.  
\end{enumerate}
Parallel assertions hold when $\wt{\lambda}_{-}$ is replaced by a leaf $\wt{\lambda}_{+}$ of $\wt{\Lambda}_{+}$.

\end{prop}

\begin{proof}
If $\tilde{\lambda}_1 = \tilde{\lambda}_2$, we let $\tilde{x} = \tilde{\lambda}_{-}\cap \tilde{\lambda}_{1}$. If $\tilde{\lambda}_1 \ne \tilde{\lambda}_2$, we let $\tilde{x} = \tilde{\lambda}_{-}\cap \tilde{\lambda}_{1}$ and  $\tilde{y} = \tilde{\lambda}_{-}\cap \tilde{\lambda}_{2}$. If $\tilde{\lambda}_1 = \tilde{\lambda}_2$ we have case $(i)$. If $\tilde{\lambda}_1 \ne \tilde{\lambda}_2$ but $\tilde{\lambda}_1,\tilde{\lambda}_2$ share the endpoint $a\in E$, then we have case $(ii)$. 

Parallel considerations hold when $\wt{\lambda}_{-}$ is replaced by a leaf $\wt{\lambda}_{+}$ of $\wt{\Lambda}_{+}$.
\end{proof}

\begin{cor}\label{cor722}
Every semi-isolated leaf  has an $h$-periodic point.
\end{cor}

\begin{proof}
By Theorem~\ref{finmany} every semi-isolated leaf is $h$-periodic and so  Proposition~\ref{3cases} implies that every semi-isolated leaf has an $h$-periodic point.
\end{proof}

The set $\wt P_{+}$ (and a corresponding set  $\wt P_{-}$ when $\wt\lambda_{-}$ is replaced by $\wt\lambda_{+}$) in Figure~\ref{PR}~$(ii)$ are lifts of  \emph{principal regions} and will be explained shortly.

\begin{cor}\label{attracting}
 In case~$(i)$, $\wt{x}$ is an attracting \upn{(}in $\wt\lambda_{-}$\upn{)} $\wt{g}$-fixed point on both sides and $\wt{\lambda}_{1}$ can not border the lift of a principal region on either side. In case~$(ii)$, $\wt{x}$ and $\wt{y}$ are attracting  in $\wt\lambda_{-}$  on the sides not meeting $(\wt{x},\wt{y})$.  If $\wt{\lambda}_{-}$ is replaced by $\wt{\lambda}_{+}\in\wt{\Lambda}_{+}$, these points are repelling  in $\wt\lambda_{+}$.
\end{cor}

Interchanging the roles of $\lambda_{-}$ and $\lambda_{1}$ in the previous corollary one sees,

\begin{cor}
  In case~$(i)$,  $\wt{\lambda}_{-}$ can not border the lift of a principal region on either side.
\end{cor}

The proof of the following corollary  is similar to the proof of Lemma~\ref{caseiiinot}.

\begin{cor}\label{cor714}
 In case~$(ii)$,  $\wt{\lambda}_{-}$ is semi-isolated and $\wt\lambda_{-}$ is bordered above by the lift of a negative principal region $P_{-}$.
\end{cor}

\begin{proof}
 In case~$(ii)$ of Figure~\ref{PR}, if there exists a sequence $\{x_{n}\}$ of lifts of  points of $|\XX_{+}|$ converging to a point of  $\wt{\lambda}_{-}$ from above, then, by the strongly closed property, there exists a juncture component $\tau\in\XX_{+}$ with lift $\wt\tau$ above $\wt{\lambda}_{-}$ with endpoints on $\Se$ separated by both $b_{1}$ and $b_{2}$.  Since $\{h^{k}(\tau)\}_{k\ge 0}$ escapes, Theorem~\ref{esctoe}  implies that the sequence $\{\wh g^{n}(\wh{\tau})\}_{n\ge 0}$ must converge to  one point  in $E$  implying $b_{1}=b_{2}$ and contradicitng that we are in case~$(ii)$. The corollary follows.
\end{proof}

\begin{cor}
If $\lambda_{-}\in\Lambda_{-}$ and $p$ are as in the second paragraph of \emph{Section~\ref{periodicleaves}}, then $\lambda_{-}$ contains a periodic point of period $p$ or $p/2$.
\end{cor}

Further, since the segment $[\wt{x},\wt{y}]$ is contained in the invariant set, no lifts of components of positive junctures can meet the segment $[\wt{x},\wt{y}]$. It follows that,

\begin{cor}\label{cor715}
 In case~$(ii)$,  $\wt{\lambda}_{1}, \wt{\lambda}_{2}\in\wt{\Lambda}_{+}$ are semi-isolated and lifts of  border leaves of a positive principal region $P_{+}$.
\end{cor}

\begin{lemma}\label{left/right}
The leaf $\wt{\lambda}_{-}$ can either be approached from below by leaves of $\wt{\Lambda}_{-}$ or be  bordered  from below by a lift of $\UU_{+}$. Similarly, $\wt{\lambda}_{1}$ \upn{(}respectively $\wt{\lambda}_{2}$\upn{)} can be approached from the left \upn{(}respectively from the right\upn{)} by leaves of $\wt{\Lambda}_{+}$ or be the border from the left \upn{(}respectively from the right\upn{)} of a lift of $\UU_{-}$.
\end{lemma}

Let $\wt{A}$  be the portion of the cusp in case~$(ii)$ of Figure~\ref{PR} bordered by the half-infinite segments $[\wt{x},a)\ss\wt{\lambda}_{1}$ and $[\wt{y},a)\ss\wt{\lambda}_{2}$ and the segment  $[\wt{x},\wt{y}]$.

\begin{defn}[arm]\label{defnarm}

The projection $A\ss L$ of  $\wt{A}$ under the covering map is called an \emph{arm} of the principal region $P_{+}$.

\end{defn}

\begin{lemma}\label{simpconnarms}

The projection of $\wt{A}$ onto $A$ is one--one. Thus any arm $A$ of a principal region is simply connected.

\end{lemma}

\begin{proof}
We must show that if $T$ is any nontrivial covering transformation, then $T(\wt{A})\cap\wt{A}=\emptyset$. If $T(a)\ne a$ and if $T(\wt{A})\cap\wt{A}\ne\emptyset$ then one of $T(\wt{\lambda}_{1}),T(\wt{\lambda}_{2})$ must meet one of $\wt{\lambda}_{1},\wt{\lambda}_{2}$ which is a contradiction. Thus we can assume $T(a) = a$ but in that case, if $T(\wt{A})\cap\wt{A}\ne\emptyset$, we must have $T(\wt{\lambda}_{1}) = \wt{\lambda}_{1}$, $T(\wt{\lambda}_{2}) = \wt{\lambda}_{2}$. This is not possible since $T$ must have exactly two fixed points on $E$.
\end{proof}

\subsection{Escaping ends}\label{ee}

Let $ \lambda$ be a leaf of $ \Lambda_{\pm}$ and $ \epsilon$ an end of $ \lambda$.

\begin{defn}[escaping end of leaf]\label{defnescpend}
 A ray $[x,\epsilon)\ss\lambda$ represents an escaping end  $\epsilon$ of $\lambda$ if there is an end $e$ of $L$ such that, for every neighborhood $U$ of $e$ in $L$, there is $z\in(x,\epsilon)$ such that $(z,\epsilon)\ss U$. 
\end{defn}

\begin{rem} By abuse of language, we often call the ray $[x, \epsilon)$ itself an escaping end.  Note that this is a much stronger property than passing arbitrarily near an end $e$ of $L$.  The latter allows  return to the core infinitely often  which the above definition does not.  By Proposition~\ref{atleastonce}, we can and do require that $x\in|\Lambda_{+}|\cap|\Lambda_{-}|$.
\end{rem}

Let $e$ be a positive end of $L$ and consider the component $\UU_{e}$ of the positive escaping set. Since $L$ has  only finitely many ends and, since there are only finitely many border leaves of $\UU_{e}$ (Theorem~\ref{finmany}), we can assume that there is  $p\ge1$  such that $h^{p}$ takes each such border leaf  to itself, preserving its ends.   

\begin{rem}
As before, the use of the terms ``below''  and ``above'' in this subsection always refers to Figure~\ref{PR}.
\end{rem}

Fix a connected lift $\tilde{\UU}_{e}$  of $\UU_{e}$. The lift $\wt{g}$  of $h^{p}$  that we consider can be chosen to take $\wt{\UU}_{e}$, hence $\delta\tilde{\UU}_{e}$, to itself. In additon, we can assume that $\wt{g}$  fixes a lift $\tilde{\lambda}_{-}\ss\delta\wt{\UU}_{e}$ of a specific border leaf $\lambda_{-}$ of $\UU_{e}$.  View $\wt{\lambda}_{-}$ as in Figure~\ref{PR}, $(i)$ or $(ii)$.  In case~$(i)$, interchange the roles of $a$  and $b$, if necessary, to assume that $\wt{\UU}_{e}$ lies below $\wt{ \lambda}_{-}$.  In case~$(ii)$, $\wt{\UU}_{e}$ also borders $\wt{ \lambda}_{-}$ from below (Corollary~\ref{cor714}).  Thus every point  $z\in\wt{\lambda}_{-}$ has the property that a small enough transverse arc $[z,\eta)$ issuing  from $z$ into the region below $\wt{\lambda}_{-}$ in Figure~\ref{PR} has $(z,\eta)\ss\wt{\UU}_{e}$.  Thus,

\begin{lemma}\label{notbelow}
Under the above assumptions, leaves of $\wt{\Lambda}_{-}$ cannot accumulate on $\wt{\lambda}_{-}$ from below.
\end{lemma}

\begin{lemma}
The ray\upn{(}s\upn{)} $(\wt{x},a)$ \upn{(}and $(\wt{y},a)$\upn{)}  lie in $\wt{\UU}_{e}$.
\end{lemma}

\begin{proof}
Indeed, no lift of a leaf of  $\wt{\Lambda}_{-}$ can have one endpoint in the arc of $\Se$ between $a$ and $c$ and the other in the arc between $a$ and $d$.  Otherwise leaves of $\wt{\Lambda}_{-}$ would accumulate on $\wt{\lambda}_{-}$ from below, contradicting Lemma~\ref{notbelow}.  Since the rays in question start out in $\wt{\UU}_{e}$, they can never exit.
\end{proof}
 
  By Proposition~\ref{side} and the strongly closed property, we conclude the following.

\begin{lemma}
Completions of components of $\wt{\XX}_{+}$ accumulate on $\wh{ \lambda}_{-}$ from below, becoming uniformly close in the Euclidean metric on $\wh L\sseq \D^{2}$.
\end{lemma}

By this lemma, let a component $\wt{\sigma}$  of $\wt{\XX}_{+}$ meet $(\wt{x},a)$ in the single point $u$.  In case~$(ii)$,  $\wt{\sigma}$ also meets $(\wt{y},a)$ in a singleton $u'$.  Set  $v,v'=\wt{g}(u),\wt{g}(u')$.

\begin{lemma}
Only finitely many  components of $\wt{\XX}_{+}$   meet the arc $[u,v]$ \upn{(}and $[u',v']$\upn{)}. 
\end{lemma}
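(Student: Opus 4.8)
The plan is to use the already-established fact (the Corollary just before the Lemma) that completions of components of $\wt{\XX}_{+}$ accumulate on $\ol{\lambda}_{-}$ from below, becoming uniformly close in the Euclidean metric on $\ol\Delta$, together with the transversality Axiom~\ref{trnsvrs} and the disjointness of distinct components of $\wt{\XX}_{+}$. Fix the point $u \in (\wt x, a)$ where one distinguished component $\wt\tau_{0}$ of $\wt{\XX}_{+}$ meets the ray, and set $v = \wt h^{p}(u)$, so that $\wt h^{p}(\wt\tau_{0}) = \wt\tau_{1}$ is another component of $\wt{\XX}_{+}$ meeting $(\wt x,a)$ in $v$. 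The arc $[u,v] \ss (\wt x,a) \ss \wt{\UU}_{e}$ (by Lemma~\ref{UUebelow}) is a compact subarc of the pseudo-geodesic $\wt{\lambda}_{-}$, lying in the open region $\wt{\UU}_{e}$, and in particular is bounded away in the Euclidean metric from every point of $\Si$.

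First I would argue that any component $\wt\tau$ of $\wt{\XX}_{+}$ meeting $[u,v]$ must have both of its endpoints on $\Si$ lying in the arc of $\Si$ cut off below $\ol{\lambda}_{-}$ between the endpoints of $\wt\tau_{0}$ and those of $\wt\tau_{1}$ — more precisely, $\wt\tau$ is ``trapped'' between $\wt\tau_{0}$ and $\wt\tau_{1}$: since $\wt\tau$ is disjoint from both $\wt\tau_{0}$ and $\wt\tau_{1}$ (distinct components of $\wt{\XX}_{+}$ are disjoint, by the Remark following Definition~\ref{pmesc}) and meets the arc $[u,v]$ which runs from a point of $\wt\tau_{0}$ to a point of $\wt\tau_{1}$, $\wt\tau$ must separate $u$ from $v$ inside the disk region bounded below $\ol{\lambda}_{-}$ and above by $\wt\tau_0 \cup \wt\tau_1$, hence its two endpoints lie in the two complementary arcs of $\Si$ determined by the endpoints of $\wt\tau_0$ and $\wt\tau_1$ on the ``below'' side. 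Then I would invoke the Corollary: there are only finitely many components of $\wt{\XX}_{+}$ whose completions fail to lie in a prescribed Euclidean neighborhood $N$ of $\ol{\lambda}_{-}$, and I choose $N$ small enough (a thin Euclidean collar of $\ol\lambda_-$) that it is disjoint from the compact arc $[u,v]$ except near the two endpoints — using that $[u,v]$ is a compact arc of $\wt\lambda_-$ bounded away from $\Si$, while the collar $N$ pinches down near $\Si$. Any component of $\wt{\XX}_{+}$ whose completion lies entirely inside $N$ cannot cross the interior of $[u,v]$; hence all but finitely many components of $\wt{\XX}_{+}$ miss $[u,v]$, giving the result. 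The argument for $[u',v']$ in case~$(ii)$ is identical.

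The main obstacle I expect is making rigorous the ``trapping'' step: one must be careful that the relevant region of $\Delta$ bounded by $\ol\lambda_-$, $\wt\tau_0$, and $\wt\tau_1$ is a genuine disk (or Jordan region in $\ol\Delta$) so that the Jordan-curve/separation argument applies to the merely continuous pseudo-geodesics, and that ``$\wt\tau$ separates $u$ from $v$'' forces its endpoints into the stated arcs of $\Si$. This uses that $\wt\tau_0, \wt\tau_1$ have distinct, well-defined endpoints on $\Si$ (they are pseudo-geodesics), that $\ol\lambda_-$ is a continuous arc with endpoints on $\Si$, and that $\wt h^{p}$ preserves the cyclic order on $\Si$ so that $\wt\tau_1$'s endpoints are ``inside'' a bounded region; combined with Axiom~\ref{trnsvrs} (each such $\wt\tau$ meets $\wt\lambda_-$ in at most a point, so it genuinely crosses $[u,v]$ transversally rather than running along it), the separation forces the endpoint constraint. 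Once that is in place, the finiteness is immediate from the Corollary, since only finitely many components of $\wt{\XX}_{+}$ have completions not contained in the chosen Euclidean collar $N$, and those contained in $N$ cannot reach the interior of $[u,v]$.
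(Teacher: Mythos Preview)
Your proposal contains a basic confusion about the setup: the arc $[u,v]$ lies on $(\wt{x},a)\ss\wt{\lambda}_{1}$, which is a leaf of $\wt{\Lambda}_{+}$, \emph{not} on $\wt{\lambda}_{-}$ as you twice assert. (Recall $\wt{x}=\wt{\lambda}_{1}\cap\wt{\lambda}_{-}$ and $a$ is an endpoint of $\wt{\lambda}_{1}$ on $\Si$.) This matters: if $[u,v]$ lay on $\wt{\lambda}_{-}$, then since $\XX_{+}\cap\Lambda_{-}=\0$ no component of $\wt{\XX}_{+}$ would meet it at all, and moreover any Euclidean collar $N$ of $\ol{\lambda}_{-}$ would \emph{contain} $[u,v]$ rather than avoid it, so your collar step is internally inconsistent.

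Even with the geometry corrected, there is a genuine gap in your invocation of the preceding Corollary. That Corollary (via Proposition~\ref{side} and Corollary~\ref{accum'}) concerns a \emph{specific} family of components of $\wt{\XX}_{+}$ --- those through a sequence $\wt{x}_{n}\to\wt{x}\in\wt{\lambda}_{-}$ --- and says \emph{those} completions eventually lie in any prescribed Euclidean neighborhood of $\ol{\lambda}_{-}$. It does not assert, and it is false, that all but finitely many components of $\wt{\XX}_{+}$ lie in such a neighborhood; components of $\wt{\XX}_{+}$ are spread throughout $\Delta$. To salvage your scheme you would have to argue that every component meeting $[u,v]$ belongs to such a converging family, and your trapping step does not supply this.

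The paper's proof is a one-liner that bypasses all of this. If infinitely many components of $\wt{\XX}_{+}$ met $[u,v]$, their intersection points with $[u,v]$ would cluster at some $w\in[u,v]$; by Axiom~\ref{locunif} one has $\ol{\XX}_{+}\sm\XX_{+}=\Lambda_{-}$, so $w\in\wt{\Lambda}_{-}$, i.e.\ $(\wt{x},a)\cap\wt{\Lambda}_{-}\ne\0$. But Lemma~\ref{UUebelow} gives $(\wt{x},a)\ss\wt{\UU}_{e}$, and $\wt{\UU}_{e}\cap\wt{\Lambda}_{-}=\0$. That is the entire argument --- no trapping, no collars, no Jordan-curve considerations.
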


\begin{proof}
Otherwise the intersections of these curves with $[u,v]$ or $[u',v']$ cluster there, implying that  $(\tilde{x}, a)\cap|\wt{\Lambda}_{-}|\ne\0$. This contradicts $(\wt{x},a)\ss \wt{\UU}_{e}$.
\end{proof}

\begin{cor}\label{bddk}
There exist an integer $k>0$ so that exactly $k$ components of $\wt{\XX}_{+}$ meet $\tilde{h}^{np}\bigl([u,v]\bigr)$  \upn{(}and, if pertinent, $\tilde{h}^{np}\bigl([u',v']\bigr)$\upn{)}, $n\ge0$.
\end{cor}

\begin{cor}\label{esc}
If ${\lambda}_{-}$ is a  border leaf  of $\UU_{+}$, then one of the following holds:
\begin{enumerate}
\item There is a unique $h$-periodic point $x\in\lambda_{-}$ and the ray $[x,\epsilon)\ss|\Lambda_{+}|$ issuing from $x$ into $\UU_{+}$ represents an escaping end $\epsilon$ of a leaf $\lambda\in\Lambda_{+}$;
\item There is a unique maximal, compact, $h$-periodic interval  $I\ss\lambda_{-}$, with $x$ either endpoint of $I$, and the ray $[x,\epsilon)\ss|\Lambda_{+}|$ issuing from $x$ into $\UU_{+}$ represents an escaping end $\epsilon$ of a leaf $\lambda\in\Lambda_{+}$.
\end{enumerate}
In case~\emph{(2)}, the rays issuing from the endpoints of $I$ determine an escaping cusp.  The corresponding assertions hold for a border leaf $\lambda_{+}$ of $\UU_{-}$.
\end{cor}

\begin{proof}
Let $\UU_{e}=\bigcup_{n=-\infty}^{\infty}U_{e}^{n}$ be the component of $\UU_{+}$ bordered by $\lambda_{-}$ where $U_{e}$ is any distinguished neighborhood of $e$ (Definition~\ref{pmesc}).  As usual set $g=h^{p}$ where $p = kp_{e}$. By Corollary~\ref{noncpt},  the ray $[x,\epsilon)$ meets infinitely many positive junctures, necessarily in nonescaping components.  
Choose a point $s\in[x,\epsilon)\cap|\XX_{+}|$ and let $t=g(s)$. Only finitely many components of positive junctures meet the interval $[s,t]$. Let $N$ be the least integer such that  a component of the juncture $J_{e}^{N} = \fr U_{e}^{N}$ meets the interval $[s,t)$. Since $[t,\epsilon) = \bigcup_{n=1}^{\infty}g^{n}([s,t))$, the interval $[t,\epsilon)$ does not meet the juncture $J_{e}^{N}$ and thus lies in $U_{e}^{N}$. Since the interval $[t,\epsilon)$ does not meet the juncture $J_{e}^{N}$, it follows that the interval $g^{i}([t,\epsilon))$ does not meet the juncture $J_{e}^{N+ik} = \fr U_{e}^{N+ik}$ and thus lies in $U_{e}^{N+ik}$.  Thus, $[x,\epsilon)$ ultimately enters and remains in any neighborhood of $e$ in $L$.
\end{proof}

This corollary has the following converse.

\begin{lemma}\label{doesnotreturn}
 If the end  $\epsilon$ of $\ell\in\Lambda_{+}$ is escaping, it has a neighborhood $[x,\epsilon)$  satisfying either $(1)$ or $(2)$ in \emph{Corollary~\ref{esc}}.  The analogous assertion holds for escaping cusps and the corresponding assertions hold for negative escaping ends and cusps.
\end{lemma}

\begin{proof}  
Let $\lambda_{-}\in\delta\UU_{e}$ be the leaf such that there is a point $z\in\lambda_{-}\cap|\Lambda_{+}|$ with the open ray $(z,\epsilon)\ss\UU_{e}$. The leaf $\lambda_{-}$ is semi-isolated and thus has a periodic point. If $z$ is the unique $h^{p}$-fixed point $x\in\lambda_{-}$ or an endpoint of the unique maximal $h^{p}$-invariant interval $[x,y]\ss\lambda_{-}$, we are done.  If not, assume that, in the lifted picture in Figure~\ref{PR}, Case~(i), $\wt {z}\in(\wt{x},d)$ and deduce a contradiction. (A similar contradiction occurs if $\wt{z}\in(c,\wt x)$ in Figure~\ref{PR}, Case~(i) or if $\wt{z}\in(c,\wt x)$ or $\wt{z}\in(\wt{y},d)$ in Figure~\ref{PR}, Case~(ii).) Note that by Corollary~\ref{cor715}, in the case of an escaping cusp, $(\wt x,\wt y)$ is contained in a principal region. Then the points $\wt{z}_{n}=\wt{h}^{-np}(z)$ converge to $d$ as $n\to\infty$ and the lift of a positive escaping ray issues from each $\wt{z}_{n}$.  Projecting down to $L$ by the covering map, we obtain escaping rays $[z_{n},\epsilon_{n})$ issuing into $\UU_{e}$ from the points $z_{n}\in\lambda_{-}$.  These points do not converge in the intrinsic real line topology of $\lambda_{-}$.  But, as points of intersection $z_{n}\in|\Lambda_{+}|\cap|\Lambda_{-}|$, these are points of the compact invariant set and cluster in $L$ at a point $z_{*}$ of that set.  Fix a neighborhood $V$ of $z_{*}$ in $L$ which is a product neighborhood for both laminations.  A subsequence $z_{n_{k}}$ consists of points that lie on distinct components (plaques) $Q_{k}$ of $V\cap\lambda_{-}$ and converge to $z_{*}$.  Clearly, for all but at most one $Q_{k}$, the ray $[z_{n_{k}},\epsilon_{n_{k}})$ must cross at least one other $Q_{k'}$, contrary to hypothesis.
\end{proof}

We have completely characterized the escaping ends and escaping cusps.

\begin{theorem}\label{escends}
 The escaping ends of leaves $\lambda_{\pm}\in\Lambda_{\pm}$ are exactly those represented by rays $(z,\epsilon)$ lying in $\UU_{\pm}$ where $z$ is either the unique periodic point on a leaf of $\delta\UU_{\pm}$ or an endpoint of the unique maximal periodic compact interval on such a leaf.  The escaping cusps are similarly characterized  where the unique periodic point  is replaced by the unique maximal compact periodic interval.
\end{theorem}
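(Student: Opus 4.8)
The statement is an ``if and only if'' and both halves are essentially contained in Corollary~\ref{esc} and Lemma~\ref{doesnotreturn}; the plan is to assemble these, to check that the periodic data occurring there is exactly the data in the statement, and to dispatch the two laminations and the cusps by the symmetries already used in those proofs. I would open by recording the elementary equivalence that an end $\epsilon$ of a leaf $\lambda_{\pm}\ss\Lambda_{\pm}$ escapes if and only if it does not return to the core. Indeed, if $\epsilon$ escapes then by Axiom~\ref{endss} it can only escape to a positive (resp.\ negative) end $e$ of $L$, so a deep enough ray $(z,\epsilon)$ lies in a fundamental neighborhood of $e$ contained in $W^{+}$ (resp.\ $W^{-}$), hence disjoint from $K$; moreover such a neighborhood lies in the component $\UU_{e}\ss\UU_{\pm}$ of the escaping set, which already accounts for the clause ``lying in $\UU_{\pm}$''. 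Conversely, Lemma~\ref{doesnotreturn} states precisely that an end (or cusp) not returning to the core is escaping.

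For the direction ``escaping $\Rightarrow$ stated form'', take an escaping end $\epsilon$ of $\lambda_{-}\ss\Lambda_{-}$ (the $\Lambda_{+}$ case is symmetric). By the previous paragraph $\epsilon$ has a ray lying in $\UU_{-}$, so $\lambda_{-}$ enters some component of $\UU_{-}$ through a point $z$ on a border leaf $\ell\in\delta\UU_{-}$ and never leaves. Being a border leaf, $\ell$ is semi-isolated, hence by Theorem~\ref{finmany} it is $h^{p}$-periodic for some $p$ (after doubling $p$ its ends are fixed as well), and Proposition~\ref{3cases}, applied to $\ell$ as a leaf of $\Lambda_{+}$, shows that $\ell$ carries either a single $h$-periodic point $z^{*}$, uniquely determined by the clause that no other point of the relevant boundary intervals is fixed (case~$(i)$), or a unique maximal compact periodic interval $[x^{*},y^{*}]$ whose interior projects into $\wh{\PP}_{+}$ (case~$(ii)$). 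In the first case Lemma~\ref{doesnotreturn} forces $z=z^{*}$; in the second its parenthetical remark forces $z\notin\intr[x^{*},y^{*}]$, and running the monotone-orbit argument of that lemma on the component of $\ell\sm\{x^{*},y^{*}\}$ lying toward $\UU_{-}$ forces $z\in\{x^{*},y^{*}\}$. For an escaping cusp of $\lambda_{-}$ one repeats the cusp version of the same argument, landing in case~$(ii)$ with $[x^{*},y^{*}]$ as the maximal periodic interval.

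For the direction ``stated form $\Rightarrow$ escaping'', let $(z,\epsilon)\ss\UU_{+}$ be a ray of a leaf $\lambda_{+}\ss\Lambda_{+}$, with $z$ the unique periodic point of a border leaf $\ell\in\delta\UU_{+}$, which is a leaf of $\Lambda_{-}$ by Lemma~\ref{front}. Lift to $\Delta$: choose the lift $\wt{\ell}=\wt{\lambda}_{-}$ and the lift $\wt{h}^{p}$ fixing it as in the discussion preceding Lemma~\ref{UUebelow}. Then a lift of $z$ is the fixed point $\wt{x}$ of Proposition~\ref{3cases}$(i)$, a lift of $\lambda_{+}$ is the leaf $\wt{\lambda}_{1}$ through $\wt{x}$, the lift of the ray is $(\wt{x},a)$, and Lemma~\ref{UUebelow} places it in the chosen lift of $\UU_{+}$; Corollary~\ref{esc} then says exactly that $(\wt{x},a)$ is a lift of an escaping end. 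If instead $z$ is an endpoint of a maximal periodic interval on $\ell$, the same reasoning lands in case~$(ii)$, and Corollary~\ref{esc} gives that the whole cusp bounded by $(\wt{x},a)$ and $(\wt{y},a)$ escapes, which is the escaping-cusp half of the statement. The $\Lambda_{-}$ versions follow by the symmetry already invoked in Corollary~\ref{esc} and Lemma~\ref{doesnotreturn}.

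The genuinely technical input---bounding how many lifts of a juncture $J^{k}$ a periodic ray can meet (Corollaries~\ref{bddk} and~\ref{esc}) and the monotone-orbit argument excluding a non-periodic point of entry into $\UU_{\pm}$ (Lemma~\ref{doesnotreturn})---is already in hand, so I do not expect a serious obstacle. The step needing the most care is the one in the second paragraph: checking that the point where an escaping leaf first enters $\UU_{\pm}$ is \emph{forced} to be one of the finitely many periodic points of a border leaf (or an endpoint of the maximal periodic interval), rather than merely to converge toward one---that is, that Lemma~\ref{doesnotreturn}'s argument really does eliminate every non-periodic entry point, including the interval-endpoint subtlety of case~$(ii)$---and in keeping the bookkeeping between the two sign conventions and the $\UU_{\pm}$/Figure~\ref{PR} dictionary consistent throughout.
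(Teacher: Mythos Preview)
Your proposal is correct and takes essentially the same approach as the paper: the paper simply states, just before the theorem, that ``This result [Lemma~\ref{doesnotreturn}], coupled with Corollary~\ref{esc}, completely characterizes the escaping ends and escaping cusps,'' and offers no further argument. Your write-up is a faithful unpacking of that sentence---using Corollary~\ref{esc} for the ``stated form $\Rightarrow$ escaping'' direction and the monotone-orbit argument inside the proof of Lemma~\ref{doesnotreturn} to pin the entry point to $z^{*}$ (or to an endpoint of $[x^{*},y^{*}]$) for the converse---with the case~$(i)$/$(ii)$ bookkeeping made explicit.
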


\begin{rem}
There are only finitely many escaping ends and escaping cusps. Some ends and cusps may escape and some may not. 

\end{rem} 

\begin{cor}
If a leaf $\lambda$ of $\Lambda_{\pm}$ has an escaping end, it is a periodic leaf.
\end{cor}

The converse, of course, is false as there are generally infinitely many periodic leaves.

\subsection{The structure of principal regions and their crown sets}\label{crown}
We consider $\PP_{+}$ and its components $P_{+}$, but all arguments and results have parallels for $\PP_{-}$ and $P_{-}$.  These components are the principal regions and, by Theorem~\ref{finmany}, there are only finitely many of them.

Fix a choice of $P\ss\PP_{+}$.   The components of $P\cap K$   will be  rectangles   or   regions with frontier finitely many simple closed curves 
$$
\epsilon_{1}\cup\beta_{1}\cup\epsilon_{2}\cup\cdots\cup\epsilon_{r}\cup\beta_{r},
$$
where the $\beta_{i}\ss\ell_{i}\in\Lambda_{+}$ are 
extreme arcs of $|\Lambda_{+}|\cap K$, alternating with proper subarcs $\epsilon_{i}$ of positive junctures in $\fr K$.  There is a least integer $p>0$ such that $h^{-p}(\beta_{i})\ss\beta_{i}$.    Then $h^{-p}(\epsilon_{i})$ is a segment of positive juncture with endpoints in $\beta_{i-1}$ and $\beta_{i}$, respectively.  Infinite iteration gives a sequence of segments of positive junctures converging to a segment $\alpha_{i}$ of a leaf $\ell'_{i}$ of $\Lambda_{-}$ having endpoints $x_{i}$ and $y_{i}$ on $\beta_{i-1}$ and $\beta_{i}$, respectively. Shorten the arcs $\beta_{i}$ to have endpoints $y_{i}$ and $x_{i+1}$,  defining a simple closed curve 
$$\gamma=\alpha_{1}\cup\beta_{1}\cup\alpha_{2}\cup\cdots\cup\alpha_{r}\cup\beta_{r}.$$  Note that the indices are taken mod~$r$. The lifts of each of the curves $\ell_{i}$ (resp. $\ell'_{i}$) can play the role of $\lambda_{-}$ in case~$(ii)$ of Proposition~\ref{3cases} with lifts of the arcs $\beta_{i}$ (resp. $\alpha_{i}$) playing the role of the segments $[\wt{x},\wt{y}]$ in that proposition. Thus, the $\beta_{i}$ lie in the invariant set and cut off $r\ge 1$ arms $A_{i}$ of the principal region $P$. Similarly, the $\alpha_{i}$ lie in the invariant set and cut off $r$ arms $A'_{i}$ of the principal region $P'\ss\PP_{-}$.

\begin{defn}[dual principal regions]\label{defndualPR}
The  principal regions $P$ and $P'$ are called dual principal regions.
\end{defn}

\begin{defn}[nucleus]\label{defnnucleus}
The closure of the intersection $P\cap P'$ is the \emph{nucleus} of the principal region $P$ and of its dual $P'$
\end{defn}

Thus a principal region is the union of the interior of its nucleus and arms.
This nucleus may be bounded by several polygonal curves $\gamma_{i}$ as above and to each is attached a set of arms for $P$ and a set of arms for $P'$.

\begin{lemma}
The nucleus $N$ of a principal region is compact.
\end{lemma}

\begin{proof}
Indeed, $\bd N\sm\bd L$ is the union of all the curves $\gamma$ associated to the principal region $P$ and the dual $P'$ and $N$ lies in both $P$ and $P'$.  If $N$ were noncompact then, since its boundary is compact, it would be a neighborhood of at least one end of $L$, hence would contain positive and/or negative junctures, contradicting the fact that $N= P\cap P'$.
\end{proof}

\begin{rem}
There are four cases to consider:
\begin{enumerate}
\item $N$ has negative Euler characteristic.  In this case, $\gamma$ can be tightened in its homotopy class to a unique simple closed geodesic $\rho_{\gamma}\ss \intr N$ and the correspondence $\gamma\lra\rho_{\gamma}$ is one-one.
\item  $N$ is an annulus.  If there are two piecewise geodesic  boundary curves $\gamma_{1}$ and $\gamma_{2}$, both will be homotopic to the same geodesic $\rho_{\gamma_{1}}=\rho_{\gamma_{2}}\ss\intr N$.   It may happen that one boundary curve $\delta$ of the annulus is a component of $\bd L$.  In this case, denote the other component of $\bd N$ by $\gamma$ and note that $\rho_{\gamma}=\delta$.
\item $N$  is a M\"obius strip with one boundary curve $\gamma$ and center circle a geodesic $\sigma\ss\intr N$. The curve $\gamma$ is homotopic to an immersed  geodesic $\rho_{\gamma}$ that is a two-to-one cover of $\sigma$.
\item\label{disknucleus} $N$ is a disk.  There is one boundary curve $\gamma$, but it is not homotopic to a closed geodesic.  In this case, we set $\rho_{\gamma}=\0$.
\end{enumerate}
In any event, the arcs making up $\gamma$ are isolated on the side facing the nucleus of the principal region $P$.
\end{rem}

\begin{defn}[dual crown sets]\label{defndualSC}
The \emph{crown set} $C_{\gamma}$ is the closure of the  component  of $P\sm\rho_{\gamma}$ that contains the curve $\gamma$. The \emph{crown set} $C'_{\gamma}$ is the closure of the  component of $P'\sm\rho_{\gamma}$ that contains the curve $\gamma$. The crown sets $C_{\gamma}$ and $C'_{\gamma}$ are called \emph{dual crown sets}.
\end{defn}

Thus, in all cases except~(\ref{disknucleus}) in the above remark, the crown sets are annuli with finitely many cusps.  In case~(\ref{disknucleus}), the crown sets are disks with finitely many cusps.

\begin{defn}[rim]\label{prreduce}
The closed curve $\rho_{\gamma}\ss P$ that cuts off a crown set  is    called the \emph{rim} of the crown set. The closed curves $\gamma$ and $\rho_{\gamma}$  will be said to be associated. 
\end{defn}

Since $h$ permutes the borders of the principal  regions, there is a corresponding permutation of the crown sets themselves. Thus,  we get cycles $C=C_{0}, C_{1}=h(C),\dots,C_{n}=h^{n}(C)=C_{0}$, and a corresponding cycle $\rho=\rho_{0},\rho_{1},\dots\rho_{n}=\rho_{0}$ of rims.  
If $n$ is the minimal period, then $h^{n} $ induces a permutation in the arms of each crown set.

\subsection{The set $\mathfrak S$ of reducing curves}\label{redcurv}

As in Nielsen-Thurston theory~\cite{bca}, we split $L$ into simpler  pieces  using \emph{reducing curves}~\cite[page 5]{fe:endp}. The reducing curves will be geodesics, either   homeomorphic to $\SI$ or $\R$ and will lie in $\intr L$.   They will be constructed   as we develop our theory. The reducing curves will be \emph{nonperipheral} in the sense that none cobounds  an annulus or infinite strip with a component of $\bd L$ and they will be disjoint from $\Lambda_{\pm}$.

\begin{defn}[$\S$]\label{famredc}
We will let $\S$ denote the set of reducing curves.
\end{defn} 

The first set of reducing curves we construct are the geodesic  rims $\rho_{\gamma}$ of crown sets for the case of  nuclei which are neither discs nor M\"obius strips. (The rim $\rho_{\gamma}$ for the M\"obius strip case could be counted as a reducing curve, but we choose not to because doing so would make some statements awkward.)  If the nucleus is peripheral,  $\rho_{\gamma}$  is a component of $\bd L$ and is not taken as a reducing curve. If the nucleus $N$ is an annulus with boundary $\gamma_{1}\cup\gamma_{2}$,  the two rims $\rho_{\gamma_{1}}$ and $\rho_{\gamma_{2}}$ are identical and this will be a reducing curve. Otherwise,   the  rims $\rho_{\gamma}$  correspond one-one with $\gamma$, are disjoint from each other and disjoint from the geodesic laminations $\Lambda_{\pm}$ and $\bd L$. 

 Every  $\gamma$ associated to a reducing curve in $\mathfrak S$ will be  the union of segments of semi-isolated leaves (Definition~\ref{defnsemiis}) of $\Lambda_{\pm}$.   As usual, we will distinguish $\mathfrak S$ from the \emph{support} (i.e., union)  $|\mathfrak S|$.

In Section~\ref{reducing}, we will construct an endperiodic automorphism $g$ which is isotopic to $f$ and permutes both the sets $\Lambda_{+}$ and $\Lambda_{-}$ and  the set $\mathfrak S$ of reducing curves.

\subsection{Examples of principal regions}

Principal regions occur for an endperiodic automorphism  when there is some topology that remains in the core or when there are more than two repelling or attracting ends. There can not be simply connected principal regions with one or two arms.

\begin{example}\label{onearmprincep}
In this example, the principal regions have only one arm.  Let $L$ be a  surface with two nonplanar ends and one disk removed as in Figure~\ref{onearm}. Then $L$ has one circle boundary component $C$. If $g$ is the endperiodic automorphism that moves each handle to the right  one unit near both ends but leaves $C$ invariant,  then the laminations $\Lambda_{\pm}$ each contain one leaf as in Figure~\ref{onearm}.  Both the positive and negative principal regions have one arm. If one composes $g$ with Dehn twists in the three dotted  curves in Figure~\ref{onearm} to get an endperiodic automorphism $f$, the new laminations $\Lambda_{\pm}$ both have uncountably many leaves. The one arm of the positive principal region is bordered by an isolated leaf $\lambda_{+}$ and no longer is escaping but returns infinitely often to the core. The negative principal region is bordered by a semi-isolated but not isolated leaf $\lambda_{-}$ and has one escaping arm. 
\end{example}

\begin{figure}[h]
\begin{center}
\begin{picture}(300,70)(40,-150)
\rotatebox{270}{\includegraphics[width=300pt]{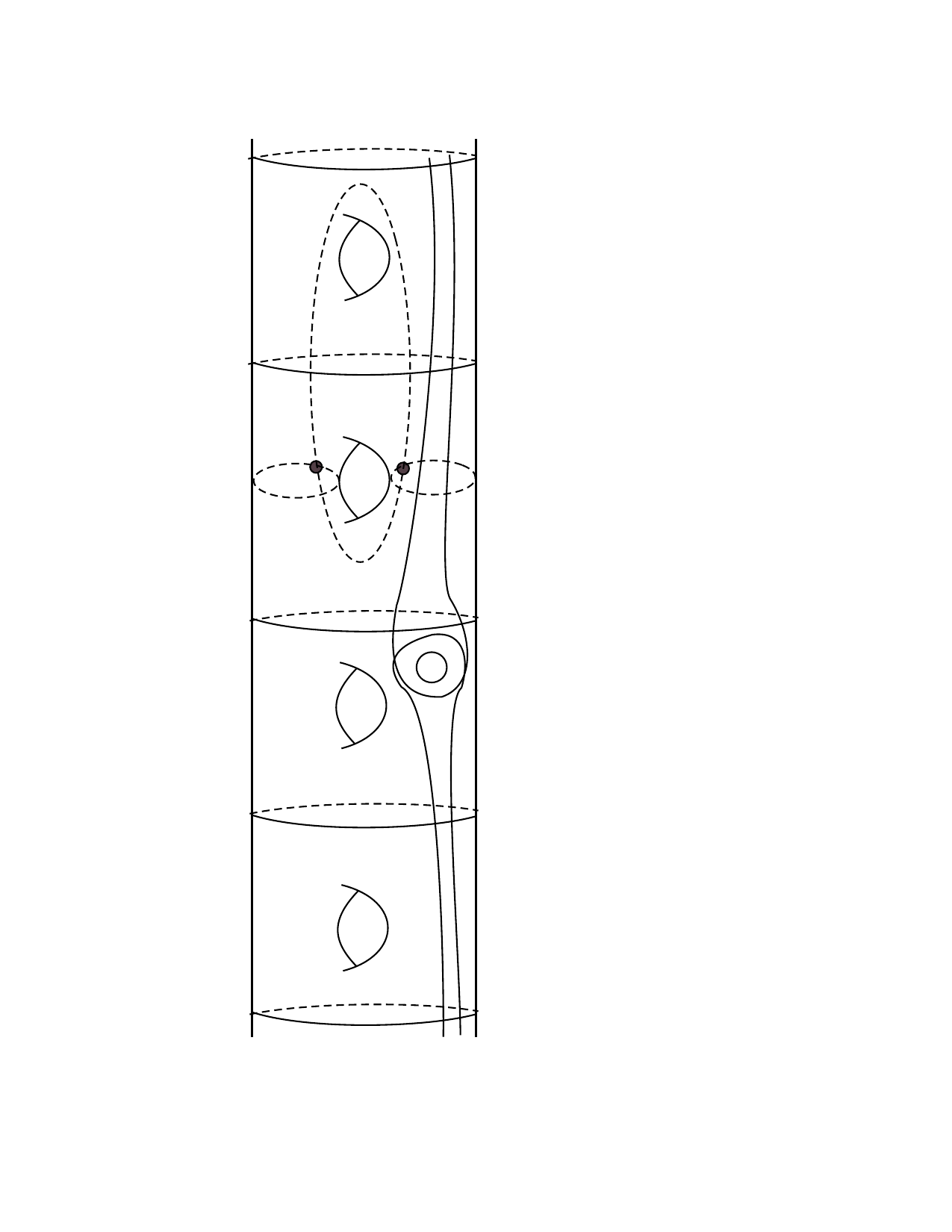}}
\put (-341,-137){$\lambda_{-}$}
\put (-39,-141){$\lambda_{+}$}
\end{picture}
\caption{An example whose principal regions have one arm}\label{onearm}
\end{center}
\end{figure}

\begin{example}
The double of the surface $L$ of Example~\ref{onearmprincep} along the circle boundary component $C$ of $L$ has dual positive and negative principal regions whose nucleus is an annulus. Each principal region has two crown sets.
\end{example}

\begin{example}
Let $S$ be a pair of pants with boundary $C_{0}$, $C_{1}$,  $C_{2}$.  Attach three copies of $L$ along $C$ to each of $C_{0}$, $C_{1}$,  $C_{2}$. This example has dual positive and negative principal region whose nucleus is a pair of pants. The rims of the three pairs of dual crown sets are $C_{0}$, $C_{1}$,  $C_{2}$.
\end{example}

\begin{example}
We give examples in which the principal regions have $r\ge3$ arms.
Let $L$ be a surface similar to the surface in  Figure~\ref{TwoEnds}  but with $r\ge3$ negative ends and one positive end and let $f$ be an  endperiodic automorphism that permutes the negative ends and moves the handles along in a way similar to the example illustrated in Figure~\ref{TwoEnds}. The laminations $\Lambda_{\pm}$ each contain $r$ leaves and there are positive and negative principal regions, each with $r$ arms.  By composing $f$ with suitable Dehn twists the laminations can be made more complicated. The principal regions are simply connected but topology can be added to them as in  Example~\ref{exwithtop}.
\end{example}

\begin{example}\label{exwithtop}
Let $L$ be the surface in Figure~\ref{TwoEnds} but with a disk centered at the saddle point  removed. Thus, $L$ has one circle boundary component $C$. Let $f$ be the map indicated in Figure~\ref{TwoEnds} modified to send  $C$ to itself.  The laminations $\Lambda_{\pm}$ each contain two leaves. There are positive and negative principal regions each with two arms. By composing $f$ with suitable Dehn twists one can obtain similar examples with more complicated laminations.
\end{example}

\subsection{The induced laminations on the compact surface $F$}\label{inducedlam}

Let $e$ be an end of $L$ and denote its full $h$-cycle by $c=\{e=e_{1},e_{2},\dots,e_{p_{e}}\}$.  Recall that, 
$$\UU_{c}=\bigcup_{n=-\infty}^{\infty}h^{n}(U_{e})=\UU_{e_{1}}\cup\UU_{e_{2}}\cup\cdots\cup\UU_{e_{p_{e}}},$$  
where $U_{e}$ is any $h$-neighborhood (Definition~\ref{fnbhd}) of $e$. Remark that $\UU_{c}$ is an open, $h$-invariant set with no periodic points.  The connected components $\UU_{e_{i}}$ of $\UU_{c}$ are permuted cyclically by $h$.      As in Section~\ref{jnctrs}, one has an infinite cyclic covering
$$q:\UU_{c}\to F.$$
The group of deck transformations for $q$ is generated by $h|\UU_{c}$. 

For definiteness, we consider the case that $c$ is an $h$-cycle of negative ends of $L$.
Since   $\UU_{c}$ and $\Lambda_{-}$ are    $h$-invariant, it follows that the induced lamination   $\Lambda_{-}|\UU_{c}$ is invariant under the group of deck transformations of   $q:\UU_{c}\to F$. Similarly, the set $\JJ_{-}$ of negative juncture components is $h$-invariant and transverse to $\Lambda_{-}$. Thus,

\begin{lemma}[$\Lambda_{F},J_{\kappa}$]\label{descends}
The lamination   $\Lambda_{-}|\UU_{c}$ descends to a well defined closed lamination $\Lambda_{F}$ of  $F$ and the set of negative juncture components in $\UU_{c}$ descends to a compact, transversely oriented, properly embedded $1$-manifold $J_{\kappa}$ that is transverse to $\Lambda_{F}$. 
\end{lemma} 

\begin{rem}
The $1$-manifold $J_{\kappa}$ is the $1$-manifold $J_{\kappa}$ of Definition~\ref{kapJunct}.

\end{rem}

We analyze the structure of $\Lambda_{F}$ using the   properties of $\Lambda_{-}|\UU_{c}$.  

 The border $\delta\UU_{-}$ consists of  semi-isolated leaves in $\Lambda_{+}$ and is invariant under $h$.  Thus, $\delta\UU_{-}$ and $\delta\UU_{c}$  each consist of $h$-cycles of semi-isolated leaves of $\Lambda_{+}$.  Recall that there are only finitely many semi-isolated leaves of $\Lambda_{\pm}$ (Theorem~\ref{finmany}) and that each contains either a unique $h$-periodic point or a unique maximal, compact, nondegenerate $h$-periodic interval.  For the semi-isolated leaves of $\Lambda_{+}$, the isolated periodic point is repelling under applications of $h$ and the endpoints of the periodic interval are each repelling on the side not meeting the interval  (Corollary~\ref{attracting}). For $\Lambda_{-}$, these points are attracting.

Evidently,  if  $[a,\infty)$ is the neighborhood of an escaping end issuing from  a periodic point $a$ on the isolated side of a leaf $\lambda$ of $\Lambda_{+}$ in $\delta\UU_{c}$, the leaf $(a,\infty)$ of $\Lambda_{-}|\UU_{c}$ descends to a circle leaf $C_{a}\ss F$ of $\Lambda_{F}$.  Either $a\in\lambda$ is an isolated, repelling,  $h$-periodic point, or it is an endpoint of a compact, nondegenerate $h$-periodic arc $[a,a']\ss\lambda$.  In this case there are two escaping ends $[a,\infty)$ and $[a',\infty)$ and, by the structure theory of principal regions, these cobound an arm $A$ of a principal region.  Since $A$ is simply connected (Lemma~\ref{simpconnarms}), $(a,\infty)$ and $(a',\infty)$      descend to a pair of circle leaves $C_{a}$ and $C_{a'}$ of $\Lambda_{F}$,  cobounding an annulus in $F$ which meets no other leaves of $\Lambda_{F}$.  The points $a,a'$ are  both repelling  on the sides opposite to $[a,a']$.  In all cases, the natural orientation of the escaping ends toward $\infty$ induces an orientation on these circle leaves of $\Lambda_{F}$.

\begin{lemma}\label{1or2}
Every leaf of $\Lambda_{-}|\UU_{c}$ issues from either one or two points of $|\delta\UU_{-}|$.  \end{lemma}

\begin{proof}
We need to show that no leaf  of $\Lambda_{-}$ lies entirely in $\UU_{c}$.  Otherwise, by Lemma~\ref{LambdadoesnotmeeUU'}, that leaf would meet no leaf of $\Lambda_{+}$, contrary to Proposition~\ref{atleastonce}.  
\end{proof}

\begin{lemma}\label{onlynoncpt}
The only leaves of $\Lambda_{-}|\UU_{c}$ which do not complete to compact arcs with endpoints in $\delta\UU_{c}$ are the escaping ends with completion $[a,\infty)$
\end{lemma}

\begin{proof}
Suppose there is a leaf $\ell=(b,\infty)$ of $\Lambda_{-}|\UU_{c}$ with completion  $\wh\ell=[b,\infty)$, $b\in|\delta\UU_{c}|$, which is not an escaping end.  Then $\ell$ either crosses some fixed juncture $J$ infinitely often or eventually remains in some compact part of $\UU_{c}$. In either case, since $\Lambda_{-}$ is closed, there is a leaf $\ell'\in\Lambda_-$ in the asymptote of the end of $\ell=(b,\infty)$. Since $\ell\ss\UU_{c}$, Lemma~\ref{LambdadoesnotmeeUU'} implies that $\ell\cap|\Lambda_+| = \0$.  Therefore, $\ell'\cap|\Lambda_+| = \0$  contradicting   Proposition~\ref{atleastonce}. 
\end{proof}

\begin{defn}[parallel packets]\label{parpackets}
    The compact completions $\wh\ell_{1}$ and $\wh\ell_{2}$ of two bounded leaves of $\Lambda_{-}|\UU_{c}$  are parallel if they are a pair of opposite sides of a rectangle in $\ddot\UU_{c}$, the other two sides being compact arcs in $|\delta\UU_{c}|$.  This is an equivalence relation on the set of compact completions of leaves of $\Lambda_{-}|\UU_{c}$ and the equivalence classes will be called \emph{parallel packets}. Each parallel packet contains two \emph{extreme leaves}.
\end{defn}

Evidently, the parallel packets are permuted by $h$.

\begin{lemma}\label{finhorb}
There are only finitely many $h$-orbits of parallel packets.
\end{lemma}

\begin{proof}
Since $\delta\UU_{c}$ has only finitely many elements, there is a least integer $p\ge0$ such that $h^{-p}$ carries each onto itself, preserving orientation.    It is enough to show that there are only finitely many $h^{-p}$-orbits of parallel packets.  If not, there is a leaf $\lambda$ of $\delta\UU_{c}$ and a compact subarc $[x,h^{-p}(x)]\ss\lambda$ containing no periodic point, and infinitely many points $x_{n}\in[x,h^{-p}(x)]$ out of which issue completions pertaining to distinct packets.  Let $y\in[x,h^{-p}(x)]$ be a cluster point of $\{x_{n}\}$. Since $\Lambda_{-}$ is a closed lamination, there exists a leaf of $\Lambda_{-}$ issuing from $y$. Since $y\in[x,h^{-p}(x)]$, $y$ is not a periodic point. Thus, by Theorem~\ref{escends} and Lemma~\ref{onlynoncpt}, the completion of the leaf of $\Lambda_{-}|\UU_{c}$ issuing from $y$ is a compact arc $\ddot\ell$.  Let $\ddot\ell_{n}$ be the completion of the leaf of $\Lambda_{-}|\UU_{c}$ issuing from $x_{n}$.  Then the $\ddot\ell_{n}$'s cluster locally uniformly on the compact arc $\ddot\ell$, proving that infinitely many of them are parallel to $\ddot\ell$.  This is contrary to hypothesis, completing the proof.
\end{proof}

\begin{rem}[Properties of the lamination $\Lambda_F$]

Putting these lemmas together, we see that the escaping rays descend under the covering projection $q:\UU_{c}\to F$  to finitely many circle leaves of $\Lambda_F$ and the parallel packets descend under the covering projection  to finitely many packets, homeomorphic to $X\x\R$, of parallel noncompact leaves of   $\Lambda_{F}$.  Here $X$ is compact and totally disconnected.  One end of such a  packet spirals in on a circle leaf, as does the other end.  Indeed,  if an end of a leaf of $\Lambda_{-}|\UU_{c}$ issues from a point $x_0\in\delta\UU_c$, then the image of the that end under $h^{np}$ issues from a point $x_n\in\delta\UU_c$,   the sequence $\{x_n\}_{n\ge 0}$ converges to a periodic point $x\in\delta\UU_c$, and the leaf of $\Lambda_{-}|\UU_{c}$ issuing from $x$ is an escaping ray. The leaves (and packets) upstairs accumulate locally uniformly on two (not necessarily distinct) $h$-orbits of escaping rays which descend to (one or two) circle leaves in $F$. 

\end{rem}

\begin{rem}[continued]
The circle leaves of $\Lambda_F$ have a preferred orientation obtained from orienting the escaping ends towards the end. The noncompact leaves of $\Lambda_F$ can not be oriented.

\end{rem}

\begin{rem}[continued]\label{noreebs}

By Lemma~\ref{simpconnarms} and Theorem~\ref{escends}, an arm of a principal region  is an infinite strip bounded by escaping rays $(a,\infty)$ and $(a',\infty)$, where $[a,a']$ is an $h$-periodic interval and thus descends under the covering projection $q:\UU_{c}\to F$ to an annulus bounded by two circle leaves of $\Lambda_F$. This is the only way two parallel circles leaves can occur in $\Lambda_F$.   As a subset of a principal region, an arm contains no leaves of $\Lambda|\UU_{c}$. Thus, it is not possible to have three mutually parallel circle leaves in $\Lambda_F$ nor is it possible  that $\Lambda_{F}$ contains a ``Reeb'' annuli.

\end{rem}

The above development can be used to define  a traintrack $\T$ carrying the lamination $\Lambda_{F}$. We have,

\begin{prop}[$\T$]\label{T'}
The traintrack $\T$ in $F$ carrying the lamination $\Lambda_{F}$ consists of finitely many oriented circles, one for each single oriented circle in $\Lambda_{F}$ or pair of oriented circles in $\Lambda_{F}$ bounding an annulus, and finitely many compact arcs, one for each packets of parallel noncompact leaves of $\Lambda_{F}$. Each end of each arc meets a circle in a point called a  switch so that the arc makes an acute angle with the outgoing arc of the circle.  
\end{prop}

\begin{rem}
Arcs can not have a preferred orientation.  Either choice of orientation will be coherent with that of the circle at the one switch and will be opposed at the other. 

\end{rem}

  We view $\T$ as a branched $1$-manifold. As a graph $\T$ is $3$-valent.

 \begin{figure}[h]
\begin{center}
\begin{picture}(210,120)(-8,0)

\rotatebox{90}{\includegraphics[width=120pt]{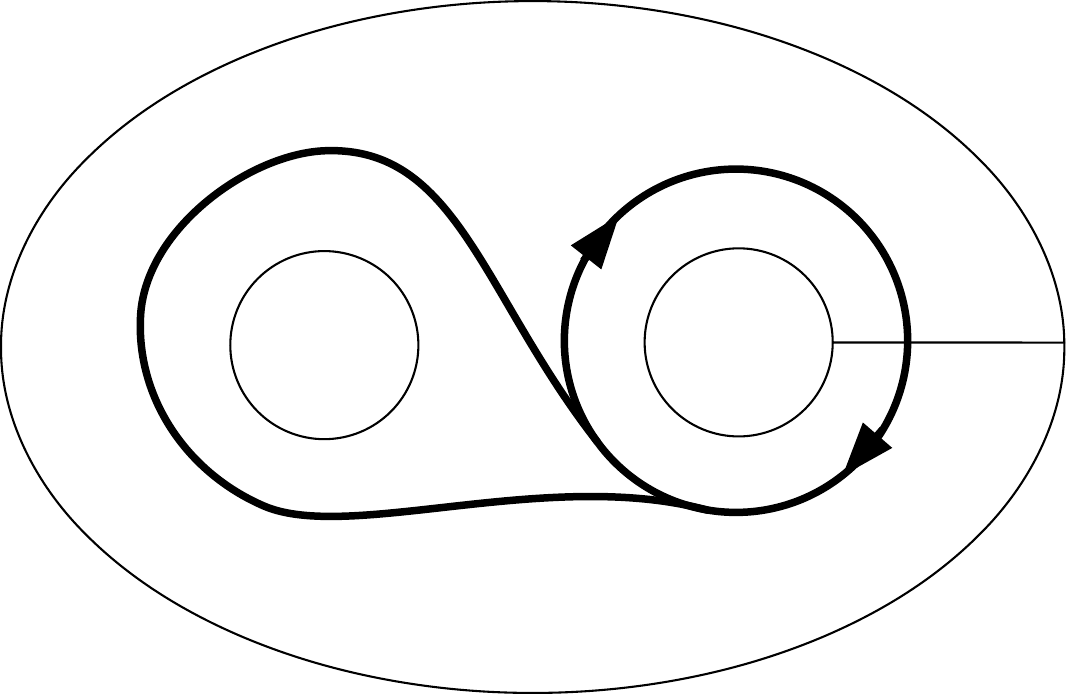}}

\put(-95,15){\small$F$}
\put(-19,65){\small$J_{\kappa}$}

\end{picture}
\caption{Traintrack $\T$ for negative end of Example~\ref{simpex}}\label{ttsimple}
\end{center}
\end{figure}

\begin{example}
Figure~\ref{ttsimple} gives the traintrack $\T$, juncture $J_{\kappa}$,  and surface  $F$ for the negative end of Example~\ref{simpex}.\end{example}

\begin{rem}
In~\cite{fe:endp}, the third author induces a hyperbolic structure and geodesic laminations on $F$ with only finitely many leaves.  The laminations in~\cite{fe:endp} are the ``geodesic tightening'' of ours and are carried by  the same traintrack.   Our packets $X\x\R$ of leaves winding in on the circle leaves generally contain infinitely many leaves, but each packet lifts to the universal cover $\wt F$ to curves having the same ideal endpoints.  Hence, tightening our circle leaves to geodesics and each of our finitely many packets to a single geodesic, each end of which winds in on one of the circles, gives  the geodesic lamination of $F$ produced in~\cite{fe:endp}.
\end{rem}


\section{The Escaping Set $\UU$}\label{escUU}

The main purpose of this section is to construct reducing curves associated to certain border components of $\UU$. This involves a detailed, technical study of $\delta\UU$. The reader should review in Section~\ref{section51} the definition of the metric completion $\ddot U$ of a component $U$ of $\UU$, the metric completion of the set $\UU$ (which is not generally connected), the map $\ddot\iota:\ddot\UU\to L$, and the definiton of the set $\delta\UU$ of border components of $\UU$. 

Recall (Lemma~\ref{escsetprop}) that the escaping set satisfies,
   $$\UU=\UU_{+}\cap\UU_{-} = L\sm (|\Lambda_-|\cup\PP_-\cup|\Lambda_+|\cup\PP_+)$$
and that $\UU$ is thus disjoint from the principal regions.  Further, the set $|\XX_{\pm}|$ will meet each component $U$ of $\UU$ but will cluster in $U$ only on points of $|\delta U|$ (Definition~\ref{bordcomp}). An element of  $\XX_{\pm}$ can not meet   $\PP_{\mp}$ but will meet $\PP_{\pm}$ in the arms.

\begin{lemma}
If $U$ is a component of $\UU$, then $\delta U\ne\0$.
\end{lemma}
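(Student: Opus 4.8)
The plan is to show that a component $U$ of $\UU$ cannot be closed in $L$, so it must have nonempty frontier, and then to identify that frontier as contributing a nonempty border $\delta U$. First I would recall that $\UU = \UU_{+}\cap\UU_{-}$ is an open subset of $L$, hence $U$ is open and noncompact (every component of $L\sm\Lambda_\pm$ is noncompact since the $B_e^n$-nests run to an end). If $\delta U$ were empty, then the canonical map $\wh\iota:\wh U\to L$ would carry $\wh U=U$ bijectively, meaning $U$ has no frontier points in $L$ at all; but $L$ is connected and $U\neq L$ (for instance, by Axiom~\ref{eachmeets} and Lemma~\ref{LmeetsK}, the laminations $\Lambda_\pm$ meet $\intr K$ and are disjoint from $\UU$, so $\UU\neq L$), so $U$ must have a nonempty topological frontier $\partial U\subset L$.

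Next I would translate "frontier point" into "border component." A frontier point $x$ of $U$ in $L$ is a limit of points of $U$; I want to produce a path $s:[0,1]\to L$ with $s(1)=x$ and $s(t)\in U$ for $t<1$, which by Corollary~\ref{sd} (its converse direction, together with the characterization lemma for $\bd\wh U$) witnesses that $x$ lies on a border component $\ell\in\delta U$. Since $U$ is open and connected and $L$ is locally path-connected, any frontier point is the endpoint of such a path (concatenate short paths in $U$ converging in the path-metric $d$, exactly as in the proof of the lemma characterizing $\bd\wh U$). Hence $\partial U\neq\0$ forces $\bd\wh U\neq\0$, i.e. $\delta U\neq\0$.

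The main obstacle — really the only delicate point — is ruling out the degenerate possibility that $U$ is both open and closed in $L$, which would make $\delta U=\0$ vacuously. Because $L$ is connected this cannot happen unless $U=L$, so I need the (easy) observation that $\UU$ is a proper subset of $L$: indeed $\Lambda_{+}\neq\0$ (Axiom~\ref{endss} guarantees every neighborhood of every end meets $\Lambda_\pm$, so the laminations are nonempty), $\Lambda_{+}\cap\UU=\0$ by the disjointness built into the $h$-juncture axioms (Axiom~\ref{locunif}, giving $\Lambda_{\pm}\cap\UU_{\mp}=\0$, and combining $\UU=\UU_+\cap\UU_-\subseteq\UU_\mp$), hence $\UU\subsetneq L$ and so $U\subsetneq L$. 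Therefore $U$ has a genuine frontier point in $L$, and the argument above produces a border component through it.

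One should also note, for completeness, that the frontier of $U$ is built from arcs of $\Gamma_{+}\cup\Gamma_{-}$ as remarked after Corollary~\ref{sd}, so $\delta U$ consists of honest border leaves (semi-isolated leaves of $\Lambda_\pm$, or pieces thereof, together with arcs of $h$-junctures); but for the statement as given we only need $\delta U\neq\0$, which follows immediately once a single frontier point has been exhibited.
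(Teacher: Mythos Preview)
Your argument is correct and takes a different route from the paper's. The paper's proof is a single line: if $\delta U=\emptyset$ then (by connectedness) $U=L$, forcing the endperiodic map to be a total translation via Lemma~\ref{total}, and that case has been excluded as a standing hypothesis. You instead argue directly that $\UU\subsetneq L$ by invoking Axiom~\ref{endss} to see that $\Lambda_{\pm}\ne\emptyset$, together with $\Lambda_{\pm}\cap\UU=\emptyset$; then a frontier point of $U$ yields a point of $\bd\wh U$.

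Both approaches reduce to the impossibility of $U=L$, but yours is more self-contained within the axiomatic framework: it avoids appealing to Lemma~\ref{total}, which was stated and proved in the geodesic setting before the axioms were introduced. The paper's approach, by contrast, makes the conceptual link to the excluded total-translation case explicit.

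One small imprecision: you claim that \emph{any} frontier point $x$ of $U$ is the endpoint of a path $s$ with $s([0,1))\subset U$. That is not automatic from local path-connectedness alone, since a path from a nearby point toward $x$ could exit $U$ before reaching $x$. What your argument actually establishes (and all that is needed) is that \emph{some} point of $\bd\wh U$ exists: take any path in $L$ from a point of $U$ to a point of $L\smallsetminus U$; its first exit time yields a finite-length path in $U$ whose terminus lies in $\bd\wh U$. This suffices for $\delta U\ne\emptyset$.
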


\begin{proof}
By  our  assumption that $f$ is not isotopic to a translation (Hypothesis~\ref{hyp5}) and Proposition~\ref{total}, $U$ is a proper subset of $L$.  Choose $x\notin U$ and $y\in U$ and denote by $[x,y]$  a closed geodesic arc  with endpoints $x$ and $y$.  There is a point $z\in[x,y)$ such that $(z,y]\ss U$ and $z\notin U$. Then clearly $z\in|\delta U|$ so $\delta U\ne\0$.
\end{proof}

\begin{rem}
The  components $U$ of $\UU$ that are rectangles play a special role.  If a component $U$ of $\UU$ is a rectangle, the sequence of iterates  under $h$ of a point of $U$ escapes. The sequence of iterates under $h$ of a vertex of $U$ remains in the core and cannot escape while only the sequence of positive (respectively negative) iterates of an interior point of the two edges in $|\Lambda_{+}|$ (respectively $|\Lambda_{-}|$) escape.

\end{rem}

\subsection{The border of the escaping set}\label{structure}

The next lemma is an immediate consequence of Proposition~\ref{atleastonce}.

\begin{lemma}\label{noentireleaf}
An entire leaf $\lambda\in\Lambda_{\pm}$ cannot be an element of $\delta\UU$.
\end{lemma}

 The following lemma formally defines what we mean by vertices and edges and is clear by Lemma~\ref{bdUU}.

\begin{lemma}[vertex/edge]\label{veredg}

If $x\in\gamma\in\delta \UU$, then $x$ lies in some semi-isolated leaf $\lambda$ of one of the laminations and either,

\begin{enumerate}

\item $x\in|\Lambda_{+}|\cap|\Lambda_{-}|$ and $x$ also lies in a semi-isolated leaf $\lambda'$ of the other lamination and  there are maximal, nondegenerate  subarcs $[x,y)$ and $[x,z)$ of $\lambda$ and $\lambda'$, respectively, that meet no other points of $|\Lambda_{+}|\cap|\Lambda_{-}|$.  If $y$ is an end of $\lambda$, then $[x,y)\ss\gamma$ and otherwise $[x,y]\ss\gamma$ and $y\in|\Lambda_{+}|\cap|\Lambda_{-}|$.  In either case, the resulting arc is called an \emph{edge} of $\gamma$ and $x$ is called a \emph{vertex} of $\gamma$.  Similar considerations hold for $[x,z)$.  

\item $x\not\in|\Lambda_{+}|\cap|\Lambda_{-}|$ and there is a maximal open subarc  $(y,z)\ss\lambda$ containing $x$ and not  meeting $|\Lambda_{+}|\cap|\Lambda_{-}|$.  By \emph{Lemma~\ref{noentireleaf}}, one or both of $y,z$ is finite,  lies in $|\Lambda_{+}|\cap|\Lambda_{-}|$, and is again called a \emph{vertex} of $\gamma$, the resulting closed subarc or infinite ray in  $\lambda$ being an \emph{edge}  of $\gamma$.  

\end{enumerate}

\end{lemma}

\begin{lemma}\label{RorS}
Each  $\gamma\in\delta\UU$ is either an immersed copy of the real line or an immersed circle with an even number of edges.  
\end{lemma}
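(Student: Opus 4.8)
The plan is to work with the boundary component $\wh\gamma\ss\bd\wh{\UU}$ in the internal completion rather than its image $\gamma$ under $\wh\iota$, exactly as suggested in the paragraph preceding the statement. First I would recall that $\bd\wh\UU$, being the boundary of an internally completed open surface, is itself a $1$-manifold; each of its components is therefore either a line or a circle. So the only content is to describe the edge structure inherited from $\Gamma_+\cup\Gamma_-$ and, in the circle case, to verify that the number of edges is even.

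The key steps, in order: (1) By Lemma~\ref{bdUU}, every point of $\delta\UU$ lies in a semi-isolated leaf of $\Lambda_+$ or $\Lambda_-$, and by the Case~(1)/Case~(2) analysis preceding Lemma~\ref{2}, the points of $\bd\wh\UU$ are organized into edges (arcs lying in a single leaf of one lamination, between consecutive points of $\Lambda_+\cap\Lambda_-$ or running out to an end) and vertices (points of $\Lambda_+\cap\Lambda_-$). (2) Pull this back to $\wh\gamma$: since $\wh\iota$ restricted to a boundary component of the internal completion of $\UU$ is a one-to-one immersion of a $1$-manifold, along $\wh\gamma$ exactly two edges emanate from each vertex (repeated edges and repeated vertices of $\gamma$ appear as two distinct edges/vertices of $\wh\gamma$) — this is the observation recorded just before the statement. (3) Hence $\wh\gamma$ is a $1$-manifold without boundary partitioned into edges meeting in pairs at vertices, so it is either a noncompact line or a compact circle. (4) In the circle case, the vertices alternate with edges around the circle — traversing $\wh\gamma$, consecutive edges are separated by exactly one vertex and the edges alternate between lying in a leaf of $\Lambda_+$ and lying in a leaf of $\Lambda_-$. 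Indeed at a vertex $x\in\Lambda_+\cap\Lambda_-$ the two edges of $\wh\gamma$ meeting there cannot both lie in leaves of the same lamination (two leaves of one lamination, or one leaf with itself, cannot cross transversely, by Axiom~\ref{muttran}/Axiom~\ref{eachmeets}), so one lies in a $\Lambda_+$-leaf and the other in a $\Lambda_-$-leaf. Therefore the edges around the circle alternate $+,-,+,-,\dots$, forcing their number to be even; finiteness of the number of edges follows from Theorem~\ref{finmany} since each edge lies in one of finitely many semi-isolated leaves and $\wh\gamma$ is compact.

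The main obstacle I expect is purely bookkeeping: making the passage from $\gamma$ to $\wh\gamma$ rigorous — i.e. justifying that $\wh\iota|\wh\gamma$ is genuinely an immersion of a $1$-manifold so that ``exactly two edges at each vertex'' is literally true, using Corollary~\ref{sd} together with the local picture of $\delta\UU$ given by Lemmas~\ref{2}, \ref{2&2}, \ref{2&2'}, and~\ref{4+} (which enumerate how $\gamma$ can self-identify, and in every case confirm that lifting to $\wh\gamma$ restores the ``two edges per vertex'' local model). Once that local model is in hand, the alternation argument and the parity conclusion are immediate, and the line-versus-circle dichotomy is just the classification of $1$-manifolds.
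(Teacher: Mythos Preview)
Your proposal is correct and follows exactly the approach the paper intends: pass to $\wh\gamma\ss\bd\wh\UU$, use that exactly two edges meet at each vertex there (so $\wh\gamma$ is a $1$-manifold, hence a line or a circle), and in the circle case observe that consecutive edges alternate between $\Lambda_+$ and $\Lambda_-$, forcing even parity. The paper itself simply declares the lemma ``trivial if one thinks of $\wh\gamma$'' after recording the two-edges-per-vertex fact, so you have supplied precisely the details the authors suppress---in particular the alternation argument, which is indeed the reason for evenness.
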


\begin{proof}
The lemma follows since the components of $\ddot U\sm U$ are homeomorphic to a line or circle for every component $U$ of $\UU$ and every $\gamma\in\delta\UU$ is the image under $\ddot\iota$ of a component of $\ddot U_{\gamma}\sm U_{\gamma}$ for some component $U_{\gamma}$ of $\UU$.
\end{proof}

\begin{rem}
We will show in Corollary~\ref{gamembed} that every border component of $\UU$ is embedded.

\end{rem}

\begin{lemma}\label{firstkind}

If  $\gamma\in\delta\UU$ is an immersed line then, either its sequence of vertices $\dots,x_{i},x_{i+1},\dots$ is bi-infinite or $\gamma$ has only one vertex $x_{0}$ connecting two unbounded edges $\alpha_{1}$ and $\beta_{1}$.  In the latter case,  $x_{0}$ is the unique $h$-periodic point on the semi-isolated leaves of $\Lambda_{\pm}$ passing through it.
\end{lemma}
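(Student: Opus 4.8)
The plan is to pass to the internal completion and analyze the line $\wh\gamma\ss\bd\wh\UU$ with $\wh\iota(\wh\gamma)=\gamma$. Its edges are arcs in the finitely many semi-isolated leaves of $\Lambda_{\pm}$ (Lemma~\ref{bdUU}, Theorem~\ref{finmany}), each of which is $h$-periodic because $\delta\UU$ is $h$-invariant (Lemma~\ref{h-inv}); fix $m$ so that $h^{m}$ fixes every semi-isolated leaf together with each of its ends. From the discussion preceding the statement, exactly two edges of $\wh\gamma$ meet at every vertex, and a whole leaf cannot be a single edge, so $\wh\gamma$ has at least one vertex. Since $\wh\gamma\cong\R$ has no endpoint, a terminal edge of $\wh\gamma$ cannot be a compact arc with a vertex as its outer endpoint (that vertex would carry only one edge), so every terminal edge is an unbounded ray $[x_{0},\delta)$, $\delta$ an end of a semi-isolated leaf. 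Hence if $\wh\gamma$ has no unbounded edge then every edge is compact and $\wh\gamma$ has a bi-infinite sequence of vertices; so it remains to show that one unbounded edge forces $\wh\gamma$ to have exactly one vertex, and to verify the remaining assertions in that case.

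The crux is the following claim: an unbounded edge $\alpha=[x_{0},\delta)$ of $\delta\UU$, say $\alpha\ss\lambda\ss\Lambda_{+}$, represents an escaping end. By Lemma~\ref{normnbhd} a normal collar of $\alpha$ on the side bordering $\UU$ lies in $\UU$; since $\intr\alpha$ meets no point of $\Lambda_{+}\cap\Lambda_{-}$, hence none of $\Lambda_{-}=\bd\UU_{+}$ (Lemma~\ref{front}), and $\UU\ss\UU_{+}$, we get $\intr\alpha\ss\overline{\UU_{+}}\sm\bd\UU_{+}=\UU_{+}$; thus $\intr\alpha$ lies in a single component $\UU_{e}=\bigcup_{n}B_{e}^{n}$ of $\UU_{+}$ (Corollary~\ref{disj}). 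Now $\intr\alpha$ cannot remain in $\bigcup_{n\le N}B_{e}^{n}$ for any $N$: otherwise the asymptote of the end $\delta$ of $\lambda$ would meet the inner frontier of that set, which lies in $\Lambda_{-}$, whereas the asymptote of an end of a leaf of $\Lambda_{+}$ is a sublamination of $\Lambda_{+}$ and $\Lambda_{+}\cap\Lambda_{-}$ is totally disconnected. Combined with the return-to-the-core dichotomy of Lemma~\ref{doesnotreturn} and Corollary~\ref{esc}, this forces $\delta$ not to return to the core, so $\delta$ is escaping; by Theorem~\ref{escends}, $x_{0}$ is the unique $h$-periodic point, or the endpoint of the unique maximal compact $h$-periodic interval, on the semi-isolated leaves through it — in particular $x_{0}$ is $h$-periodic.

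Now suppose $\wh\gamma$ has an unbounded edge $\alpha=[x_{0},\delta)$ at the vertex $x_{0}$, and let $\beta$ be the other edge of $\wh\gamma$ at $x_{0}$. Choose $p$ so that $h^{p}$ fixes $x_{0}$, the two semi-isolated leaves through it, all their ends, and (after passing to $h^{2p}$) each direction along each of these leaves from $x_{0}$. Then $\beta$ is the unique maximal arc issuing from $x_{0}$ in a prescribed direction along one of these leaves whose interior misses $\Lambda_{+}\cap\Lambda_{-}$, so $h^{p}(\beta)=\beta$. If $\beta=[x_{0},y]$ were compact, then $h^{p}(y)=y$, so $y$ is an $h$-periodic point on a semi-isolated leaf of $\delta\UU_{\mp}$ (adjacent to $\UU$), whence by Theorem~\ref{escends} $y$ lies in that leaf's unique periodic point or maximal periodic interval; this forces $[x_{0},y]$, and hence $\intr\beta$, to lie in the interior of that interval — but, just as for $\alpha$, $\intr\beta\ss\UU_{\pm}$ when $\beta\ss\Lambda_{\pm}$, while the interior of a periodic interval lies in $\PP_{\mp}$ (Lemma~\ref{ii}), and $\UU_{\pm}\cap\PP_{\mp}=\0$ (Lemma~\ref{unions}), a contradiction. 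Hence $\beta$ is also unbounded, so $\wh\gamma=\alpha\cup\{x_{0}\}\cup\beta$ has exactly the one vertex $x_{0}$; this also shows that a line in $\delta\UU$ with two or more vertices has all edges compact, completing the dichotomy. Finally, in the one-vertex case $\gamma$ has no repeated vertex; a repeated edge would be a terminal ray edge appearing on both its sides in $\bd\wh\UU$, but the local picture at $x_{0}$ — analyzed through Lemmas~\ref{2}, \ref{2&2}, \ref{2&2'}, \ref{4+} together with the fact that both terminal edges are escaping ends issuing from the unique periodic point provided by Theorem~\ref{escends} — leaves room only for the two edges of $\wh\gamma$, each bordering $\UU$ on a single side; so $\wh\iota|\wh\gamma$ is one-to-one, $\gamma$ borders $\UU$ on only one side, and the last assertion is precisely the consequence of Theorem~\ref{escends} noted above. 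The main obstacle is the crux claim, i.e.\ making rigorous both that the asymptote of $\delta$ stays inside $\Lambda_{+}$ and that $\alpha$ cannot return to the core.
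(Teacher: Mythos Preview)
Your crux claim---that an unbounded edge $\alpha=[x_{0},\delta)\ss\lambda\ss\Lambda_{+}$ represents an \emph{escaping} end---is not established by the argument you give. You correctly show $\intr\alpha\ss\UU_{e}$ for some positive end $e$, and that $\intr\alpha$ cannot be confined to $\bigcup_{n\le N}B_{e}^{n}$. But this only says $\alpha$ penetrates arbitrarily deeply toward $e$, which is already the content of Axiom~\ref{endss} for \emph{every} end of every leaf of $\Lambda_{+}$; it does \emph{not} prevent $\delta$ from returning to the core infinitely often while doing so. Lemma~\ref{doesnotreturn} and Corollary~\ref{esc} run in the wrong direction for you: they say that if $\delta$ does not return to the core then it is escaping, not that deep penetration forces non-return. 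The asymptote argument you sketch doesn't close the gap either: if $\delta$ both returns to $K$ and goes arbitrarily deep, its asymptote is noncompact, so no contradiction with Proposition~\ref{nbounded} arises, and there is no reason the closure of $(x_{0},\delta)$ cannot meet $\Lambda_{-}$ even though $(x_{0},\delta)$ itself does not.

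The paper avoids this entirely. It never tries to show $\delta$ is escaping. Instead it uses directly that the semi-isolated leaf $\lambda$ is $h^{k}$-periodic (Theorem~\ref{finmany}) with ends fixed: orienting $\lambda$ so that the end of $\alpha_{1}$ is the initial end, one has the trichotomy $h^{k}(x_{0})<x_{0}$, $h^{-k}(x_{0})<x_{0}$, or $h^{k}(x_{0})=x_{0}$. In the first two cases the image of $x_{0}\in\Lambda_{+}\cap\Lambda_{-}$ lands in $\intr\alpha_{1}$, contradicting that $\intr\alpha_{1}$ contains no vertex; hence $x_{0}$ is periodic. Then if $\beta_{1}=[x_{0},x_{1}]$ were bounded it would be fixed by $h^{k}$, so $(x_{0},x_{1})\ss\II$; by Lemma~\ref{ii} and $(x_{0},x_{1})\cap\Lambda_{-}=\0$ this forces $(x_{0},x_{1})\ss\PP_{-}$, which is open and disjoint from $\UU$, contradicting $(x_{0},x_{1})\ss\delta\UU$. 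That $\gamma$ is one--one immersed and borders $\UU$ on only one side then follows at once from Lemma~\ref{2}. Your route through Theorem~\ref{escends} is circuitous and, as it stands, circular: you need periodicity of $x_{0}$ to invoke the characterization of escaping ends, but you are trying to deduce periodicity from it.
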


\begin{proof}
Suppose, for definiteness, that the sequence is not infinite to the left and denote its initial vertex by $x_{0}$.  Thus, its initial edge $\alpha_{1}$ must be a ray in a semi-isolated leaf $\lambda$ of one of the laminations.  By Theorem~\ref{finmany}, we know that, for some integer $k\ge1$, $h^{k}(\lambda)=\lambda$.  Without loss of generality, we can suppose that $h^{k}$ fixes the ends of $\lambda$.   Thus, orienting $ \lambda$ so that its initial end is the end of $\alpha_{1}$, we see that either $h^{k}(x_{0})<x_{0}$, $h^{-k}(x_{0})<x_{0}$, or $h^{k}(x_{0})=x_{0}$.  The first two cases imply that $\intr\alpha_{1}$ meets the other lamination, hence contains a vertex.  Hence the third case holds.  Now suppose that the edge $\beta_{1}=[x_{0},x_{1}]$ is bounded. It must be fixed by $h^{k}$  and so by (2) of Corollary~\ref{cor715}, $(x_{0},x_{1})$  lies in a principal region $P$ which then meets $\UU$ which is a contradiction.     The final assertion is  clear.
\end{proof}

\begin{defn}[first/second kind]\label{fskind}
A real line border component $\gamma$ of $\UU$ with just one vertex is said to be of the \emph{first kind}.  Otherwise, the border component is of the {second kind}.
\end{defn}

\begin{nota}\label{bordernota}
The border components $\gamma$ of $\UU$ of the first kind are of the form $\gamma = \alpha_{0}\cup\beta_{0}$ with $\alpha_{0}\ss\Lambda_{-}$ and $\beta_{0}\ss\Lambda_{+}$ and with  one vertex $y_{0} = \alpha_{0}\cap\beta_{0}$. Otherwise a border $\gamma$ of $\UU$ is of the form $\gamma= \bigcup_{i = -\infty}^{\infty} \alpha_i\cup\beta_i$ with $\alpha_i\ss|\Lambda_-|$, $\beta_i\ss|\Lambda_+|$, $\beta_{i-1}\cap\alpha_{i}=x_{i}$, and $\alpha_{i}\cap\beta_{i}=y_{i}$, $i\in\Z$. If $\gamma$ is of the second kind, then the $\alpha_{i},\beta_{i}$ form a bi-infinite sequence. If $\gamma$ is compact, then there exists an integer $r>0$ such that $\alpha_{i+r} = \alpha_{i}$ and $\beta_{i+r}= \beta _{i}$, all $i\in\Z$.

\end{nota}

The next lemma follows immediately from the definitions of $\UU$ and $\delta\UU$.

\begin{lemma}\label{endsee}

If $\gamma\in\delta\UU$, then   there exist a component $U_{\gamma}$ of $\UU$, $e_{-}\in\EE_{-}(L)$ and  $e_{+}\in\EE_{+}(L)$ such that $\gamma\in\delta U_{\gamma}$ and $U_{\gamma}\ss\UU_{e_{-}}\cap\UU_{e_{+}}$.  

\end{lemma}

From now on, we focus on  the end $e_{-}\in\EE_{-}(L)$. The discussion for $e_{+}\in\EE_{+}(L)$ is analogous. Let  $c=\{e_{-}=e_{1},e_{2},\dots,e_{p_{e_{-}}}\}$ be the full $h$-cycle containing $e_{-}$. Recall the projection $q:\UU_{c}\to F$, the lamination $\Lambda_{F}$, and the traintrack $\T$ for $\Lambda_{F}$ (Section~\ref{inducedlam}).  Note that the surface $F$ depends on the $h$-cycle of ends containing $e_-$.

\begin{lemma}\label{notint}

If $\gamma\in\delta\UU$ is not of the first kind and is not  the border component of a rectangle and $\alpha\ss|\Lambda_{-}|$ is an edge of $\gamma$, then $\alpha$ is an extreme leaf of a parallel packet of  leaves  of $\Lambda_{-}|\ddot\UU_{e_{-}}$.

\end{lemma} 

\begin{proof}
If  $\alpha$ is an interior leaf of a parallel packet of  leaves, then $\gamma$ is the border component  of a rectangle contrary to  assumption.
\end{proof}

 \begin{nota}
Denote by $T^{*}$ the compact surface obtained by fattening up $T$ in $F$. 

\end{nota}

\begin{figure}[h]
\begin{center}
\begin{picture}(210,70)(-8,0)

\includegraphics[width=200pt]{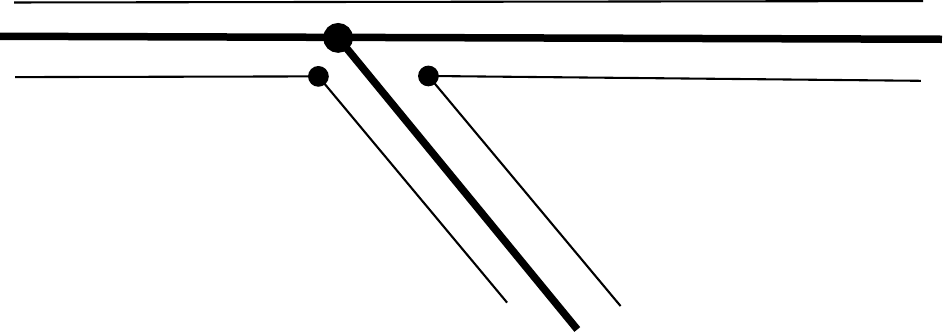}
\put (-62,75){$\bd\T^{*}$}
\put (-162,40){$\bd\T^{*}$}
\put (-82,40){$\bd\T^{*}$}
\put (-72,-10){$\E$}
\put (9,58){${\rm C}$}

\end{picture}
\caption{Inserting vertices into  $\bd\T^{*}$ at a switch}\label{atswitch}
\end{center}
\end{figure}
 
 A switch (Proposition~\ref{T'}) on the train track $T$ comes from the end of a  parallel packet of leaves winding in on a circle leaf. If one considers $T$ as a $3$-valent graph, a switch   is a vertex of degree three. Two of the edges coming out of a switch are subsets of the circle leaf. The third edge represents the parallel packet of leaves and is thus distinguished from the other two edges. In a natural way, the structure of $\T$ as a $3$-valent graph divides certain  components of $\bd\T^{*}$ into vertices and edges by inserting two vertices into  $\bd\T^{*}$ at each switch  as indicated in Figure~\ref{atswitch}. In Figure~\ref{atswitch}, the two horizontal edges at the switch belong to a circle leaf $C$ while the third edge $E$ represents an end of a parallel packet of  leaves winding in on $C$.

\begin{rem}

Some components of $\bd\T^{*}$ may contain no vertices. In Theorem~\ref{traintrack}, we show that these components correspond to border components of $\UU$ of the first kind.

\end{rem}

\begin{rem}

The surface $T^*$ may be non-orientable.

\end{rem}

\begin{theorem}\label{traintrack}

\begin{enumerate}

\item There is a $\iota^*/p_{e_-}$-to-one correspondence between the set of border components $\gamma$ of $\UU$ of the first kind that meet $\UU_{e_{-}}$ and the  set of  components $C^*$ of $\bd\T^{*}$ not containing a vertex where $\iota^*$ is the intersection number of $C^*$ and $J_{\kappa}$\upn{;}

\item There is a one-one correspondence between the set of  infinite families of compact border components $\gamma$ of $\UU$  that meet $\UU_{e_{-}}$ and are not rectangles and the  set of   components $C^*$ of $\bd\T^{*}$  containing a vertex with zero intersection number with the juncture $J_{\kappa}$.

\item There is a $\iota^*/p_{e_-}$-to-one correspondence between the set of border components $\gamma$ of $\UU$ of the second kind that meet $\UU_{e_{-}}$ and the  set of  components $C^*$ of $\bd\T^{*}$ containing a vertex such that $C^*$ has non-zero intersection number $\iota^*$ with the juncture $J_{\kappa}$\upn{;}

\end{enumerate}

\end{theorem}

\begin{proof}
First, suppose $\gamma = \alpha_{0}\cup\beta_{0}$ with $\alpha_{0}\ss\Lambda_{-}$ and $\beta_{0}\ss\Lambda_{+}$ and with  one vertex $y_{0} = \alpha_{0}\cap\beta_{0}$ is a border component  of $\UU$ of the first kind  that meets $\UU_{e_{-}}$, then $C_{\gamma} = q(\intr\alpha_{0})$ is a circle leaf of $\Lambda_{F}$ such that there are no edges of the traintrack on one side of the circle $C_{\gamma}$. In $\T^{*}$, $C_{\gamma}$ gives rise to a   component $C_{\gamma}^{*}$  of $\bd\T^{*}$ containing no vertices. Thus, every border component $\gamma$ of the first kind that meets $\UU_{e_{-}}$ corresponds to a unique component $C^{*}_{\gamma}$  of $\bd\T^{*}$. 

Conversely, if $C^{*}$ is a component of $\bd\T^{*}$ containing no vertices, then there exists a corresponding circle leaf $C$ of $\Lambda_{F}$  such that there are no edges of the traintrack  on one side of the circle $C$. It is then easy to see that $C = q(\intr\alpha_{0})$ where $\gamma = \alpha_{0}\cup\beta_{0}$ is  a border leaf of $\UU$ of the first kind  that meets $\UU_{e_{-}}$. Thus, every component $C^{*}\ss\bd\T^{*}$ containing no vertices  corresponds to to at least one border componet of $\UU$ of the first kind that meets $\UU_{e_{-}}$. 

Let $\iota^*$ be the intersection number of $C^*$ and $J_{\kappa}$. Since $h^{\iota^*}(\gamma) = \gamma$ and the curves $h^{j}(\gamma)$, $0\le j<\iota^*$, are distinct and there are $p_{e_-}$ ends in the cycle of ends containing $e_-$, it follows that there are $\iota^*/p_{e_-}$ border components  of $\UU$  of the first kind that meet $\UU_{e_{-}}$ corresponding to  the  component $C^*$.

Next, suppose $\gamma= \bigcup_{i = -\infty}^{\infty} \alpha_i\cup\beta_i$ (in the notation of page~\pageref{bordernota}) is a border component of a component $U_{\gamma}$ of  $\UU$ such that $U_{\gamma}\ss\UU_{e_{-}}$ is not a rectangle and $\gamma$ is not a border component  of $\UU$ of the first kind. The leaf $\alpha_{i}$, $i\in\Z$,  of $\Lambda_{-}|\ddot\UU$ is extreme in a parallel packet of leaves (Lemma~\ref{notint}) and, since $q(\alpha_i)\ss q(U_{\gamma})$, determines an edge of a component $C^{*}_{\gamma}$ of $\bd\T^{*}$ which we denote by $\alpha_{i}^{*}$.  The edge $\alpha_{i}^{*}$ inherits an orientation from $\alpha_{i}$ and we denote the initial vertex of $\alpha_{i}^{*}$ by $x_{i}^{*}$ and the terminal vertex by $y_{i}^{*}$. 

The  terminal vertex $y_{i-1}^{*}$ of $\alpha_{i-1}^{*}$ and the initial vertex $x_{i}^{*}$ of $\alpha_{i}^{*}$ determine a unique interval $(y_{i-1}^{*},x_{i}^{*})\ss C^{*}_{\gamma}$ which contains no vertices. This follows since there exists an edge $\beta_{i}\ss|\Lambda_{+}|$ such that $\alpha_{i-1}\cap\beta_{i}=y_{i}$,   $\beta_{i}\cap\alpha_{i}=x_{i+1}$, and $\alpha_{i-1}\cup\beta_{i}\cup\alpha_{i}\ss\delta U_{\gamma}$ for some component $U_{\gamma}$ of $\UU$.

We denote the interval $[y_{i-1}^{*},x_{i}^{*}]$ by $\beta_{i}^{*}$. Then $\bigcup_{i = -\infty}^{\infty} \alpha_i^{*}\cup\beta_i^{*}$ where $\alpha_{i-1}^{*}\cap\beta_{i}^{*}=y_{i}^{*}$ and   $\beta_{i}^{*}\cap\alpha_{i}^{*}=x_{i+1}^{*}$ equals $C^{*}_{\gamma}$. The union is, in fact, a finite union $\bigcup_{i = 0}^{p-1} \alpha_i^{*}\cup\beta_i^{*}$ with $\alpha_{0}^{*} = \alpha_{p}^{*}$, for a least integer $p>0$,  since each component of $\bd\T^{*}$ has finitely many edges. Thus $\gamma$ corresponds to $C^{*}_{\gamma}\ss\bd\T^{*}$.

If $\gamma$ is compact, it follows that $\alpha_{p} = \alpha_{0}$ and $\alpha_{j}\ne\alpha_{0}$ for $0<j<p$.  Thus the intersection number of $C^{*}_{\gamma}$ and the juncture $J_{\kappa}$ is zero. Similarly, if $\gamma$ is of the second kind, then $\alpha_0\ne\alpha_p$. In this case $h^{\iota^*}(\alpha_{0}) = \alpha_{p}$ where $\iota^*$ is the intersection number of $C^{*}_{\gamma}$ and $J_{\kappa}$.

Conversely, if $C^{*}$ is a component of $\bd\T^{*}$ containing vertices, choose an edge $\alpha_{0}^{*} = [x_{0}^{*},y_{0}^{*}]$ corresponding to an extreme leaf $\alpha_{0}$ of a parallel packet of leaves of $\Lambda_{-}|\ddot\UU_{e_{-}}$. Then $\alpha_{0}\ss\gamma$ where $\gamma$  is a border component of $\UU$ that meets $\UU_{e_{-}}$ which is not a rectangle or a border component  of $\UU$ of the first kind.  Exactly as above, $\gamma$ corresponds to $C^{*}$. Thus for each $C^{*}\ss\bd\T^{*}$ containing a vertex there is at least one $\gamma$ not a rectangle and not of the first kind that corresponds to it.

If $\gamma$ is compact,  $\{h^{np_{e_-}}(\gamma)\ |\ n\in\Z\}$ is a bi-infinite family of disjoint compact border components that meet $\UU_{e_{-}}$. Thus, in this case, there is  one bi-infinite family of border components corresponding to $C^{*}_{\gamma}$.

If $\gamma$ is not compact, then $\gamma$ must be of the second kind. Let $\iota^*$ be the intersection number of $C^*$ and $J_{\kappa}$. Since $h^{\iota^*}(\gamma) = \gamma$ and the curves $h^{j}(\gamma)$, $0\le j<\iota^*$, are distinct and there are $p_{e_-}$ ends in the cycle of ends containing $e_-$, it follows that there are $\iota^*/p_{e_-}$ border components  of $\UU$  of the second kind that meet $\UU_{e_{-}}$ corresponding to  the  component $C^*$.
\end{proof}

In the case that $\gamma$ is not a rectangle, the next two corollaries follow because a component of $\bd\T^{*}$ is   a simple curve. They are clear if $\gamma$ is a rectangle.

\begin{cor}

If $\gamma\in\delta\UU$, then $\gamma$ is embedded in $L$.

\end{cor}

\begin{cor}\label{gamembed}
If $\gamma\in\delta\UU$ is compact, then $h^{m}(\gamma)$ is disjoint from $h^{n}(\gamma)$ for all $m\ne n\in\Z$.
\end{cor}

The next corollary follows because $\bd\T^{*}$ has finitely many components and $L$ has finitely many ends.

\begin{cor}\label{finnumbord}

\begin{enumerate}

\item There are finitely many infinite families $\{h^{n}(\gamma)\ |\ n\in\Z\}$ of disjoint compact border components of $\UU$ that are not rectangles where $\gamma$ is a compact border component\upn{;}

\item There are finitely many noncompact border components of $\UU$.

\end{enumerate}

\end{cor}

The next corollary follows because there are finitely many   noncompact border components of $\UU$.

\begin{cor}

If $\gamma\in\delta\UU$ is noncompact, then there exists a least integer $r>0$ such that $h^{r}(\gamma) = \gamma$.

\begin{rem}
The integer $r$ is necessarily a multiple of both $p_{e_{-}}$ and $p_{e_{+}}$.

\end{rem}

\begin{rem}
If $\gamma\in\delta\UU$ is of the first kind, then $h^{r}$ has one fixed point on $\gamma$. If $\gamma\in\delta\UU$ is of the second kind, then $h^{r}$ is fixed point  free on $\gamma$.

\end{rem}

\end{cor}

Denote by $\epsilon_{-}$ the negative end of $\gamma$. That is, if $\gamma = \alpha_{0}\cup\beta_{0}$ is of the first kind then $\alpha_{0}= (\epsilon_{-},y_{0}]$ and if $\gamma$ is of the second kind and $x\in\gamma$,  then  $h^{nr}(x)\to \epsilon_{-}$ as $n\to-\infty$.

\begin{lemma}\label{disfr}

If $\gamma\in\delta\UU$ is noncompact and $(\epsilon_{-},x_{0}]\ss\gamma$ is a neighborhood of $\epsilon_{-}$, then there exists a neighborhood of $e_{+}$ \emph{(see Lemma~\ref{endsee})} disjoint from $(\epsilon_{-},x_{0}]$.

\end{lemma}

\begin{proof}
The lemma is clear if $\gamma$ is of the first kind. Suppose $\gamma$ is of the second kind. Choose $z\in\gamma$ and let $z_{n} = h^{nr}(z)$, $n\in\Z$. Since $[z_{-1},z_{0}]\ss\gamma$ is compact, there exists a distinguished neighborhood $U_{e_{+}}$ of $e_{+}$ disjoint from $[z_{-1},z_{0}]$. Since $U_{e_{+}} \supset U_{e_{+}}^{k} = h^{kp_{e_{+}}}(U_{e_{+}})$, all $k\ge 0$, it follows that $U_{e_{+}}^{k}$ is disjoint from $[z_{-1},z_{0}]$, all $k\ge 0$. Thus $U_{e_{+}} = h^{-kr}(U_{e_{+}}^{kr/p_{e_{+}}})$ is disjoint from $h^{-kr}([z_{-1},z_{0}]) = [z_{-(k+1)},z_{-k}]$. Thus, $U_{e_{+}}$ is disjoint from $\bigcup_{k=0}^{\infty}[z_{-(k+1)},z_{-k}] = (\epsilon_{-}, z_{0}]$.
\end{proof}

\begin{cor}\label{aneb}

If $\gamma\in\delta\UU$ is noncompact, then any lift $\wt\gamma$ of $\gamma$ has two endpoints $a\ne b\in\Si$.

\end{cor}

\begin{proof}
Let $\sigma$ be a positive  juncture component meeting $\gamma$ at a point $z$. Let $\sigma_{n} = h^{nr}(\sigma)$ and $z_{n} = h^{nr}(z)$. Let $\wt\gamma$ be a lift of $\gamma$, $\wt z_{n}\in\wt\gamma$ lifts of $z_{n}$, and $\wt\sigma_{n}$ lifts of $\sigma_{n}$ containing $\wt z_{n}$. Then the geodesics $\wt\sigma_{n}$ nest on a point $b\in\Si$ as $n\to\infty$. An analogous argument shows that the negative  end of $\wt\gamma$ limits on a well defined point $a\in\Si$.

If $a=b$ then every neighborhood of the negative end of $\wt\gamma$ meets $\wt\sigma_{n}$ for $n$ sufficiently large. Since the sequence $\{\sigma_{n}\}_{n\ge 0}$ escapes to $e_{+}$, this contradicts Lemma~\ref{disfr}.
\end{proof}

\begin{example}\label{frstsndknd}
In Example~\ref{simpex}, there is one semi-isolated leaf $\lambda_{-}\in\Lambda_{-}$ and one semi-isolated leaf $\lambda_{+}\in\Lambda_{+}$. This example has one real line border component   of the first kind and one  real line border component  of the second kind,  both clearly visible in Figure~\ref{PlanarEnd}.
\end{example}

\begin{example}\label{exam719}
Example~\ref{simpex} has one bi-infinite sequence $\CC$ of compact border components of $\UU$ that are not borders of  rectangles.   Each $\gamma_{n}\in\CC$ has one edge in the semi-isolated leaf  $\lambda_{-}\in\Lambda_{-}$ and one edge in the semi-isolated leaf  $\lambda_{+}\in\Lambda_{+}$.  The component $U_{n}\ss\UU$ with this $\gamma_{n}$ as border is a (stretched out) annulus and  has   one of the boundary circles of $L$ in its boundary. In this example (and most examples) there are infinitely many bi-infinite sequences of compact border components of $\UU$ that are the borders of rectangles.  Example~\ref{itj} is similar, each $\gamma_{n}$ again having two edges and two vertices and all belonging to the same bi-infinite sequence $\CC$, but now they are the elements of $\delta U$, where $U$ is a single unbounded component of $\UU$.  
\end{example}

\begin{defn}[peripheral border component]\label{perif}
A border component $\gamma$ of $\UU$ is \emph{peripheral} if there is a component $C\ss\UU$ of $\bd L$ such that $\gamma$ and $C$ cobound a component $U_{\gamma}$ of $\UU$ homeomorphic either to an open  annulus or an open infinite strip $\R\x(0,1)$.   
\end{defn}

\subsection{Reducing circles}\label{redcircs}

Let $\gamma\in\delta U_{\gamma}$ be a circle border  component of $\UU$, where $U_{\gamma}$ is a component of $\UU$ that is not an open disk nor  an open M\"obius strip.  Then there is a unique simple closed geodesic  $\sigma_{\gamma}$  in the free homotopy class of $\gamma$. By the convexity of $U_{\gamma}$, $\sigma_{\gamma}\ss U_{\gamma}$. This geodesic $\sigma_{\gamma}$ is 2-sided and cobounds an open annulus $A\ss U_{\gamma}$ with $\gamma$.  If $\gamma$ is peripheral, then $\sigma_{\gamma}$ is a component of $\bd L$ and is not taken as a reducing curve. Otherwise $\sigma_{\gamma}$ will be included in the set $\S$ of reducing curves.     If the  set $U_{\gamma}$ is an open annulus with boundary  the two curves $\gamma_{1}$ and $\gamma_{2}$, then $\sigma_{\gamma_{1}}=\sigma_{\gamma_{2}}$. Otherwise,   the various $\sigma_{\gamma}$'s will be disjoint and disjoint from $\Lambda_{\pm}$, $\bd L$, and the previously constructed reducing curves.

\begin{lemma}\label{fincov}
The sequence of reducing circles $\{\sigma_{h^{n}(\gamma)}\}_{n\in\Z}$ escapes. 
\end{lemma}

\begin{proof}
Note that $\sigma_{h^{n}(\gamma)} = (h^{n}(\sigma_{\gamma}))^{\g}$. Since the circle $\sigma_{\gamma}$ is compact and contained in $\UU_{e_{-}}\cap\UU_{e_{+}}$, it follows that $\sigma_{\gamma}\ss U_{e_{-}}\cap U_{e_{+}}$ for a distinguished neighborhood $U_{e_{-}}$ of $e_{-}$ and  distinguished neighborhood $U_{e_{+}}$ of $e_{+}$. We will use the fact that $\sigma_{\gamma}\ss U_{e_{+}}$ to show that $\{\sigma_{h^{n}(\gamma)}\}_{n\ge 0}$ escapes. In an analogous way the fact that $\sigma_{\gamma}\ss U_{e_{-}}$ shows that $\{\sigma_{h^{n}(\gamma)}\}_{n\le 0}$ escapes.

Let $J = \fr U_{e_{+}}$. By the construction of $h$ in Section~\ref{defineh}, $J_{n} = h^{n}(J)$, $n\in\Z$. Thus $J_{n} = \fr h^{n}(U_{e_{+}})$ and $h^{n}(U_{e_{+}})$ is the distinguished neighborhood of a positive end in the cycle of ends containg the end $e_{+}$. Since $\fr h^{n}(U_{e_{+}})$ consists of the geodesics forming $J_{n}$ and $h^{n}(\sigma_{\gamma})\ss h^{n}(U_{e_{+}})$, it follows that $\sigma_{h^{n}(\gamma)} = (h^{n}(\sigma_{\gamma}))^{\g}\ss h^{n}(U_{e_{+}})$, $n\in\Z$. Since only finitely many of the distinguished neighborhoods $h^{n}(U_{e_{+}})$, $n\ge 0$, meet any compact set, it follows that $\{\sigma_{h^{n}(\gamma)}\}_{n\ge 0}$ escapes. 
 \end{proof}

\begin{rem}
 As one forwardly iterates applications of $h$ to $\gamma$, the vertices remain in $K$ and the edges $\beta_i\ss|\Lambda_+|$ stretch without bound.  Similarly, under iterates of $h^{-1}$, the edges $\alpha_i\ss|\Lambda_-|$ become unbounded. The open annulus cobounded by  $\sigma_{\gamma}$  and $\gamma$ also stretches without bound.  A simple modification of Example~\ref{simpex} illustrates this behavior.  Alter $L$ by gluing a punctured torus to each boundary circle.  Those circles now become reducing curves $\sigma_{\gamma}$ and one easily sees $\gamma$ and the annulus that it cobounds with $\sigma_{\gamma}$.
\end{rem}

\begin{figure}[htb]
\begin{center}
\begin{picture}(275,130)(40,-10)
\includegraphics[width=350pt]{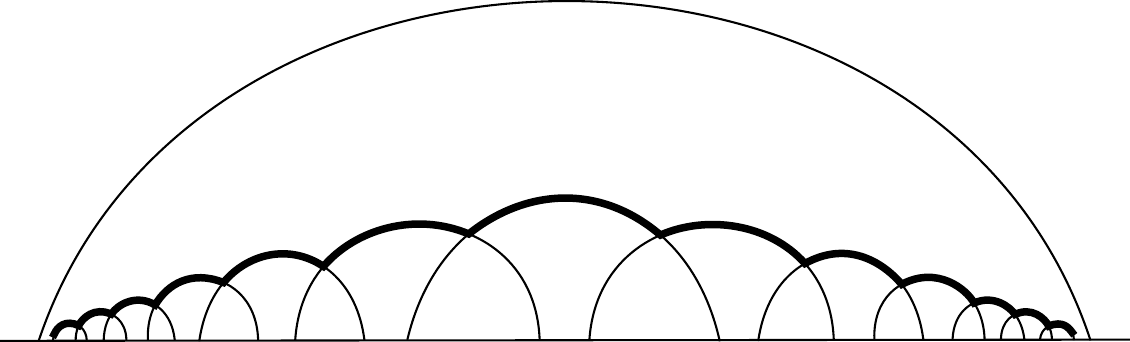}

\put (-340,-10){$a$}
\put (-13,-10){$b$}
\put (-228,42){$\tilde{\alpha}_i$}
\put (-180,50){$\tilde{\beta}_i$}
\put (-135,42){$\tilde{\alpha}_{i+1}$}
\put (-73,25){$\tilde{\gamma}$}
\put (-66,75){$\tilde{\sigma}_{\gamma}$}

\end{picture}
\caption{The lifts $\wt{\sigma}_{\gamma}$ and $\wt{\gamma}$ (boldface) for $\gamma\in\delta\UU$ of second kind}\label{firstposs}
\end{center}
\end{figure}

\subsection{Reducing lines}\label{redcrv2}

Let $\gamma\in\delta U_{\gamma}$ be a  border component of $\UU$ homeomorphic to the reals.  The associated curve $\sigma_{\gamma}$ is the geodesic whose lift has   endpoints $a\ne b$ of Corollary~\ref{aneb}.  Figure~\ref{firstposs} (respectively Figure~\ref{firstknd}) illustrates  $\wt\gamma$ and $\wt{\sigma}_{\gamma}$ when $\gamma$ is of the second kind (respectively first kind). Here   $\wt\gamma$ is given in boldface and  the bottom line  represents a subarc of $\Se$ (Definition~\ref{se}).

\begin{figure}[h]
\begin{center}
\begin{picture}(275,150)(40,-10)
\includegraphics[width=350pt]{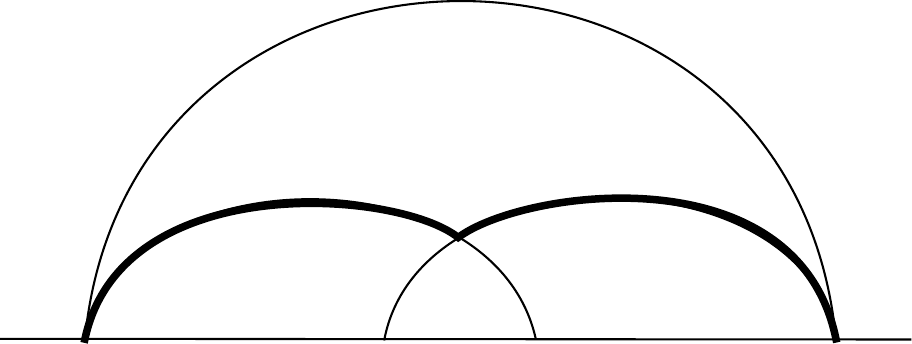}

\put (-70,53){$\tilde{\gamma}$}
\put(-77,100){$\wt{\sigma}_{\gamma}$}
\put(-176,30){$\wt y_{0}$}
\put(-260,58){$\wt\alpha_{0}$}
\put(-150,58){$\wt\beta_{0}$}
\put(-319,-9){$a$}
\put(-29,-9){$b$}

\end{picture}
\caption{The lifts $\wt{\sigma}_{\gamma}$ and $\wt{\gamma}$ (boldface) for $\gamma\in\delta\UU$ of first kind}\label{firstknd}
\end{center}
\end{figure}

\begin{lemma}\label{siggamprop}
The curve $\sigma_{\gamma}$ is simple, homeomorphic to $\R$, and disjoint from $\gamma$. 
\end{lemma}

\begin{proof}
If $\sigma_{\gamma}$ is not simple, then some lift of $\sigma_{\gamma}$ intersects the lift $\wt\sigma_{\gamma}$ with endpoints $a,b$. It follows that some lift of $\gamma$ intersects the lift $\wt\gamma$ with endpoints $a,b$ which contradicts the fact that $\gamma$ is simple. Thus, $\sigma_{\gamma}$ is simple.

As in the proof of Corollary~\ref{aneb}, there exists   lifts $\sigma_n$ of positive juncture components that lie in arbitrarily small neighborhoods of the end $e_+$ for $n$ sufficiently large such that the $\sigma_n$ nest on $b$. Since $\sigma_{\gamma}$ has a lift with  endpoint $b$, this implies that $\sigma_{\gamma}$ is unbounded. If $\sigma_{\gamma}$ is homeomorphic to a circle then it is compact and thus bounded. This contradiction implies that $\sigma_{\gamma}$ is homeomorphic to the reals.

If $\sigma_{\gamma}$ meets $\gamma$, then some lift $\wt\sigma'_{\gamma}$ of $\sigma_{\gamma}$ with endpoints $a',b'\in\Si$ meets the lift $\wt\gamma$ of $\gamma$ with endpoints $a,b$. If $\gamma$ is a border component of $\UU$ of the first kind, then $\wt\sigma'_{\gamma}$ meets one of the rays $\wt\alpha_0$ or $\wt\beta_0$ (see Figure~\ref{firstknd}). If $\gamma$ is a border component of $\UU$ of the second kind, then $\wt\sigma'_{\gamma}$ meets one of the compact arcs $\wt\alpha_i$ or $\wt\beta_i$, $-\infty<i<\infty$ (see Figure~\ref{firstposs}). Let $\alpha$ with lift $\wt\alpha$  be such a ray or compact arc  that $\wt\sigma'_{\gamma}$ meets $\wt\alpha$ and let $\lambda\in\Lambda_{\pm}$ be such that $\lambda\supset\alpha$ has lift $\wt\lambda\supset\wt\alpha$. Then the endpoints of $\wt\lambda$ on $\Si$ separate $a'$ and $b'$ so $\wt\lambda$ intersects the lift $\wt\gamma'$ of $\gamma$ with endpoints $a',b'$ on $\Si$  transversely contradicting the fact that $\gamma\in\delta\UU$. Thus, $\sigma_{\gamma}$ and $\gamma$ are disjoint.
\end{proof}

\begin{cor}\label{cor631}

The curves $\gamma$ and $\sigma_{\gamma}$ cobound a region $O_{\gamma}\ss L$ homeomorphic to $(0,1)\x\R$ and contained in the component $U_{\gamma}$ of $\UU$.

\end{cor}

\begin{proof}
Let $O_{\gamma}$ be the projection of the open region $\wt O_{\gamma}\ss\Delta$ bounded by the curves $\wt\gamma$ and and $\wt\sigma_{\gamma}$.  We must show that if $T$ is any nontrivial covering transformation, then $T(\wt O_{\gamma})\cap \wt O_{\gamma}=\emptyset$. Suppose to the contrary that $T(\wt O_{\gamma})\cap \wt O_{\gamma}\ne\emptyset$.  If $T(a)\ne a$ and/or $T(b)\ne b$,  then one of $T(\wt\gamma),T(\wt\sigma_{\gamma})$ must meet one of $\wt\gamma,\wt\sigma_{\gamma}$ which either contradicts the fact that $\gamma$ and $\sigma_{\gamma}$ are each simple or contradicts the fact that $\gamma$ and $\sigma_{\gamma}$ are disjoint.   Thus we can assume $T(a) = a$ and  $T(b) = b$ but in that case,  we must have $T(\wt O_{\gamma})\ = \wt O_{\gamma}$ and $T(\wt\gamma) = \wt\gamma$. Since $\gamma$ is homeomorphic to the reals, $T$ must be the identity transformation.
 
By the convexity of $\wt O_{\gamma}$, any geodesic in $\wt\Lambda_{\pm}$ that meets $\wt O_{\gamma}$ must also meet $\wt\gamma$. It follows that $\wt O_{\gamma}$ is disjoint from $\wt\Lambda_{\pm}$ and  that $O_{\gamma}\ss U_{\gamma}$.
\end{proof}

\begin{cor}\label{2ndconnects}

One end of the curve $\sigma_{\gamma}$ approaches the positive end $e_{+}$ while the other end of $\sigma_{\gamma}$ approaches the negative end $e_{-}$.

\end{cor}

If $\gamma$ is peripheral, then $\sigma_{\gamma}\ss\bd L$ and is not taken as a reducing curve. If the  set $U_{\gamma}$ is a doubly infinite open strip with boundary  two curves $\gamma_{1}$ and $\gamma_{2}$, then $\sigma_{\gamma_{1}}=\sigma_{\gamma_{2}}$. In this case,  both of the $\gamma_i$ may be of the second kind or one of the $\gamma_{i}$ may be of the first kind and   the  other of the second kind.  It can not happen that both of the $\gamma_i$ are of the first kind. 

Indeed, exactly as in Example~\ref{simpex2}, if the  set $U_{\gamma}$ is a doubly infinite open strip with boundary  two curves $\gamma_{1}$ and $\gamma_{2}$ both of the first kind, then there would exist a leaf of $\Lambda_+$ and a leaf of $\Lambda_-$ each meeting $U_{\gamma}$ contradicting the fact that $U_{\gamma}$ is contained in the escaping set.

 Otherwise,   the various $\sigma_{\gamma}$'s, associated to real line border components $\gamma$ of $\UU$, will be disjoint and disjoint from $\Lambda_{\pm}$, $\bd L$. The curves $\sigma_{\gamma}$ will be included in the set $\S$ of reducing curves.

\begin{rem}
If $\gamma$ is of the second kind, the edges  $\beta_i\ss|\Lambda_+|$ of $\gamma$ stretch unboundedly as $i\to\infty$ and  the edges $\alpha_i\ss|\Lambda_-|$ stretch unboundedly as $i\to -\infty$.  The vertices of $\gamma$ remain in the core. Thus,  these border components of the second kind appear quite bizarre in $L$ and do not directly connect a negative end of $L$ to a positive one but rather return infinitely often to the core.   By Corollary~\ref{2ndconnects},    $\sigma_{\gamma}$ does directly connect a negative end to a positive end.  
\end{rem}

The set $\S$ of reducing curves is now complete.

\begin{rem}
The reducing curves in $\S$ are of following kinds,

\begin{enumerate}

\item The rims of crown sets defined in Section~\ref{redcurv}.

\item The infinite families of reducing circles defined in Section~\ref{redcircs}.

\item The reducing lines corresponding to border components of $\UU$ of the first and  second kind defined in Section~\ref{redcrv2}.

\end{enumerate} 

\end{rem}

\begin{rem}
A reducing curve is always a geodesic.

\end{rem}

 
\section{Reduction}\label{reducing}

By a reduced piece, we mean the internal completion $Q=\ddot U$ of a component $U$ of $L\sm|\S|$. Recall that $\ddot\iota:Q\to\ol U$ is not necessarily one-one but possibly identifies border components of $Q$ (Section~\ref{section51}). As we define $g$ on $\ol U$, in abuse of notation, we will also consider $g$ to be defined on $Q$. If $A\ss L$ we will abuse notation and denote $\ddot\iota^{-1}(A)$ by $A\cap Q$ and say $A$ meets $Q$ if $A\cap Q\ne\0$.

\begin{theorem}\label{geodext1}
If $f:L\to L$ is an endperiodic automorphsm, then there exists   an endperiodic automorphism $g:L\to L$, isotopic to $f$ such that,

\begin{enumerate}

\item $g\bigl||\Lambda_{+}|\bigr. = h\bigl||\Lambda_{+}|\bigr.$ and $g\bigl||\Lambda_{-}|\bigr. = h\bigl||\Lambda_{-}|\bigr.$ where $h$ is the endperiodic automorphism of \emph{Theorem~\ref{geodext}}\upn{;}

\item $g$ permutes the elements of the set $\S$ of reducing curves\upn{;}  

\item There are at least one and at most finitely many components $U$  of $L\sm|\S|$ with $\ol U$ noncompact. For such  $U$, if $g^{m}(Q) = Q$, then either,

\begin{enumerate}

\item $g^{m}:Q\to g^{m}(Q)$ is isotopic to a translation\upn{;}

\item $g^{m}:Q\to g^{m}(Q)$ is a pseudo-anosov automorphism \upn{(}Definition~\ref{psdansv}\upn{)}.

\end{enumerate}

\end{enumerate}

\end{theorem}

\begin{rem}
The dynamics of $g$ on the compact reduced pieces is described in Propositions~\ref{hnisNT} and~\ref{hnistriv}. Briefly, if $Q$ is a compact reduced piece in a principal region, the dynamics of $g$ on $Q$ is given by Nielsen-Thurston theory. If $Q$ is a compact reduced piece contained in the escaping set $\UU$, then the dynamics of $g$ on $Q$ is trivial.

\end{rem}

\begin{rem}
If $\sigma_{\gamma}$ is a reducing curve associated to the border component $\gamma\in\delta\UU$ then $g(\sigma_{\gamma}) = \sigma_{h(\gamma)}$. If $\rho_{\gamma}$ is a reducing curve which is a rim of a crown set associated to a boundary component $\gamma$ of the nucleus of a principal region, then $g(\rho_{\gamma}) = \rho_{h(\gamma)}$.

\end{rem}

\subsection{Proof of first part of Theorem~\ref{geodext1}}

 Let $h$ be the endperiodic automorphism  of Theorem~\ref{geodext}. Note that the isotopies in Lemma~\ref{preservesS} will move some juncture components but leave invariant  the juncture components in $\{h(\sigma)\ |\ \sigma\in\JJ_{W}\}$ ($\JJ_{W}$ is defined in Definition~\ref{jw}).  

\begin{lemma}\label{preservesS}
There exists an endperiodicautomorphism $g$ isotopic to $h$ by an isotopy with support disjoint from $\Lambda_{+}\cup\Lambda_{-}\cup\bd L$ such  that $g(\S)=\S$.
\end{lemma}

\begin{proof}[Sketch of proof]

Enumerate the elements of $h(\S)$ as $\{\gamma_i\}_{i=1}^{\infty}$ (or $\{\gamma_i\}_{i=1}^{N}$ if there are finitely many reducing curves). Let $\psi_0 = \id$. We inductively find  sequences $\{\Phi_{i}\}$ of isotopies and $\{\psi_{i}\}$ of homeopmorphisms with $\psi_{i} = \Phi^{1}_{i}\circ\psi_{i-1}$, $i\ge 1$,     with support disjoint from $\Lambda_{+}\cup\Lambda_{-}\cup\bd L$, and  with $\Phi_{i}$ fixing $\gamma_{j}^{\g}$ pointwise, $1\le j\le i-1$,   and moving $\psi_{i-1}\circ \gamma_{i}$ to its geodesic tightening. That is, $\psi_{i}(\gamma_{i}) =  \gamma_{i}^{\g}$.

 Suppose $\Phi_i, \psi_i$, $1\le i\le n-1$, have been defined satisfying these properties and let $\wt\gamma_n$ be a lift of $\gamma_n$ and $\wt\gamma^{\g}_n$ be the lift of $\gamma^{\g}_n$ sharing endpoints on $\Si$ with $\wt\gamma_n$. 
 
 If $\sigma$ is a juncture component in $h(\JJ_W)$ and  $x\in\sigma\cap\gamma_i$  has lift $\wt x\in\wt\sigma\cap\wt\gamma_i$ where $\wt\sigma$ is a lift of $\sigma$, then by Lemma~\ref{slide} there is a sliding isotopy supported in a small neighborhood of $\sigma$ with lift sliding $\wt x$ along $\wt\sigma$ to the point of intersection of $\wt\sigma$ and $\wt\gamma^{\g}_i$.  After a sequence of such sliding isotopies , we can assume that $\wt\sigma\cap\wt\gamma_i = \wt\sigma\cap\wt\gamma^{\g}_i$ for every juncture component $\sigma\in h(\JJ_W)$ amd lift $\wt\sigma$ meeting $\wt\gamma_i$.

Consider the set of all points of intersection of $\sigma\cap\gamma_n$, all $\sigma\in h(\JJ_W)$.  If $a,b\in\gamma_n$ are two such points of intersection so that the interval $[a,b]\ss\gamma_n$ contains no other such point of intersection, then, by Theorem~\ref{3.1}, there is an isotopy moving the interval $[a,b]\ss\gamma_n$ to the corresponding interval in $\gamma^{\g}_n$ with endpoints  $a,b$. After a sequence of such isotopues we can assume $\gamma_n$ has been moved to $\gamma^{\g}_n$.  Let $\Phi_n$ be the composition of the isotopies of this and  the previous  paragraph and $\psi_{n} = \Phi^{1}_{n}\circ\psi_{n-1}$.   By the remark after Theorem~\ref{epsteinsmooth}, we can assume $\Phi_{n}$ fixes $\gamma_{j}^{\g}$ pointwise, $1\le j\le n-1$.

The    supports of the $\Phi_{n}$ do not accumulate. Thus, $\psi_{n}\to\psi$, a well defined homeomorphism isotopic to the identity by an isotopy $\Phi$ and $g=\psi\o h$ is such that $g(\alpha)$ is a geodesic for every $\alpha\in\JJ_{W}$. Since $\gamma_n$ and $\gamma_n^{\g}$ both lie in the same component of $L\sm(|\Lambda_{+}|\cup|\Lambda_{-}|)$ for each $i\ge 1$, the  isotopy $\Phi$ can be defined to have support disjoint from $\Lambda_{+}\cup\Lambda_{-}$. The lemma follows.
\end{proof}

This $g$ is the required endperiodic automorphism of Theorem~\ref{geodext1}.

\begin{cor}\label{hstarf}

The endperiodic automorphism $g$ permutes the components  $U$ of $L\sm|\S|$.

\end{cor}

The endperiodic automorphism $g$ of Theorem~\ref{geodext1} permutes  the elements of each of the sets $\Lambda_{+}$,  $\Lambda_{-}$,  and $\S$ . It is not possible to construct $g$ isotopic to $f$ so that $g$ also  permutes the elements of each of the sets   $\JJ_{+}$ and  $\JJ_{-}$  without additional restriction on the choice  of $f$-junctures. The problem one encounters in trying to do this is illustrated in the following example.

\begin{figure}[h]
\begin{center}
\begin{picture}(300,200)(-50,0)
\includegraphics[width=200pt]{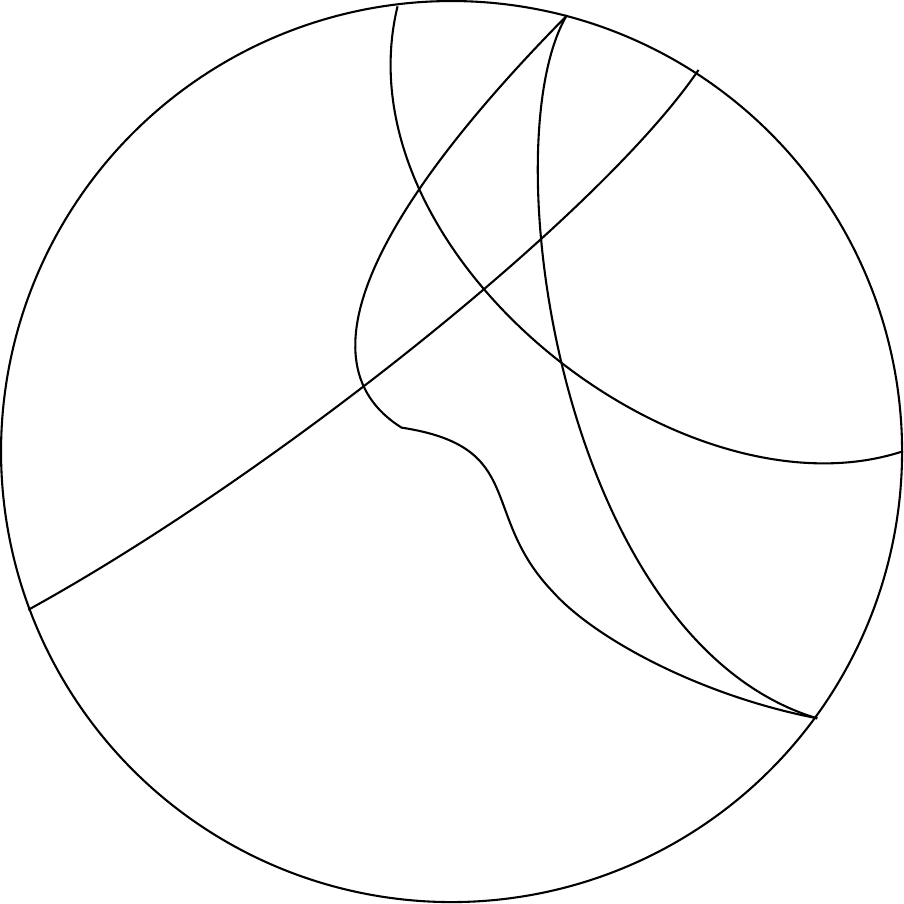}

\put(-90,60){$\sigma$}
\put(-55,75){$\sigma^{\g}$}
\put(-35,105){$\beta$}
\put(-150,80){$\alpha$}

\end{picture}
\caption{A bad configuration of three pseudo-geodesics in $\Delta$}\label{BermTri}
\end{center}
\end{figure}

\begin{example}\label{ExBermTri}

If three pseudo-geodesics in $\Delta$ are such that any two of them intersect, then there might not exist an ambient isotopy of $\Delta$ that moves each of them to a geodesic. Figure~\ref{BermTri} illustrates such a bad triple of pseudo-geodesics. In Figure~\ref{BermTri}, $\alpha$ and $\beta$ are geodesics and $\sigma$ is a pseudo-geodesic with geodesic tightening $\sigma^{\g}$. There clearly does not exist an ambient isotopy that moves each of $\alpha$, $\beta$, and $\sigma$ to geodesics.

It is possible to construct an endperiodic automorphism $f$ of a surface $L$ with positive juncture component $\tau^+$, negative juncture component $\tau^-$, and reducing curve $\gamma$ such that the lifts of the pseudo-geodesics $f(\tau^+), f(\tau^-),f(\gamma)$
 are configured as $\alpha,\beta,\sigma$ in Figure~\ref{BermTri}. For such an example it would be impossible to find an ambient isotopy of $L$ permutting each of the sets $\JJ_+$, $\JJ_-$, and $\S$.

\end{example}

\subsection{The reduced pieces}\label{rpand pf}

By a reduced piece $Q$, we mean the internal completion $Q=\ddot U$ of a component $U$ of $L\sm|\S|$.

\begin{lemma}\label{QolU}

Suppose $U$ is a component of $L\sm\S$.

\begin{enumerate}

\item If $U$ does not meet $|\Lambda_{+}|\cup|\Lambda_{-}|$, then $\ddot\iota:Q\to\ol U$ is a homeomorphism\upn{;}\label{QolU1}

\item If $U$ does  meets $|\Lambda_{+}|\cup|\Lambda_{-}|$, then either $\ddot\iota:Q\to\ol U$ is a homeomorphism or $Q$ is homeomorphic to $\ol U$ cut apart along some reducing curves.\label{QolU2}

\end{enumerate}

\end{lemma}

\begin{proof}
If $\sigma_{\gamma}$ is a reducing curve, then on one or both sides of $\sigma_{\gamma}$, there is a curve $\gamma\ss|\Lambda_{+}|\cup|\Lambda_{-}|$ such that $\gamma$ and $\sigma_{\gamma}$ cobound an annulus or doubly infinite strip with interior disjoint from $|\S|$. Thus, one or both sides of $\sigma_{\gamma}$ borders a component of $L\sm|\S|$ which meets $|\Lambda_{+}|\cup|\Lambda_{-}|$  and~(\ref{QolU1}) follows. In case (\ref{QolU2}), it is easy to construct examples in which $\ddot\iota:Q\to\ol U$ is not a homeomorphism to $\ol U$ but  rather identifies pairs of components of $\delta Q$ to form reducing curves.
\end{proof}

\subsubsection{The noncompact reduced pieces}\label{noncpieces}

\begin{lemma}\label{Qstand}
A noncompact reduced piece ${Q}$ is standard in the hyperbolic metric induced by the metric of $L$.
\end{lemma}

\begin{proof}
If $Q = \ol U$, then $Q$ is a standard hyperbolic surface as a subsurface with geodesic boundary of the standard hyperbolic surface $L$. Otherwise, $Q$ is $\ol U$ cut apart along embedded geodesics and so is a standard hyperbolic surface.
\end{proof}

Since there are finitely many reducing curves of the first and second kind, there are finitely many noncompact reduced pieces. Therefore, for any noncompact reduced piece $Q$, there exists an integer $m>0$ such that $g^{m}(Q) = Q$.

\begin{prop}\label{fmonQendp}

Suppose $Q$ is a noncompact reduced piece. Then,

\begin{enumerate}

\item The  immersion $\ddot \iota:{Q}\looparrowright L$ induces a map $\iota_{Q}:\EE({Q})\to\EE(L)$ and  $\EE({Q})$ is finite\upn{;}\label{Qone}

\item If $m>0$ is an integer such that $g^{m}(Q) = Q$, then $g^{m}:{Q}\to {Q}$ is endperiodic.\label{Qtwo}

\end{enumerate}

\end{prop}

\begin{proof}
 Fix an integer $k>0$ which is a multiple of $m$ and of $p_{e}$ for every end $e$ of $L$. Also fix an exhaustion $K=K_{0}\ss K_{1}\ss\cdots K_{n}\ss\cdots$ of $L$ by larger and larger choices of cores. Then $\{K_{n}\cap {Q}\}_{n\ge 0}$ is an exhaustion of $Q$. Let $e\in\EE({Q})$ and let  $U_{n}$ be the connected component of the complement in ${Q}$ of $K_{n}\cap {Q}$ which is a neighborhood of $e$. Then $e=[\{U_{n}\}]$ (see the  definition of ``end'' on page~\pageref{enddefn}). Let $V_{n}$ be the component of $L\sm K_{n}$ which contains $U_{n}$. Thus, $e' = [\{V_{n}\}]$ is a uniquely defined end in $\EE(L)$ and the map $\iota_{Q}:\EE({Q})\to\EE(L)$ defined by  $\iota_{Q}(e) = e'$ is well defined, proving the first assertion of (\ref{Qone}).
 
Next we prove that $\EE({Q})$ is finite. Since $k$ is divisible by $p_{e'}$, for every $e'\in\EE(L)$, $g^{k}:L\to L$ fixes  $\EE(L)$ pointwise.  The ends $e\in\EE({Q})$ can arise in one of two ways. Let $\iota_{Q}(e) = e'$. In the first case,  $\UU_{e'}\ss Q$ and $e$ is naturally identified to $e'$ as an attracting or repelling   end of ${Q}$ relative to $g^{k}$.  There can be at most finitely many  ends of $\EE({Q})$ obtained in this way. All ends of  $\EE({Q})$  obtained in this way are attracting or repelling ends for $g^{k}:{Q}\to {Q}$.

Otherwise, $\UU_{e'}$ meets and therefore contains a reducing curve $\gamma$ of the first or second kind or an infinite family of compact reducing curves which can be thought of as boundary component(s) of ${Q}$. Since there are only finitely many reducing curves of the first or second kind or inifinite familes of compact reducing curves, at most finitely many ends of ${Q}$ can be obtained in this way. 

Let $e\in\EE({Q})$ be of either of these types,  $e' = i_{Q}(e)$, and let $U_{e'}\ss L$ be a $g$-neighborhood of the end $e'$ as in Definition~\ref{fnbhd}.   Suppose $U_{e}\ss {Q}$ is the component $X$ of $(U_{e'}\cap {Q})$ which is a neighborhood of the end $e$ in ${Q}$ with any contiguous compact components of ${Q}\sm X$ added on. Then $U_{e}$ satisfies the conditions of Definition~\ref{perends} so $e$ is an attracting or repelling end for $g^{k}:{Q}\to {Q}$. Since all ends in $\EE({Q})$ are attracting or repelling, it follows that $g^{k}$ is endperiodic on ${Q}$. By Lemma~\ref{power}, it follows that $g^{m}$ is endperiodic on ${Q}$.
\end{proof}

Recall that the leaves of the laminations $\Lambda_{\pm}$ do not intersect the reducing curves in $\S$.

\begin{lemma}\label{lambdaQ}

Suppose $Q$ is a noncompact reduced piece and $g^{m}(Q) = Q$. 

\begin{enumerate}

\item $\UU_{\pm}\cap Q$ are the positive/negative escaping sets for the endperiodic automorphism $g^{m}:Q\to Q$.\label{1esc}

\item $(\Lambda_{-}\cap Q,\Lambda_{+}\cap Q)$ is the Handel-Miller bilamination for the endperiodic automorphism $g^{m}:Q\to Q$.\label{2lambda}

\end{enumerate}

\end{lemma}

\begin{proof}
For $x\in Q$, the sequence $\{g^{n}(x)\}_{n\ge 0}$ escapes in $L$ if and only if the sequence $\{(g|Q)^{km}(x)\}_{k\ge 0}$ escapes in $Q$ and~(\ref{1esc}) follows. By Lemma~\ref{front'}, $|\Lambda_{\pm}| = \fr\UU_{\mp}$. Let $\Lambda^{Q}_{\pm}$ denote  the positive/negative Handel-Miller laminations for the endperiodic automorphism $g^{m}:Q\to Q$.  By  Lemma~\ref{front'} and~(\ref{1esc}),   
$$|\Lambda^{Q}_{\pm}| = \fr(Q\cap \UU_{\mp}) =  Q\cap\fr\UU_{\mp} = Q\cap|\Lambda_{\pm}|$$ 
and~(\ref{2lambda}) follows.
\end{proof}

\subsection{Description of the action of $g$ on the reduced pieces}\label{actofh}

\subsubsection{The Nielsen-Thurston case}\label{NTcase}  
Suppose $U$ is a component of $L\sm|\S|$ in the nucleus of a principal region. Then $\ol U$ is compact and $(|\Lambda_{+}|\cup|\Lambda_{-}|)\cap U = \0$.     By Lemma~\ref{QolU}, $\ddot\iota:Q\to\ol U$ is a homeomorphism so $Q$ can be identified with $\ol U$. By Corollary~\ref{hstarf},  $g$ permutes the components $U$ of $L\sm|\mathfrak S|$. Since there are only finitely many such $U$, there exists a least integer $m$ so that $g^{m}(Q) = Q$. We have,

\begin{prop}\label{hnisNT}

Suppose $Q$ is a compact reduced piece lying in a principal region and $g^{m}(Q) = Q$.  Then the dynamics of the homeomorphism $g^{m}:Q\to h^{m}(Q)$ is given by Nielsen-Thurston theory.

\end{prop}

\subsubsection{The trivial case}\label{trivcase}  
Suppose $U\ss\UU$ is a component of $L\sm|\S|$ with $\ol U$ compact.    By Lemma~\ref{QolU}, $\ddot\iota:Q\to\ol U$ is a homeomorphism so $Q$ can be identified with $\ol U$.  By Corollary~\ref{hstarf},  $g$ permutes the components $U$ of $L\sm|\mathfrak S|$.  Since the sets $g^{n}(\ol U)$, $n\in\Z$, are disjoint, the dynamics of $g$ on $Q$ is trivial in this case.

\begin{prop}\label{hnistriv}

Suppose $Q$ is a compact reduced piece lying in $\UU$.   Then for all nonzero $n\in\Z$ the map  $g^{n}:Q\to g^{n}(Q)$ is is a homeomorphism between disjoint compact sets.

\end{prop}

\subsubsection{The translation case}\label{transcase}

Suppose $U$ is a   component of $L\sm|\S|$ with $\ol U$ not compact that does not meet  $|\Lambda_{+}|\cup|\Lambda_{-}|$. Then $U\ss\UU$. 

By Lemma~\ref{QolU}, $\ddot\iota:Q\to\ol U$ is a homeomorphism so $Q$ can be identified with $\ol U$.  By Corollary~\ref{hstarf},  $g$ permutes the components $U$ of $L\sm|\mathfrak S|$.  Since the number of noncompact reduced pieces is finite, it follows that there exists a least integer $m>0$ so that $g^{m}(Q) = Q$.  By Lemma~\ref{lambdaQ} (\ref{2lambda}), $\Lambda_{\pm}\cap Q$ are the Handel-Miller laminations for the endperiodic automorphism $g^{m}:Q\to Q$. Since $\Lambda_{\pm}\cap Q=\0$, by Lemma~\ref{total}, $g^{m}$ is isotopic to  a translation. We have, 

\begin{prop}\label{hnistransl}

If $Q$ is a noncompact reduced piece which does not meet  $|\Lambda_{+}|\cup|\Lambda_{-}|$ and $g^{m}(Q) = Q$,  then $g^{m}:Q\to Q$ is isotopic to  a translation.

\end{prop}

\subsubsection{The pseudo-anosov case}\label{pacase}

Suppose $U$ is a components of $L\sm|\S|$ with noncompact closure meeting $|\Lambda_{+}|\cup|\Lambda_{-}|$.    In this case the map $\ddot\iota:Q\to\ol U$ is not always one-one but rather sometimes identifies border components of $Q=\ddot U$. In addition, $\ol U$ may share border components in $\S$  with other components of $L\sm|\S|$.

Since the leaves of the laminations $\Lambda_{\pm}$ do not intersect the reducing curves in $\S$ we have,

\begin{lemma}

For a component $U$ of $L\sm|\S|$, the following two statments are equivalent,

\begin{enumerate}

\item $U$ meets $|\Lambda_{+}|\cup|\Lambda_{-}|$\upn{;}\label{psanone}

\item $U$ meets both $|\Lambda_{+}|$ and $|\Lambda_{-}|$. If $x\in U$ and $x\in\lambda\in\Lambda_{+}\cup\Lambda_{-}$, then $\lambda\ss U$.\label{psantwo}

\end{enumerate}

\end{lemma}

 \begin{defn}[pseudo-anosov]\label{psdansv}

An endperiodic automorphism $h:L\to L$ is a  \emph{pseudo-anosov automorphism} if,

\begin{enumerate}

\item  $h$ is not a translation\upn{;}

\item $L$ contains no reducing curves\upn{;}

\item $h$ preserves the Handel-Miller bilamination $(\Lambda_{-},\Lambda_{+})$.

\end{enumerate}
 .
\end{defn}

\begin{rem}
Recall that by Lemma~\ref{total}, an endperiodic automorphsm is a translation if and only if $\Lambda_{-} = \0 = \Lambda_{+}$.

\end{rem}

\begin{figure}[h]
\begin{center}
\begin{picture}(150,140)(0,0)

\includegraphics[width=150pt]{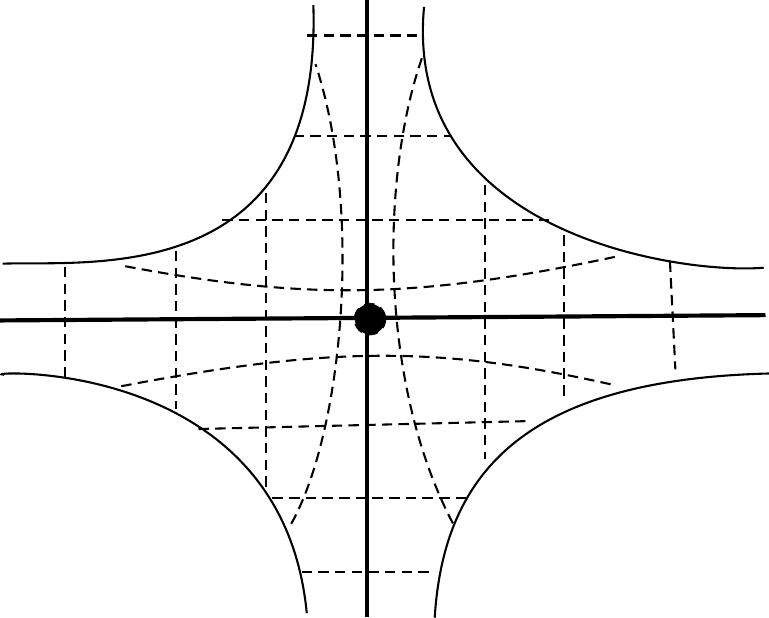}

\end{picture}
\caption{A pseudo-anosov component that is not admissible}\label{chairs1}
\end{center}
\end{figure}

\begin{rem}
We use the lower case ``a'' because the analogy with the pseudo-Anosov components for automorphisms of compact surfaces is weak.  Also, some of these reduced pieces might not even be admissible, although they will be standard. A pseudo-anosov  reduced piece as in Figure~\ref{chairs1} occurs, for instance, in Figure~\ref{TwoEnds}.  Here, because it inherits a complete hyperbolic metric from $L$, the simple ends are cusps. The laminations are still the locally uniform limits of the  geodesic junctures. This endperiodic automorphism occurs as the monodromy of the depth one leaf in Gabai's ``stack of chairs'' foliation of a sutured solid torus (\cite{ga0}, but also cf.~\cite[Section~11.1]{condel2}).
\end{rem}

By Corollary~\ref{hstarf},  $g$ permutes the components $U$ of $L\sm|\mathfrak S|$.  Since the number of  noncompact reduced pieces is finite, it follows that if   there exists a least integer $m>0$ so that $g^{m}(Q) = Q$.   We have,

\begin{prop}\label{hnispa}

If $Q$ is a noncompact reduced piece  which meets $|\Lambda_{+}|\cup|\Lambda_{-}|$ and $g^{m}(Q) = Q$,  then $g^{m}:Q\to Q$ is a pseudo-anosov automorphism.

\end{prop}

\section{Dynamics in the Core}\label{cordyn}

In the Nielsen-Thurston theory, the pseudo-Anosov homeomorphism is semi-conjugate to a two-ended Markov shift of finite type.  The analogous result here concerns the core dynamical system.

\begin{defn}[core dynamical system]\label{CoreDy}
Let $f:L\to L$ be an endperiodic automorphism and $h$ an endperiodic automorphism isotopic to $f$ and preserving the bilamination $(\Lambda_{+},\Lambda_{-})$.  Then the restriction of $h$ to $\KK=|\Lambda_{+}|\cap|\Lambda_{-}|$ defines the core dynamical system $h:\KK\to \KK$.
\end{defn}

The main theorem of this section is,

\begin{theorem}\label{basicmar}

The core dynamical system $h:\KK\to \KK$ is topologically conjugate to a two-ended Markov shift of finite type. 

\end{theorem}

We allow $\bd L\ne\0$ and we do not reduce.  Theorem~\ref{basicmar} will follow immediately from Proposition~\ref{prop1013}.

We fix a core $K$.  The pre-Markov and Markov rectangles $R\ss K$ we consider below will have a pair of opposite edges $\alpha_{R},\beta_{R}$ that are subarcs of $\Lambda_{+}$ (called the positive or vertical edges) and a pair of opposite edges $\delta_{R},\gamma_{R}$ that are subarcs of $\Lambda_{-}$ (called the negative or horizontal edges).  As remarked earlier (on page~\pageref{notgeomrect}), we are using the term ``rectangle'' as it is used in the literature on Markov partitions.  Our rectangles will be convex geodesic quadrilaterals, not geometric rectangles, as the latter are impossible in hyperbolic geometry.

\begin{rem}
We allow degenerate rectangles.  If $ \alpha_{R}= \beta_{R}$, then $R= \alpha_{R}$ degenerates to an arc in $ \Lambda_{+}$ and, if $ \delta_{R}= \gamma_{R}$, $R=\delta_{R}$ degenerates to an arc in $ \Lambda_{-}$.    These degenerate possibilities should be kept in mind in what follows.
\end{rem}

\begin{defn}[$\QQ^{+}$]\label{QQ+}
Let $Q^{\dagger}\ss K$ be the  \emph{extreme rectangle} (Definition~\ref{extquad}) with two  edges $\alpha_{Q^{\dagger}},\beta_{Q^{\dagger}}$  that are extreme arcs of $\Lambda_{+}\cap K$ in their isotopy class. The other two edges $\delta_{Q^{\dagger}},\gamma_{Q^{\dagger}}$ 
of $Q^{\dagger}$ are arcs in positive junctures in $\bd_{+} K$.  Let $Q\ss Q^{\dagger}$ be the largest  rectangular subset of $Q^{\dagger}$ with two edges subsets of $\alpha_{Q^{\dagger}},\beta_{Q^{\dagger}}\ss|\Lambda_{+}|$ and the other two edges subsets of leaves of $\Lambda_{-}$.   The finite set of all such rectangles will be denoted by $\QQ^{+}$.
\end{defn}

\begin{rem}
The rectangle $Q\ss Q^{\dagger}$ has as positive edges subarcs of the positive edges of $Q^{\dagger}$.  Its negative edges are arcs of $\Lambda_{-}\cap Q^{\dagger}$.  These arcs may be identical, in which case $Q$ degenerates to an arc in $\Lambda_{-}$.  Conceivably, $\Lambda_{-}\cap Q^{\dagger}=\0$, hence $Q=\0$. In this case $Q^{\dagger}$ lies in the positive escaping set $\UU_{+}$ and contributes nothing to the dynamical system $h:\KK\to\KK$.
\end{rem}

\begin{figure}[htb]
\begin{center}
\begin{picture}(100,100)(30,55)
\includegraphics[width=150pt]{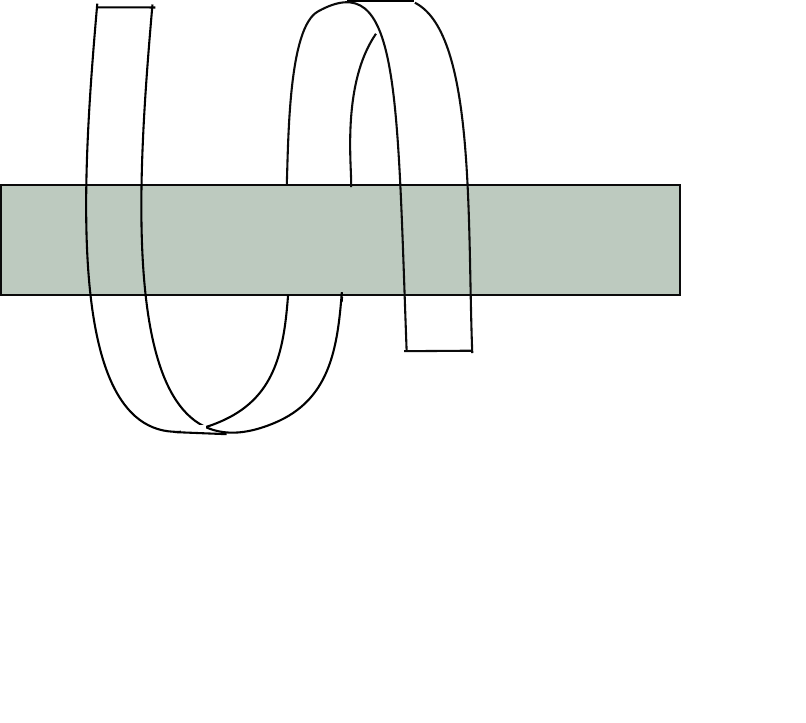}

\put(-50,90){\small $R'$}
\put(-77,120){\small $R$}
\end{picture}
\caption{$R$ completely crosses $R'$ twice}\label{compcross}
\end{center}
\end{figure}

\begin{defn}[completely crosses]\label{completelycrosses}
Let $R$ and $R'$ be nondegenerate rectangles with geodesic sides.
Then $R$ completely crosses   $R'$ in the positive direction if each component (if any) of $R\cap R'$ is a nondegenerate rectangle with horizontal boundary components subarcs of $|\Lambda_{-}|\cap R'$  and vertical boundary components subarcs of $|\Lambda_{+}|\cap R$. If $R$ and/or $R'$ degenerates, the definition is modified in the obvious way.  We say that $R$ completely crosses $R'$ in the negative direction precisely if $R'$ completely crosses $R$ in the positive direction.
\end{defn}

 This definition allows $R$ to cross $R'$ multiple times (see Figure~\ref{compcross}) or never.  It also allows the situation pictured in Figure~\ref{touch}.
 
\begin{figure}[htb]
\begin{center}
\begin{picture}(100,100)(10,0)
\includegraphics[width=90pt]{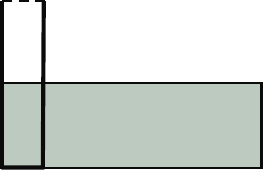}

\put(-50,10){\small $R'$}

\put(-86,40){\small $R$}
\end{picture}
\caption{$R$ completely crosses $R'$ with overlapping edges}\label{touch}
\end{center}
\end{figure}

\begin{prop}\label{oneone}
If   $Q\in\QQ^{+}$, then $h(Q)$ completely  crosses  \upn{(}in the positive direction\upn{)} any rectangle $Q'\in\QQ^{+}$ that it meets.
\end{prop}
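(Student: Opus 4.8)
The plan is to work in the universal cover $\wt L = \Delta$ and exploit the dynamics of $\wh h = \wh f$ on $\Si$. Fix $Q \in \QQ^+$ and $Q' \in \QQ^+$ with $h(Q) \cap Q' \ne \0$. The key point is to understand what ``completely crosses in the positive direction'' means concretely: $h(Q)$ enters $Q'$ through one of the edges $\delta'_Q, \gamma'_Q$ (the edges in positive junctures in $\delta K$) and its two $\Lambda_+$-sides $h(\alpha_Q), h(\beta_Q)$ run all the way across $Q'$, exiting through the other junctures-edge, with $h(\alpha_Q)$ and $h(\beta_Q)$ straddling both $\Lambda_-$-sides $\delta_{Q'}, \gamma_{Q'}$ of $Q'$. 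So the statement to prove is that the $\Lambda_+$-leaves carrying $h(\alpha_Q)$ and $h(\beta_Q)$ each meet both $\gamma_{Q'}$ and $\delta_{Q'}$ — equivalently, in the cover, the lifted leaves $\wh h(\wt{\alpha}_Q)$ and $\wh h(\wt\beta_Q)$ have endpoints on $\Si$ separating the four endpoints of the relevant lift of $Q'$.

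First I would set up lifts: choose a lift $\wt{Q'}$ of $Q'$ meeting a lift of $h(Q)$, which by Axiom~\ref{trnsvrs} determines a lift $\wt Q$ of $Q$ and a lift $\wt h$ with $\wt h(\wt Q)$ meeting $\wt{Q'}$. Recall that $\wt\alpha_Q \subset \alpha'_Q$, where $\wt\alpha'_Q$ is an \emph{extreme} arc of $\wt\Lambda_+ \cap \wt K$ in its isotopy class (Definition~\ref{extremearcs}), and similarly for $\beta$; the whole point of passing to the maximal $4$-gon $Q$ inside $Q'$ (rather than $Q'$ itself) is that $\alpha_Q, \beta_Q$ are genuine sub-arcs of leaves of $\Lambda_+$ reaching all the way to the $\Lambda_-$-sides of $Q$, so $Q$ is a genuine ``$4$-gon'' in the sense of the definition. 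Then I would use the fact (Corollary~\ref{accum'}, Axiom~\ref{locunif}) that positive $h$-junctures accumulate on the leaves through $\alpha'_Q, \beta'_Q$, so that the leaf $\lambda$ of $\Lambda_+$ containing $\alpha_Q$ is a locally uniform limit of curves in $\XX_+$; applying $h$, the leaf $h(\lambda)$ through $h(\alpha_Q)$ is a locally uniform limit of curves $h(J)$, which are again positive $h$-junctures (since $\wh h = \wh f$ permutes junctures as $f$ does). The crossing claim will then follow from a separation-of-endpoints argument on $\Si$: the endpoints of $\wt{Q'}$ on $\Si$ come in the cyclic order (endpoint of a positive juncture, endpoint of $\gamma_{Q'}$, endpoint of a positive juncture, endpoint of $\delta_{Q'}$), and since $h(Q)$ enters $Q'$ through a junctures-edge but $Q$ is a $4$-gon with $\Lambda_+$-sides crossing \emph{all} of $K$ between its $\Lambda_-$-sides, the images $\wh h(\wt\alpha_Q), \wh h(\wt\beta_Q)$ cannot have an endpoint trapped in the arc of $\Si$ subtended by either $\wt{\gamma}_{Q'}$ or $\wt{\delta}_{Q'}$ — else a leaf of $\Lambda_+$ would properly cross a leaf of $\Lambda_-$, contradicting mutual transversality of the laminations (Axiom~\ref{muttran}, which forces such leaves to meet in at most one point) combined with the fact that $h(\alpha_Q)$ already meets $\wt{Q'}$ transversally.

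The step I expect to be the main obstacle is pinning down precisely why $h(Q)$ must cross \emph{completely} rather than merely partially — i.e. ruling out the possibility that $\wh h(\wt\alpha_Q)$ stops short, with one endpoint on $\Si$ lying in the arc subtended by $\wt\delta_{Q'}$ (an edge of $Q'$ in $\Lambda_-$). The resolution should be that $\delta_{Q'} \subset \Lambda_-$ and $h(\alpha_Q) \subset \Lambda_+$, so if the endpoint of $\wh h(\wt\alpha_Q)$ were strictly inside the subtended arc of $\wt\delta_{Q'}$ while $h(\alpha_Q)$ also meets $\intr \wt{Q'}$, then the completed leaf $\ol{h(\wt\alpha_Q)}$ would have to properly cross $\ol{\wt\delta}_{Q'}$, since one of its endpoints is inside and (because $h(Q)$ genuinely enters $Q'$) the other is outside that arc — directly contradicting Axiom~\ref{muttran}. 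The same argument applied to $\wt\gamma_{Q'}$ and to $\wh h(\wt\beta_Q)$ gives that both $\Lambda_+$-sides of $h(Q)$ fully straddle both $\Lambda_-$-sides of $Q'$, which is exactly complete crossing in the positive direction. A secondary bookkeeping point — that the ``positive direction'' orientation is respected, not reversed — follows because $\wh h = \wh f$ and $f$ carries positive junctures deeper into positive ends (Lemma~\ref{theyesc}), so the transverse orientation coming from the positive-juncture edges of $Q$ is carried consistently onto that of $Q'$.
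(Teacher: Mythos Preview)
Your argument has a genuine gap: the ``contradiction with Axiom~\ref{muttran}'' you invoke is not a contradiction at all. Axiom~\ref{muttran} says $\Lambda_+$ and $\Lambda_-$ are \emph{mutually transverse} --- meaning they \emph{do} cross, transversally --- and Axiom~\ref{eachmeets} says lifted leaves of the two laminations meet in at most one point. So a lift of a $\Lambda_+$-leaf having one endpoint inside the arc of $\Si$ subtended by a lift of a $\Lambda_-$-leaf and the other endpoint outside is perfectly consistent with the axioms: it just says the two leaves cross once. There is nothing forbidding the $\Lambda_+$-leaf through $h(\alpha_Q)$ from crossing the $\Lambda_-$-leaf through $\delta_{Q'}$; indeed, it must cross it if $h(Q)$ meets $Q'$ at all. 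Your endpoint-separation argument establishes only that the \emph{infinite leaves} through $h(\alpha_Q),h(\beta_Q)$ cross the infinite leaves through $\delta_{Q'},\gamma_{Q'}$, which is automatic and says nothing about whether the finite \emph{arcs} $h(\delta_Q),h(\gamma_Q)$ --- the $\Lambda_-$-sides of the $4$-gon $h(Q)$ --- lie outside $Q'$. That is what ``completely crosses'' means, and your argument never addresses it.

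The real obstruction is dynamical, and you never use the one property that singles $Q$ out: its \emph{maximality} inside the extreme-arc rectangle $Q'$. The paper's proof is a short contradiction using exactly this. If $h(Q)$ fails to cross the target completely, one of the $\Lambda_-$-edges, say $h(\delta_Q)$, lies strictly inside the target $4$-gon (between its $\Lambda_-$-edges). This cuts off a small rectangle $Q''$ in the target between $h(\delta_Q)$ and one of the target's $\Lambda_-$-edges, bounded on the other two sides by subarcs of the $\Lambda_+$-leaves through $h(\alpha'_Q),h(\beta'_Q)$. Pulling back by $h^{-1}$ gives a rectangle $h^{-1}(Q'')$ attached to $Q$ along $\delta_Q$, with $\Lambda_+$-sides lying in the same leaves as $\alpha'_Q,\beta'_Q$; since $h^{-1}$ pushes positive junctures away from the positive ends, $h^{-1}(Q'')$ stays in the extreme-arc rectangle $Q'$. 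Then $Q\cup h^{-1}(Q'')$ is a $4$-gon in $Q'$ strictly containing $Q$, contradicting the maximality in the definition of $\QQ^+$. No lift to the universal cover is needed.
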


\begin{proof}
Let $Q\ss Q^{\dagger}$  and $Q'\ss(Q')^{\dagger}$ as in Definition~\ref{QQ+}.  Suppose $h(Q)$ meets $Q'$. 
Since the vertical edges of $(Q')^{\dagger}$ are extremals of their isotopy class in $\Lambda_{+}\cap K$ and the horizontal edges of $h(Q^{\dagger})$ lie in  positive junctures disjoint from $K$, we see that $h(Q^{\dagger})$ completely crosses $(Q')^{\dagger}$, hence completely crosses $Q'$.  If $h(Q)$ does not completely cross $Q'$ then, since it meets $Q'$, at least one horizontal edge of $h(Q)$, say $\delta_{h(Q)}=h(\delta_{Q})$, lies in $Q'\sm\delta_{Q'}$.  By applying $h^{-1}$, we see that this contradicts the fact that $Q$ is the largest rectangular subset of $Q^{\dagger}$ with pairs of opposite edges in $\Lambda_{+}$ and $\Lambda_{-}$ respectively.
\end{proof}

\begin{defn}[Markov family, pre-Markov family]\label{marfam}

A finite family $\{R_{1},\ldots,R_{n}\}$ of disjoint rectangles is called a \emph{Markov family} (in the literature frequently called a \textit{Markov partition}) if it satisfies Properties I -- IV below,

\newcounter{bean}

\begin{list}{\Roman{bean}.}{\usecounter{bean}}
\item One pair of opposite edges of each $R_{i}$ is contained in  $|\Lambda_{+}|$ and the other pair of opposite edges of each $R_{i}$ is contained in $|\Lambda_{-}|$. 
\item $R_{i}\cap R_{j} = \0$ if $i\ne j$.
\item $h(R_{i})$ completely crosses each $R_{j}$.
\item $h(R_{i})\cap R_{j}$ has at most one component.
\item For each bi-infinite sequence,
$$
(\dots,i_{-k},\dots,i_{-1},i_{0},i_{1},\dots,i_{k},\dots) \in \{1,2,\dots,n\}^{\Z}
$$
the intersection,
$\bigcap_{k=-\infty}^{\infty} h^{k}(R_{i_{k}})$,
is either empty or exactly one point.
\end{list}
If the family of rectangles satisfies Properties  I -- III, we call it a \emph{pre-Markov family} (or partition).

\end{defn}

\begin{example}

We will show that the family
$$\MM^{+} = \{\text{components of }h(Q)\cap Q' \mid  Q,  Q'\in \QQ^{+}\} = \{R_{1}^{+},\ldots,R_{n}^{+}\}.
$$ is a Markov family. We refer to $\MM^{+}$ as the family of \emph{positive Markov rectangles}. The family $\QQ^{+}$ satisfying Proposition~\ref{oneone} is a pre-Markov family.

\end{example}

\begin{rem}
In the literature,  a pre-Markov partition is sometimes called a Markov partition.

\end{rem}

\begin{rem}
There are infinitely many different possible choices for the family of Markov rectangles. The family $\MM^{+}$ satisfies  Property~V if and only if there are no principal regions. Property~V   is required of a family used to prove Proposition~\ref{prop1013} and Theorem~\ref{basicmar}. Below we will give a Markov family $\MM$ that does satisfy Property~V.

\end{rem}

\begin{rem}
Often a Markov family is allowed to have contiguous elements as in Figure~\ref{touch'}.  Our families do not have contiguous elements.  This is because the rectangles $Q^{\dagger}$ that we started with could not be contiguous.
\end{rem}

\begin{rem}
It is not standard to allow degenerate rectangles in Markov families, but there is no real problem in doing so.  In our situation, Markov rectangles that degenerate to arcs are forced, for example, if there are principal regions with isolated border leaves.  It is possible that an isolated leaf of either lamination might contribute such a degenerate rectangle  and that the components of $h(Q)\cap Q'$ might degenerate to points. In general, we allow  Markov families to have some rectangles that degenerate to a point. The reader should keep such possible degeneracies in mind throughout the following discussion.
\end{rem}

\begin{figure}[htb]
\begin{center}
\begin{picture}(20,100)(100,0)
\includegraphics[width=200pt]{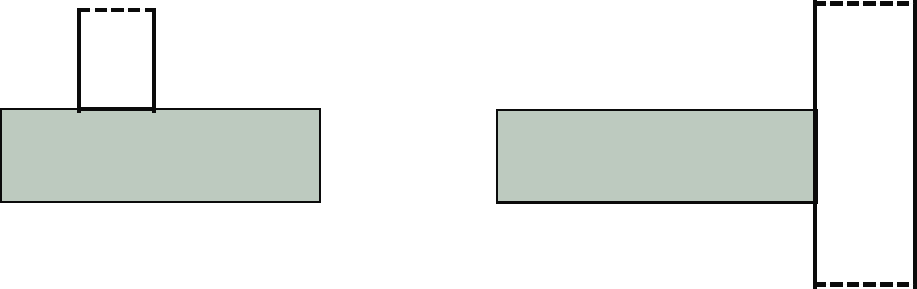}

\put(-50,25){\small $R_{i}^{+}$}
\put(-170,25){\small $R_{i}^{+}$}
\put(-15,45){\small $R_{j}^{+}$}
\put(-178,45){\small $R_{j}^{+}$}
\end{picture}
\caption{Contiguous rectangles}\label{touch'}
\end{center}
\end{figure}

\begin{cor}\label{twotwo}
If   $R_{i}^{+}\in\MM^{+}$ is a component of $h(Q)\cap Q'$,  then $R_{i}^{+}$  completely crosses $Q'$ in the positive direction.
\end{cor}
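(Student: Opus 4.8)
The plan is to deduce Corollary~\ref{twotwo} directly from Proposition~\ref{oneone} by unwinding the definition of the Markov $4$-gons. By definition, $R_{i}^{+}$ is a component of $h(Q)\cap Q'$ for some $Q,Q'\in\QQ^{+}$. Proposition~\ref{oneone} tells us that $h(Q)$ completely crosses $Q'$ in the positive direction, meaning that the images under $h$ of the $\Lambda_{-}$-edges $\delta_{Q},\gamma_{Q}$ of $Q$ run across $Q'$ from one $\Lambda_{+}$-edge to the other (i.e.\ $h(\delta_{Q})$ and $h(\gamma_{Q})$ each have their endpoints on $\alpha'_{Q}$ and $\beta'_{Q}$, or more precisely the relevant arcs behave so that no partial crossing occurs). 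First I would state precisely what ``completely crosses in the positive direction'' means for a single component: a component $R$ of $h(Q)\cap Q'$ completely crosses $Q'$ if the two edges of $R$ lying in $\Lambda_{+}$ are (arcs of) the extreme edges $\alpha'_{Q},\beta'_{Q}$ of $Q'$ — equivalently, $R$ meets both $\Lambda_{+}$-edges of $Q'$ and its $\Lambda_{-}$-edges lie in $h(\delta_{Q})\cup h(\gamma_{Q})$.

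The key step is then the observation that, since $h$ is a homeomorphism of $\Gamma_{+}\cup\Gamma_{-}$ carrying leaves of $\Lambda_{+}$ to leaves of $\Lambda_{+}$ and leaves of $\Lambda_{-}$ to leaves of $\Lambda_{-}$ (transversality is preserved by Axiom~\ref{trnsvrs} and the construction), the set $h(Q)$ is again a ``rectangle'' bounded by two arcs of $\Lambda_{+}$ (the images of $\alpha_{Q},\beta_{Q}$) and two arcs of $\Lambda_{-}$ (the images of $\delta_{Q},\gamma_{Q}$). A component $R_{i}^{+}$ of the intersection $h(Q)\cap Q'$ is therefore bounded by arcs of $\Lambda_{+}$ coming from $\partial Q'$ or from $h(\partial Q)$, and by arcs of $\Lambda_{-}$ coming likewise. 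Because leaves of $\Lambda_{+}$ cannot cross leaves of $\Lambda_{+}$ and cannot cross themselves (Proposition~\ref{thelams}, Axiom~\ref{muttran}), the $\Lambda_{+}$-edges of $h(Q)$ and those of $Q'$ cannot cross transversely inside $K$; so each $\Lambda_{+}$-edge of $R_{i}^{+}$ must be a subarc of a $\Lambda_{+}$-edge of $Q'$, and similarly each $\Lambda_{-}$-edge of $R_{i}^{+}$ is a subarc of $h(\delta_{Q})$ or $h(\gamma_{Q})$. Then Proposition~\ref{oneone} forbids the ``short'' possibility in which a $\Lambda_{-}$-edge of $R_{i}^{+}$ lies in the interior of $Q'$ (that is exactly the configuration $h(\delta_{Q})\subset Q\setminus(\delta_{Q}\cup\gamma_{Q})$ ruled out there), so both $\Lambda_{+}$-edges of $Q'$ must appear as edges of $R_{i}^{+}$. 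Hence $R_{i}^{+}$ completely crosses $Q'$ in the positive direction.

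In carrying this out I would first set up the notation carefully: write $h(Q)$ with its four boundary arcs explicitly labeled, and describe the two combinatorial types of component of $h(Q)\cap Q'$ — those whose $\Lambda_{-}$-boundary lies entirely in $\partial Q'$ versus those with part of their $\Lambda_{-}$-boundary in $\intr Q'$. Then I would invoke Proposition~\ref{oneone} to eliminate the second type. A small subtlety is the degenerate cases: if $Q$ or $Q'$ degenerates to an arc or a point, the statement is vacuous or trivial, and I would note this at the outset so the generic argument need not carry the degenerate bookkeeping. Another minor point is the direction convention (``positive direction''): I would make explicit that ``positive'' refers to crossing from one $\Lambda_{+}$-edge to the other, consistent with the orientation used in Proposition~\ref{oneone}.

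The main obstacle I anticipate is purely one of bookkeeping rather than mathematics: one has to be scrupulous about which arcs of $\partial R_{i}^{+}$ are ``horizontal'' (in $\Lambda_{+}$) and which are ``vertical'' (in $\Lambda_{-}$), and about the fact that $R_{i}^{+}$ is a \emph{component} of the intersection, so a priori it could be a proper subrectangle cut off by an edge of $Q'$ meeting the interior of $h(Q)$ or vice versa. Ruling this out is exactly where the non-crossing property of each lamination and the maximality built into Proposition~\ref{oneone} do the work, and the argument is short once the picture is organized; I expect no genuinely hard step, only the need for a clean statement of what ``completely crosses'' means and a careful appeal to the previous proposition.
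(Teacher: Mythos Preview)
Your proposal is correct and matches the paper's approach: the paper states this as an immediate corollary of Proposition~\ref{oneone} with no separate proof, and your argument---that $h(Q)$ is again a $4$-gon, that the $\Lambda_{+}$-edges of $h(Q)$ and $Q'$ cannot cross, and that Proposition~\ref{oneone} then forces each component of $h(Q)\cap Q'$ to stretch fully across $Q'$---is exactly the intended unpacking.
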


\begin{prop}\label{just1}
The family  $\MM^{+}$ has the property that  $h(R_{i}^{+})\cap R_{j}^{+}$  is either empty or $h(R_{i}^{+})$ completely crosses $R_{j}^{+}$ in the positive direction and their intersection  has a single component, $1\le i,j\le n$. Consequently, $\MM^{+}$ is a Markov family.
\end{prop}

\begin{proof}
Suppose that $R_{i}^{+}$ is a component of $h(Q)\cap Q'$ with $Q,Q'\in\QQ^{+}$ and suppose  $R_{j}^{+}\in\MM^{+}$. Since the sets in $\QQ^{+}$ are disjoint, if $h(R_{i}^{+})\ss h(Q')$ meets $R_{j}^{+}\in\MM^{+}$, it follows from the definition of $\MM^{+}$ that $R_{j}^{+}$ is a component of $h(Q')\cap Q''$ some $Q''\in\QQ^{+}$. By Corollary~\ref{twotwo}, $R_{i}^{+}$ completely crosses $Q'$ in the positive sense so $h(R_{i}^{+})$ completely crosses $R_{j}^{+}$ exactly once in the positive sense. ``Exactly once'' is due to the fact that $R_{j}^{+}$ is not $Q''$, but a component of $h(Q')\cap Q''$ and also $h(R_{i}^{+})\ss h(Q')$.
\end{proof}

Let $\{R_{1},R_{2},\dots,R_{n}\}$ be a Markov family.  One sets up an $n\x n$ \emph{Markov matrix} $A$  with entries
$$
A_{ij} = 
\begin{cases}
 1, & \text{if $R_{i}\cap h(R_{j})\ne\0$},\\
 0, & \text{if $R_{i}\cap h(R_{j})=\0$.}
\end{cases}
$$
The corresponding set $\SS$ of \emph{symbols} consists of all bi-infinite sequences
$$
\iota = (\dots,i_{-k},\dots,i_{-1},i_{0},i_{1},\dots,i_{k},\dots) \in \{1,2,\dots,n\}^{\Z}
$$
such that $A_{i_{k},i_{k+1}}=1$, $-\infty<k<\infty$.

 \begin{rem}
There are always periodic symbols in $\SS$, usually a countable infinity of them.
 \end{rem}

For each bi-infinite sequence in the symbol set $\SS$ for $\MM^{+}$ (and only for such), 
$$
\cdots\cap h^{-k}(R_{i_{-k}}^{+})\cap\cdots\cap h^{-1}(R_{i_{-1}}^{+})\cap R_{i_{0}}^{+}\cap h(R_{i_{1}}^{+})\cap\cdots\cap h^{k}(R_{i_{k}}^{+})\cap\cdots = \zeta_{ \iota} \ne\0.
$$
We would like to have that $ \zeta_{ \iota}$ ranges exactly over $\KK$ as $ \iota$ ranges over $\SS$, in which case the right-shift operator $ \sigma:\SS\to\SS$ will be exactly conjugate to $h:\KK\to \KK$.  This will be true if there are no principal regions, but generally many $ \zeta_{ \iota}$ may be whole arcs of intersection of $ \Lambda_{+}$ with arms of negative principal regions.  We will leave it to the reader to see this, remarking only that it is due to a basic asymmetry in the definition of $\MM^{+}$ which favors the role of $ \Lambda_{+}$.  In studying particular examples, and even for the applications of symbolic dynamics in~\cite{cc:cone,cc:almostnohol}, it is always adequate to use $\MM^{+}$.  The following discussion, which does not explicitly mention the principal regions, is motivated partly  by aesthetics, but is also useful.

\begin{example}
 We first define the negative pre-Markov rectangles. Let $Q^{\dagger}\ss K$ be the \emph{extreme rectangle}  (Definition~\ref{extquad}) with two  edges $\delta_{Q^{\dagger}},\gamma_{Q^{\dagger}}$  that are extreme arcs of $\Lambda_{-}\cap K$ in their isotopy class. The other two edges $\alpha_{Q^{\dagger}},\beta_{Q^{\dagger}}$ 
of $Q^{\dagger}$ are arcs in negative junctures in $\bd K$.  Let $Q\ss Q^{\dagger}$ be the largest  rectangular subset of $Q^{\dagger}$ with two edges subsets of $\delta_{Q^{\dagger}},\gamma_{Q^{\dagger}}\ss|\Lambda_{-}|$ and the other two edges subsets of leaves of $\Lambda_{+}$.   The finite set of all such rectangles will be denoted by $\QQ^{-}$.  Define
$$\MM^{-} = \{\text{components of }h^{-1}(Q)\cap Q' \mid  Q,  Q'\in \QQ^{-}\} = \{R_{1}^{-},\ldots,R_{m}^{-}\}.$$  In analogy with $\MM^{+}$, this is shown to be a Markov family.

\end{example}

\begin{rem}
 If there are no principal regions one expects $\MM^{-}$ to be closely related to $\MM^{+}$. For example, in Examples~\ref{simpex0} and~\ref{simpex}, if one takes the core $K$ to be as in Figure~\ref{DoubledExample}, then $\MM^{+} = h(\MM^{-})$. 

\end{rem}

\begin{example}

The symmetric family of \emph{Markov rectangles} is given by,
$$\MM=\{\text{components of }R_{i}^{+}\cap R_{j}^{-} \mid  R_{i}^{+}\in \MM^{+}, R_{j}^{-}\in \MM^{-}\} = \{R_{1},\dots,R_{q}\}$$
The top and bottom edges of $R_{\ell}$ will be arcs in $\Lambda_{-}$ and the left and right edges will be arcs in $\Lambda_{+}$. The family $\MM$ (again Markov) satisfies Property V and will be used to prove Theorem~\ref{basicmar}.

\end{example}

It should be remarked that some of these new Markov rectangles might degenerate to a point, a component of intersection of a degenerate rectangle $R_{i}^{+}$ with a degenerate $R_{i}^{-}$.  This finite set of points will be permuted by $h$.  There may also be arcs from one or another of the laminations among these rectangles.

As we will see, the symbol set for this Markov family will exactly encode $\KK$ and provide the desired conjugacy of $h|\KK$ to the resulting shift map.

We will leave it to the reader to check that $\MM$  is again a Markov family for $h$.  

\begin{rem}
Our situation in which distinct Markov rectangles cannot be contiguous  is a bit stronger than the usual requirement that they merely not overlap.  This eliminates the usual ambiguity in the coding, insuring that $ \sigma:\SS\to\SS$ is conjugate to $h:\KK\to \KK$ and not merely semi-conjugate. 
\end{rem}

We are ready to prove the  key result.

\begin{prop}\label{prop1013}
If $\iota = (\dots,i_{-k},\dots,i_{-1},i_{0},i_{1},\dots,i_{k},\dots) \in \{1,2,\dots,q\}^{\Z}$ is a symbol for $\MM$, then the infinite intersection 
$$
I_{ \iota}^{+}=R_{i_{0}}\cap h(R_{i_{1}})\cap h^{2}(R_{i_{2}})\cap\cdots\cap h^{k}(R_{i_{k}})\cap\cdots
$$  
is an arc of $ \Lambda_{+}\cap R_{i_{0}}$ and 
$$
I_{ \iota}^{-}=R_{i_{0}}\cap h^{-1}(R_{i_{-1}})\cap h^{-2}(R_{i_{-2}})\cap\cdots\cap h^{-k}(R_{i_{-k}})\cap\cdots
$$
is an arc of $ \Lambda_{-}\cap R_{i_{0}}$.
Furthermore, all such arcs are obtained in this way.  Consequently,
$ \zeta_{ \iota}=I_{ \iota}^{-}\cap I_{ \iota}^{+}\in \KK$ and every point of $\KK$ is of the form $ \zeta _{\iota}$ for a unique symbol $ \iota$.
\end{prop}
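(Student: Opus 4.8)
The plan is to establish the four assertions in order: first that $I_\iota^+$ is an arc of $\Lambda_+\cap R_{i_0}$ (and dually $I_\iota^-$ is an arc of $\Lambda_-\cap R_{i_0}$), then that \emph{every} arc of $\Lambda_+\cap R_{i_0}$ arises this way, then that $\zeta_\iota=I_\iota^-\cap I_\iota^+\in\KK$, and finally that the assignment $\iota\mapsto\zeta_\iota$ is a bijection onto $\KK$. The whole argument is a nested-intersection argument run in the 4-gon structure, using the Markov property of $\MM$ (each $h(R_i)\cap R_j$ is empty or crosses $R_j$ completely in the positive direction, with one component) established just above, together with Axiom~\ref{muttran} (a leaf of $\wt\Lambda_\pm$ is determined by its endpoints on $\Si$) and Axiom~\ref{strongcl} (strong closedness) to control the limit.

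For the first assertion I would argue as follows. Set $S_k=R_{i_0}\cap h(R_{i_1})\cap\cdots\cap h^k(R_{i_k})$. Since $\iota$ is a symbol for $\MM$, each $S_k$ is nonempty; by the Markov property and induction $S_k$ is a 4-gon that crosses $R_{i_0}$ completely in the positive direction, i.e. its two $\Lambda_-$-edges are (subarcs of) the two $\Lambda_-$-edges of $R_{i_0}$, while its two $\Lambda_+$-edges are being pinched. Indeed $h(S_k)\supseteq S_{k+1}$ crossing inside $h(R_{i_1})$, so the ``width'' in the $\Lambda_+$-direction of $h^{-1}(S_{k+1})$ is controlled; pulling back, the two $\Lambda_+$-edges of $S_{k+1}$ lie strictly between those of $S_k$. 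I would lift everything to $\wt L=\Delta$: fix compatible lifts $\wt R_{i_j}$ along the symbol, let $\wt h$ be the lift used in the Markov coding, and let $\wt S_k$ be the corresponding nested 4-gons. The two $\Lambda_+$-edges of $\wt S_k$ are arcs of leaves of $\wt\Lambda_+$ whose endpoint-pairs on $\Si$ form a nested sequence of (pairs of) intervals; I need these to shrink to a single leaf. This is where I expect the main obstacle: a priori the pinching need only be monotone, not contracting to a point. The resolution is that $\bigcap_k \wt S_k$, being a nested intersection of 4-gons each completely crossing $\wt R_{i_0}$ transversally to $\wt\Lambda_+$, is a closed connected set meeting each $\Lambda_-$-plaque of $\wt R_{i_0}$ in at most one point; if it contained two distinct $\Lambda_+$-leaves $\ell,\ell'$, then the 4-gon they span would be trapped in every $h^k(\wt R_{i_k})$, forcing (after applying $\wt h^{-k}$) a 4-gon between two leaves of $\wt\Lambda_+$ that repeatedly returns to bounded width far out toward the ends — exactly the configuration ruled out in the proof of Lemma~\ref{sispecial} via Axiom~\ref{muttran}. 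Hence $\bigcap_k\wt S_k$ is a single arc of a single leaf of $\wt\Lambda_+$; by Axiom~\ref{strongcl} this leaf actually lies in $\wt\Lambda_+$ (it is a limit of leaves), and it crosses $\wt R_{i_0}$, so its projection $I_\iota^+$ is an arc of $\Lambda_+\cap R_{i_0}$. The claim for $I_\iota^-$ is the mirror argument using $h^{-1}$ and negative junctures.

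For surjectivity onto arcs: given an arc $a\subseteq\Lambda_+\cap R_{i_0}$, I would track its forward images. Since $h(R^+_i)$ crosses completely any $R^+_j$ it meets, and the $R^+_j$ cover $\Lambda_+\cap K$ up to the rectangles $R_\alpha$ of Lemma~\ref{finman}, the plaque of $\Lambda_+$ through $a$ stays in $K$ under forward iteration (it is a leaf-arc, and leaves of $\Lambda_+$ return to the core by Lemma~\ref{LmeetsK} and the no-escape-in-the-wrong-direction part of Axiom~\ref{endss} — more precisely, here one only needs that an arc of $\Lambda_+$ lands in some $R_{i_k}$ at each stage, because $\Lambda_+\cap K$ is the finite union of the closed rectangles $R_\alpha$). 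So there is a symbol $\iota$ with $h^k(a)\subseteq R_{i_k}$ for all $k\ge0$, and then $a\subseteq I_\iota^+$; by the first part $I_\iota^+$ is itself an arc of $\Lambda_+$ through the same plaque, and maximality of the nested intersection forces $a\subseteq I_\iota^+$ with $I_\iota^+$ the full plaque-arc inside $R_{i_0}$. Choosing the backward part of $\iota$ compatibly (every point of $a\cap\KK$ lies in a plaque of $\Lambda_-$ that can be coded backward the same way — this needs $a$ to meet $\KK$, which holds because border-arc-only arcs have been separated off, or one simply extends $\iota$ arbitrarily consistently with $A$) completes it.

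Finally, $\zeta_\iota=I_\iota^-\cap I_\iota^+$ is the intersection of a $\Lambda_+$-arc and a $\Lambda_-$-arc in $R_{i_0}$, hence a single point (Axiom~\ref{eachmeets}: leaves of the two laminations meet in at most one point) lying in $\Lambda_+\cap\Lambda_-=\KK$; it is nonempty because $\bigcap_k h^k(R_{i_k})$ is a descending intersection of nonempty compact 4-gons. For the bijection: if $\zeta_\iota=\zeta_{\iota'}=:z$, then $z\in R_{i_0}\cap R_{i'_0}$ and, since the Markov 4-gons of $\MM$ are \emph{pairwise disjoint} (the remark preceding the proposition), $i_0=i'_0$; applying the same to $h^k(z)\in h^k(R_{i_k})\cap h^k(R_{i'_k})$, i.e. $z\in R_{i_k}\cap R_{i'_k}$ after applying $h^{-k}$, forces $i_k=i'_k$ for all $k$, so $\iota=\iota'$. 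Conversely every $z\in\KK$ lies in a unique $R_{i_0}$ (disjointness), and in a unique $R_{i_k}$ for each $k$ after transporting by $h^{-k}$, and the compatibility $A_{i_k,i_{k+1}}=1$ is automatic since $z\in h^k(R_{i_k})$ and $h(z)\in h^{k}(R_{i_{k+1}})$ together force $h(R_{i_k})\cap R_{i_{k+1}}\ne\0$; thus $z=\zeta_\iota$. This shows $\sigma:\SS\to\SS$ is conjugate (not merely semi-conjugate) to $h:\KK\to\KK$, as asserted.
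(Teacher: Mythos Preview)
Your overall strategy is the paper's: argue by contradiction that a nondegenerate $I_\iota^+$ would trap two distinct leaves of $\Lambda_+$ forced to share endpoints on $\Si$, violating Axiom~\ref{muttran}; then get surjectivity from the fact that $\MM$ covers $\KK$, and the bijection from pairwise disjointness of the Markov 4-gons. The last two parts are fine and match the paper almost verbatim.

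The key step, however, is not correctly executed. You say that applying $\wt h^{-k}$ to the trapped 4-gon gives ``a 4-gon between two leaves of $\wt\Lambda_+$ that repeatedly returns to bounded width far out toward the ends.'' But $\wt h^{-k}(D)\subset\wt R_{i_k}$ stays in (a lift of) the compact core and involves a \emph{different} pair of leaves $h^{-k}(\ell),h^{-k}(\ell')$ for each $k$; nothing here goes out to the ends, and no single pair of leaves is shown to become close at infinity. The paper runs the nest in the opposite direction: enlarge each $R_{i_k}$ to a rectangle $C_{i_k}\subset K$ whose non-$\Lambda_+$ sides lie in positive junctures in $\bd K$, set $P_k=h^k(C_{i_k})\cap h^{k+1}(C_{i_{k+1}})\cap\cdots$, and observe $P_0\subset P_1\subset\cdots$ with the juncture-sides of $P_k$ lying in $h^k$ of honest positive junctures, hence of uniformly bounded geodesic length. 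The increasing union $P=\bigcup_kP_k$ is then an infinite strip between two \emph{fixed} leaves $\lambda,\mu\in\Lambda_+$ whose lifts are forced to share both endpoints on $\Si$ by these bounded cross-sections, exactly the mechanism of Lemma~\ref{sispecial}. Your citation of that lemma is apt, but the increasing-union construction with juncture sides (not $\Lambda_-$ sides) is precisely the content you need to supply. One minor indexing slip as well: since $z\in I_\iota^+$ means $h^{-k}(z)\in R_{i_k}$, the surjectivity argument should track $h^{-k}(a)$, not $h^k(a)$.
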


\begin{proof}
 One easily sees that $\MM^{+}$ and $\MM^{-}$ each covers $\KK$, hence so does $\MM$. Consequently, the assertions about $I_{ \iota}^{+}$ and $I_{ \iota}^{-}$ imply the assertion about $ \zeta_{ \iota}$. In fact, $ \zeta_{ \iota}=I_{ \iota}^{-}\cap I_{ \iota}^{+}$  will be one point as the intersection of two arcs, one in a leaf $\lambda_{-}\in\Lambda_{-}$ joining a pair of opposite edges of  $R_{i_{0}}$   and the other  in a leaf $\lambda_{+}\in\Lambda_{+}$ joining the other pair of opposite edges of  $R_{i_{0}}$.     As remarked above, the fact that each point of $\KK$ uniquely determines its symbol is due to the fact that our Markov rectangles are disjoint. Thus, it remains to prove the assertions about $I_{ \iota}^{+}$ and $I_{ \iota}^{-}$.

 If $I_{ \iota}^{+}$ is not as asserted, it must be a rectangle with nonempty interior.  Assume this and deduce a contradiction as follows.
 By the construction of $\MM$, the sides of $R_{i_{k}}$ in $ \Lambda_{+}$ extend to the sides in $ \Lambda_{+}$ of a rectangle $C_{i_{k}}\ss K$, the other two sides of which are arcs $ \delta_{k}, \gamma_{k}$ in positive junctures in $\bd K$.  Consider the set
 $$
 P_{k} = h^{k}(C_{i_{k}})\cap h^{k+1}(C_{i_{k+1}})\cap\cdots
 $$
 By our hypothesis, $P_{k}$ is a nondegenerate rectangle with two sides $ \delta'_{k}, \gamma'_{k}$ subarcs of $h^{k}( \delta_{k})$ and $h^{k}( \gamma_{k})$, respectively.  Thus $\delta'_{k},\gamma'_{k}$ are subarcs of  positive junctures in fundamental neighborhoods of positive ends.      Furthermore, 
 $$
 P_{0}\ss P_{1}\ss\cdots\ss P_{k}\ss\cdots
 $$ 
 and the edges in $ \Lambda_{+}$ of each $P_{k}$ are subarcs of the corresponding sides of $P_{k+1}$, $0\le k<\infty$.  The increasing union of these rectangles is an infinite strip $P$ bounded by \emph{distinct} leaves $ \lambda, \mu\in \Lambda_{+}$.  Any lift of this strip to the universal cover is a strip with distinct boundary components $\wt{ \lambda},\wt{ \mu}\in\wt{ \Lambda}_{+}$ covering $ \lambda, \mu\in \Lambda_{+}$ and limiting on two pairs $\{x,y\},\{z,w\}\ss\Si$. The notation is chosen so that the lifts $\wt\delta_{k}'$ lying in this strip have endpoints $x_{k},y_{k}$ on $\wt\lambda$ and $\wt\mu$, respectively, and $\{x_{k}\}_{k\ge0}$ converges to $x$ and $\{y_{k}\}_{k\ge0}$ to $y$.  If $x\ne y$, then $\{\wt\delta'_{k}\}_{k\ge0}$ converges to the geodesic in $\wt L$ with endpoints $x$ and $y$.  Consequently the sequence $\{\delta'_{k}\}_{k\ge0}$ accumulates locally uniformly on a geodesic in $L$. But by Theorem~\ref{esctoe},  this sequence escapes, hence $x=y$. Similarly,  $z=w$, hence $\wt{ \lambda}=\wt{ \mu}$.  This is the desired contradiction.
 
 The assertion about $I_{ \iota}^{-}$ is proven in the same way.  Finally, the fact that $\MM$ covers $\KK$ implies that the union of all $I_{ \iota}^{+}$'s also covers $\KK$, as does the union of all $I_{ \iota}^{-}$'s.  Thus, all arcs of $ \Lambda_{+}\cap R_{i_{0}}$ and all arcs of $ \Lambda_{-}\cap R_{i_{0}}$ are obtained as asserted, $i_{0}=1,2,\dots,q$.  
 \end{proof}

\begin{rem}
It is possible to use the pre-Markov family $\QQ^{+}=\{Q_{1},Q_{2},\dots Q_{p}\}$ of rectangles to produce projectively invariant measures for $h$ on $ \Lambda_{+}$ (and $ \Lambda_{-}$) much as in the case of pseudo-Anosov automorphisms of compact surfaces.  The following sketch follows the lead of~\cite[pages~95-102]{bca}  and we refer the reader there for more details.

Let  $B=(B_{k,j})$ be the incidence matrix, where  $B_{k,j}$ is  the number of components of $h(Q_{{k}})\cap Q_{{j}}$.  By the Brouwer fixed point theorem, this matrix 
has an eigenvector  $y\ne\mathbf{0}$ with all entries nonnegative and with eigenvalue  $\kappa\ge1$.  In the (typical) case that $\kappa>1$, one obtains a transverse, projectively invariant measure $\mu_{+}$ for $h$ on $ \Lambda_{+}$ with projective constant $\kappa$.

Now $\QQ^{+}$ is also pre-Markov for $h^{-1}$ with intersection matrix the transpose $B^{\T}$ and (left) eigenvector $y^{\T}$.  This gives a transverse, projectively invariant measure $\mu_{-}$ for $h^{-1}$ on  $ \Lambda_{-}$ with projective constant $\kappa$.  Viewed as a projectively invariant measure for $h$, it has projective constant $\kappa^{-1}<1$.

Since the eigenvectors $y$ and $y^{\T}$ may have some zero entries, these measures will not generally have full support.  See~\cite{fe:endp} for a simple example.  If, however, $ \Lambda_{+}$ is a minimal $h$-invariant lamination, the measures will evidently have full support.
\end{rem}


\section{Pseudo-geodesic laminations and the isotopy theorem}\label{uniq}

The theory developed so far is for geodesic laminations, but for applications to foliation theory this is much too restrictive.  In this section we give an axiomatic approach to endperiodic automorphisms using pseudo-geodesic laminations.

\subsection{The axioms}\label{axsys}

We fix a choice of standard hyperbolic metric $\g$ on the admissible surface $L$.

\begin{lemma}\label{samepseudos}
Any two standard hyperbolic metrics on $L$ have the same pseudo-geodesics.
\end{lemma}

\begin{proof}
Assume first that $\bd L=\0$ and let $g$ and $g'$ be two standard hyperbolic metrics on $L$.  Let $\pi_{0}:\Delta\to L$ and $\pi_{1}:\Delta\to L$ be the universal covers corresponding to these respective metrics.  It is well known that there is a commutative diagram
$$\begin{CD}
\Delta @>\phi>> \Delta\\
@V\pi_{0}VV     @VV\pi_{1}V\\
L      @>>\id>    L
\end{CD}$$
where $\phi$ is a homeomorphism completely determined by its value on a single point $x_{0}\in\Delta$.  Since the hyperbolic metrics are standard, as in the proof of~\cite[Theorem~2]{cc:epstein},  $\phi$ extends canonically to a homeomorphism $\wh\phi:D^{2}\to D^{2}$ which clearly takes geodesics to pseudo-geodesics.

If $\bd L\ne\0$, the proof is similar.  
\end{proof}

\begin{rem}
Since $L$ has a standard hyperbolic metric,  \cite[Corollary~4]{cc:epstein} implies that an arbitrary homeomorphism $\phi:L\to L$ takes pseudo-geodesics to pseudo-geodesics.
\end{rem}

Recall from Definition~\ref{geotightpg} that, if $\gamma$ is a pseudo-geodesic, then $\gamma^{\g}$ denotes its \emph{geodesic tightening}, the unique geodesic the lifts of which have the same ideal endpoints as the corresponding lifts of $\gamma$.  Note that  $\wt\gamma^{\g}\ss\wt L$ since $\bd L$, hence $\bd\wt L$, is geodesic.  If we allow the metric $\g$ to vary, we will refer to the $\g$-geodesic tightening.

Beginning with an endperiodic automorphism $f:L\to L$, we want to define an associated pair of pseudo-geodesic laminations satisfying four  axioms.  

The notations $\Lambda_{\pm}$ and $\JJ_{\pm}$, which have been used up to now for the geodesic Handel-Miller laminations and the geodesic juncture components, will from now on be used for the pseudo-geodesic laminations and juncture components given in the axioms.

\begin{axiom}\label{muttran} $(\Lambda_{+},\Lambda_{-})$ is a bilamination,   $\Lambda_{+}$ and $\Lambda_{-}$ being closed,   pseudo-geodesic laminations with all leaves   disjoint from $\bd L$. 
\end{axiom}

Hereafter, for the sake of economy, every assertion about ``the lamination $\Lambda_{\pm}$'' is really two assertions, one about $\Lambda_{+}$ and one about $\Lambda_{-}$.

\begin{axiom}\label{eachmeets}
A leaf of $\wt{\Lambda}_{\pm}$ can meet a  leaf of $\wt{\Lambda}_{\mp}$ in at most one  point.
\end{axiom}

Equivalently, the leaves of $\Lambda_{+}$ cannot intersect the leaves of $\Lambda_{-}$ so as to form digons.

\begin{defn}[endpoint correspondence property]\label{epcorrp}
The pseudo-geodesic lamination $\Lambda_{\pm}$ has the endpoint correspondence property with respect to $f$ if the correspondence $\lambda\mapsto\lambda^{\g}$ sends $\Lambda_{\pm}$ one-one onto the positive/negative geodesic Handel-Miller laminations associated to $f$  (Definition~\ref{HMgeobilamin}).
\end{defn}

When the context makes it clear, we sometimes shorten the language, saying that $\Lambda_{\pm}$ has the endpoint correspondence property.  We also say that the bilamination has the endpoint correspondence property (with respect to $f$).

The final two axioms will tie $(\Lambda_{+},\Lambda_{-})$ to the endperiodic automorphism $f$.

\begin{axiom}\label{ecorr}
The bilamination $(\Lambda_{+},\Lambda_{-})$ has the endpoint correspondence property with respect to $f$.
\end{axiom}

\begin{nota}
If $X$ is a set of pseudo-goedesics, we use the notation 
$$X^{\g} = \{\gamma^{\g}\ |\ \gamma\in X\}.$$ 

\end{nota}

\begin{rem}
By Axiom~\ref{ecorr}, $(\Lambda^{\g}_{+},\Lambda^{\g}_{-})$ are the Handel-Miller geodesic bilamination associated to $f$.

\end{rem}

The next two lemmas follow immediately from Axiom~\ref{ecorr}  since $\Lambda^{\g}_{\pm}$ has the same properties.

\begin{lemma}\label{everyleaf}

Every leaf of $\Lambda_{\pm}$ meets at least one leaf of $\Lambda_{\mp}$.

\end{lemma}

\begin{lemma}\label{uniquepseudo}
Each leaf of the lifted lamination $\wt{\Lambda}_{\pm}$ of $\wt L$ is  determined by its   endpoints in $\Si$. 
\end{lemma}

Fix a set of $f$-junctures  and corresponding set $\NN$  of $f$-juncture components (Definition~\ref{famNNg}). We define a set of pseudo-geodesic junctures and corresponding set $\JJ$ of juncture components in analogy to the sets of geodesic junctures and juncture components we introduced in Definition~\ref{junctdefn}. 

\begin{defn}[junctures,  $\JJ$, $\JJ_{+}$, $\JJ_{-}$, $\iota$]\label{jnctfam}

A set $\JJ$ of juncture components associated to a set $\NN$ of $f$-juncture components  is a set  of compact, properly embedded arcs and circles in $L$ such that
\begin{enumerate}
\item there is  a bijection $\iota:\NN\to\JJ$;
\item for each $\gamma\in\NN$, $\iota(\gamma)$ is homotopic to $\gamma$ with endpoints (if any) fixed;
\item if $\gamma\ne\gamma'$ are both in $\NN_{+}$ or both in $\NN_{-}$, then $\iota(\gamma)$ is disjoint from $\iota(\gamma')$.
\end{enumerate}
The set $\JJ_{+}=\iota(\NN_{+})$ is called the set of \emph{positive juncture components}. The set $\JJ_{-}=\iota(\NN_{-})$ is called the set of \emph{negative juncture components}.  Further the map $\iota$ extends in a natural way to the set of fixed $f$-junctures to define a fixed set of \emph{junctures} of the form $J = \iota(N)$ where $N$ is one of the fixed $f$-junctures.

\end{defn}

The map $\iota:\NN\to \JJ$  serves the role  of  the geodesic tightening map of Definition~\ref{gtm}  and can be extended to a map $\iota:\NN^{\dagger}\to \JJ$ where $\NN^{\dagger}$\label{gtm10NN} is the set of pseudo-geodesics $\gamma$ such that there exists an $f$-juncture component $\sigma\in\NN$ whose lifts have the same endpoints on $\Se$ as the lifts of $\gamma$. Then define $\iota(\gamma) = \iota(\sigma)$. This definition then extends in a natural way to a function $\iota$ with domain the finite unions of elements of $\NN^{\dagger}$. Then the last equality in the following definition makes sense. Compare this definition with Definition~\ref{Jndef}.

\begin{rem}
Notice that, by item~(3) in Definition~\ref{jnctfam}, the set  of junctures has the juncture intersection property.
\end{rem}

\begin{defn}[$J_{n}$]\label{Jndef10}
Given a juncture $J = \iota(N)$, let $J_{n} = \iota(f^{n}(N)) = \iota(f^{n}(J))$, $n\in\Z$.  
\end{defn}

In what follows, we assume a choice of $\NN$ and $\JJ$, $f$-juncture components and juncture components, and thus a choice of set of $f$-junctures and set of junctures.  Axiom~\ref{trnsvrs} will then claim the existence of choices with suitable properties.

\begin{axiom}\label{trnsvrs}
There is a choice of the families $\NN_{\pm}$ of positive/negative $f$-juncture components and $\JJ_{\pm}=\iota(\NN_{\pm})$ of positive/negative juncture components such that:
\begin{enumerate}
\item $\Lambda_{+}\cup\JJ_{-}$   and $\Lambda_{-}\cup\JJ_{+}$ are each sets of disjoint pseudo-geodesics;
\item $\Lambda_{+}$ is transverse to $\JJ_{+}$ and $\Lambda_{-}$ is transverse to $\JJ_{-}$;
\item no leaf of $\Lambda_{\pm}$ can meet an element of $\JJ_{\pm}$ so as to form digons.
\end{enumerate}
\end{axiom}

This completes the list of axioms.

\begin{defn}[Handel-Miller pseudo-geodesic bilamination associated to $f$]\label{HMbilam}
If $(\Lambda_{+},\Lambda_{-})$ satisfies the four axioms, it will be called a Handel-Miller pseudo-geodesic bilamination associated to $f$.  The individual  laminations $\Lambda_{\pm}$ will be called Handel-Miller pseudo-geodesic laminations associated to $f$.
\end{defn}

\begin{rem}
The image  of a Handel-Miller pseudo-geodesic bilamination associated to $f$ under an isotopy of $L$ will be a  Handel-Miller pseudo-geodesic bilamination associated to $f$. Thus,  a Handel-Miller pseudo-geodesic bilamination associated to $f$ is not unique but, by Theorem~\ref{uniqlams}, is unique up to isotopy.

\end{rem}

\begin{lemma}\label{totallydisc}
The Handel-Miller pseudo-geodesic  laminations associated to $f$  are transversely totally disconnected \emph{(Definition~\ref{trtotdis})}.
\end{lemma}

\begin{proof}
We will show that $\Lambda_{+}$ is transversely totally disconnected. The proof that $\Lambda_{-}$ is transversely  totally disconnected is similar. Suppose that $|\Lambda_{+}|$ has nonempty interior. Then $|\wt\Lambda_{+}|$ has nonempty interior. Let $x\in\lambda_{+}\in\wt\Lambda_{+}$ have a neighborhood contained in $|\wt\Lambda_{+}|$. Then there exists a leaf $\lambda_{-}\in\wt\Lambda_{-}$ and a point $y\in\lambda_{+}\cap\lambda_{-}$ such that $y$ has a neighborhood  $V\ss|\wt\Lambda_{+}|$ which is an open disk. This follows since one can find a bilamination chart of the form $P\times(-\epsilon,\epsilon)$ with $x\in P\times\{0\}$ and $P\ss\lambda_{+}$ as long as desired (see  Remark  page~\pageref{longchart}) and since, by Lemma~\ref{everyleaf}, every leaf in $\wt\Lambda_{+}$ meets some leaf in $\wt\Lambda_{-}$. Here, of necessity, $\epsilon>0$ will be small.

Thus, there exists an arc $[a,b]\ss\lambda_{-}\cap|\wt\Lambda_{+}|$ with $y\in(a,b)$. Let $\lambda_{a},\lambda_{b}\in\wt\Lambda_{+}$  be such that $\lambda_{a}\cap\lambda_{-} = \{a\}$ and $\lambda_{b}\cap\lambda_{-} = \{b\}$. Consider the geodesics $\lambda^{\g}_{a},\lambda^{\g}_{b},\lambda^{\g}_{-}$ in $\wt L$ with the same endpoints on $\Si$ as $\lambda_{a},\lambda_{b},\lambda_{-}$ respectively. By Axiom~\ref{ecorr}, $\lambda^{\g}_{a},\lambda^{\g}_{b}\in\wt\Lambda^{\g}_{+}$  and $\lambda^{\g}_{-}\in\wt\Lambda^{\g}_{-}$. Let $\{a^{\g}\} = \lambda^{\g}_{a}\cap\lambda^{\g}_{-}$, $\{b^{\g}\} = \lambda^{\g}_{b}\cap\lambda^{\g}_{-}$, and $[a^{\g},b^{\g}]\ss\lambda^{\g}_{-}$. Since the semi-proper leaves are dense in $\Lambda^{\g}_{+}$, there exists  a pair of semi-proper leaves  $\lambda^{\g}_{1},\lambda^{\g}_{2}\in\wt\Lambda^{\g}_{+}$  meeting the interval $[a^{\g},b^{\g}]$ at a pair of points $x^{\g}_{1},x^{\g}_{2}$ with  $(x^{\g}_{1},x^{\g}_{2})\cap|\wt\Lambda^{\g}_{+}| = \0$. Let $\lambda_{1},\lambda_{2}\in\wt\Lambda_{+}$ have the same endpoints on $\Si$ as $\lambda^{\g}_{1},\lambda^{\g}_{2}$ respectively (Axiom~\ref{ecorr}). By Axiom~\ref{ecorr}, if $x_{i} = \lambda_{i}\cap\lambda_{-}$, $i=1,2$, then   $(x_{1},x_{2})\cap|\wt\Lambda_{+}| = \0$ which is a contradiction. Thus, $\intr|\Lambda_{+}| = \0$. The lemma follows. 
\end{proof}

\begin{cor}\label{unsemiso}
The union of the semi-isolated leaves of $\Lambda_{\pm}$ is dense in $|\Lambda_{\pm}|$.
\end{cor}

\begin{lemma}

No  leaf  $\lambda\in\Lambda_{\pm}$  is contained  in a bounded region of $L$.

\end{lemma}

\begin{proof}
Let $X_{0}\ss X_{1}\ss\cdots\ss X_{i}\ss\cdots$ be an exhaustion of $L$ by compact subsets. Without loss we can assume $\fr X_{i}$ is a geodesic $1$-manifold. By Lemma~\ref{nbounded}, $\lambda^{\g}$ meets    $\fr X_{i}$, for  $i$ sufficiently large. Suppose $\alpha$ is a geodesic component of $\fr X_{i}$ with $\alpha\cap\lambda^{g}\ne\0$. Let $\wt\alpha$ and $\wt\lambda^{\g}$ be lifts of $\alpha$ and $\lambda^{\g}$ with $\wt\alpha\cap\wt\lambda^{\g}\ne\0$. If $\wt\lambda$ is lift of $\lambda$ sharing endpoints with $\wt\lambda^{\g}$, then $\wt\lambda\cap\wt\alpha\ne\0$. Thus, $\lambda\cap\fr X_{i}\ne\0$, for all $i$ sufficiently large, and the lemma follows.
\end{proof}

\begin{cor}\label{noleafbdd}

No neighborhood of any end of a  leaf of $\Lambda_{\pm}$ is contained in a bounded region of $L$.

\end{cor}

\begin{proof}
If a neighborhood of an end $\epsilon$ of  a leaf of $\Lambda_{\pm}$ is contained in a bounded region of $L$, then the asymptote of $\epsilon$ (see definition in the proof of Lemma~\ref{noncpt}) would contain a leaf of $\Lambda_{\pm}$ in a bounded region of $L$ which contradicts the lemma.
\end{proof}

In particular, the leaves are noncompact and one-one immersions of $\R$ in $\intr L$.  The endpoints of the lifted leaves are ideal. That is they are in  $E$.

\begin{theorem}\label{consistency}
The Handel-Miller geodesic bilamination $(\Lambda^{\g}_{+},\Lambda^{\g}_{-})$  associated to $f$ satisfies the four axioms, where   $\iota:\NN\to\JJ^{\g}$ is defined by $\iota(\gamma)=\gamma^{\g}$, $\forall \gamma\in\NN$.
\end{theorem}

\begin{proof}
\textbf{Axiom~\ref{muttran}.}  By Proposition~\ref{geobilam}, $(\Lambda^{\g}_{+},\Lambda^{\g}_{-})$ are bilaminations.  By construction they are closed. Clearly every geodesic is a pseudo-geodesic. If $x\in\bd L$, then $x$ can not lie in either $\Lambda^{\g}_{+}$ or $\Lambda^{\g}_{-}$  because $\{h^{n}(x)\}_{n\in\Z}$ escapes if $x\in\bd L$ but not if $x\in\Lambda^{g}_{\pm}$.

\textbf{Axiom~\ref{eachmeets}.} Since there cannot be geodesic digons in hyperbolic geometry, this axiom is immediate.

\textbf{Axiom~\ref{ecorr}.}  This axiom is tautologically true.

\textbf{Axiom~\ref{trnsvrs}.}  This follows from Proposition~\ref{geobilam} and the fact that, in hyperbolic surfaces, there are no geodesic digons.
\end{proof}

\subsection{Uniqueness}\label{uniq'}
An axiomatization needs to satisfy two conditions, consistency and completeness.  The consistency of our axioms is given by Theorem~\ref{consistency}.  For completeness, one needs to show that the system defined by the axioms is unique up to a reasonable equivalence relation.  In this section we will prove the following.

\begin{theorem}[Uniqueness Theorem]\label{uniqlams}
The Handel-Miller pseudo-geodesic bilamination  associated to an endperiodic automorphism $f$ is uniquely determined by $f$  up to ambient isotopy.
\end{theorem}

This is an immediate consequence of   Theorem~\ref{isotlams}.  The following is useful in applications.

\begin{cor}
The core dynamical system $h:\KK\to\KK$  is uniquely determined by the endperiodic automorphism $f$ up to topological conjugacy $\psi\o h|\KK\o\psi^{-1}$ by a homeomorphsim $\psi:L\to L$ isotopic to the identity.
\end{cor}

\begin{theorem}[Isotopy Theorem]\label{isotlams}
    If $(\Lambda_{+},\Lambda_{-})$ is  a   bilamination  satisfing the axioms,  then  there is a homeomorphism $\psi:L\to L$, isotopic to the identity by an isotopy fixing $\bd L$ pointwise,  such that $\psi(\lambda)=\lambda^{\g}$, for each $\lambda\in\Lambda_{\pm}$ and $(\psi(\Lambda_{+}),\psi(\Lambda_{-}))$ is the Handel-Miller geodesic bilamination. 
\end{theorem}

\begin{cor}\label{metricindep}
If $f:L\to L$ is an endperiodic automorphism, the  geodesic bilamination $(\Lambda^{\g}_{+},\Lambda^{\g}_{-})$ associated to $f$ is independent, up to ambient isotopy, of the choice of standard hyperbolic metric $\g$.
\end{cor}

\begin{proof}
Let $\g$ and $\g'$ be two standard hyperbolic metrics on $L$ and let $(\Lambda^{\g}_{+},\Lambda^{\g}_{-})$ be the Handel-Miller geodesic bilamination associated to $f$ and corresponding to $\g$, $(\Lambda^{\g'}_{+},\Lambda^{\g'}_{-})$ the one corresponding to $\g'$.  Relative to the metric $\g$, the lamination $\Lambda^{\g'}_{\pm}$ is pseudo-geodesic (Lemma~\ref{samepseudos}).  The pair $(\Lambda^{\g'}_{+},\Lambda^{\g'}_{-})$ forms a pseudo-geodesic bilamination which clearly satisfies Axioms~\ref{muttran},~\ref{eachmeets}, and~\ref{trnsvrs} as these are metric independent.  By Corollary~\ref{indep}, we can assume we are using the same choice of set $\NN$ of $f$-junctures in Axiom~\ref{trnsvrs} for both laminations $\Lambda^{\g}_{\pm}$ and $\Lambda^{\g'}_{\pm}$. 

It remains to prove Axiom~\ref{ecorr}. Assume that $\bd L = \0$. The proof in the case $\bd L \ne \0$ is similar. As in the proof of Lemma~\ref{samepseudos}, the identity map $\id:L\to L$ lifts to a map $\phi:\Delta\to \Delta$ uniquely determined by its value at one point which extends to a map $\wh\phi:\D^{2}\to\D^{2}$. Axiom~\ref{ecorr} follows immediately  since the endpoints on $\Si$ of the leaves of $\wh\NN$ and of $\wh\phi^{-1}(\wh\NN)$ coincide.
\end{proof}

\subsection{Proof of Theorem~\ref{isotlams}}\label{geodextpf10}

We will prove  Theorem~\ref{isotlams} in a sequence of steps, noting that by Corollary~\ref{unsemiso}, we only need to straighten the semi-isolated leaves. At each step we will modify the laminations $\Lambda_{\pm}$ by an isotopy. That is we will replace the laminations $\Lambda_{\pm}$ by $\phi(\Lambda_{\pm})$ where $\phi$ is a homeomorphsm isotopic to the identity.  We do not apply the homeomorphism $\phi$ to $\Lambda^{\g}_{\pm}$   which is to be thought of  simply as the target of the whole process.

\subsubsection{A preliminary isotopy to set up the tiling}

We use the  tiling $\mathfrak T^{\g}$ defined in Section~\ref{prelisotf}.   

\begin{lemma}\label{fg10}
Suppose the juncture $J_{e} = \fr V_{e}$ where $V_{e}$ is a closed neighborhood of the end $e\in\EE(L)$.  Then there exists a homeomorphism $\psi_{e}$, isotopic to the identity, such that $\psi_{e}(\tau) = \tau^{\g}$ for every juncture component $\tau\ss V_{e}$.  
\end{lemma}

\begin{proof}
Enumerate the juncture components in $V_{e}$ as $\{\tau_{i}\}_{i=1}^{\infty}$.  Let $\psi_{0} = \id$. Using   Theorem~\ref{2.1}, Theorem~\ref{3.1} and the   remark after Theorem~\ref{epsteinsmooth}, inductively find  sequences $\Phi_{i}$ of isotopies and $\psi_{i}$ of homeomorphisms isotopic to the identity with $\psi_{i} = \Phi^{1}_{i}\circ\psi_{i-1}$, $i\ge 1$,     fixing $\bd L$ and $\tau_{1}^{\g},\ldots,\tau_{i-1}^{\g}$ pointwise with $\Phi_{i}$ moving $\psi_{i-1}(\tau_{i})$ to its geodesic tightening.  That is, $\psi_{i}(\tau_{i}) = \Phi^{1}_{i}(\psi_{i-1}(\tau_{i})) = \tau_{i}^{\g}$.  Then, by the remark after Theroem~\ref{epsteinsmooth}, the supports of at most finitely many $\Phi_{i}$ meet any compact set so that $\psi_{i}\to\psi_{e}$, a well defined homeomorphism isotopic to the identity and $\psi_{e}(\tau_{i}) = \psi_{i}(\tau_{i})  = \tau_{i}^{\g}$, $i\ge 1$.
\end{proof}

\begin{cor}\label{imagejunct10}

Suppose the juncture $J_{e}= \fr V_{e}$ is chosen, each $e\in\EE(L)$, such that the $V_{e}$ are disjoint. Then there exists a homeomorphism $\psi$, isotopic to the identity, such that $\psi(\tau) = \tau^{\g}$ for every juncture component $\tau\ss \bigcup_{i=1}^{k}V_{e_{i}}$. 

\end{cor}

\begin{proof}
Take $\psi$ to be the composition of the $\psi_{e}$ each $e\in\EE(L)$.
\end{proof}

Note that the homeomorhism $\psi$ is such that $\psi(J_{e}) = J_{e}^{\g}$ and $\psi(V_{e}) = U_{e}$ where $J_{e}^{\g} = \fr U_{e}$, $e\in\EE(L)$, and the $U_{e}$ are disjoint. As usual $L = W^{-}\cup K\cup W^{+}$ where,
$$W^{-} = \bigcup_{e\in\EE_{-}(L)}U_{e}\quad{\rm and}\quad W^{+} = \bigcup_{e\in\EE_{+}(L)}U_{e}.$$

The set $\JJ^{\g}_{W}$   consists of all components of negative geodesic  junctures in $W^{-}$ and all components of positive geodesic junctures in $W^{+}$ (Definition~\ref{jw}). Cutting $L$ apart along the geodesic juncture components in $\JJ^{\g}_{W}$ decomposes $L$ into a set $\mathfrak T^{\g}$ of ``tiles'' (Definition~\ref{defntile}).  

\begin{lemma}

The laminations $\psi(\Lambda_{\pm})$ are transverse to the geodesic junctures in $\JJ_{W}^{\g}$.

\end{lemma}

\begin{proof}
This follows immediately from the fact that the laminations $\Lambda_{\pm}$ were transverse to the junctures in $\JJ_{\pm}$.
\end{proof}

We replace the laminations $\Lambda_{\pm}$ by the laminations $\psi(\Lambda_{\pm})$. This is our first step in straightening the laminations $\Lambda_{\pm}$ to the laminations $\Lambda_{\pm}^{\g}$.

\subsubsection{Preliminary isotopies in  the core and the laminations $\GG^{\g}_{\pm}$}\label{lamofcore} 

As above,  $\fr_{+}K=\bigcup_{e\in\EE_{+}(L)}J^{\g}_{e}$ and $\fr_{-}K=\bigcup_{e\in\EE_{-}(L)}J^{\g}_{e}$.

\begin{nota}
Denote by  
\begin{eqnarray*}
\GG_{\pm}&=&\Lambda_{\pm}|K\\
\GG^{\g}_{\pm}&=&\Lambda^{\g}_{\pm}|K, 
\end{eqnarray*}
the laminations induced  on $K$ by $\Lambda_{\pm}$ and $\Lambda^{\g}_{\pm}$.

\end{nota}

A leaf $\alpha = [x_1.x_2]\in\GG_{\pm}$ is contained in a leaf $\gamma_{\alpha}\in\Lambda_{\pm}$ and  has endpoints $x_i$ in geodesic juncture components $\sigma_i\ss\fr_{\pm} K$, $i=1,2$.  Let $\wt\alpha = [\wt x_1,\wt x_2]\ss\wt{\gamma_{\alpha}}$ be  lifts of $\alpha = [ x_1,x_2]\ss\gamma_{\alpha}$ with $\wt x_i$ in the lift $\wt\sigma_i$ of $\sigma_i$, $i=1,2$. The  geodesic tightening $\gamma_{\alpha}^{\g}$ of $\gamma_{\alpha}$ has lift $\wt{\gamma_{\alpha}^{\g}}$ sharing endpoints on $\Si$ with $\wt{\gamma_{\alpha}}$. There is a unique arc $\wt{\alpha^{\g}} = [\wt{x_1^{\g}},\wt{x_2^{\g}}]\ss\wt{\gamma_{\alpha}^{\g}}$ with $\wt{x_i^{\g}}\in\wt\sigma_i$, $ i=1,2$.

\begin{nota}
The covering projection of the arc $\wt{\alpha^{\g}}$ is in $\GG^{\g}_{\pm}$ and will be denoted $\alpha^{\g} = [x_1^{\g},x_2^{\g}]$.  The correspondence $\alpha\leftrightarrow\alpha^{\g}$ induces a one-one correspondence between $\GG_{\pm}$ and $\GG^{\g}_{\pm}$. 
\end{nota}

Recall from Lemma~\ref{extremals} that each isotopy class of leaves of $\GG^{\g}_{\pm}$ contains  two extremal leaves (in the degenerate case where the isotopy class has one leaf we consider that leaf to be an extremal leaf).  For reasons that will become obvious, we will refer to a leaf  $\alpha\in\GG_{\pm}$ as an extremal leaf  if the corresponding geodesic segment    $\alpha^{\g}\in\GG^{\g}_{\pm}$ is an extremal leaf of $\GG^{\g}_{\pm}$.

By Lemma~\ref{LambdaK}, the leaves of $\GG^{\g}_{\pm}$ fall into finitely many isotopy classes. Thus, $\GG_{\pm}$ has finitely many extremal leaves.

\begin{prop}\label{strextleaves}
There is a preliminary isotopy, supported arbitrarily near $K$ and leaving that surface invariant, that straightens each extremal leaf $\alpha\in\GG_{\pm}$  to its  corresponding leaf  $\alpha^{\g}\in\GG^{\g}_{\pm}$. 
\end{prop}

\begin{proof}
Each of the finitely many extremal leaves in $\GG_+$ has its endpoints in two (not necessarily distinct) geodesic juncture components in $\fr_+ K$. An application of Lemma~\ref{arctight} to each of these in turn will straighten each of them in turn as required.

Each of the finitely many extremal leaves in $\GG_-$ has its endpoints in two (not necessarily distinct) geodesic juncture components in $\fr_- K$ and will possibly be divided into finitely many segments by the geodesics containing the already straightened extremal leaves in $\GG_+$. Finitely many applications of Lemma~\ref{arctight} to each of these segments in turn will staighten each of the extremal leaves in $\GG_-$ in turn without moving the already straightened extremal leaves of $\GG_+$.
\end{proof}

Thus, it can be assumed that each of  the extremal leaves $\alpha\in\GG_{\pm}$ is identical with its corresponding leaf $\alpha^{\g}\in\GG_{\pm}^{\g}$. They each lie in the geodesic $\gamma^{\g}_{\alpha}$, the lifts of which have the same endpoints as the lifts of the leaf $\gamma_{\alpha}$ of $\Lambda_{\pm}$ containing $\alpha$. Note that at this stage it is not yet true that $\gamma_{\alpha} = \gamma^{\g}_{\alpha}$, only that $\alpha = \alpha^{\g}$.

Looking in the universal cover we see that,

\begin{cor}

Each leaf of $\GG_{+}$ \upn{(}respectively $\GG_{-}$\upn{)} lies in a rectangle $R$ with a pair of opposite edges extremal leaves of $\GG_{+}$ \upn{(}respectively $\GG_{-}$\upn{)}  and a pair of opposite edges contained in $\fr_{+}K$ \upn{(}respectively $\fr_{-}K$\upn{)} .  

\end{cor}

\begin{nota}
The set of rectangles $R\ss K$ bounded by extremal leaves of  $\GG_{+}$ will be denoted by $\RR_{+}$.  The analogous set of rectangles bounded by extremal leaves of  $\GG_{-}$ will be denoted by $\RR_{-}$. Let $\RR = \RR_{+}\cup\RR_{-}$.
\end{nota}

\begin{lemma}
If $R_{+}\in\RR_{+}$   and $R_{-}\in\RR_{-}$, then the components of $R_{+}\cap R_{-}$ are rectangles with one pair of opposite sides in opposite sides of $R_{+}$ and the other pair in opposite sides of $R_{-}$.
\end{lemma}

\begin{proof}
This follows from Axiom~\ref{eachmeets} which implies that a side of $R_{+}$ and a side of $R_{-}$ cannot intersect so as to form one or more digons.
\end{proof}

If $\alpha$ is a leaf of $\GG_{+}$ (respectively of $\GG_{-}$) lying in  $R\in\RR$, then $\alpha$ connects two positive (respectively negative) juncture components lying in $\fr_{+}K$ (respectively $\fr_{-}K$). 
Let $R\in\RR$.  The geodesic arcs $R\cap\fr K$, taken in either order, will be called the bottom and top edges of $R$ and the extremal geodesic arcs will be called the left and right edges.  Fix these choices.

Notice that $|\RR|$ has both concave and convex corners.   Of course a given $R_{-}\in\RR_{-}$ can cross an $R_{+}\in\RR_{+}$ more than once.  Since the extremals have already been straightened, both $\GG_{\pm}$ and $\GG^{\g}_{\pm}$ can be viewed as laminations of $|\RR|$.  If some of the rectangles $R$ degenerate to arcs, this language is a bit unorthodox, but harmlessly so.

\begin{nota}
Denote by $\bd_{*}\RR$   the closure of $\fr\RR\sm\fr K$.
\end{nota}

In other words, $\bd_{*}\RR$ is the part of $\bd\RR$ lying in $|\Lambda_{+}|\cup|\Lambda_{-}|$.

The leaves of $\GG_{+}$ are properly embedded arcs  having   both endpoints in $\fr_{+}K$.  All isotopies of these leaves are to  fix $\bd_{L}K\cup\fr_{-}K$ pointwise where  $\bd_{L}K = K\cap\bd L$. As already indicated, the isotopies leave $\fr_{+}K$ componentwise invariant, but not pointwise. Similar remarks  apply to $\GG_{-}$.

The leaves of $\GG_{\pm}$ fall into finitely many  isotopy classes.    There are extremal leaves in each isotopy class which, together with arcs in $\fr_{\pm}K$ cut off rectangles containing all the leaves in that isotopy class.  If the isotopy class has only one leaf, we consider that leaf as a degenerate rectangle.

\subsubsection{Straightening all leaves of $\GG_{\pm}$}\label{straightleaves}

This is the delicate part of the argument.

By Corollary~\ref{unsemiso}, we only need to straighten the semi-isolated leaves  of $\Lambda_{\pm}$. In this subsection, we will  construct a sequence $\{\Phi_{i}\}_{i=1}^{\infty}$ of  ambient isotopies which progressively ``locally straighten'' each leaf  $\alpha\in\GG_{\pm}$ which is a segment of a semi-isolated leaf $\lambda\in \Lambda_{+}\cup\Lambda_{-}$  to the corresponding geodesic segment $\alpha^{\g}\in\GG^{\g}_{\pm}$, taking care that once a leaf segment has been straightened by $\Phi_{i}$, it remains invariant under $\Phi_{j}$, $j>i$.   

We will need to prove that the infinite composition $\psi=\cdots\o\Phi^{1}_{i}\o\cdots\o\Phi^{1}_{2}\o\Phi^{1}_{1}$ is a well defined homeomorphism, isotopic to the identity.  This is a delicate point since we cannot ensure that the supports of these isotopies form a locally finite family of sets.  But then, since the semi-isolated leaves have been straightened, the homeomorphism $\psi$ performs as advertised.

Throughout this process the homeomorphism $\psi_{i}=\Phi^{1}_{i}\o\cdots\o\Phi^{1}_{2}\o\Phi^{1}_{1}$  distorts the laminations $\Lambda_{\pm}$ to laminations $\Lambda^{i}_{\pm}$.

The goal of this section is to  construct  an isotopy straightening all the   leaves of $\GG_{\pm}$.  Note that $|\GG_{\pm}|\ss|\RR|$.  Recall that there is a canonical correspondence between the leaves $\alpha$ of $\GG_{\pm}$ and leaves $\alpha^{\g}$ of $\GG^{\g}_{\pm}$,  We will progressively ``tighten''  a countable dense set of leaves $\alpha\in\GG_{\pm}$  one at a time to their corresponding geodesic leaves $\alpha^{\g}\in\GG^{\g}_{\pm}$  by ambient isotopies compactly supported in a neighborhood of $K$.   Already tightened leaves of $\GG_{\pm}$ and components of $\bd_{L}K\cup\fr_{\mp}K$ are pointwise fixed under the ambient isotopy  where  $\bd_{L}K = K\cap\bd L$.  The components of $\fr_{\pm}K$ and already tightened leaves of $\GG_{\mp}$ will be invariant but generally not fixed pointwise by the isotopy.

\begin{lemma}\label{compfolns}
There are mutually transverse $\CO$ foliations $\FF_{\pm}$ of $|\RR|$ such that $\GG_{+}\ss\FF_{+}$ and $\GG_{-}\ss\FF_{-}$.
\end{lemma}

\begin{proof}
By Axiom~\ref{muttran}, $(\Lambda_{+},\Lambda_{-})$ is a bilamination. By Lemma~\ref{Rchart},  applied to this bilamination,  each of the rectangles making up $|\RR_{+}|\cap|\RR_{-}|$ can be bifoliated as required. The rest of $|\RR|$ consists of rectangles that can be bifoliated as laminated charts for either $\Lambda_{+}$ or $\Lambda_{-}$.  The usual process of gluing foliations along transverse and tangential boundary components allows the foliations in these various rectangles to be matched up.
\end{proof}

\begin{lemma}\label{FF'}
There are mutually transverse $\CO$ foliations $\FF^{\g}_{\pm}$ of $|\RR|$ by geodesic arcs such that $\GG^{\g}_{+}\ss\FF^{\g}_{+}$ and $\GG^{\g}_{-}\ss\FF^{\g}_{-}$.
\end{lemma}

\begin{proof}
In each rectangle $R$ making up  $|\RR_{+}|$, the geodesic lamination induced by $\GG_{+}^{\g}$ extends to a geodesic foliation $\FF_{+}^{\g}$ by the method of proof of Proposition~\ref{geobilam}.  Similarly, we extend $\GG_{-}^{\g}$ to a geodesic foliation $\FF_{-}^{\g}$ of $|\RR_{-}|$.  On the intersection $|\RR_{+}|\cap|\RR_{-}|$ these foliations, being geodesic, are necessarily transverse.  $\FF_{\pm}^{\g}$ is easily extended to the remaining subrectangles of $|\RR_{\mp}|$ to be transverse to $\FF_{\mp}^{\g}$ there.
\end{proof}

\begin{lemma}

There exists  a natural map $\mu:(\FF_{+},\FF_{-}) \ra (\FF_{+}^{\g},\FF_{-}^{\g})$. The map $\mu$ takes leaves  $\alpha\in\GG_{\pm}$ to their corresponding geodesic leaves  $\alpha^{\g}\in\GG^{\g}_{\pm}$ and gaps of $\GG_{\pm}$ to the corresponding gaps of $\GG^{\g}_{\pm}$.  
\end{lemma}

\begin{proof}
We define the map $\mu$ on $\FF_{+}$. The definition of $\mu$ on $\FF_{-}$ is analogous. One first defines $\mu(\ell) = \ell^{\g}$ for $\ell\in\GG_{+}$ and then extends $\mu$ over the gaps of $\GG_{+}$. There are two  types of gaps to consider. First let $V$ be a gap of $\GG_{+}$  lying in a   rectangle $R\in\RR_{+}$, $\sigma$ the bottom edge of $V$ and $\tau$ the bottom edge of the corresponding gap $W$ of $\GG^{\g}_{+}$. There is a  map $\nu:\sigma\ra\tau$, linear in the hyperbolic metric. If $\lambda\ss V$ is a leaf of $\FF_{+}$ and $\lambda\cap\sigma = \{p\}$, let $\mu(\lambda)\ss W$ be the leaf of $\FF_{+}^{\g}$ with $\mu(\lambda)\cap\tau = \{\nu(p)\}$. Next,  let $V$ be a gap of $\GG_{+}$  lying entirely in a   rectangle $R\in\RR_{-}$, crossing it from left edge to right edge, $\sigma$ the left edge of $V$, and $\tau$ the left edge of the corresponding gap $W$ of $\GG^{\g}_{+}$. As in the first case, there is a  map $\nu:\sigma\ra\tau$, linear in the hyperbolic metric. If $\lambda\ss V$ is a leaf of $\FF_{+}$ and $\lambda\cap\sigma = \{p\}$, let $\mu(\lambda)\ss W$ be the leaf of $\FF_{+}^{\g}$ with $\mu(\lambda)\cap\tau = \{\nu(p)\}$. 
\end{proof}

For $\lambda\in\FF_{\pm}$, we will denote $\mu(\lambda)$ by $\lambda^{\g}\in\FF_{\pm}^{\g}$. The following lemma is clear.

\begin{lemma}

The map $\mu$ induces a homeomorphism we will denote by $$|\mu|:|\RR|\ra|\RR|.$$

\end{lemma}

\begin{lemma}

There exist  countable  sublaminations $\HH_{\pm}\ss\FF_{\pm}$ and  $\HH_{\pm}^{\g}\ss\FF_{\pm}^{\g}$ that correspond under the map $\mu$ and such that $|\HH_{\pm}|$ and $|\HH^{\g}_{\pm}|$ are each dense in $|\RR|$. The set $\HH_{\pm}$  contains all the semi-isolated leaves of $\GG_{\pm}$. The rest of the leaves of $\HH_{\pm}$ lie in gaps of $\GG_{\pm}$. .

\end{lemma}

\begin{proof}
We first construct $\HH_{+}$. The construction of $\HH_{-}$ is similar. Start by putting the semi-isolated leaves of $\GG_{+}$ into $\HH_{+}$. Then for each gap $V$ of $\GG_{+}$, choose a countable  subset of $\FF_{+}$ with union dense in $V$. Since there are countably many gaps, each bounded by two semi-isolated leaves, the set $\HH_{+}$ is countable. Define the sets $\HH_{\pm}^{\g}= \{\mu(\lambda)\mid\lambda\in\HH_{\pm}\}$.
\end{proof}

Enumerate the elements of the set $\HH_{+}\cup\HH_{-}$ as $\ell_{1},\ell_{2},\ldots$. Note that we are not distinguishing whether $\ell_{n}$ lies in $\HH_{+}$ or $\HH_{-}$. Working in the  rectangle $R\in\RR$ which contains $\ell_{1}$, we produce an isotopy $\Phi_{1}$, compactly supported in $R$ and leaving $\bd_{*}\RR$ fixed pointwise, which carries $\ell_{1}$ to $\ell_{1}^{\g}$.  As usual, this is a sliding isotopy along the bottom and top of $R$ and then an application of Theorem~\ref{3.1}.  Let $\psi_{0}=\id$ and $\psi_{1} = \Phi^{1}_{1}$. 

\begin{prop}\label{key}
For $n=1,2,\ldots$, there exist  isotopies $\Phi_{n}$ and homeomorphisms $\psi_{n}$ defined on $|\RR|$, fixing $\bd_{*}\RR$ pointwise, such that,
\begin{enumerate}
\item $\psi_{n} = \Phi^{1}_{n}\circ\psi_{n-1}$. 
\item $\psi_{n-1}$ takes $\ell_{k}$ to $\ell_{k}^{\g}$, $1\le k\le n-1$.
\item $\Phi_{n}$ straightens $\psi_{n-1}(\ell_{n})$ to $\ell_{n}^{\g}$ while leaving the already straightened $\ell_{k}^{\g}$ invariant, $1\le k\le n-1$.
\end{enumerate}
\end{prop}

\begin{proof}
We proceed inductively. The assertions of the proposition for $n=1$ are satisfied tautologically or vacuously. Inductively, suppose $\Phi_{n}$, $\psi_{n}$ and $\psi_{n-1}$ have been defined satisfying the conditions of the proposition, for some $n\ge1$. The next leaf to be straightened is $\ell_{n+1}$. To simplify the discussion, assume $\ell_{n+1}\in\GG_{+}$. Let $R\in\RR_{+}$ be the   rectangle containing $\ell_{n+1}$ and let $\beta_{1},\ldots,\beta_{r}$ be the other already straightened leaves of   $\GG_{+}$ in $R$. Let $X$ be the rectangular component of $R\sm\bigcup_{i=1}^{r}\beta_{i}$ that contains $\psi_{n}(\ell_{n+1})$.  Then $\ell^{\g}_{n+1}\ss X$ and the support of $\Phi_{n+1}$ will be $X$. Let $\alpha_{1},\ldots,\alpha_{q}$ be the arcs crossing $X$ in which the already straightened leaves of $\GG_{-}$ meet $X$. Note that the endpoints of $\psi_{n}(\ell_{n+1})$ lie in two geodesic segments $X\cap\fr_{+}K$ which we denote $\alpha_{0},\alpha_{q+1}$. The segments $\alpha_{1},\ldots,\alpha_{q}$  divide $X$ into subrectangles $X_{i}$ with two opposite edges $\alpha_{i},\alpha_{i+1}$, $0\le i\le q$.
 
The isotopy $\Phi_{n+1}$ is defined as a composition of the following very straightforward isotopies. One first  defines isotopies supported in  small neighborhoods of the $\alpha_{i}$, $0\le i\le q+1$, that slides the point of intersection of  $\psi_{n}(\ell_{n+1})$ with $\alpha_{i}$ along the arc $\alpha_{i}$ until it coincides with the point of intersection of $\ell^{\g}_{n+1}$ with $\alpha_{i}$. Then Theorem~\ref{3.1} is used to define isotopies supported on the $X_{i}$, $0\le i\le q$, to move the image of the arc $\psi_{n}(\ell_{n+1})$ under the first set of sliding isotopies to the arc $\ell^{\g}_{n+1}$.   
 
 Composing these finitely many isotopies  defines an isotopy $\Phi_{n+1}$, straightening $\psi_{n}(\ell_{n+1})$ to $\ell_{n+1}^{\g}$ while leaving the already straightened leaves $\ell_{k}^{\g}$ invariant, $1\le k\le n$.  Set $\psi_{n+1}=\Phi^{1}_{n+1}\o\psi_{n}$. 
\end{proof}

\begin{rem}
We emphasize that, if $\ell_{n}\in\GG_{\pm}$, then $\ell_{n}^{\g}\in\GG_{\pm}^{\g}$ is the corresponding  geodesic tightening of $\ell_{n}$.  Also, $\psi_{n}(\HH_{\pm})$ is transverse to  $\psi_{n}(\HH_{\mp})$. By abuse of notation, we denote this lamination again by $\HH_{\pm}$.  
\end{rem}

Let  $\WW_{n}=\{\ell_{1}^{\g},\ldots,\ell_{n}^{\g}\}$. Then  $|\RR|\sm|\WW_{n}|$ consists of finitely many rectangles. Let $\rho:L\x L\to[0,\infty)$ denote the (complete) hyperbolic metric.

\begin{defn}[mesh]\label{gridmesh}

The quantity $\text{mesh}(\WW_{n})$ is the diameter of the largest rectangular component of   $|\RR|\sm|\WW_{n}|$, measured in the hyperbolic metric $\rho$.

\end{defn}

Let $\delta_{n}$ denote  $\text{mesh}(\WW_{n})$. Then, by compactness and the fact that   $|\HH_{\pm}^{\g}|$  is dense in $\RR$ we have,

\begin{lemma}\label{goestozero}

$\delta_{1}\ge\delta_{2}\ge\cdots\ge\delta_{n}\ra 0$.

\end{lemma}

Let $\{\psi_{n}\}_{n=0}^{\infty}$ and $\{\Phi_{n}\}_{n=0}^{\infty}$ be the sequences of homeomorphisms and isotopies as in Proposition~\ref{key}.  On the space $\CC$ of continuous functions $s:\RR\to \RR$ we define a metric $d$ by setting 
$$d(s,r)=\max_{x\in \RR}\rho(s(x),r(x)).$$
As is well known, the topology associated to this metric is the compact-open topology.  The group $H\ss\CC$ of homeomorphisms $h:|\RR|\to |\RR|$ is a topological group under the metric $d$.

\begin{lemma}\label{cauchy}
Relative to the metric $d$, the sequence $\{\Phi^{1}_{n+k}\o\Phi^{1}_{n+k-1}\o\cdots\o\Phi^{1}_{n}\}_{n=0}^{\infty}$ converges to the identity uniformly for $k\ge0$ and the sequence $\{\psi_{n}\}_{n=0}^{\infty}$ is Cauchy.
\end{lemma}

\begin{proof}
By (3) of Proposition~\ref{key},  $\Phi^{1}_{n+k}\o\Phi^{1}_{n+k-1}\o\cdots\o\Phi^{1}_{n}$  sends the rectangles of $|\RR|\sm|\WW_{n-1}|$ into themselves.   Therefore, by Lemma~\ref{goestozero}, 
$$\rho(\Phi^{1}_{n+k}\o\Phi^{1}_{n+k-1}\o\cdots\o\Phi^{1}_{n}(x),x)\le\delta_{n-1}\ra 0,$$
 uniformly in $k\ge0$ and $x\in |\RR|$ as $n\to\infty$.      Since $$\psi_{n+k}=\Phi^{1}_{n+k}\o\Phi^{1}_{n+k-1}\o\cdots\o\Phi^{1}_{n}\o\psi_{n-1},$$ it follows immediately that the sequence $\{\psi_{n}\}_{n=0}^{\infty}$ is Cauchy.
\end{proof}

By a standard argument, we get the following.

\begin{cor}
The pointwise limit $\psi=\lim_{n\to\infty}\psi_{n}$ exists and equals the homeomorphism $|\mu|$.
\end{cor}

Recall that the group $H$ of homeomorphisms of $|\RR|$ is a topological group under the metric $d$. The homeomorphism  $\psi$ will be isotopic  to the identity if there  is a continuous path $s$ in $H$ starting at $\id$ and ending at $\psi$.  We  construct such a  path.  It will be convenient to parametrize it on the one point compactification $[1,\infty]$ of $[1,\infty)$. The isotopies $\Phi_{n}^{t}$, $0\le t\le1$, send the rectangles of $|\RR|\sm|\WW_{n-1}|$ into themselves and  $\Phi_{n}^{0}=\id$. 
Recall   that $\psi_{0}=\id$.
Define $s:[1,\infty]\to H$ by 
$$
s(t)=\begin{cases}
\Phi_{n}^{t-n}\o\psi_{n-1}, &n\le t\le n+1,\,\,1\le n<\infty\\
\psi, &t=\infty.
\end{cases}
$$

\begin{lemma}
The path $s:[1,\infty]\to H$ is continuous, hence $\psi$ is isotopic to the identity.
\end{lemma}

\begin{proof}
Continuity of $s(t)$ for $1\le t<\infty$ is clear. 
Since  $\Phi_{n+k}^{t}\o\Phi^{1}_{n-k-1}\o\cdots\o\Phi^{1}_{n}$ sends the rectangles of $|\RR|\sm|\WW_{n-1}|$ into themselves, $0\le t\le1$ and $k\ge0$, we see by Lemma~\ref{goestozero} that $d(s(t_{1}),s(t_{2}))$ is uniformly as small as desired, for all $t_{1},t_{2}\in[n,\infty)$ and $n$ sufficiently large.  Since $\lim_{n\to\infty}s(n)=\psi$ in the metric $d$, it follows that $\lim_{t\to\infty}s(t)=\psi$ in that metric, proving continuity at $t=\infty$.
\end{proof}

Since the homeomorphism $\psi$ isotopic to the identity is pointwise fixed on $\bd_{*}\RR$, it can be extended to a homeomorphism isotopic to the identity, again denoted by $\psi$, which is the identity on $K\sm\RR$.  We then have,

\begin{prop}\label{straightensall}

The homeomorphism $\psi$ of $K$,  isotopic to the identity, sends each leaf of $\ell\in\GG_{\pm}$ to its corrponding geodesic tightening in $\ell^{\g}\in\GG^{\g}_{\pm}$.

\end{prop}

\subsubsection{Extending the isotopy to $L$}\label{sect96}
Since $\psi(\fr_{\pm}K) = \fr_{\pm}K$ and  $\psi$ is isotopic to the identity there, it is easy to extend $\psi$ into a small collar of $\fr_{\pm}K$ on the side facing away from $K$, damping the extension off to the identity in this collar.  One then extends by the identity over the rest of $L$, obtaining a homeomorphism $\psi_{K}:L\to L$ isotopic to the identity by an isotopy compactly supported as near to $K$ as desired.  Since $\psi_{K}|K=\psi$, Proposition~\ref{straightensall} implies that $\psi_{K}(\gamma_{\alpha})$ and $\gamma^{\g}_{\alpha}$ cross $\fr_{\pm}K$ in exactly the same points.  

Enumerate the tiles of $\mathfrak T^{\g}$ in any convenient way as $P_{0},P_{1},P_{2},\dots,P_{n},\dots$.  We can take $P_{0}=K$ and assume that, for each $n\ge1$, $P_{n}$ has at least one edge in common with at least one tile $P_{i}$, $0\le i<n$.

Next proceed to $P_{1}$ and produce the homeomorphism $\psi_{P_{1}}$, isotopic to the identity, in the same way as above, noting that the isotopy will fix pointwise the interface of $P_{1}$ with $K$.  Remark that the procedure is easier here since,  if $P_{1}$ lies in a neighborhood of   a positive end, there will only be  rectangles $R\in\RR_{+}$ in $P_{1}$, and  if $P_{1}$ lies in a neighborhood of   a negative end, there will only be  rectangles $R\in\RR_{-}$ in $P_{1}$.  One obtains a   compactly supported  homeomorphism   $\psi_{P_{1}}:L\to L$ isotopic to the identity by an isotopy which is the identity on $K$ and outside a neighborhood of $P_{1}$ which is only slightly larger than $P_{1}$.  Proceed in this way with each $P_{i}$ is turn.The supports of the homeomorphisms $\psi_{K},\psi_{P_{1}},\dots,\psi_{P{n}},\dots$ and associated isotopies form a locally finite family of compact sets.
Thus, the infinite composition 
$$
\psi_{L}=\cdots\o\psi_{P_{n}}\o\psi_{P_{n-1}}\o\cdots\psi_{P_{1}}\o\psi_{K}
$$
is a well defined homeomorphism on $L$, isotopic to the identity, which carries $\Lambda_{\pm}$ to $\Lambda^{\g}_{\pm}$.  The proof of Theorem~\ref{isotlams} is complete.

\subsection{Good choice of junctures}

In Section~\ref{axsys} we axiomatize a system consisting of five elements $(f,\NN,\JJ,\Lambda_{+},\Lambda_{-})$ where $f:L\to L$ is an endperiodic automorphism, $\NN$ is a set of $f$-juncture components chosen as in Definition~\ref{famNNg}, $\JJ$ is a set of juncture components associated to $\NN$ (Definition~\ref{jnctfam}), and $\Lambda_{\pm}$ are pseudo-geodesic laminations.

The Isotopy Theorem (Theorem~\ref{isotlams}) provides a homeomorphism $\psi$ of $L$, isotopic to the identity, so that $\psi(\Lambda_{\pm}) = \Lambda^{\g}_{\pm}$.

\begin{theorem}\label{boldmove}
If the system $(f,\NN,\JJ,\Lambda_{+},\Lambda_{-})$ satisfies the axioms, the entire  theory for the Handel-Miller geodesic laminations developed in \emph{Sections~\ref{constr}~-~\ref{cordyn}}   carries over verbatim to the system 
$(f,\NN,\JJ' = \psi^{-1}(\JJ^{\g}),\Lambda_{+},\Lambda_{-})$.
\end{theorem}

Denote by $\iota^{\g}$ the geodesic tightening map of Definition~\ref{gtm}. 

\begin{defn}[$\iota'$]\label{gtm10}

Define the map  $\iota':\NN\to \JJ$   by $\iota' = \psi^{-1}\circ\iota^{\g}\circ\psi$.  

\end{defn}

The map $\iota':\NN\to \JJ$  serves the role  of the map $\iota:\NN\to \JJ$ of Definition~\ref{jnctfam}  for the sytem $(f,\NN,\JJ' = \psi^{-1}(\JJ^{\g}),\Lambda_{+},\Lambda_{-})$ and as such replaces the geodesic tightening map   of Definition~\ref{gtm}. It can be extended to a map $\iota':\NN^{\dagger}\to \JJ$ where $\NN^{\dagger}$ is the set of pseudo-geodesics $\gamma$ such that there exists an $f$-juncture component $\sigma\in\NN$ whose lifts have the same endpoints on $\Se$ as the lifts of $\gamma$. Then define $\iota'(\gamma) = \iota(\sigma)$. This definition then extends in a natural way to a function $\iota'$ with domain the finite unions of elements of $\NN^{\dagger}$.

\begin{proof}[Proof of  Theorem~\ref{boldmove}]
First, consider the endperiodic automorphism $f^{*} = \psi\circ f\circ\psi^{-1}:L\to L$ with set of $f^{*}$-juncture components $\NN^{*} = \{\psi(\gamma)\ |\ \gamma\in\NN\}$. Since $\gamma\in\NN$ and $\psi(\gamma)\in\NN^{*}$ have the same endpoints on $\Se$, they have the same geodesic tightenings. Thus, $\JJ^{\g}$ is both the set of geodesic tightenings of the set $\NN$ of $f$-juncture components and the set $\NN^{*}$ of $f^{*}$-juncture components.    Thus, the entire  theory for the Handel-Miller geodesic laminations developed in Sections~\ref{constr}~-~\ref{cordyn} is true for the system $(f^{*},\NN^{*},\JJ^{\g},\Lambda^{\g}_{+},\Lambda^{\g}_{-})$.  

Further,  $\psi^{-1}$ carries the  bilamination $(\Lambda_{+}^{\g},\Lambda_{-}^{\g}) = (\psi(\Lambda_{+}),\psi(\Lambda_{-}))$   to the bilamination $(\Lambda_{+},\Lambda_{-})$, the set of geodesic junctures $\JJ^{\g}$ to the set $\JJ' = \psi^{-1}(\JJ^{\g})$ of junctures, and the set $\NN^{*}$ of $f^{*}$-junctures  to the set $\NN$ of $f$-junctures  as well as all statements for the theory  developed in Sections~\ref{constr}~-~\ref{cordyn} for the system $(f^{*},\NN^{*},\JJ^{\g},\Lambda^{\g}_{+},\Lambda^{\g}_{-})$  to  the system $(f,\NN,\JJ',\Lambda_{+},\Lambda_{-})$. The truth of each statements for the system $(f,\NN,\JJ',\Lambda_{+},\Lambda_{-})$ then follows from the truth of the statement for the system $(f^{*},\NN^{*},\JJ^{\g},\Lambda^{\g}_{+},\Lambda^{\g}_{-})$ since $\psi^{-1}$ is a homeomorphism, isotopic to the identity (see examples below).
\end{proof}

\begin{example}

By Theorem~\ref{finmany}, $\Lambda^{\g}_{+}$ and $\Lambda^{\g}_{-}$ each have only finitely many semi-isolated leaves. Since $\psi^{-1}$ is a homeomorphism, $\Lambda_{+} = \psi^{-1}(\Lambda^{\g}_{+})$ and $\Lambda_{-} = \psi^{-1}(\Lambda^{\g}_{-})$ each have finitely many semi-isolated leaves. Thus, Theorem~\ref{finmany}  is valid in the system   $(f,\NN,\JJ' = \psi^{-1}(\JJ^{\g}),\Lambda_{+},\Lambda_{-})$.

\end{example}

\begin{example}

By Lemma~\ref{LGstrcl}, the laminations $\Gamma^{\g}_{\pm}$ and $\Lambda^{\g}_{\pm}$ are strongly closed. Since $\psi^{-1}$ is a homeomorphism, it follows immediately that the  laminations $\Gamma_{\pm} = \psi^{-1}(\Gamma^{\g}_{\pm})$ and $\Lambda_{\pm} = \psi^{-1}(\Lambda^{\g}_{\pm})$ are strongly closed. Thus, Lemma~\ref{LGstrcl}  is valid in the system   $(f,\NN,\JJ' = \psi^{-1}(\JJ^{\g}),\Lambda_{+},\Lambda_{-})$. It is much more difficult to prove Lemma~\ref{LGstrcl} for the system $(f,\NN,\JJ' = \psi^{-1}(\JJ^{\g}),\Lambda_{+},\Lambda_{-})$ directly from the axioms.

\end{example}

\begin{example}

By Theorem~\ref{geodext}, there exists an endperiodic automorphism $h$, isotopic to $f^{*}$ and permuting the elements  of each of the sets $\Lambda^{\g}_{+}$,  $\Lambda^{\g}_{-}$,  $\JJ^{\g}_{+}$, and  $\JJ^{\g}_{-}$. That is $h = \phi\circ f^{*}$ with $\phi$ a homeomorphism isotopic to the identity and $h(\Lambda^{\g}_{+}) = \Lambda^{\g}_{+}$,  $h(\Lambda^{\g}_{-}) = \Lambda^{\g}_{-}$,  $h(\JJ^{\g}_{+}) = \JJ^{\g}_{+}$, and  $h(\JJ^{\g}_{-}) = \JJ^{\g}_{-}$. Then,
\begin{eqnarray*}
h' &=& \psi^{-1}\circ h\circ\psi\\
&=& \psi^{-1}\circ\phi\circ f^{*}\circ\psi\\
&=& \psi^{-1}\circ\phi\circ\psi\circ f\circ\psi^{-1}\circ\psi\\
&=& \psi^{-1}\circ\phi\circ\psi\circ f
\end{eqnarray*}
is an endperiodic automporphism isotopic to $f$ and $h'(\Lambda_{+}) = \psi^{-1}\circ h\circ\psi(\psi^{-1}(\Lambda^{\g}_{+})) = \psi^{-1}(\Lambda^{\g}_{+}) = \Lambda_{+}$. Thus $h'$ permutes the elements of $\Lambda_{+}$. Similarly, $h'$ permutes the elements of $\Lambda_{-}$, $\JJ'_{+}$, and $\JJ'_{-}$. Thus, Theorem~\ref{geodext}  is valid in the system   $(f,\NN,\JJ' = \psi^{-1}(\JJ^{\g}),\Lambda_{+},\Lambda_{-})$.

\end{example}

\begin{rem}
We think of $\JJ'$ as a ``good'' choice of juncture components  in the sense that Theorem~\ref{boldmove} holds. The good choice is not always preferrable when one is trying to verify the axioms.  Thus, in the proof of the transfer theorem (Theorem~\ref{transfer}), the set $\JJ$ of junctures that we will construct for the transferred laminations is definitely not ``good'', but satisfies Axiom~\ref{trnsvrs}.
\end{rem}


\section{Smoothing $h$ and the laminations}\label{HMSM}

The goal of  this section is to prove the following.

\begin{theorem}\label{HMsmooth}
Given an endperiodic automorphism $f$, there exists  a smooth Handel-Miller pseudo-geodesic bilamination $(\Lambda_{+},\Lambda_{-})$ associated  to $f$   and a smooth endperiodic automorphism $h$ isotopic to $f$ and preserving $(\Lambda_{+},\Lambda_{-})$. 
\end{theorem} 

Recall that the  bilamination $(\Lambda_{+},\Lambda_{-})$ is called a   Handel-Miller pseudo-geodesic bilamination $(\Lambda_{+},\Lambda_{-})$ associated  to $f$  if it satisfies the four axioms (Definitions~\ref{HMbilam}).

\begin{rem}
Geodesic  laminations $\Lambda^{\g}_{\pm}$  associated to $f$ and satisfying the axioms are probably not generally smooth.  In fact,  geodesic laminations may not even be  $\CI$ or, if $\CI$ they may fail to be $\CII$ (Example~\ref{Denjoy}).  The problem is that if, in local laminated charts, there are infinitely many gaps clustering on plaques of the lamination, the mean value theorem generally obstructs attempts to extend the lamination to a $\CI$ foliation across these gaps and there are similar higher order obstructions to $C^{r}$ smoothness, $r\ge1$.  Furthermore, as in the Nielsen-Thurston theory, there seem to be   obstacles to choosing the endperiodic automorphism  $h^{\g}$, isotopic to $f$ and preserving  $(\Lambda^{\g}_{+},\Lambda^{\g}_{-})$,  to be a diffeomorphism. 
\end{rem}

We first prove Theorem~\ref{HMsmooth} for the case in which there are no principal regions. The changes that have to be made to handle the case in which there are principal regions  are not substantive and   are outlined in Section~\ref{SecPrinReg}. 

\begin{*hyp}

\textbf{Until Section~\ref{SecPrinReg}, we assume there are no principal regions.}

\end{*hyp}

\ni\textbf{Strategy.} We give a heuristic sketch of the proof. One begins with a Handel-Miller geodesic bilamination  $(\Lambda^{\g}_{+},\Lambda^{\g}_{-})$  associated to $f$  and  an endperiodic automorphism $h^{\g}$, isotopic  to $f$ and preserving  $(\Lambda^{\g}_{+},\Lambda^{\g}_{-})$. We produce a Markov partition for $h^{\g}$ by geodesic quadrilaterals, somewhat different from the ones described in Section~\ref{cordyn}, and define a diffeomorphism on the union of these rectangles.  Using standard techniques in differential topology, we extend this to an endperiodic diffeomorphism $h:L\to L$ which is isotopic to $h^{\g}$.  Positive iterates of $h$ ``stretch'' the Markov quadrilaterals deeper and deeper into the positive ends, limiting in a natural way on a smooth lamination $\Lambda_{+}$ in $L$.  Similarly, negative iteration produces a smooth bilamination $\Lambda_{-}$,  $(\Lambda_{+},\Lambda_{-})$ being an $h$-invariant bilamination.  We verify the axioms, thereby proving Theorem~\ref{HMsmooth} for the case that there are no principal regions.  We then give a detailed outline of how to modify this construction to accomodate the presence of principal regions.      \vs

Fix a set  $\NN$\label{fixNN}  of $f$-juncture components as in Definition~\ref{famNNg}.  This determines a set of geodesic juncture components $\JJ^{\g}$.

To begin with, since the intersection points of junctures with $\bd L$ do not cluster except at ends of $L$, the construction of $h^{\g}$ in Section~\ref{defineh} can be carried out so that the following holds.

\begin{lemma}\label{diffeonearbd}
The automorphism $h^{\g}:L\to L$ is a diffeomorphism in a neighborhood of $\bd L$.
\end{lemma}

Fix a core $K$.   Let $\JJ^{-}_{0}\ss\JJ^{\g}_{-}$ be the set of juncture components comprising $\fr_{-}K$ and  $\JJ^{+}_{0}\ss\JJ^{\g}_{+}$ be the set of juncture components comprising $\fr_{+}K$. Recall the decomposition $L=W^{-}\cup K\cup W^{+}$. Then $|\JJ^{-}_{0}|=W^{-}\cap K = \fr W^{-}$ and $|\JJ^{+}_{0}|=W^{+}\cap K = \fr W^{+}$. Let   $\JJ^{-}_{n}=(h^{\g})^{n}(\JJ^{-}_{0})$,   $\JJ^{+}_{n}=(h^{\g})^{n}(\JJ^{+}_{0})$,  and $W_{n}=(h^{\g})^{n}(W^{-})$, $n\in\Z$.  

The following is clear.

\begin{lemma}

\begin{enumerate}

\item $\bigcup_{n=-\infty}^{\infty} \JJ^{-}_{n} = \JJ^{\g}_{-}$\upn{;}

\item $\bigcup_{n=-\infty}^{\infty} \JJ^{+}_{n} = \JJ^{\g}_{+}$\upn{;}

\item $\fr W_{n} = |\JJ^{-}_{n}|$.

\end{enumerate}

\end{lemma}

\begin{defn}[quadrilateral and geodesic quadrilateral]\label{quadltrl}
We will call a geodesically convex figure with four edges that are geodesics a \emph{geodesic quadrilateral}.  When $h$ has been defined, the images of a geodesic quadrilateral under applications of $h$ and its powers will  be called quadrilaterals.
\end{defn}

\begin{rem}
We defined a rectangle (Definition~\ref{rctnglr}) to have a pair of opposite edges in $\Gamma_{-}$ and a pair of opposite edges in $\Gamma_{+}$.   In a geodesic quadrilateral, the edges    may be, but need not be,  in $\Gamma_{\pm}$.
\end{rem}

\begin{lemma}\label{maxrects}
For $n>0$ large enough, the set of components of $K\sm \intr W_{n}$ consists of a finite family $\{R_{1},\ldots,R_{k}\}$ of geodesic quadrilaterals with one pair of opposite edges in $\fr_{+}K$ and the other pair of opposite edges components of $|\JJ^{-}_{n}|\cap K$ in $\XX^{\g}_{-}|K$. 
\end{lemma}

\begin{proof}
By Lemma~\ref{LambdaK} the leaves of $\Lambda^{\g}_{+}\cap K$ are properly embedded, boundary incompressible  arcs with endpoints in $\fr_{+}K$ which fall into finitely many isotopy classes. By Lemma~\ref{extremals} each of these isotopy classes has two extreme arcs which together with two arcs on $\fr_{+}K$ bound rectangles $\{R'_{1},\ldots,R'_{\ell}\}$ such that $\bigcup_{i=1}^{\ell}R'_{i}\supset |\Lambda^{\g}_{+}|\cap K$. Since there are no principal regions, both edges of $R'_{i}$ in $|\Lambda^{\g}_{+}|\cap K$ are approached outside of $R'_{i}$ by arcs of $|\XX^{\g}_{-}|\cap K$. Since the negative junctures can only cluster on $|\Lambda^{\g}_{+}|$, the strongly closed property implies that there are only finitely many components of $\bigcup_{n=0}^{\infty}|\JJ^{-}_{n}|\cap K$ that do not lie in one of these isotopy classes and that $R'_{i}\ss R''_{i}$, a geodesic quadrilateral with a pair of opposite edges in $\fr_{+}K$ and a pair of opposite edges in $\bigcup_{n=0}^{\infty}|\JJ^{-}_{n}|\cap K$ such that $\bigcup_{i=1}^{\ell}R''_{i}$ contains every arc of $\bigcup_{n=0}^{\infty}|\JJ^{-}_{n}|\cap K$ that lies in the same isotopy class as an arc of $|\Lambda^{\g}_{+}|\cap K$.

If  $n$ is large enough, then  the components of $\fr W_{n}\cap K$ all lie in one of the isotopy classes and all are  components of $|\XX^{\g}_{-}|\cap K$. Thus, the components of $K\sm \intr W_{n}$ are  geodesic quadrilaterals $\{R_{1},\ldots,R_{k}\}$ that are subquadrilaterals of the geodesic quadrilaterals $\{R''_{1},\ldots,R''_{\ell}\}$.
\end{proof}

\begin{rem}
By choosing $n>0$  large enough, as in Lemma~\ref{maxrects}, we guarantee that the set  $\{R_{1},\dots,R_{k}\}$  of components of  $K\sm \intr W_{n}$ has the property  that the geodesic quadrilateral $h^{\g}(R_{j})$ completely crosses any $R_{i}$ that it meets, $1\le j\le k$.  The components of the $h^{\g}(R_{j})\cap R_{i}$ are the components of $K\sm \intr W_{n+1}$. Now take the $R_{i}$ to be the components of   $K\sm \intr W_{n+1}$. As in the proof of Proposition~\ref{just1}, any $h^{\g}(R_{j})$ meets any $R_{i}$ at most once.   Thus, the set of geodesic quadrilaterals $\{R_{1},\ldots,R_{k}\}$ making up $K\sm \intr W_{n+1}$ satisfy Properties~II, III, and~IV  of Definition~\ref{marfam} but is not a  Markov system as defined there because the edges of  $R_{i}$ are not subarcs of leaves of the laminations $\Lambda^{\g}_{\pm}$. 
\end{rem}

\begin{nota}
From now on in Section~\ref{HMSM}  we denote by $N>0$ this integer $n+1$.

\end{nota}

\begin{defn}[Markov chain]\label{markchn}
A sequence 
$$
\iota=(i_{0},i_{1},i_{2},\dots,i_{n},\dots)
$$ 
in which $i_{n}\in\{1,2,\dots,k\}$ and $R_{i_{n}}\cap h^{\g}(R_{i_{n+1}})\ne\0$, $n\ge0$, will be called a Markov chain.
\end{defn}

This language is borrowed from symbolic dynamics.  Our system of geodesic quadrilaterals is  a Markov partition as commonly defined in dynamics, but different from that defined in Section~\ref{cordyn}.  The following statement and proof are closely analogous to those  of Proposition~\ref{prop1013}.  Since our construction of the smooth bilamination is modelled on this, we give details.

\begin{prop}\label{infint}
If $\iota$  is a Markov chain, then there is a unique leaf $\ell^{\g}_{\iota}\in\Lambda^{\g}_{+}$ such that if
$$\ell^{\g}_{\iota_{n}}=(h^{\g})^{n}(R_{i_{n}})\cap(h^{\g})^{n+1}(R_{i_{n+1}})\cap\cdots,\quad n\ge 0$$
then
$$
\ell^{\g}_{\iota} =\bigcup_{n=0}^{\infty}\ell^{\g}_{\iota_{n}},
$$ 
an increasing union of compact arcs.
\end{prop}

\begin{proof}
Choose a lift $C^{\g}_{0} = \wt R_{i_{0}}$  of $R_{i_{0}}$  to  $\wt L\ss\Delta$. This lift determines lifts $C^{\g}_{n}$ of $(h^{\g})^{n}(R_{i_{n}})$ for all $n>0$. For $n\ge 0$, let $\wt\lambda^{\g}_{n},\wt\mu^{\g}_{n}\in\wt\XX^{\g}_{-}$ contain opposite sides of these lifts $C^{\g}_{n}$ of $(h^{\g})^{n}(R_{n})$. The sequences $\{\wt\lambda^{\g}_{n}\},\{\wt\mu^{\g}_{n}\}$ converge monotonically to $\wt\lambda^{\g},\wt\mu^{\g}\in\wt\Lambda^{\g}_{+}$.  Let $\delta^{\g}_{0}$ be an edge of $R_{i_{0}}$ contained in $\fr_{+}K$. Let $\delta^{\g}_{1}\ss h^{\g}(\fr_{+}K)$ be the edge of $h^{\g}(R_{i_{1}})$ bordering the piece of  $h^{\g}(R_{i_{1}})$ that exits  $R_{i_{0}}$ through $\delta^{\g}_{0}$. Inductively, let $\delta^{\g}_{n}\ss (h^{\g})^{n}(\fr_{+}K)$ be the edge of $(h^{\g})^{n}(R_{i_{n}})$ bordering the piece of  $(h^{\g})^{n}(R_{i_{1}})$ that exits  $R_{i_{n-1}}$ through $\delta^{\g}_{n-1}$. For $n\ge 0$, let $\wt\delta^{\g}_{n}$ be the edge of the lift  $C^{\g}_{n}$ of $(h^{\g})^{n}(R_{i_{n}})$ that covers $\delta^{\g}_{n}$ and let $\wt\sigma^{\g}_{n}$ be the geodesic in $\wt L$ that contains $\wt\delta^{\g}_{n}$. Note that the $\wt\sigma^{\g}_{n}$ are lifts of components of $(h^{\g})^{n}(\fr_{+}K)$. By Theorem~\ref{esctoe}, the $\wt\sigma^{\g}_{n}$ nest on a point $a\in\Si$ which is a common endpoint of $\wt\lambda^{\g}$ and $\wt\mu^{\g}$. Similarly, starting with the other edge  $\gamma^{\g}_{0}$ of $R_{i_{0}}$ which lies in $\fr_{+}K$, one gets a family of geodesics in $\wt L$ that nest on a point $b\in\Si$ which is the other common endpoint of $\wt\lambda^{\g}$ and $\wt\mu^{\g}$. It follows that $\wt\lambda^{\g}=\wt\mu^{\g}$ which projects to $\ell^{\g}_{\iota}$.
\end{proof}

\begin{rem}
For a similar characterization of the leaves $\ell^{\g}_{\kappa}\in\Lambda^{\g}_{-}$, first note that applying $(h^{\g})^{-N}$ to the geodesic quadrilaterals $R_{i}$, gives  similar geodesic quadrilaterals $R^{*}_{i}$ with a pair of opposite edges in $\fr_{-}K$ and the other pair of  edges  components of $|\JJ^{+}_{-N}|\cap K$.  We use  sequences 
$$
\kappa=(\dots,i_{-n},\dots,i_{-2},i_{-1},i_{0})
$$ 
in which $R^{*}_{i_{-n}}\cap (h^{\g})^{-1}(R^{*}_{i_{-n-1}})\ne\0$.  It is actually unnecessary to go to the geodesic quadrilaterals $R^{*}_{i}$ since one gets the same result by interchanging the roles of $R_{i}$ and $h^{\g}(R_{j})$ and using negative powers of $h^{\g}$.  Notice that this would not work if an edge of $R_{i}$ bordered a principal region.
\end{rem}

As remarked earlier, our approach to the proof of Theorem~\ref{HMsmooth} in the case that there are no principal regions will be to isotope $h^{\g}$ to an endperiodic \emph{diffeomorphism} $h$ such that $h(R_{i})=h^{\g}(R_{i})$, $1\le i\le k$, and define the new smooth bilamination $(\Lambda_{+},\Lambda_{-})$ by a construction analogous to the above,  defining $\XX_{\pm}$ in a fairly obvious way.  Verifying the axioms will complete the proof.  The proof in the presence of principal regions is similar, but notationally more complicated.

\subsection{Geodesic grids}\label{geogrs}

Given a 
geodesic quadrilateral $R$, let $\alpha^{\g}_{0}$ and $\alpha^{\g}_{1}$ be one pair of opposite sides, $\beta^{\g}_{0}$ and $\beta^{\g}_{1}$ the other. Orient the $\alpha^{\g}_{i}$ from $\beta^{\g}_{0}$ to $\beta^{\g}_{1}$ and the $\beta^{\g}_{i}$ from $\alpha^{\g}_{0}$ to $\alpha^{\g}_{1}$. Let $\phi:\alpha^{\g}_{0}\to\alpha^{\g}_{1}$ and $\psi:\beta^{\g}_{0}\to\beta^{\g}_{1}$ be orientation preserving diffeomorphisms. Thus $\phi$ pairs the endpoints of $\beta^{\g}_{0}$ and $\beta^{\g}_{1}$ and  $\psi$ likewise pairs the endpoints of $\alpha^{\g}_{0}$ and of $\alpha^{\g}_{1}$.  Since  $R$ is  geodesically convex, the geodesics joining $t\in\alpha^{\g}_{0}$ to $\phi(t)\in\alpha^{\g}_{1}$ lie in $R$, for all $t\in\alpha^{\g}_{0}$.  Since the geodesic quadrilateral is simply connected, we may as well be working in the hyperbolic plane where geodesic arcs depend smoothly on their endpoints.   Thus, our geodesics depend smoothly on  $t,\phi(t)$ and cannot intersect each other.  Indeed, since $\phi$ is orientation preserving, any proper intersections would produce geodesic digons.  Thus these geodesic arcs are the leaves of a smooth foliation $\FF_{\vv}$ of $R$ such that $\beta^{\g}_{i}\in\FF_{\vv}$, $i=0,1$.  Similarly, use $\psi$ to obtain a smooth, geodesic foliation $\FF_{\hh}$ of $R$, necessarily transverse to $\FF_{\vv}$ and incorporating the $\alpha^{\g}_{i}$ as leaves.  We call this a ``geodesic grid''.

Typically, we will have one geodesic quadrilateral completely crossing another once as in Figure~\ref{grid1}. We can put geodesic grids on each which agree on the intersection.  In Figure~\ref{grid1}, define the ``vertical'' geodesic foliation $\FF_{\vv}$ first on the tall geodesic quadrilateral $R$ and the ``horizontal'' geodesic foliation $\FF_{\hh}$ on the wide one $R'$.  In order to extend $\FF_{\hh}|R\cap R'$ across the unfoliated geodesic subquadrilaterals of $R$, one needs to choose diffeomorphisms from the left side to the right that match  smoothly at the common endpoints of domains with the ones already defined by $\FF_{\hh}|R\cap R'$.  This is a matter of making the two $\infty$-jets coincide and is standard. Similarly, $\FF_{\vv}$ is extended smoothly. 

\begin{figure}
\begin{center}
\begin{picture}(100,130)(75,100)
\includegraphics[width=300pt]{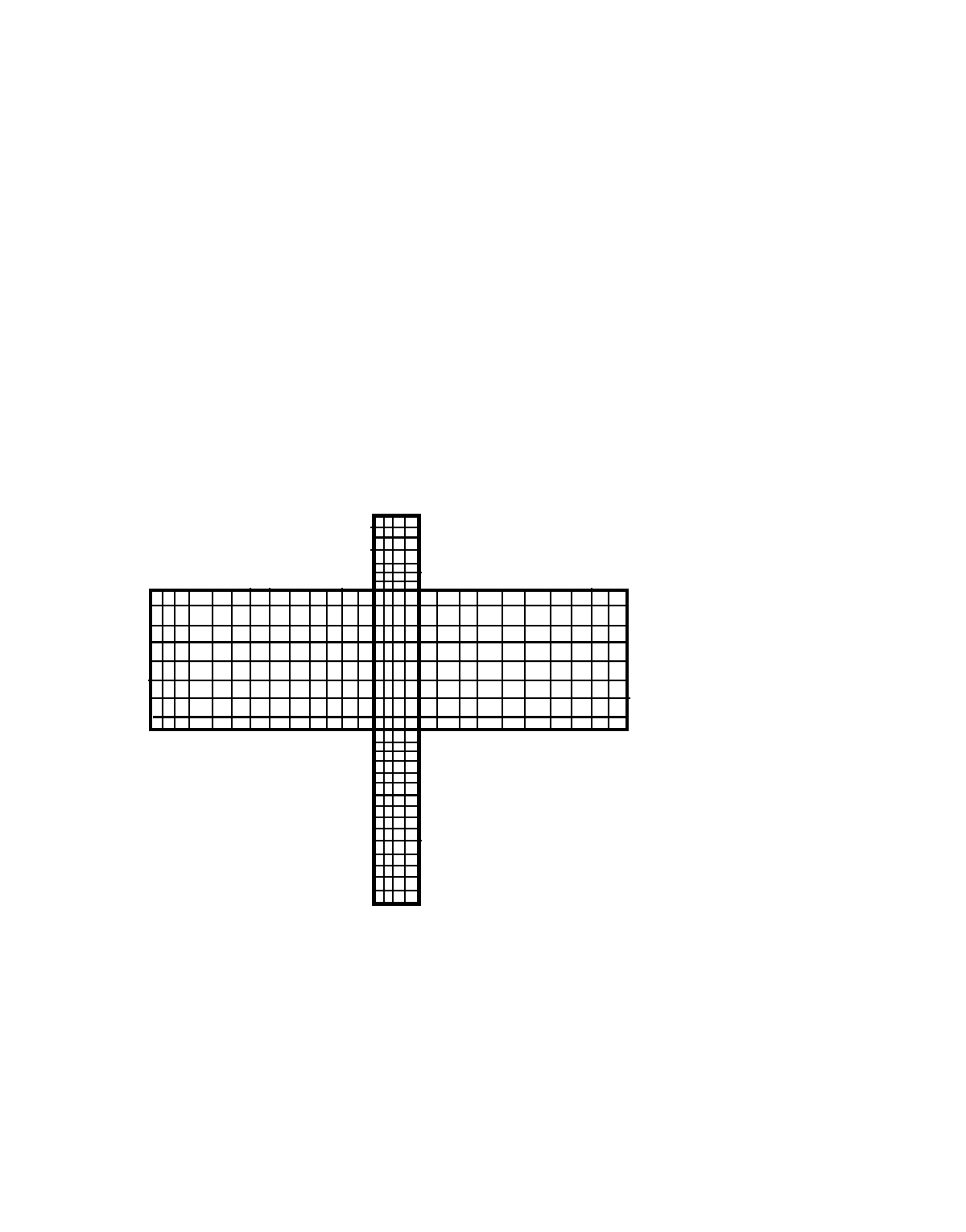}
\put(-100,175){\Small$R'$}
\put(-166,130){\Small$R$}

\end{picture}
\caption{Compatible grids}\label{grid1}
\end{center}
\end{figure}

Another situation is that the quadrilateral $R$ with the geodesic grid may be contained in a slightly larger quadrilateral $A$ as in Figure~\ref{larger}.  We will leave it to the reader to adapt the previous discussion to extend the geodesic grid on $R$ to a geodesic grid on $A$.

\begin{figure}
\begin{center}
\begin{picture}(100,100)(100,170)
\includegraphics[width=300pt]{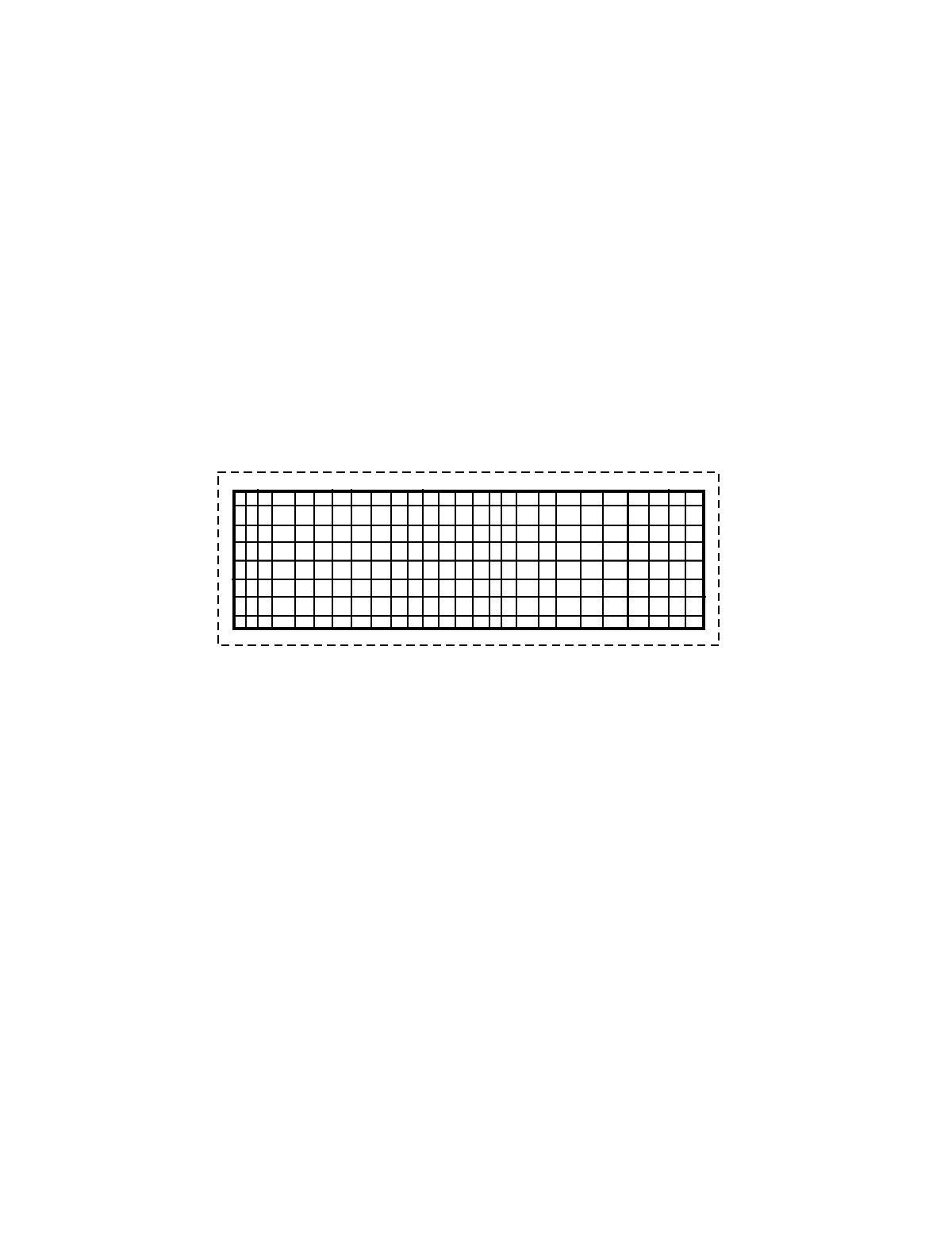}
\put(-100,180){\Small$A$}

\end{picture}
\caption{Expanded geodesic quadrilateral}\label{larger}
\end{center}
\end{figure}

\begin{figure}
\begin{center}
\begin{picture}(100,200)(100,70)
\includegraphics[width=300pt]{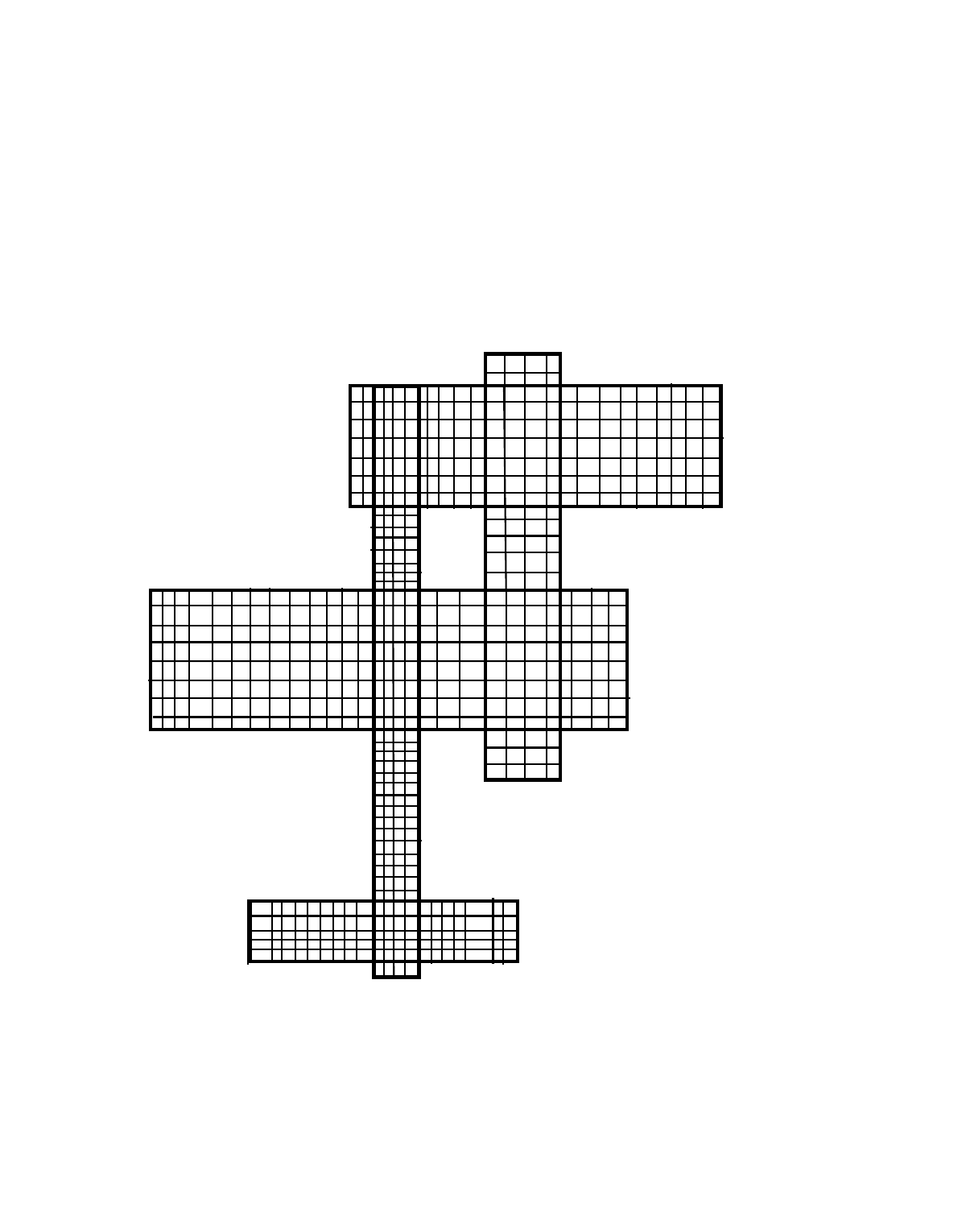}
\put(-100,175){\Small$R_{j}$}
\put(-166,130){\Small$h^{\g}(R_{i})$}

\end{picture}
\caption{Compatible grids}\label{grid}
\end{center}
\end{figure}

In the remark preceding Definition~\ref{markchn}, we  constructed  families $\{R_{1},\ldots,R_{k}\}$ and  $\{h^{\g}(R_{1}),\ldots,h^{\g}(R_{k})\}$of pairwise disjoint
 geodesic quadrilaterals. Furthermore, $h^{\g}(R_{i})$ will completely cross some of the $R_{j}$'s, but never more than once (cf.~Definition~\ref{completelycrosses}).  As above, we define smooth geodesic grids on $\RR=
\{R_{1},\ldots,R_{k}\}\cup\{h^{\g}(R_{1}),\ldots,h^{\g}(R_{k})\}$ which are compatible on overlaps.  In Figure~\ref{grid}, the vertical lines are to be leaves of $\FF_{\vv}$, the horizontal ones leaves of $\FF_{\hh}$. Note that the foliation $\FF_{\hh}$ incorporates as leaves the edges of $R_{i}$ which are subarcs of $\fr_{+}K$, while $\FF_{\vv}$ incorporates the other pair of opposite edges.  

\begin{rem}
Ultimately, the smooth bilamination $(\Lambda_{+},\Lambda_{-})$ that we are going to construct will be such that $\FF_{\vv}|R_{i}$ is an extension of $\Lambda_{+}|R_{i}$ and $\FF_{\hh}$ is an extension of $\Lambda_{-}|R_{i}$.  This will establish the smoothness of the laminations in each $R_{i}$ and will extend to global smoothness by iteration of the diffeomorphism $h$ that we are going to construct.
\end{rem}

\begin{rem}
If finitely many geodesic leaves are preassigned in any of the geodesic quadrilaterals, there  is no problem choosing the foliations to incorporate them.  If one wants to preassign infinitely many geodesic leaves, say arcs of $\Lambda^{\g}_{\pm}$, serious smoothness issues arise.  This is exactly why, in producing the  smooth   laminations $\Lambda_{\pm}$ and diffeomorphism $h$ out of the geodesic data we will not generally get back the geodesic laminations.
\end{rem}

\subsection{Smoothing in the geodesic quadrilaterals}  Let $\RR=\bigcup_{i=1}^{k}R_{i}$.  In order to smooth $h^{\g}$ on $\RR$, we first thicken each $R_{i}$ to a slightly larger quadrilateral $A_{i}$ (see Figure~\ref{bigrectangle}).  We can do this so that the $A_{i}$'s are pairwise disjoint.

We want to extend the geodesic grids of Section~\ref{geogrs} to a neighborhood   of $\RR\cup h^{\g}(\RR)$ of the form
$$
V=A_{1}\cup\cdots\cup A_{k}\cup h^{\g}(A_{1})\cup\cdots h^{\g}(A_{k}).
$$
The problem is that $h^{\g}(A_{i})$ will not generally be a geodesic quadrilateral, the edges failing to be geodesics. To correct this, we perform a small isotopy of $h^{\g}$.

\begin{lemma}
If $A_{i}$ approximates $R_{i}$ sufficiently well, there is a homeomorphism $\phi_{i}$, isotopic to the identity by an isotopy supported near $h^{\g}(A_{i})$ and away from $h^{\g}(A_{j})$, $j\neq i$, taking $h^{\g}(\fr_{+}K)$ to itself componentwise, fixing $h^{\g}(R_{i})$ pointwise, fixing the vertices of $h^{\g}(A_{i})$, and such that $\phi_{i}(h^{\g}(A_{i}))$ is a geodesic quadrilateral.
\end{lemma}

\begin{proof}
Let $D\ss L$ be a smoothly embedded disk   containing $h^{\g}(A_{i})$, having the four vertices of that image on its boundary, but such that the edges of $h^{\g}(A_{i})$ are  properly embedded in $D$. If the vertices of $A_{i}$ have been chosen sufficienly near the corresponding vertices of $R_{i}$, $D$ can be chosen so that the geodesic arcs joining the vertices of $h^{\g}(A_{i})$ to form a geodesic quadrilateral are also properly embedded in $D$.
Also, choose $D$ so that it meets no  $h^{\g}(A_{j})$, $j\neq i$.  
By our usual application of Theorem~\ref{3.1}, we can find an ambient isotopy $\Phi$ supported in $D$, such that $\Phi$  fixes $h^{\g}(R_{i})$ pointwise and takes the bottom two  edges of $h^{\g}(\bd A_{i})$ to the geodesics joining the corresponding  pairs of vertices. For the remaining two edges, simple sliding isotopies along the components of $\fr_{+}K$ crossed by these edges must first be performed and the Theorem~\ref{3.1} completes the isotopy. Take $\phi = \Phi^{1}$.
\end{proof}

We can assume that the homeomorphisms $\phi_{i}$ are isotopic to the identity by isotopies which have pairwise disjoint supports and set $\phi=\phi_{1}\o\cdots\o\phi_{k}$.  Thus, replacing $h^{\g}$ by $\phi\circ h^{\g}$ we can assume that $h^{\g}(A_{i})$ is a slightly larger geodesic quadrilateral than $h^{\g}(R_{i})$, $1\le i\le k$.

\begin{figure}[b]
\begin{center}
\begin{picture}(300,130)(-5,-130)

\rotatebox{270}{\includegraphics[width=140pt]{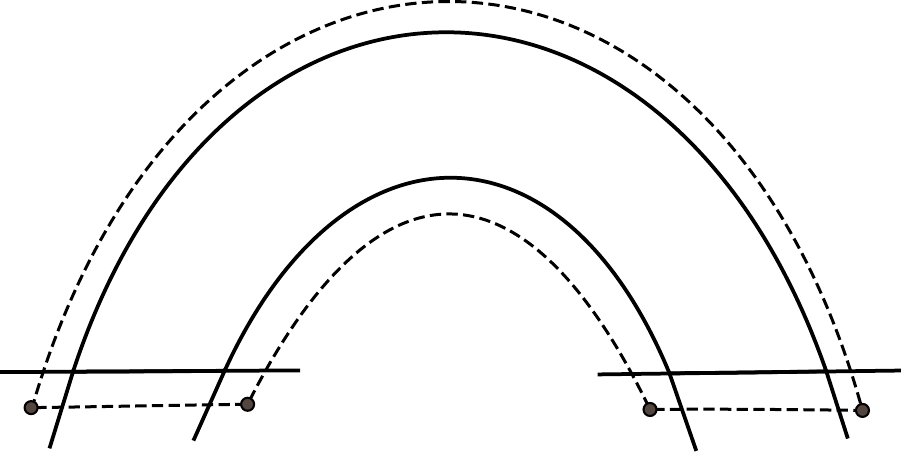}}

\put(-180,-118){$\fr_{+}K$}
\put(10,-118){$\fr_{+}K$}
\put(-65,-70){$R_{i}$}
\put(-40,-57){$A_{i}$}

\put(-165,-85){$\alpha'$}
\put(-237,-40){$\alpha$}

\end{picture}
\caption{Geodesic quadrilaterals $R_{i}$ and $A_{i}$ (dotted)}\label{bigrectangle}
\end{center}
\end{figure}

 We construct the geodesic grid on $V$ as above so that it restricts to the geodesic grid already constructed on $\RR$.  The extended foliations are again denoted by $\FF_{\hh}$ and $\FF_{\vv}$.
 
 These foliations have trivial holonomy, so it is easy to construct a smooth, transverse invariant measure $\mu_{\hh}$ for $\FF_{\hh}$ on $V$ and another $\mu_{\vv}$ for $\FF_{\vv}$ on $V$.  The measure $\mu_{\hh}$ can be viewed as a smooth measure on the leaves of $\FF_{\vv}$, invariant by holonomy translations along the leaves of $\FF_{\hh}$ and, similarly, $\mu_{\vv}$ is such a measure on the leaves of $\FF_{\hh}$.  
 
 \begin{rem}
 By a smooth measure $\mu$ on a smooth $1$-manifold $B$, we mean that if $\theta$ is Lebesgue measure on $B$, then the Radon-Nikodym derivative $d\mu/d\theta = f$ is a smooth function on $B$. Thus, for any Borel set $X\sseq B$, $\mu(X)=\int_{X}f\,d\theta$.
 \end{rem}
 
 On each $A_{i}$, using the above measures, we define smooth coordinates $(x,y)$ such that the leaves of $\FF_{\hh}$ are the level sets of $y$ and the leaves of $\FF_{\vv}$ are the level sets of $x$.  Simply designate the coordinates at one point $p_{0}$, say by $(a,b)$, designate an orientation of the leaves of $\FF_{\vv}|A_{i}$ and of the leaves of $\FF_{\hh}|A_{i}$ and use the holonomy invariant  measures to define the desired smooth coordinates on $A_{i}$.  More precisely, using the designated orientation of the leaf of $\FF_{\hh}$ through $p_{0}$, each point $p$ in that leaf to the right of $p_{0}$ defines a subarc $[p_{0},p]$ and we define $x(p)=a+\mu_{\vv}[p_{0},p]$.  If $p$ is to the left of $p_{0}$, set $x(p)=a-\mu_{\vv}[p,p_{0}]$.  By translation along the leaves of $\FF_{\vv}$ we extend the $x$ coordinate to all of $A_{i}$.  The $y$ coordinate is defined analogously using the measure $\mu_{\hh}$.  These do not give global coordinates on $V$, but the above construction makes the following clear.

 \begin{lemma}
 On overlaps $A_{i}\cap h^{\g}(A_{j})$,  the induced coordinates $(x,y) $ coming from $A_{i}$ and $(x',y')$ coming from $h^{\g}(A_{j})$ are related by 
 \begin{eqnarray*}
x'=\epsilon_{1}x + c\\
y'=\epsilon_{2}y+d,
\end{eqnarray*}
where $\epsilon_{i}=\pm1$, $i=1,2$, and $c,d$ are constants.
 \end{lemma}

The sign is due to the choices of orientation and the additive constants are due to the fact that the coordinates are well defined up to translation.

Thus the Jacobian matrix for this coordinate change is
\begin{equation}
d\Phi = \begin{bmatrix}
\epsilon_{1} & 0\\
0 & \epsilon_{2}
\end{bmatrix}.\tag{$*$}
\end{equation}

Normalize the measures $\mu_{\hh},\mu_{\vv}$ so that the ``width'' of each $A_{i}$, measured by $\mu_{\vv}$ is $1$ and similarly, the ``height'' of each $h^{\g}(A_{i})$ is $1$.

We now choose  diffeomorphisms $h_{i}:A_{i}\to h'(A_{i})$ which preserve the smooth geodesic grids, $1\le i\le k$. In the smooth local coordinates $(x,y)$ on  $V$ we have, $$h_{i}(x,y)=(\xi_{i}(x),\zeta_{i}(y)).$$ For technical reasons,   choose $h_{i}$ to satisfy, 
$$0< |d\xi_{i}/dx|<a<1,\qquad |d\zeta_{i}/dy| >b>1,$$
for suitable constants $a$  and $b$. This is possible since the width of each $A_{j}$ and the height of each $h^{\g}(A_{i})$ is $1$ and $h^{\g}(A_{i})$ intersects $A_{j}$, if at all, in a proper geodesic subquadrilateral of both $A_{i}$ and $h^{\g}(A_{j})$. Hence $h_{i}$ strictly diminishes width and increases height.
We summarize.

\begin{prop}\label{localprop}
For $1\le i\le k$ there are diffeomorphisms $$h_{i}=(\xi_{i},\zeta_{i}):A_{i}\ra h^{\g}(A_{i}),$$ preserving the geodesic grid and such that 
$$0< |d\xi_{i}/dx| <a<1,\qquad |d\zeta_{i}/dy| >b>1$$
in the coordinates on $A_{i}\cup h^{\g}(A_{i})$.
\end{prop}

\begin{rem}
Because of $(*)$, these inequalities hold for any of our choices of local coordinates.
\end{rem}

We begin the modification of $h^{\g}$ to  an isotopic diffeomorphism $h$ by replacing $h^{\g}|R_{i}$ by $h_{i}|R_{i}$, $1\le i\le k$.  The fact that these maps extend to diffeomorphisms on the slightly larger geodesic quadrilaterals $A_{i}$ will be useful in extending these local definitions to the global $h$.

\subsection{Smoothing outside $\RR$}\label{smoothoutside}

We first need to ``blend'' $h^{\g}$ with the diffeomorphisms $h_{i}$ so that they agree on a neighborhood of $R_{i}$. Fix a smooth, embedded disk $D_{i}\ss V$ with  $R_{i}\ss\intr D_{i}$ and set  $\bd D_{i}=S_{i}$.  Choose these so that $D_{i}\cap D_{j}=\0$, $i\neq j$. Note that $h^{\g}|S_{i}$ and $h_{i}|S_{i}$ are homotopic.  By Theorem~\ref{2.1}, there is a homeomorphism $\phi_{i}$, isotopic to the identity by an isotopy compactly supported outside of $R_{i}$ and inside of $V$, such that $\phi_{i}\o h^{\g}|S_{i}=h_{i}|S_{i}$.  (Note that we are working in the complement of $R_{i}$ where $S_{i}$ is essential.) Again, we can assume the $\phi_{i}$ and $\phi_{j}$, $i\neq j$, are isitopic to the identity by isotopies which have disjoint supports. In this way, we isotope $h^{\g}$  by a compactly supported isotopy to a homeomorphism $h''$ that agrees with $h_{i}$ on $S_{i}$, $1\le i\le k$.  Use Alexander's trick to isotope $h''\o h_{i}^{-1}|D_{i}$ to the identity by an isotopy that fixes   $S_{i}$ pointwise.  This shows that $h''|{D_{i}}$ is isotopic to $h_{i}|D_{i}$ by an isotopy throuhout which they continue to agree on $S_{i}$, $1\le i\le k$.  We summarize.

\begin{lemma}
After a compactly supported  isotopy, we can assume that $h^{\g}$  agrees with $h_{i}$ on a neighborhood of  $R_{i}$, $1\le i\le k$.  
\end{lemma}

Let $L'$ denote the complement in $L$ of the union of the open geodesic quadrilaterals $\intr R_{i}$.  Similarly, Let $L''=h^{\g}(L')$ be the complement in $L$ of the union of the open geodesic quadrilaterals $h^{\g}(\intr R_{i})$.  We work completely in $L'$. By the above lemma and Lemma~\ref{diffeonearbd},  $h^{\g}|L'$ is  a diffeomorphism in a neighborhood in $L'$ of $\bd L'$. It is well known   that $h^{\g}|L':L'\to L''$ is arbitrarily well approximated by a diffeomorphism $h^{\#}$, isotopic to $h^{\g}|L'$ by an isotopy that is fixed in a smaller neigborhood in $L'$ of $\bd L'$. For a particularly nice proof of this which does not use the Sch\"onflies theorem, see A.~Hatcher's unpublished note~\cite{ha:smooth} which is available on the author's website.  

Since $h^{\#}$ agrees with $h_{i}$ in a one-sided neighborhood of $\bd R_{i}$, on the side outside $R_{i}$, $1\le i\le k$, and the isotopy was constant in such a neighborhood, $h^{\#}$ and the $h_{i}$ combine to give  a diffeomorphism which we again denote by $h^{\#}
:L\to L$, isotopic to $h^{\g}$ and agreeing with $h_{i}$ in a neighborhood of each $R_{i}$. Since $h^{\#}$ agrees with $h^{\g}$ in  a neighborhood of $\bd L'$ and is isotopic to $h^{\g}$, we can apply the smooth versions of Theorems~\ref{2.1} and~\ref{3.1} (cf.~Theorem~\ref{epsteinsmooth}) to smoothly isotope $h^{\#}$ to a diffeomorphism $h$ that agrees with the permutation induced by $h^{\g}$ on the components of $|\JJ_{i}^{-}|$, $i\le N$, and of $|\JJ_{i}^{+}|$, $i\ge0$ thus preserving the structure we set up in Lemma~\ref{maxrects}. Again we work in $L'$ and the isotopies are supported away from $\bd L'$.   Evidently, $h$ is endperiodic and the  $h^{\g}$-junctures making up $\fr K$ propagate under applications of $h^{n}$, $n\in\Z$. In Section~\ref{smoothaxioms} we will take these to be the set of all $h$-junctures.  We summarize.

\begin{lemma}
The diffeomorphism $h:L\to L$ is endperiodic, isotopic to $h^{\g}$ and agrees with $h_{i}$ on a neighborhood of $R_{i}$, $1\le i\le k$.  Furthermore, if $\tau^{\g}$ is a component of $|\JJ_{i}^{-}|$, $i\le N$, or of $|\JJ_{i}^{+}|$, $i\ge0$, then $h(\tau^{\g})=h^{\g}(\tau^{\g})$.
\end{lemma}

\begin{rem}
In particular, since $h|R_{i}=h_{i}$, we have guaranteed that $h$ satisfies the inequalities of Proposition~\ref{localprop} on $R_{i}$, $1\le i\le k$.

\end{rem}

\subsection{Proof of Theorem~\ref{HMsmooth} in the case of no principal regions}\label{smoothaxioms}

We define the sets of positive/negative juncture components $\JJ_{\pm}$. Let $\JJ_{\pm}$ be the set of arcs and circles $h^{k}(\gamma)$, $k\in\Z$, $\gamma\in\JJ_{0}^{\pm}$. Let $\XX_{\pm}\ss\JJ_{\pm}$ be the set of nonescaping  juncture components.  Since the components of the junctures are either essential circles or properly embedded arcs, the following is evident.

\begin{lemma}\label{cor3ccepstein}

The components of the junctures are pseudo-geodesics.

\end{lemma}

Since the juncture components in $\JJ$ are isotopic to the corresponding geodesic juncture components in $\JJ^{\g}$ the following is evident.

\begin{lemma}\label{NNbiJJ}

There is a bijection $\iota:\NN\to\JJ$ where $\NN$ is the set of $f$-juncture components fixed on \emph{page~\pageref{fixNN}}.

\end{lemma}

It is now necessary to construct the smooth, $h$-invariant laminations and verify the axioms.  Remark that 
$$R_{i_{n}}\cap h^{\g}(R_{n_{i+1}})=R_{i_{n}}\cap h(R_{i_{n+1}}),\quad n\ge0.$$ 
Thus $h$ and $h^{\g}$ define the same Markov chains $\iota=(i_{0},i_{1},i_{2},\dots,i_{n},\dots)$.
In  analogy with the construction of $\ell^{\g}_{\iota}$ (Proposition~\ref{infint}), replacing $h^{\g}$ with $h$, we define $\ell_{\iota}$ by a Markov chain $\iota$. 
That this is a curve requires proof.

\begin{lemma}
The set $\ell_{\iota}$ is a one-one immersed smooth curve in $L$ for each Markov chain $\iota$.
\end{lemma}

\begin{proof}
Set 
$$R_{i_{0}i_{1}\cdots i_{n}}=R_{i_{0}}\cap h(R_{i_{1}})\cap h^{2}(R_{i_{2}})\cap\cdots\cap h^{n}(R_{i_{n}}).
$$
Iterated applications of the first inequality in  Proposition~\ref{localprop} to $h:R_{i_{n}}\to h(R_{i_{n}})$ show that the (geodesic) quadrilaterals in the nested sequence 
$$
R_{i_{0}}\supset R_{i_{0}i_{1}}\supset\cdots\supset R_{i_{0}i_{1}\cdots i_{n}}\supset\cdots
$$
 have widths decreasing monotonically to $0$.  Thus, the intersection of this nested sequence is a single leaf $\ell_{i_{0}}$ of $\FF_{\vv}|R_{i_{0}}$.  Likewise, the Markov chain $(i_{1},i_{2},\dots)$ defines a leaf $\ell_{i_{1}}$ of $\FF_{\vv}|R_{i_{1}}$ and the smooth, embedded arc $h(\ell_{i_{1}})\ss L$ contains $\ell_{i_{0}}$ as a proper subarc. Set $$\ell_{i_{n}}=R_{i_{n}}\cap h(R_{i_{n+1}})\cap\cdots$$ and obtain a strictly increasing nest
$$
\ell_{i_{0}}\ss h(\ell_{i_{1}})\ss h^{2}(\ell_{i_{2}})\ss\cdots\ss h^{n}(\ell_{i_{n}})\ss\dots
$$
of smoothly embedded curves.  Since $h$ is endperiodic, the two sequences of bottom and top edges of the quadrilaterals $\{h^{n}(R_{i_{n}})\}_{n\ge 0}$ both escape and the union $\ell_{\iota}$ of these arcs is a one-one, smoothly immersed copy of $\R$. 
\end{proof}

\begin{rem}
Since $h$ is only a diffeomorphism,  not a hyperbolic isometry, $\ell_{\iota}$ is not generally a geodesic, although it will repeatedly cross the quadrilaterals $R_{i}$ and $h(R_{j)})$ in geodesic arcs which are leaves of $\FF_{\vv}$.
\end{rem}

\begin{rem}
The curves $\ell_{\iota}$ are in one-one correspondence with the curves $\ell^{\g}_{\iota}\in\Lambda^{\g}_{\pm}$ of Proposition~\ref{infint}, each being uniquely determined by the Markov chain $\iota$.  This is crucial for proving that the bilamination $(\Lambda_{+},\Lambda_{-})$ that we construct satisfies the axioms.
Note that, despite the notation $\ell^{\g}_{\iota}$, we have not yet proven that this curve is the geodesic tightening of $\ell_{\iota}$. This will be proven in Lemma~\ref{basiclem}. Meanwhile, the correspondence is via the index $\iota$.
\end{rem}

Let   $\Gamma_{\pm}=\Lambda_{\pm}\cup\XX_{\mp}$ .

\begin{lemma}\label{bilams}

 $(\Lambda_{+},\Lambda_{-})$ and $(\Gamma_{+},\Gamma_{-})$ are smooth bilaminations. 

\end{lemma}

\begin{proof} 
Let $\iota=(i_{0},i_{1},\dots,i_{n},\dots)$ be a sequence with each $i_{n}\in\{1,2,\dots,k\}$ and with the property that $R_{i_{n}}\cap h(R_{i_{n+1}})\ne\0$, for all $n\ge0$.  Since $h:\RR\to h(\RR)$ is a diffeomorphism and preserves the smooth foliations $(\FF_{\hh},\FF_{\vv})$, every leaf $\ell_{\iota}$ of $\Lambda_{+}$ intersects a geodesic quadrilateral $R_{i}$, if at all, in plaques of $\FF_{\vv}$.  Via $h$, we define $(\FF^{i_{n}}_{\hh},\FF^{i_{n}}_{\vv})$ on each $h^{n}(R_{i_{n}})$, $n\ge0$, and remark that $\ell_{\iota}\cap h^{n}(R_{i_{n}})$ is a set of plaques of $\FF^{i_{n}}_{\vv}$ in $h^{n}(R_{i_{n}})$. Since $\FF^{i_{n}}_{\vv}$ and $\FF^{i_{n}}_{\hh}$ are transverse  smooth foliations of $h^{n}(R_{i_{n}})$, we see that the interiors of the sets $h^{n}(R_{i_{n}})$ are  laminated charts for $\Lambda_{+}$ belonging to the maximal smooth atlas of $L$.   That is, we have produced a smooth partial laminated atlas for $\Lambda_{+}$.  Similarly, we produce a smooth partial  laminated atlas for $\Lambda_{-}$.  Since the smooth foliations in each of our charts are transverse, one containing all plaques of $\Lambda_{+}$ meeting the chart and the other containing all plaques of $\Lambda_{-}$ meeting the chart, we have produced a smooth partial bilaminated atlas $\A$ for $(\Lambda_{+},\Lambda_{-})$.  

  By the definition of $\XX_{\pm}$ we see that the intersection of any of its leaves with a chart in $\A$ is also a plaque of the appropriate foliation.  That is, $\A$ consists of smooth bilaminated charts for $(\Gamma_{+},\Gamma_{-})$, but does not cover $|\Gamma_{+}|\cup|\Gamma_{-}|$.  Any point of $|\Gamma_{+}|\cup|\Gamma_{-}|$ outside of $\bigcup_{U\in\A}U$ either lies on a unique leaf of $\XX_{+}$ or on a unique leaf of $\XX_{-}$ or is a point of intersection of a leaf of $\XX_{+}$ and a leaf of $\XX_{-}$.   In all of these cases, constructing a smooth, bilaminated chart about $x$ for $(\Gamma_{+},\Gamma_{-})$ is easy since the leaves of $\XX_{\pm}$ are isolated from each other and from $|\Lambda_{+}|\cup|\Lambda_{-}|$.  This gives a smooth, partial bilaminated atlas for $(\Gamma_{+},\Gamma_{-})$.
\end{proof}

The pair $(\Lambda_{+}\cup\JJ_{-},\Lambda_{+}\cup\JJ_{+})$ may not be a bilamination. In fact, an escaping component of a positive juncture, if any, would  coincide with an  escaping component of a negative juncture. However, the following is an obvious corollary.

\begin{cor}\label{julams}

$\Lambda_{+}\cup\JJ_{-}$ and $\Lambda_{-}\cup\JJ_{+}$ are each smooth laminations.

\end{cor}

Let  $\UU_{+}$ (respectively  $\UU_{-}$) denote the positive (respectively  negative) escaping set for $h$. That is, $x\in\UU_{+}$ if and only if $\{h^{n}(x)\}_{n\ge 0}$ escapes and $x\in\UU_{-}$ if and only if $\{h^{n}(x)\}_{n\le 0}$ escapes.  These sets are open and 
  each component of $\XX_{\pm}$ lies in $\UU_{\pm}$.  In what follows, we use $\CC X$ to denote the complement in $L$ of a subset $X$.

\begin{lemma}\label{CCUU}
 $\CC\UU_{\pm} = |\Lambda_{\mp}|$.
\end{lemma}

\begin{proof}
We prove that $\CC\UU_{-} = |\Lambda_{+}|$. The proof that $\CC\UU_{+} = |\Lambda_{-}|$ is similar. Let $x\in\CC\UU_{-}$. Then no point in the orbit of $x$ is in $\UU_{-}$ and, by applying a large enough negative power of $h$, we can  assume that $x\in K$. For all $n\ge0$, $x\notin W_{N+n}\ss\UU_{-}$. (Recall the definition of the sets $W_{n}$ just before Definition~\ref{quadltrl}.) Thus, for each $n\ge0$,  there is a unique $1\le i_{n}\le k$ such that $x\in h^{n}(R_{i_{n}})\cap K$. Consequently, $x\in\ell_{\iota}\in\Lambda_{+}$ where $\iota = (i_{0},i_{1},\ldots,i_{n},\ldots)$.  Thus, $\CC\UU_{-}\sseq|\Lambda_{+}|$ and the reverse inclusion is obvious.
\end{proof}

\begin{lemma}\label{front}
The frontier of $\UU_{\mp}$ is exactly $|\Lambda_{\pm}|$ and $\ol{|\XX_{\mp}|}\sm |\XX_{\mp}| = |\Lambda_{\pm}|$.
\end{lemma}

\begin{proof}
  By Lemma~\ref{CCUU},  $\UU_{-}\cap|\Lambda_{+}| = \0$.
On the other hand, if $x\in|\Lambda_{+}|$, then it lies in a segment of a leaf $\ell_{\iota}$ in $h^{k}(R_{i_{k}})$ for arbitrarily large values of $k$.  Consequently, every neighborhood of $x$ meets a leaf of $\XX_{-}$, hence meets $\UU_{-}$. Thus $|\Lambda_{+}|\sseq  \fr\UU_{-}$. By Lemma~\ref{CCUU}, it follows that $|\Lambda_{+}| =  \fr\UU_{-}$. A similar argument proves that $|\Lambda_{-}|=\fr\UU_{+}$.  For the second assertion, the definition of $\Lambda_{+}$ makes it clear that $|\Lambda_{+}|\sseq \ol{|\XX_{-}|}\sm |\XX_{-}|$.  For the reverse   inclusion, let $\{x_{n}\}_{n=1}^{\infty}$ be a sequence of points in distinct leaves of $\XX_{-}$ converging to a point $x\in L$.  Since the negative $h$-junctures do not accumulate in $\UU_{-}$, $x\notin\UU_{-}$ and so $x\in\fr\UU_{-}=|\Lambda_{+}|$.  
\end{proof}

\begin{cor}\label{Lclsd}
The laminations $\Lambda_{\pm}$ and  $\Gamma_{\pm}$ are closed.
\end{cor}

\begin{proof}
Since the frontier of a set is a closed set it follows that the laminations $\Lambda_{\pm}$ are closed laminations. Further, the closure of $|\XX_{\mp}|$ is exactly $|\Lambda_{\pm}|\cup|\XX_{\mp}| = |\Gamma_{\pm}|$ so the lamination $\Gamma_{\pm}$ is a closed lamination.
\end{proof}

By construction, each leaf of $\XX_{\pm}$ is homotopic to a unique leaf of $\XX^{\g}_{\pm}$.  In fact, the components of  junctures in distinguished neighborhoods of ends  are the same.

\begin{lemma}\label{nodigs}

 The leaves of $\Lambda_{+}$ cannot intersect those of\, $\JJ_{+}$ so as to form digons.  The leaves of $\Lambda_{-}$ cannot intersect those of\, $\JJ_{-}$ so as to form digons.

\end{lemma}

\begin{proof}
First note that, if a leaf of $\Lambda_{+}$ intersects a leaf of $\JJ_{+}$ so as to form a digon, then a leaf of $\Lambda_{+}$ intersects $\fr_{+}K$ so as to form a digon.  Just apply a suitable power of $h$.  But this would imply that  an edge of $\bd R_{i}$ or of $\bd h(R_{i})$ would intersect $\fr_{+}K$ so as to form a digon, some $i\in\{1,2,\dots,k\}$.  Since these geodesic quadrilaterals are the same as the ones for the geodesic laminations, this would contradict Axiom~\ref{trnsvrs} for the geodesic laminations.  A similar proof works for the laminations $\Lambda_{-}$ and $\JJ_{-}$. 
\end{proof}

\begin{cor}\label{elnodigs}

 The leaves of $\Lambda_{+}$ cannot intersect those of $\Lambda_{-}$ so as to form digons.
 
 \end{cor}
 
 \begin{proof}
If a leaf $\ell_{+}$ of $\Lambda_{+}$ intersects a leaf $\ell_{-}$ of $\Lambda_{-}$ so as to form a digon, then a suitable locally uniform approximation of $\ell_{+}$ by leaves of $\XX_{-}$ will give a leaf of $\XX_{-}$ that intersects $\ell_{-}$ so as to form a digon.  
\end{proof}

The leaves of $\Lambda_{\pm}$ correspond one-one with the leaves of $\Lambda^{\g}_{\pm}$ by $\ell_{\iota}\leftrightarrow\ell^{\g}_{\iota}$ and the leaves of $\XX_{\pm}$ correspond one-one to those of $\XX^{\g}_{\pm}$ by homotopy.
This establishes a canonical one-one correspondence $\Gamma_{\pm}\leftrightarrow\Gamma^{\g}_{\pm}$.  Denote this correspondence by $\gamma\leftrightarrow\gamma^{\g}$.  The following shows that, in conformity with the notation of Section~\ref{uniq}, $\gamma^{\g}$ is the geodesic tightening of $\gamma$.

\begin{lemma}\label{basiclem}
The laminations $\Gamma_{\pm}$ are pseudo-geodesic and, 
under the correspondence $\gamma\leftrightarrow\gamma^{\g}$, the completed lifts $\wh\gamma$ and $\wh\gamma^{\g}$ have the same  endpoints on $S_{\infty}$.
\end{lemma}

\begin{proof}
By Lemma~\ref{cor3ccepstein}, the leaves of $\XX_{\pm}$ are pseudo-geodesics.  Let 
$$
\iota=(i_{0},i_{1},i_{2},\dots,i_{n},\dots)
$$ 
be a Markov chain. Let $\delta_{0} = \delta^{\g}_{0}$ and inductively define the edge $\delta_{n}\ss h^{n}(\fr_{+}K)$ of $h^{n}(R_{i_{n}})$ exactly as the edge $\delta^{\g}_{n}$ was defined in the proof of Proposition~\ref{infint}. The lift $C_{0} = C^{\g}_{0} = \wt R_{i_{0}}$ of $R_{i_{0}}$ to $\wt L$ determine the lifts $C_{n}$ of $h^{n}(R_{i_{n}})$ with edge $\wt\delta_{n}$ a lift of $\delta_{n}$. In fact $\delta_{n}$ and $\delta^{\g}_{n}$ are in the same lift $\wt\sigma^{\g}_{n}$ of a component of $h^{n}(\fr_{+}K)$  from which it follows that $\ell_{\iota}$ and $\ell^{\g}_{\iota}$ share the endpoint $a$.  Similarly, starting with the other edge $\gamma_{0} = \gamma^{\g}_{0}\ss\fr_{+}K$ of $R_{i_{0}}$ one shows that $\ell_{\iota}$ and $\ell^{\g}_{\iota}$ share the endpoint $b$. Thus $\ell_{\iota}$ is a pseudo-geodesic sharing endpoints with $\ell^{\g}_{\iota}$.
\end{proof}

At this point we have verified that  $(\Gamma_{+},\Gamma_{-})$ is a smooth bilamination preserved by the smooth endperiodic diffeomorphism  $h$. It remains to verify that $(\Lambda_{+},\Lambda_{-})$ is a Handel-Miller pseudo-geodesic bilamination associated to $h$ (and, thus, to $h^{\g}$), that is that $(\Lambda_{+},\Lambda_{-})$ satisfies the axioms:\vs

\ni\textbf{Axiom~\ref{muttran}}.  By   Lemma~\ref{bilams}, $(\Lambda_{+},\Lambda_{-})$ is a bilamination. By Corollary~\ref{Lclsd}, $(\Lambda_{+},\Lambda_{-})$ is closed.  By Lemma~\ref{basiclem}, the elements of $\Lambda_{\pm}$ are pseudo-geodesics.  The fact that the leaves of $\Lambda_{\pm}$ are disjoint from $\bd L$ is obvious from the construction.

\ni\textbf{Axiom~\ref{eachmeets}}.   By Corollary~\ref{elnodigs}, the  leaves of $\wt\Lambda_{+}$ and $\wt\Lambda_{-}$ can only intersect in a single point.

\ni\textbf{Axiom~\ref{ecorr}}.  By Lemma~\ref{basiclem}, the bilamination $(\Lambda_{+},\Lambda_{-})$ has the endpoint correspondence property with respect to $f$.

\ni\textbf{Axiom~\ref{trnsvrs}}.  By Lemma~\ref{NNbiJJ} there exists a set $\NN$ of $f$-juncture components and bijection $\iota:\NN_{\pm}\to\JJ_{\pm}$.  By Lemmas~\ref{cor3ccepstein} and~\ref{basiclem}, $\Lambda_{\pm}\cup\JJ_{\mp}$ are sets of pseudo-geodesics. By Corollary~\ref{julams},  the elements of each of the sets $\Lambda_{\pm}\cup\JJ_{\mp}$ are disjoint.    Item~(1) follows.    Item~(2) follows since, by construction, the elements of $\Lambda_{\pm}$ are transverse to the elements of $\JJ_{\mp}$.   By Lemma~\ref{nodigs}, no element of $\JJ_{\pm}$ meets an element of $\Lambda_{\pm}$ to form a digon.   Thus, item~(3) is true.

\subsection{Proving Theorem~\ref{HMsmooth} if there are principal regions}\label{SecPrinReg}

We will sketch the way the previous argument needs to be modified, leaving many details for the reader.

Let $\PP$ denote the union of the positive principal regions.  For $N\ge1$ large enough, the components of $K\sm (W_{N}\cup\PP)$ are   geodesic quadrilaterals, each having a pair of opposite edges that are subarcs of $\fr_{+}K$.  But the other pair of opposite edges are either both subarcs of $\JJ_{N}^{-}$ or one is a subarc of $\JJ_{N}^{-}$ and the other a subarc of a border leaf of a positive principal region.  For the structure of principal regions and their nuclei, the reader may wish to review Section~\ref{crown}.

Let $P$ be a positive principal region.  It has a dual negative principal region $P'$ which shares a compact nucleus $N_{P}=N_{P'}$ with $P$. See Figure~\ref{dual} where the dashed curves are border curves of $P'$, the solid ones border curves of $P$.

\begin{figure}[h]
\begin{center}
\begin{picture}(300,170)(-45,-170)

\rotatebox{270}{\includegraphics[width=170pt]{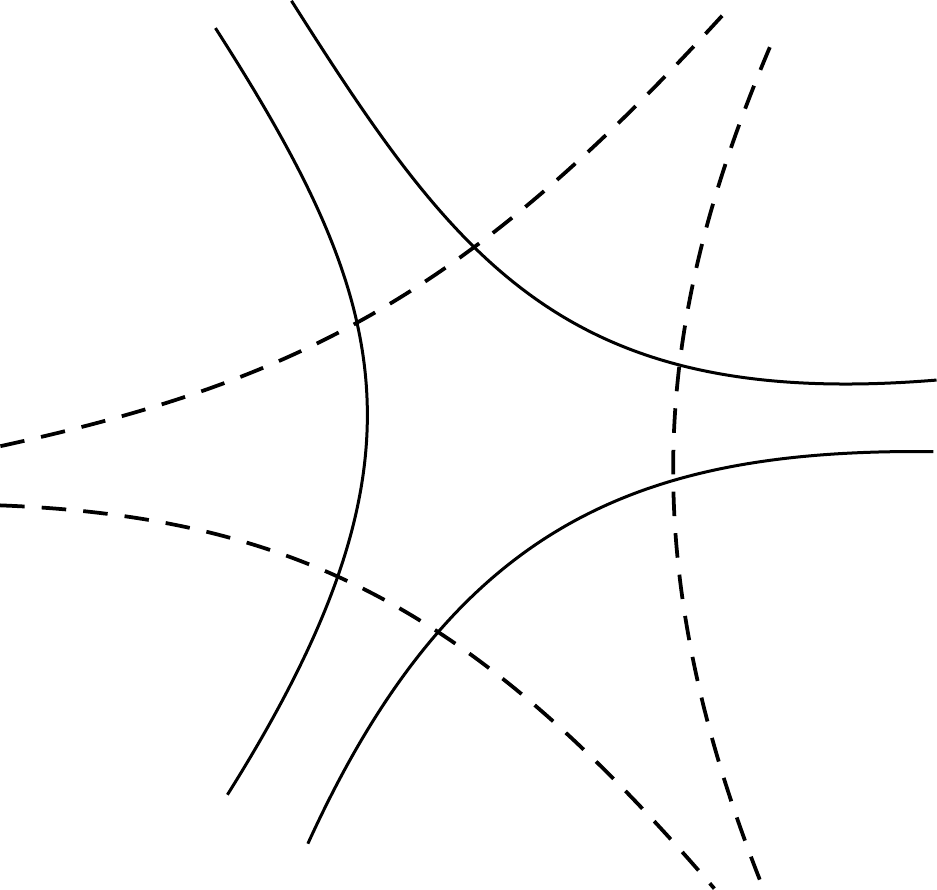}}
\put(-83,-95){\Small$N_{P}$}

\end{picture}
\caption{The dual principal regions $P$ and $P'$.}\label{dual}
\end{center}
\end{figure}

Actually, Figure~\ref{dual} depicts only the parts of $P$ and $P'$ including one boundary curve $\gamma$ of $N_{P}$.  There may be finitely many distinct components of $\bd N_{P}$.  
The curve $\gamma$ is made up alternately of $n_{\gamma}$ subarcs of $\delta P$ and $n_{\gamma}$ subarcs of $\delta P'$, $n_{\gamma}$ being a positive integer depending on $\gamma$, not on $P$ alone. In the figure we draw the case $n_{\gamma}=3$.  Out of  $\gamma$ there radiate $n_{\gamma}$ simply connected ``arms'' of $P$ and $n_{\gamma}$ such arms of $P'$.   Since we are in the case of geodesic laminations, the lifts of these arms to $\wt L$ are  cusps.    

The set $(P\sm\intr N_{P})\cap K$ may have infinitely many components, all but finitely many of which are geodesic quadrilateral components of the intersection of arms of $P$ with $K$.
The nonquadrilateral components of $(P\sm\intr N_{P})\cap K$ are each equal to the union of one of the components $\gamma$ of $\bd N_{P}$ and the ``stumps'' of the $n_{\gamma}$  arms (cf.~Definition~\ref{stump}).  In Figure~\ref{NucleusStumps}, we have drawn the nucleus $N_{P}$ (shaded) and stumps of  $P$ cut off by one of the curves $\gamma$ with $n_{\gamma}=3$.  The boundary curve $\gamma$ of $N_{P}$ is drawn in boldface.

\begin{figure}[h]
\begin{center}
\begin{picture}(300,170)(-45,-170)

\rotatebox{270}{\includegraphics[width=170pt]{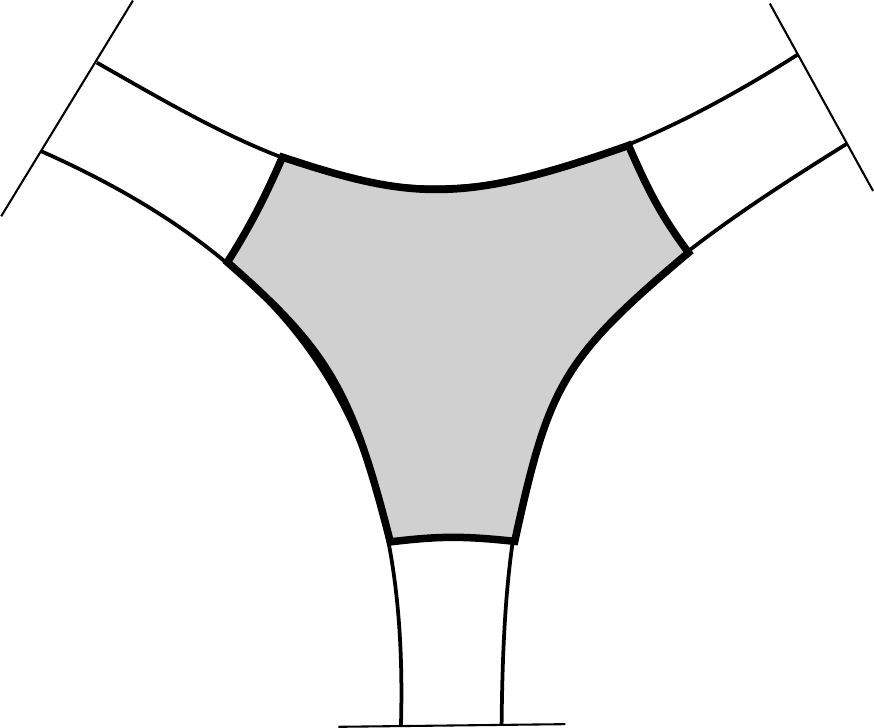}}
\put(-107,-80){\Small$N_{P}$}
\put(7,-20){\Small$\fr_{+}K$}
\put(-210,-20){\Small$\fr_{+}K$}
\put(-70,-174){\Small$\fr_{+}K$}
\put(-70,-95){\Small$\gamma$}

\end{picture}
\caption{The nucleus and stumps of a crown set.}\label{NucleusStumps}
\end{center}
\end{figure}

\begin{defn}[stump]\label{stump}
If $P$ is a positive principal region, we will call   the rectangular portion of an arm of $P$ that is  between the nucleus and $\fr_{+} K$ the  \emph{stump}  of the arm.  The stump  is bounded by the arc of $| \Lambda_{-}|$ separating the stump from the nucleus, two  arcs of $ |\Lambda_{+}|$, and an arc of $\fr_{+} K$.  It is simply connected.  The stumps of the arms of the dual negative principal region $P'$ are defined similarly.
\end{defn}

In Figure~\ref{RQ}, we have attached the $n_{\gamma}=3$ geodesic quadrilaterals $R_{i}^{\gamma}$, each with one edge in one of the three depicted border leaves of $P$, the opposite edges being arcs in $\JJ_{N}^{-}$. The other pair of opposite edges lie in $\fr_{+}K$.  These geodesic quadrilaterals and the stumps unite to produce an annulus $A_{\gamma}$ which we have shaded.  With dotted lines representing arcs in the border of $P' $ we have cut off three other geodesic subquadrilaterals of $A_{\gamma}$ which will be denoted as $Q_{j}^{\gamma}$. Note that the $R_{i}^{\gamma}$'s are pairwise disjoint, as are the $Q_{j}^{\gamma}$'s, but  each $R_{i}^{\gamma}$ overlaps two $Q_{j}^{\gamma}$'s.  

\begin{figure}[h]
\begin{center}
\begin{picture}(300,170)(-45,-175)

\rotatebox{270}{\includegraphics[width=170pt]{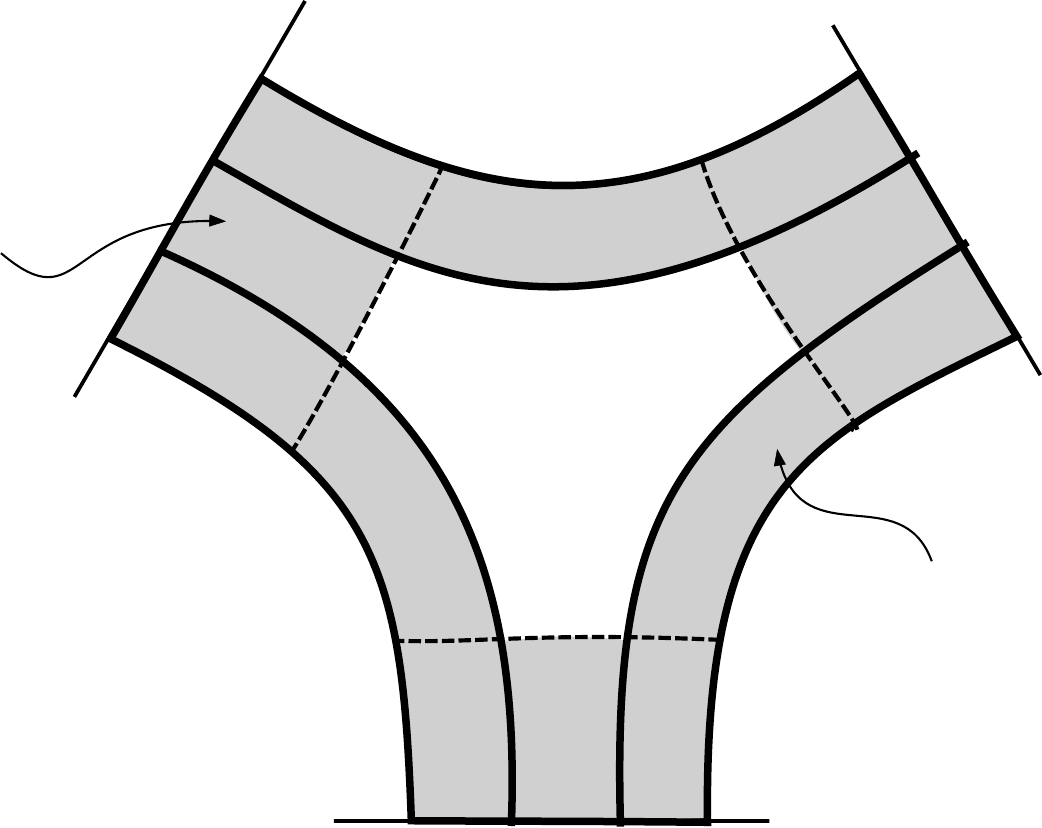}}
\put(-20,-30){\Small$\fr_{+}K$}
\put(-193,-30){\Small$\fr_{+}K$}
\put(-55,-174){\Small$\fr_{+}K$}
\put(-106,-28){\Small$\JJ_{N}^{-}$}
\put(-155,-122){\Small$\JJ_{N}^{-}$}
\put(-55,-122){\Small$\JJ_{N}^{-}$}
\put(-20,-120){\Small$R_{i}^{\gamma}$}
\put(-226,-53){\Small$Q_{j}^{\gamma}$}

\end{picture}
\caption{The annulus $A_{\gamma}$}\label{RQ}
\end{center}
\end{figure}

As $\gamma$ varies over boundary components of the nuclei of the finitely many principal regions in $\PP$, we get a family of geodesic quadrilaterals $R_{i}^{\gamma}$, $1\le i\le n_{\gamma}$.  There are also the ``normal'' geodesic quadrilateral components of $K\sm(W_{N}\cup\PP)$ with a pair of opposite edges in $\JJ_{N}^{-}$. These latter are simply indexed $R_{i}$, $1\le i\le k$  as before. The geodesic quadrilaterals of both types have a pair of opposite edges that are subarcs of $\fr_{+}K$.  As before, if $N$ has been chosen large enough, the $h^{\g}$-images of these geodesic quadrilaterals have connected intersection (possibly empty) with each  of  $R_{i}$ and $R_{i}^{\gamma}$, completely crossing any of these latter geodesic quadrilaterals that they meet.  

\begin{rem}
The reader should give some thought to the family of geodesic quadrilaterals consisting of $Q_{j}^{\gamma}$, $1\le j\le n_{\gamma}$ and $R_{j}$, $1\le j\le k$.  These are pairwise disjoint and each  has connected intersection with the geodesic quadrilaterals $h^{\g}(Q^{\gamma}_{i})$ and $h^{\g}(R_{i})$, completely crossing whichever of the latter that it meets.  This is entirely analogous to the situation without principal regions and will be used to generate the lamination $\Lambda_{-}$.
\end{rem}

While it is clear how the $h^{\g}$-images of  geodesic quadrilaterals intersect  geodesic quadrilaterals, one also should note that there is often an annular  component of $A_{\gamma}\cap h^{\g}(A_{\gamma'})$, for boundary circles $\gamma$ and $\gamma'$ of nuclei, as indicated in Figure~\ref{Annuli}, where the dotted lines together with $\gamma=h^{\g}(\gamma')$ bound $h^{\g}(A_{\gamma'})$.

\begin{figure}[h]
\begin{center}
\begin{picture}(300,170)(-45,-167)

\rotatebox{273}{\includegraphics[width=170pt]{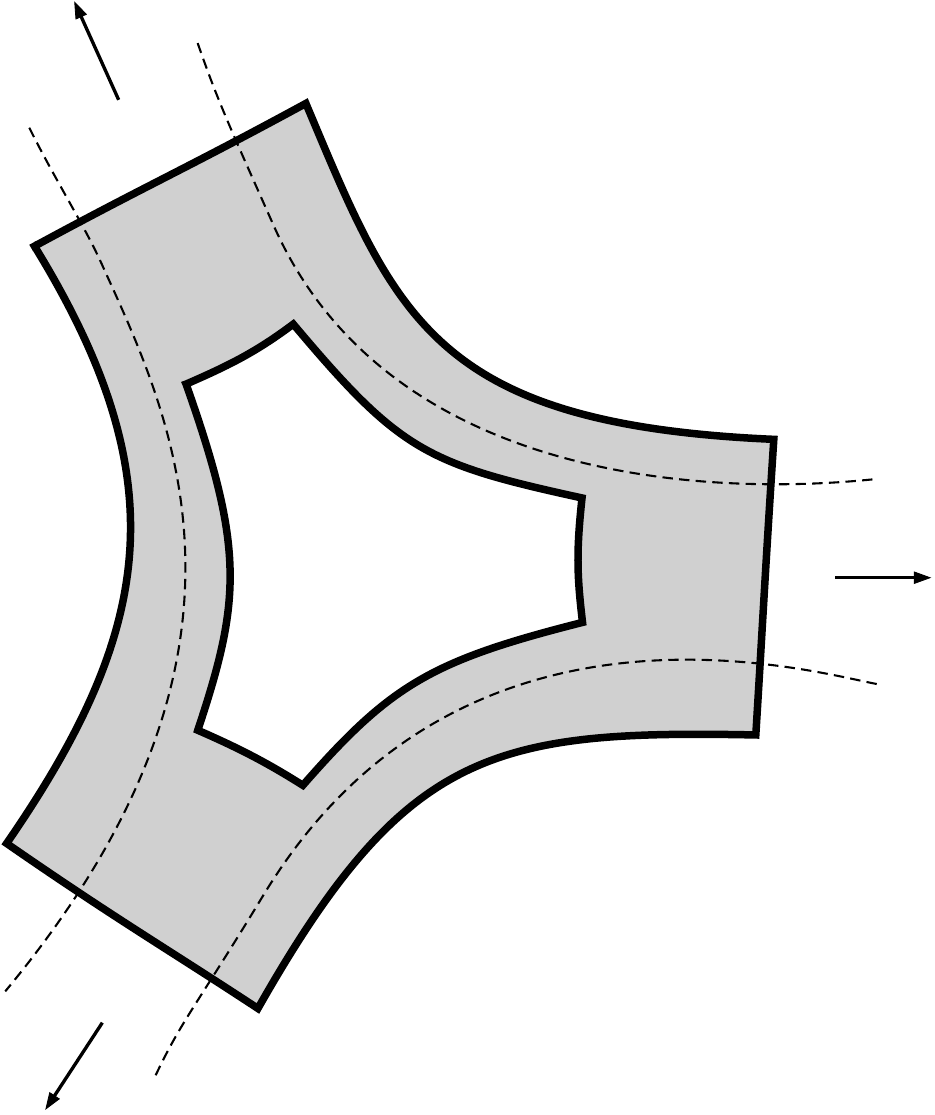}}

\end{picture}
\caption{An annular component of $A_{\gamma}\cap h^{\g}(A_{\gamma'})$.}\label{Annuli}
\end{center}
\end{figure}

Set $\RR$ equal to the union of all $R_{i}$ and all $A_{\gamma}$.
In analogy with the former construction, we define   a pair of transverse, smooth geodesic foliations $\FF_{\hh}$ and $\FF_{\vv}$ on $\RR\cup h^{\g}(\RR)$.  The foliation $\FF_{\hh}$  incorporates the edges in $\fr_{+}K$ and their $h^{\g}$-images as leaves. The foliation $\FF_{\vv}$ incorporates the edges in $\JJ_{N}^{-}$ and in $\delta\PP$ and their $h^{\g}$-images.  Furthermore, the geodesic quadrilaterals and annuli and their $h^{\g}$-images can be fattened to obtain a neighborhood $V$ of $\RR\cup h^{\g}(\RR)$ over which the foliations are extended.  Details, which are analogous to the case in which there are no principal regions, are left to the reader. Figure~\ref{Annulus} depicts the foliated neighborhood of one of the annuli. 

\begin{figure}[h]
\begin{center}
\begin{picture}(300,170)(-45,-170)

\rotatebox{270}{\includegraphics[width=170pt]{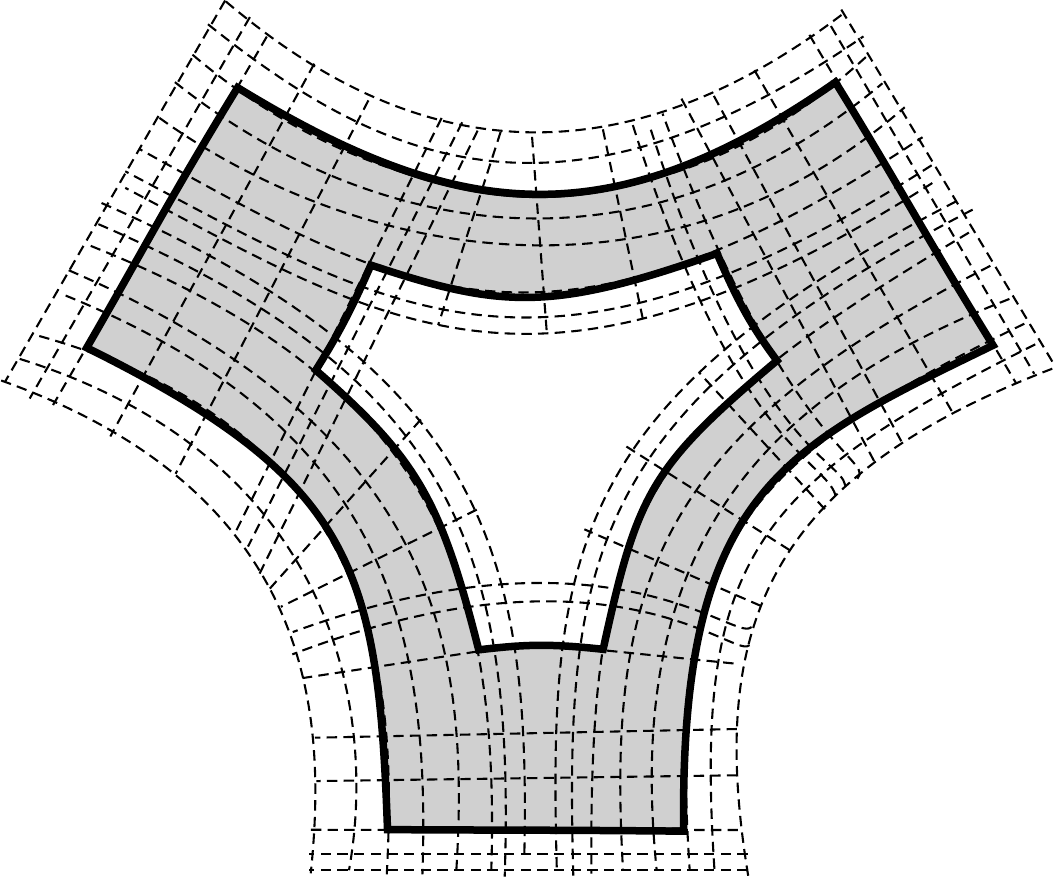}}

\end{picture}
\caption{Geodesic foliations of a neighborhood of $A_{\gamma}$.}\label{Annulus}
\end{center}
\end{figure}

 This smooth geodesic grid is again used to define a diffeomorphism which we will denote by  $h_{\RR}:\RR\to h^{\g}(\RR)$ and will extend over an open neighborhood of $\RR$. Notice that the geodesic quadrilaterals $Q_{j}^{\gamma}$ are also in the domain of $h_{\RR}$.  If $R$ is any of the geodesic quadrilaterals $R_{i}$, $R_{i}^{\gamma}$, or $Q_{i}^{\gamma}$, the grid defines smooth coordinates $(x,y)$ on $R$. Here, the leaves of $\FF_{\hh}$ are level sets of $x$ and the leaves of $\FF_{\vv}$ are level sets of $y$. On overlapping geodesic quadrilaterals of $\RR\cup h^{\g}(\RR)$ we can again choose the coordinates so that the first derivatives of the change of coordinate functions are $\pm1$.  The diffeomorphism $h_{\RR}|R$ has the form $h_{\RR}(x,y)=(\xi(x),\zeta(y))$.  As in Proposition~\ref{localprop}, $\xi$ and $\zeta$ can be chosen so that the following are true.
 
 \begin{prop}\label{localprop'}
 On $R=R_{i}$, $1\le i\le k$, 
 $$0< |d\xi/dx| <c<1,\qquad |d\zeta/dy| >b>1.$$
 On $R=R_{i}^{\gamma}$, the first of these inequalities holds and on $R=Q_{i}^{\gamma}$ the second holds.
 \end{prop}

 \begin{lemma}
 The homeomorphism $h^{\g}$ is isotopic to a  homeomorphism  which agrees with $h^{\g}$ outside a neighborhood of $\RR$ and with $h_{\RR}$ inside that neighborhood.
 \end{lemma}
 
 \begin{proof}
For each geodesic quadrilateral $R_{i}$, $1\le i\le k$, and $R_{i}^{\gamma}$ this is proven exactly as before.  On $A_{\gamma}$, there remain geodesic quadrilaterals  $D$ in each stump where the isotopy has not been defined.  Looking carefully at Figure~\ref{Annulus}, one sees that an application of Theorem~\ref{3.1} isotopes $h^{\g}$ to agree with $h_{\RR}$ on the boundary of a  geodesic quadrilateral in $V$ which covers $D$, this without affecting the isotopies already performed.  An application of Alexander's trick completes the isotopy. 
 \end{proof}
 
 We therefore assume that $h^{\g}$ agrees with the diffeomorphism $h_{\RR}$ in a neighborhood of $\RR$.
  One now proceeds exactly as before, using~\cite{ha:smooth} and the smooth versions of Theorem~\ref{2.1} and~\ref{3.1}, to prove the following.
 
 \begin{lemma}
 There is an isotopy of $h^{\g}$, which is constant in a neighborhood of $\RR$ and on  $\bd L$, to a diffeomorphism $h:L\to L$  such that $h|\tau^{\g}=h^{\g}|\tau^{\g}$, where $\tau^{\g}$ is a component of $\JJ_{i}^{-}$, $i\le N$, or of $\JJ_{i}^{+}$, $i\ge 0$.  In particular, $h$ is endperiodic.
 \end{lemma}
 
 At this point all of the machinery has been set up for defining $\Lambda_{+}$ using the geodesic quadrilaterals $R_{i}^{\gamma}$ and $R_{i}$ and positive powers of $h$.  One similarly produces $\Lambda_{-}$ using the geodesic quadrilaterals $R_{i}$ and $Q_{i}^{\gamma}$ and negative powers of $h$. Throwing in the nonescaping components of junctures gives the smooth laminations $\Gamma_{\pm}$ and verification of the axioms proceeds essentially as before.   Theorem~\ref{HMsmooth} is proven.  


\section{The transfer theorem}\label{AWH}

The principal applications of Handel-Miller theory are to the endperiodic automorphisms  that arise in foliations of $3$-manifolds by surfaces. See, for example,~\cite{fe:thesis,fe:endp}.  We introduce here the basics, setting the stage for such applications as in~\cite{cc:cone,cc:almostnohol}.    The main new result in this section will be Theorem~\ref{transfer}, the ``transfer theorem'',  which is absolutely fundamental to~\cite{cc:cone,cc:almostnohol}.

\subsection{Depth~one foliations}\label{dpthonefol}

Let $M$ be a compact, connected sutured $3$-manifold with boundary.  We do not require $M$ to be orientable, but we will only be considering smooth, codimension~$1$ foliations $\FF$ of $M$ which are transversely orientable.  The boundary of $M$ will decompose into 
$$
\bd M=\tb M\cup\trb M,
$$
where the components of $\tb M$ (the \emph{tangential} boundary) are compact leaves of $\FF$ and $\trb M$ (the \emph{transverse} boundary) is transverse to $\FF$.  The components of $\trb M$ must be annuli, tori and/or Klein bottles.  M\"obius strips are ruled out by the transverse orientability of $\FF$.  The annuli will meet $\tb M$ along corners that are convex with respect to the foliation.

\begin{defn}[taut foliation]\label{tautfoln}
The foliation $\FF$ is taut if each leaf meets either a closed transversal to $\FF$ or an arc transverse to $\FF$ with endpoints in $\tb M$.
\end{defn}

Taut foliations are very important in the theory of $3$-manifold topology.  For instance, the foliation can have no Reeb components, hence well known theorems of S.~P.~Novikov~\cite{nov} imply that  the leaves are $\pi_{1}$-injective and no closed transversal to $\FF$ is null homotopic.  It follows easily that the lift $\wt\FF$ of $\FF$ to the universal cover $\wt M$ has all leaves simply connected and, unless $M=S^{2}\x\SI$ with the product foliation,  that $\wt M$ itself is contractible.   

The foliation $\FF$ induces foliations of the components of $\trb M$.  Even when $\FF$ is taut, $\FF|\trb M$ might have 2-dimensional Reeb components.  We need to rule this out also.

Set $M^{\o}=M\sm\tb M$.

\begin{defn}[depth one foliation]\label{dpthonefoln}
The foliation $\FF$ is depth one if $\FF|M^{\o}$ is a fibration of $M^{\o}$ over $\SI$.
\end{defn}

Here it is possible that $M=F\x[-\infty,\infty]$ and the foliation is transverse to the compact interval fibers.  Such foliations, called foliated products of depth one, are well understood and easily classified.

\begin{rem}
Since $\FF $ is transversely oriented, some of the components of $\tb M$ are oriented transversely into $M$ and some out of $M$.  The annular components of $\trb M$ separate inwardly oriented components of $\tb M$ from outwardly oriented ones.  As $\FF$ varies, we keep the transverse orientations of the components of $\tb M$ fixed. This then is a constraint on the transverse orientations of these foliations.  In the case that $M$ is orientable, the structure $(M,\tb M,\trb M)$ is a \emph{sutured manifold} in the sense of D.~Gabai~\cite{ga1}.  In Gabai's notation,  the sutured manifold is denoted by $(M,\gamma)$, $\trb M=\gamma=A(\gamma)\cup T(\gamma)$ and $\tb M=R(\gamma)=R_{+}(\gamma)\cup R_{-}(\gamma)$. Here, $A(\gamma)$ is the union of the annular components of $\trb M$, $T(\gamma)$ the union of the toroidal components, $R_{+}(\gamma)$ is the union of the outwardly oriented components of $\tb M$ and $R_{-}(\gamma)$ the union of the inwardly oriented ones.
\end{rem}

\subsection{Monodromy}\label{defmonod}

It is possible to find a  $1$-dimensional foliation $\LL$ transverse to $\FF$.  For instance, construct a smooth, nowhere zero vector field $v$ on $M$, transverse to the leaves of $\FF$, tangent to $\trb M$  and oriented coherently with the transverse orientation of $\FF$.  This is easily done locally and the local fields are assembled into a global one by a suitable smooth partition of unity.  The integral curves to $v$ are the leaves of $\LL$.    This transverse foliation is smooth.  It is also possible to construct transverse foliations $\LL$ that are only $\CO$.  In any event, $\LL$ is oriented by the transverse orientation of $\FF$.  

If $\trb M$ has no $2$-dimensional Reeb components, the foliation $\LL|A$ induced on annular components $A$ of $\trb M$ can always be taken to be the product foliation by compact intervals.

Suppose $\FF$ is a depth one foliation of $M$ (Definition~\ref{dpthonefoln}) and let $L$ be a noncompact leaf of $\FF$.  That is, $L$ is a fiber of the fibration $M^{\o}\to\SI$ induced by $\FF$. For each point $x\in L$, let $\ell_{x}$ be the leaf of $\LL$ through $x$.  Follow $\ell_{x}$ starting at $x$ in the direction given by its orientation until the first time a point $y\in L$ is encountered. The fact that such a point $y\in L$ exists for each $x\in L$ follows from the compactness of $M$~\cite[Lemma~4.2]{fe:jdg}. The map $x\to y$ defines the first return map 
$$
f:L\to L.
$$
If $\LL$ is smooth,  $f$ is a diffeomorphism.  If $\LL$ is only $\CO$, $f$ is a homeomorphism.

\begin{defn}[monodromy]\label{monofoln}
The automorphism $f:L\to L$ is called the monodromy  induced by $\LL$.
\end{defn}

Sometimes we will denote $\LL$ by $\LL_{f}$.  One should note, however, that while $f$ is determined by $\LL_{f}$, distinct transverse foliations may determine the same monodromy.  

\subsubsection{Endperiodic monodromy}

\begin{lemma}\label{endpmono}
Let $M$ be a compact, connected sutured $3$-manifold and $\FF$  a taut, transversely oriented, depth one foliation of $M$ which induces no   $2$-dimensional Reeb components on $\trb M$. A leaf $L$ of $\FF|M^{\o}$ has only finitely many ends and the monodromy $f:L\to L$, defined by any transverse, $1$-dimensional foliation $\LL$, is an endperiodic automorphism.  
\end{lemma}

To prove Lemma~\ref{endpmono}, for any end $e\in\EE(L)$, one constructs the neighborhood $U_{e}$ of $e$ exactly as in the proof of Proposition~\ref{comcomp}  in Section~\ref{jnctrs}.  

\subsubsection{Realizing endperiodic monodromy}\label{realizing}

\begin{lemma}\label{realize}
Given an endperiodic automorphism $f:L\to L$ of a surface with finitely many ends, there exists a compact $3$-manifold $M$, a depth one foliation $\FF$ of $M$ and a transverse, $1$-dimensional foliation $\LL$ such that $L$ is homeomorphic to each noncompact leaf of $\FF$ and $f$ is the monodromy induced by $\LL$.  If $f$ is a diffeomorphism, $M$, $\FF$ and $\LL$ are smooth.  If $f$ is only a homeomorphism, $M$ and $\FF$ are smooth, but generally $\LL$ is not.
\end{lemma}

The proof of Lemma~\ref{realize} is standard.  One first constructs the suspension of $f$, 
$$M^{\o}=L\x[0,1]/\{(x,1)\equiv(f(x),0)\}.$$
The fact that $f$ is endperiodic, then enables one to glue on the compact leaves that form $\tb M$.

The annular components of $\trb M$ can arise in two ways. There might be an infinite sequence of compact components of $\bd L$ contained in $\UU$ permuted simply transitively by $f$ or a finite set of compact components of $\bd L$ contained in the set of principal regions permuted cyclically by $f$.  The $\LL$-saturation of any one of these circles contains the others and is an annular component of $\trb M$. There might be a necessarily finite set of noncompact components of $\bd L$ permuted cyclically by $f$.  The $\LL$-saturation of any of these contains the others and again, since there are no periodic points on such a component, we obtain an annular component of $\trb M$.  In these annuli, the leaves of $\LL$ are compact intervals, hence $\FF$ induces no $2$-dimensional Reeb components.  The tori and/or Klein bottles in $\trb M$ are induced similarly by finite $f$-cycles of compact components of $\bd L$.  

If  $\FF$ induces no 2-dimensional Reeb components on $\trb M$, each noncompact component of $\bd L$  joins a negative end to a positive end and  the monodromy of $\FF$ has no fixed points on $\bd L$. If  every component of $\tb M$ has negative Euler characteristic, $L$ is an admissible surface.  If $\FF$ is not a foliated product, the monodromy of $\FF$ is not isotopic to a translation.  Thus the theory of Sections~\ref{endp}~-~\ref{HMSM} applies to the monodromy of a depth one  leaf of a foliation satisfying Hypothesis~\ref{ongo}.

\begin{hyp}\label{ongo}
\textbf{Hereafter, unless we explicitly state otherwise, $M$ is a compact, connected sutured $3$-manifold with boundary, the foliation $\FF$ is smooth, transversely oriented, taut, depth~one, not a foliated product, induces no 2-dimensional Reeb components on $\trb M$, and every component of $\tb M$ has negative Euler characteristic}.

\end{hyp}

\subsection{The Transfer Theorem}\label{transfth}

We now turn to  the main goal of   Section~\ref{AWH}.  Suppose $M$ and $\FF$ satisfies Hypothesis~\ref{ongo}.

Let $f:L\to L$ be the monodromy associated to a transverse, $1$-dimensional foliation $\LL_{f}$.  Fix any standard hyperbolic metric $\mathfrak g$ on $L$ and recall that, if $(\Lambda_{+},\Lambda_{-})$ is a Handel-Miller pseudo-geodesic bilamination of $L$  associated to $f$ relative to this metric, it is Handel-Miller associated to $f$  relative to any other choice of standard hyperbolic metric (Corollary~\ref{metricindep}). Thus the following definition is independent of the choice of $\mathfrak g$.

\begin{defn}[Handel-Miller monodromy]\label{HMmonofoln}

Suppose that $\LL$ is a $1$-dimensional foliation transverse to $\FF$ which induces endperiodic monodromy $h:L\to L$,  isotopic to $f$ and preserving a Handel-Miller pseudo-geodesic bilamination $(\Lambda_{+},\Lambda_{-})$ associated to $f$. Then we say that $h$ is a Handel-Miller monodromy  for the depth one foliation $\FF$.
\end{defn}

Recall that the  bilamination $(\Lambda_{+},\Lambda_{-})$ is called a   Handel-Miller pseudo-geodesic bilamination $(\Lambda_{+},\Lambda_{-})$ associated  to $f$  if it satisfies the four axioms (Definitions~\ref{HMbilam}).

The following  is analogous to a result of Fried~\cite[p.~261]{FLP} for pseudo-Anosov monodromy in fibrations of hyperbolic $3$-manifolds and is key to proving the maximality of foliation cones defined by Handel-Miller monodromy (see Proposition~\ref{HMmax}).  

\begin{theorem}[Transfer Theorem]\label{transfer}
Let $\LL$ be a $1$-dimensional foliation transverse to $\FF$ and inducing Handel-Miller monodromy $h:L\to L$ on a depth one leaf $L$.  If $\FF'$ is another depth one foliation transverse to $\LL$ and satisfying \emph{Hypothesis~\ref{ongo}},  then $\LL$ induces Handel-Miller monodromy $h':L'\to L'$ on any depth one leaf $L'$ of $\FF'$. 
\end{theorem}

The proof of the transfer theorem is now our goal.  We first note that by Lemma~\ref{endpmono}, the monodromy $h':L'\to L'$ is an endperiodic automorphism. We  show how to produce an $h'$-invariant bilamination $(\Lambda'_{+},\Lambda'_{-})$ by transferring the bilamination $(\Lambda_{+},\Lambda_{-})$ from $L$ to $L'$ along $\LL$  and   then verify that this satisfies the axioms for a Handel-Miller pseudo-geodesic bilamination associated to $h'$.

\subsection{Transferring paths}

Given a parametrized path $s:[a,b]\to L$,  there are countably many ``transferred'' parametrized paths $s':[a,b]\to L'$ which are uniquely determined by $s'(a)$ and are obtained by projecting locally along the leaves of $\LL$.  This is a standard sort of local continuation process.  Likewise, a parametrized path $s:(-\infty,\infty)\to L$ transfers to parametrized paths $s':(-\infty,\infty)\to L'$, each uniquely determined by any one of its values. All of this works equally well for transfers of paths from $L'$ to $L$ and transfers of paths from $L$ to $L$ and from $L'$ to $L'$.  A choice of parametrization is convenient in defining the transferred paths, but the unparametrized paths underlying the parametrized transferred paths are independent of the choice. Thus, we will normally view the transfer operation as having to do with unparametrized paths.

\begin{rem}
If $s$ is a loop it is quite possible that some or all of its transferred paths open up. 
\end{rem}

\begin{defn}[transfer of a path]\label{transfpath}
The set of paths obtained by transferring  a path $s$ in $L$ to $L'$ will be called simply the transfer of $s$ to $L'$.  Similarly, one defines the transfer of a path $s$ in $L'$ to $L$, of a path in $L$ to $L$ and of a path  in $L'$ to $L'$.  We will distinguish \textbf{the} transfer of $s$, which is a set of paths, from  \textbf{a} transfer of $s$, which is a single path in the transfer of $s$.
\end{defn}

\begin{rem}
The union of the  paths  in the transfer of $s$ to $L'$  is exactly the set of points of intersections of $L'$ with leaves of $\LL$ which meet $s$. Similar remarks hold for the other three types of transfer.
\end{rem}

\begin{lemma}\label{transtrans}
The set of paths in the transfer of a  path $s$ on $L$ to $L'$ is permuted transitively by the map $h':L'\to L'$. Similar assertions hold for the other three types of transfer.
\end{lemma}

Indeed, $h'$ and $h$ are defined by flowing along $\LL$.

\begin{rem}
The transfer operation on a parametrized path is a continuation process using a parametrization of $s$ and gives a parametrization of $s'$.  Thus, a choice of orientation of $s$ induces canonically an orientation of any transfer $s'$.
\end{rem}

\begin{lemma}\label{nulltransf}
Let $s$ be a loop on $L$ and $s'$ a transferred path on $L'$. Then $s'$ is a  nullhomotopic loop on $L'$ if and only if $s$ is a nullhomotopic loop on $L$.  A similar assertion holds for the other three types of transfer.  
\end{lemma}

\begin{proof}
Suppose that $s$ is nullhomotopic on $L$. 
 If $s'$ opens, a classical construction (cf.~\cite[Lemma~3.3.7]{condel1}) gives a closed transversal to $\FF'$ which is homotopic in $M^{\o}$ to $s$.  By tautness, such a loop must be essential in $M$ and so $s$ is essential on $L$. This contradiction shows that $s'$ is a closed loop. Thus $s'$ is homotopic to $s$ in $M$ along the projecting arcs of $\LL$ and, again by tautness, it is nullhomotopic on $L'$.  The converse has the same proof.
\end{proof}

\begin{defn}[transfer of a set of curves]\label{trfamcur}
Given a set $\AAA$ of curves on $L$, the union of the transfers of the elements of $\AAA$ will be called the transfer $\AAA'$ of $\AAA$.
\end{defn}

\begin{rem}
Be careful of the logic here.  The transfer of a path is a \emph{set} of paths.  The union of the transfer of a set of paths is the union of sets of paths, hence is a set of paths. As usual, we can define the transfer of a set of curves on $L$ to $L$, etc.
\end{rem}

\subsubsection{The transferred laminations}
We  define laminations $\Lambda'_{\pm}$ on $L'$ as the transfers of the Handel-Miller pseudo-geodesic laminations $\Lambda_{\pm}$ on $L$  (forgetting parame\-trizations).  There are problems here as it is not generally true that the transfer of a lamination is a lamination. It turns out that the $h$-invariance of $\Lambda_{\pm}$ saves the day. 

\begin{lemma}\label{2disj}
Let $\AAA$ be an $h$-invariant family of pairwise disjoint, one-one immersed, nonparametrized   curves.  Then the transfer $\AAA'$ of $\AAA$ to $L'$ is an $h'$-invariant family of pairwise disjoint, one-one immersed, nonparametrized  curves.
\end{lemma}

\begin{proof}
Let $\lambda_{1}',\lambda'_{2}\in\AAA'$ and suppose there is a point $x\in\lambda_{1}'\cap\lambda'_{2}$ such that, for every simply connected neighborhood $V$ of $x$ in $L'$, the connected component of $x$ in $V\cap(\lambda'_{1}\cup\lambda'_{2})$ is not an embedded curve. (It is possible that $\lambda'_{1}=\lambda'_{2}$ and self intersects nontrivially.)  By the $h$-invariance of $\AAA$, every transfer of $\lambda'_{i}$ to $L$ is an element of $\AAA$, $i=1,2$.  Since a small enough choice of $V$ projects along $\LL$ homeomorphically   into $L$, we obtain $\lambda_{1},\lambda_{2}\in\AAA$ intersecting nontrivially.
\end{proof}

\begin{lemma}\label{symtransf}
For $\AAA'$ and $\AAA$ as in \emph{Lemma~\ref{2disj}}, $\AAA$ is the transfer of $\AAA'$ to $L$.
\end{lemma}

This is an easy exercise.

\begin{lemma}\label{hinvlam}
If $\Lambda$ is an $h$-invariant lamination of $L$ and $\Lambda'$ is its transfer to $L'$, then $\Lambda'$ is an $h'$-invariant lamination of $L'$.  
\end{lemma}

\begin{proof}
By Lemma~\ref{2disj}, we only need to show that there is a laminated partial atlas for $\Lambda'$. Let $\lambda'\in\Lambda'$ and let $x'\in\lambda'$.  There is $\lambda\in\Lambda$ such that $\lambda'$ is in the transfer of $\lambda$ and so there is $x\in\lambda$ that is carried by the transfer operation to $x'$.  Let $(U,Y,\phi)$ be a laminated chart associated to $\Lambda$, containing $x$ and small enough that $U$ projects along $\LL$ homeomorphically onto a neighborhood $U'$ of $x'$.  Let $\pi:U\to U'$ be this projection. Then $(U',Y,\phi\o\pi^{-1})$ is a laminated chart about $x'$.  It is clear  that the plaques in this chart are exactly the path components of the intersections of the curves in $\Lambda'$ with $U'$.  Since $x'\in|\Lambda'|$ is arbitrary, we have constructed a laminated partial atlas for $\Lambda'$, hence $\Lambda'$ is a lamination.
\end{proof}

\begin{lemma}
If $\Lambda$ is a closed, $h$-invariant lamination of $L$, then $\Lambda'$ is a closed, $h'$-invariant lamination of $L'$.
\end{lemma}

Indeed, a point  $x'\in L'\sm|\Lambda'|$ transfers to a point  $x\in L\sm|\Lambda|$.  But $x$ has an open neighborhood $U$ in $L\sm|\Lambda|$ which is small enough to project along $\LL$ to an open neighborhood $U'$ of $x'$.   Lemma~\ref{symtransf} implies that $U'$ misses $|\Lambda'|$.

\begin{cor}\label{(L,L')}
The transferred laminations $\Lambda'_{\pm}$ are closed,  disjoint from $\bd L'$, $h'$-invariant and $(\Lambda_{+}',\Lambda'_{-})$ is a bilamination.
\end{cor}

Indeed, the bilaminated charts are produced exactly as the laminated charts.

 \subsubsection{Behavior of the transferred laminations at the ideal boundary $E$}\label{sss1242}

Since there are only thirteen nonstandard surfaces~\cite[Theorem~8]{cc:epstein}, all of which are easily ruled out under our hypotheses as leaves of $\FF$ and $\FF'$,  $L$ and $L'$  can each be given a standard hyperbolic metric (cf.~Definition~\ref{standard} and the following remarks).  Fix a choice of such metrics.  By Corollary~\ref{metricindep}, the laminations satisfy the axioms relative to one choice of standard metric if and only if they do so relative to any choice of standard metric.

\begin{rem}
 It is not necessary to make all of the leaves of either foliation simultaneously hyperbolic.  By a well known theorem of A.~Candel~\cite{cand:hyp}, it is possible to find a metric on $M$ which makes all of the leaves of $\FF$ hyperbolic and another that makes the leaves of $\FF'$ hyperbolic, but nothing so sophisticated is needed here.
\end{rem}

Let $\wt M^{\o}$ be the universal cover of ${M}^{\o}$.  Let $\wt\LL$ be the lift of $\LL|M^{\o}$ to $\wt{M}^{\o}$. Fix a lift $\wt{L}\ss\wt M^{\o}$ of the   leaf $L\in\FF$.  Similarly, fix a lift $\wt{L}'\ss\wt M^{\o}$ of the   leaf $L'\in\FF'$. Because the foliations $\FF$ and $\FF'$ are taut, these lifts are the universal  covers of $L$ and $L'$, respectively, and we view $\wt{L}, \wt{L}'\ss\Delta$. 

\begin{lemma}\label{eachsingle}
Each of the surfaces $\wt{L}$ and $\wt{L}'$ meets each leaf of $\wt\LL$ in a single point.
\end{lemma}

\begin{proof}
The fibration $\pi:M^{\o}\to\SI$ defined by $\FF$ lifts to a (necessarily trivial) fibration $\wt\pi:\wt M^{\o}\to\R$ with fibers the leaves of $\wt\FF$.  The leaves of $\wt\LL$ are cross-sections of this bundle. Indeed, if $\wt\ell$ is a leaf of $\wt\LL$, $\wt\pi:\wt\ell\to\R$ is easily seen to be a surjective immersion,  hence a diffeomorphism (by the classificatiion of $1$-manifolds).  The assertion for the leaves $\wt L$ of $\wt\FF$ follow and the  same proof works for the leaves $\wt L'$ of $\wt\FF'$.
\end{proof}

\begin{cor}\label{LamLam'}

Projection along the leaves of $\wt\LL$ defines  $\nu:\wt{L}\to\wt{L}'$,  a homeomorphism which carries $\wt\Lambda_{\pm}$ onto $\wt\Lambda'_{\pm}$.

\end{cor}

\begin{proof}
Indeed, the fact that projection defines a homeomorphism $\nu$ is an obvious consequence of Lemma~\ref{eachsingle}.  Given a path $\sigma$ on $L$ and a lift $\wt\sigma$ on $\wt L$, it is evident that $\nu(\wt\sigma)$ is a lift of a transfer of $\sigma$ to $L'$. Thus, $\nu(\wt\Lambda_{\pm})\sseq\wt\Lambda'_{\pm}$.  For the reverse inclusion, note that the argument just given works equally well to prove that $\nu^{-1}(\wt\Lambda'_{\pm})\sseq\wt\Lambda_{\pm}$.  Thus,
$$
\nu(\wt\Lambda_{\pm})\supseteq\nu(\nu^{-1}(\wt\Lambda'_{\pm}))=\wt\Lambda'_{\pm}.
$$
\end{proof}

\begin{rem}
Note that $\nu$ generally is not the lift of a map from $L$ to $L'$.
\end{rem}

The main result of Section~\ref{sss1242} is,

\begin{prop}\label{main}
The map $\nu$ extends to a homeomorphism $\wh\nu:\wh{L}\to\wh{L}'$.
\end{prop}

 The proof of Proposition~\ref{main} will be modelled on that of~\cite[Theorem~2]{cc:epstein}, but there are important differences.  Note that doubling $M$ along $\trb M$ gives rise to foliations $2\FF$ and $2\FF'$ having  $2L$ and $2L'$, respectively, as leaves.  By this device, we can reduce the proof of Proposition~\ref{main} to the case that $L$ and $L'$ have empty boundary.  Thus $\wt L$ and $\wt L'$ can be identified with $\Delta$ and $\nu:\Delta\to\Delta$ is a homeomorphism. The real reason for doubling, however, is to assure that the components of $\tb M$ are closed surfaces, thereby eliminating finitely many counterexamples to the conclusion of Lemma~\ref{sepcurves}.
 
 \begin{rem}
 This doubling is only a temporary device. It will end when the proof of Proposition~\ref{main} is complete  and nowhere during this period will it be assumed that the doubled laminations satisfy the axioms.  Generally they do not, as Example~\ref{simpex2} shows.  We will explain shortly why this doubling is necessary.
 \end{rem}

\begin{defn}[essentially intersecting loops]\label{essintlps}
Two  (parametrized) loops $\sigma_{1},\sigma_{2}$ in a surface  \emph{essentially intersect} if,  whenever a loop $\tau_{1}$ is freely homotopic to $\sigma_{1}$ and a loop $\tau_{2}$ is freely homotopic to $\sigma_{2}$, then $\tau_{1}$ and $\tau_{2}$ have nonempty intersection.
\end{defn}

In particular, loops that essentially intersect must intersect.

\begin{lemma}\label{sepcurves}
If $F$ is a closed, connected surface other than the sphere, projective plane, or torus, it contains a pair of essentially intersecting loops each of which separates $F$.
\end{lemma}

For the case of the connected sum of $n\ge4$ tori or $n\ge4$ projective planes, this is easy.  The remaining cases are left as an exercise, but see Figure~\ref{SeparatingCurves} for the connected sum of two tori.

\begin{lemma}\label{rr'}
There is a pair of essentially intersecting loops $\rho$ and $\rho_{\pitchfork}$ in $L$ such that, 
\begin{enumerate}
\item Every transfer of $\rho$ to $L'$ is a  loop $\rho'$,
\item Given a transfer $\rho'$ as above, there exists a transfer $\rho'_{\pitchfork}$ of $\rho_{\pitchfork}$ to  $L'$ which  is a  loop essentially intersecting $\rho'$.
\end{enumerate}
 \end{lemma}
 
 \begin{proof}
Let $F$ be a component of $\tb M$
 and consider ends $e$ and $e'$ of $L$ and $L'$, respectively, which are asymptotic to $F$. 
 Fix  an $h$-neighborhood 
 $U_{e}\ss L$ of $e$ and an $h'$-neighborhood $U_{e'}\ss L'$ of $e'$ which spiral on $F$. As in Section~\ref{jnctrs}, let $N,N'$ be smoothly embedded, transversely oriented 1-manifolds in $F$ which define the respective cohomology classes corresponding to these semi-coverings. It is worth noting that, as in the cited section, these semicoverings are the restrictions of  honest coverings $p:\UU\to F$ and $p':\UU'\to F$, where $\UU\ss L$ is open and $h$-invariant and $\UU'\ss L'$ is open and $h'$-invariant. The  deck transformation groups are infinite cyclic, generated by $h$ and $h'$ respectively.

 By Lemma~\ref{sepcurves},  we can choose a  pair of essentially  intersecting   simple closed curves $\sigma$ and $\sigma_{\pitchfork}$ on $F$ which each separate $F$.  Let  $x\in F$ be a point of intersection. Since $\sigma$ and $\sigma_{\pitchfork}$ are separating, the homological intersection number of each of $N$ and $N'$ with each of $\sigma$ and $\sigma_{\pitchfork}$ is zero.   Thus, the curves $\sigma$ and $\sigma_{\pitchfork}$ lift along $\LL$ to essential loops  $\rho$ and $\rho_{\pitchfork}$ in $\UU$ (and therefore $L$) with $\rho\cap\rho_{\pitchfork}$ containing a point $y$ which is a lift of $x$.  Then $\rho$ and $\rho_{\pitchfork}$ intersect essentially. Similarly, the curves $\sigma$ and $\sigma_{\pitchfork}$ lift along $\LL$ to essentially intersecting loops  $\rho'$ and $\rho'_{\pitchfork}$ in  $L'$ with $\rho'\cap\rho'_{\pitchfork}$ containing a point $z$ which is a lift of $x$.  The curves $\rho$ and  $\rho'$ and the curves  $\rho_{\pitchfork}$  and $\rho'_{\pitchfork}$  are obviously transfers of each other.
 \end{proof}
 
 \begin{figure}
\begin{center}
\begin{picture}(210,150)(0,0)

\includegraphics[width=200pt]{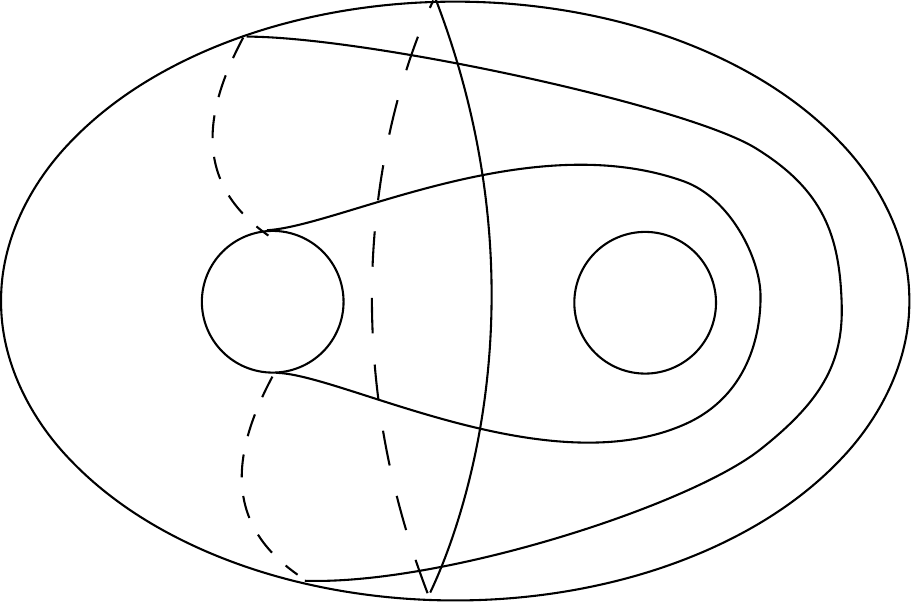}

\end{picture}
\caption{Separating loops which essentially intersect}\label{SeparatingCurves}
\end{center}
\end{figure}

\begin{nota}
Denote by $\RR$  the set of  loops $\rho$ on $L$ such that,
\begin{enumerate}
\item  there is a  loop $\rho_{\tr}$ essentially intersecting $\rho$;
\item the transfers  of $\rho$ and $\rho_{\tr}$ to $L'$ are sets of loops;
\item  for each  transfer $\rho'$ of $\rho$, there exists a transfer $\rho'_{\tr}$ of $\rho_{\pitchfork}$ which  is a  loop essentially intersecting $\rho'$.
\end{enumerate}
Let $\RR'$ denote the union of all transfers $\rho'$ of loops $\rho\in\RR$. 
\end{nota}

By Lemma~\ref{rr'}, both $\RR$ and $\RR'$ are nonempty. Further, $\RR'$ is the transfer of $\RR$ and vice versa.    Let $\wt\RR$  be the set of lifts to $\wt L$ of elements of $\RR$ and $\wt\RR'$ the set of lifts to $\wt L'$ of elements of $\RR'$.  

\begin{lemma}

\begin{enumerate}

\item $\wt\RR$ is invariant under the group of deck transformations of $\wt L$ and $\wt\RR'$ is invariant under the deck transformations of $\wt L'$.  

\item  $\nu$ induces a bijection $\wt\RR\to\wt\RR'$ which will again be denoted by $\nu$. 

\item Finally, every $\rho\in\RR$ admits an essentially transverse  closed  loop $\rho_{\pitchfork}$ such that some transfer of $\rho'_{\pitchfork}$ is a closed  loop essentially transverse to a given transfer $\rho'$ of $\rho$.

\end{enumerate}

\end{lemma}

Indeed, (1) and (3) are clear and (2) is proven exactly as Corollary~\ref{LamLam'}.

\begin{nota}
Let,
$$\ZZ = \{z\in S^{1}_{\infty}\mid   \text {$z$ is an ideal endpoint of $\sigma\in\wt\RR$}\},$$ 
$$\ZZ' = \{z\in S^{1}_{\infty}\mid   \text {$z$ is an ideal endpoint of $\tau\in\wt\RR'$}\}.$$
\end{nota}

\begin{rem}
The standard counterclockwise orientation of $\Si $ induces a cyclic order on $\ZZ$ and on $\ZZ'$.
\end{rem}

\begin{lemma}\label{olh}
The homeomorphism $\nu:\Delta\to\Delta$  induces a bijection $\ol\nu:\ZZ\to \ZZ'$ which either preserves or reverses the cyclic order.
\end{lemma}

\begin{proof}
The bijection $\nu:\wt\RR\to\wt\RR'$ induces a bijection $\ol\nu$ between their sets of endpoints.  If $\nu:\Delta\to\Delta$ is an orientation preserving (respectively reversing) homeomorphism, it is clear that $\ol\nu$ preserves (respectively reverses) cyclic order.  
\end{proof}

Evidently $\ZZ$ and $\ZZ'$ are invariant under (the extensions to $\Si$ of) deck transformations for the respective covering spaces.  Since $L$ and $L'$ have no simply connected ends, we can apply~\cite[Corollary~1]{cc:epstein} to obtain the following.

\begin{lemma}\label{zzdense}
The sets $\ZZ$ and $\ZZ'$ are dense in $ S^{1}_{\infty}$.
\end{lemma}

\begin{cor}
The map $\ol\nu:\ZZ\to \ZZ'$ extends to a homeomorphism $\ol\nu: S^{1}_{\infty}\to  S^{1}_{\infty}$. 
\end{cor}

We now define $\wh\nu:\D^{2}\to\D^{2}$ to be the bijection such that $\wh\nu|\Delta=\nu$ and $\wh\nu|\Si=\ol\nu$.  We need to prove it is a homeomorphism.  Since $\D^{2}$ is a compact Hausdorff space and $\wh\nu$ is bijective, it will be enough to prove that $\wh\nu$ is continuous.  It is only necessary to prove this  at points $z\in\Si$.

\begin{nota}
Denote by $\GG$ (respectively $\GG'$)  the set of lifts of pseudo-geodesics $\gamma$ in $L$ (respectively  $L'$) such that some, hence every, completed lift $\wh\gamma$ has both endpoints in $\ZZ$ (respectively  in $\ZZ'$).
\end{nota}

\begin{lemma}\label{inGG}
The homeomorphism $\nu$ defines a bijection $\nu:\GG\to\GG'$.
\end{lemma}

\begin{proof}
If $\wh\gamma$ has an endpoint $z\in \ZZ$, we will show that $\nu(\widetilde{\gamma})$ limits on the point $\ol\nu(z)\in \ZZ'$.  Applying this to both endpoints of $\wh\gamma$, $\wt\gamma\in\GG$, will prove that $\nu(\GG)\ss\GG'$.  The proof for $\nu^{-1}$ is completely similar, proving the lemma.

Fix a choice of $s\in\RR$ with a lift having $z$ as an ideal endpoint and let $\sigma$ be the closed geodesic homotopic to $s$.  Let $s_{\tr}\in\RR$ intersect $s$ essentially and $\tau$ be the closed geodesic homotopic to $\s_{\tr}$.  Then $\tau$ intersects $\sigma$ essentially and, since these loops are geodesics, the intersection is transverse. Let $a$ be one of these intersection points. 
Let  $\widetilde\sigma$ be the lift of $\sigma$ with $z$ as an ideal endpoint.      Let $\{a_{n}\}_{n\in\Z}$ be the lifts of $a$ in $\widetilde\sigma$, indexed so that $\lim_{n\to\infty}a_{n}=z$ monotonically along $\wt\sigma$. Let $\wt\tau_{n}$ be the lift of $\tau$ through $a_{n}$.  Each $\wh{\tau}_{n}$ has endpoints $u_{n},w_{n}$ bounding a subarc $\alpha_{n}\subset S^{1}_{\infty}$ containing the point $z$.  The sequences $u_{n}\to u$ monotonically and 
 and $w_{n}\to w$ monotonically  as $n\to\infty$.  We claim that $u=z=w$.  Otherwise,  the closed geodesic $\tau$ in $L$ accumulates locally uniformly on a geodesic which is impossible. The sets $V_{n}\subset\D^{2}$, $n\ge0$, bounded by $\wt\tau_{n}\cup\alpha_{n}$ form a fundamental system of closed neighborhoods of $z$. Since $z$ is an ideal endpoint of $\wt\gamma$, it follows that, for each $n\ge0$, there is a ray $\wt\gamma_{n}\ss\wt\gamma$ such that $\wt\gamma_{n}\ss V_{n}$.
 
 The  sets $ V_{n}\cap\Delta$ are nested with empty intersection. Thus, the homeomorphism $\nu$ carries them to a nested sequence of  sets with empty intersection.  Furthermore, the homeomorphism $\ol\nu$ carries the sequences $u_{n}$ and $w_{n}$ to sequences converging monotonically from opposite sides to $\ol\nu(z)$. By the definition of $\ol\nu$, it carries the ideal endpoints of $\wt\tau_{n}$ to the ideal endpoints of $\nu(\wt\tau_{n})$.  It clearly follows  that $\{\wh\nu(V_{n})\}_{n=1}^{\infty}$ is a fundamental system of neighborhoods of $\ol\nu(z)$. Furthermore, $\nu(\wt\gamma_{n})\ss\wh\nu(V_{n})$ for $n\ge0$, so $\nu(\wt\gamma)$ has $\ol\nu(z)$ as an ideal endpoint.
\end{proof}

\begin{rem}
The existence of $\tau$ in the above proof depends on $\rho$ being nonperipheral, that is, on $\sigma$ not being a component of $\bd L$.  By doubling we have assured that $\bd L=\0$.  Actually, we would be fine without doubling except for a few pesky cases.  For example, if each of the components $F$ of $\tb M$ is either a torus with an open disk deleted or a pair of pants, then, in the proof of Lemma~\ref{sepcurves}, it would not be possible to find a simple  separating loop $\sigma\ss F$ which is not peripheral and the proof of Lemma~\ref{rr'} would fail.  Hence, in this case, we cannot construct $\rho$ to be nonperipheral.  Doubling is not necessary for the application of~\cite[Corollary~1]{cc:epstein} in proving Lemma~\ref{zzdense} since the leaves $L$ and $L'$ are admissible, hence have no simply connected ends.
\end{rem}

\begin{proof}[Proof of \emph{Proposition~\ref{main}}]
We want to show that $\wh\nu:\D^{2}\to\D^{2}$ is continuous at $z\in\D^{2}$.  This is clear if $z\in \Delta$, so we assume $z\in S^{1}_{\infty}$. Let $U$ be an open neighborhood of $\wh\nu(z)$. Since $\ZZ'$ is dense in $S^{1}_{\infty}$, we can choose  an arc $\alpha\subset U\cap S^{1}_{\infty}$ with endpoints $a,b\in\ZZ'$ such that $\wh\nu(z)\in\alpha$.
Let $\wh{\gamma}$ be any curve in $U$ with endpoints $a$ and $b$ and $\gamma$ the projection of $\wh{\gamma}\cap\Delta$ to $L$. Then $\wt\gamma\in\GG'$ so by Lemma~\ref{inGG}, $\nu^{-1}(\wt\gamma)\in\GG$ and has  endpoints $\ol\nu^{-1}(a),\ol\nu^{-1}(b)\in\ZZ$.  Then the subset $V$ of $\D^{2}$ bounded by $\wh\nu^{-1}(\wh{\gamma})$ and $\wh\nu^{-1}(\alpha)$  is a closed neighborhood of $z$ in $\mathbb{D}^{2}$ and $\wh\nu(V)\subset U$.  This proves continuity at arbitrary $z\in S^{1}_{\infty}$.  Finally, in the general case before doubling, $\nu$ restricts to the desired homeomorphism of $\wh L$ onto $\wh L'$.
\end{proof}

 Because of the final statement in the above proof, we no longer work in the double and will not do so again. Here are two corollaries of Proposition~\ref{main}.

\begin{cor}\label{pseudotransfer}
Any transfer of a pseudo-geodesic on $L$ to a curve on $L'$ is a pseudo-geodesic \upn{(}and vice versa\upn{)}.
\end{cor}

\begin{proof}
Let $\lambda$ be a pseudo-geodesic on $L$ and let $\wt\lambda$ be a lift of $\lambda$ to $\wt L$. By the definition of $\nu$, it is clear that $\nu(\wt\lambda)=\wt\lambda'$ is a lift of some transfer $\lambda'$ of $\lambda$ to $L'$. Since $\wt\lambda$ has two well defined endpoints, $\wh\nu$ carries those endpoints to well defined endpoints of $\wt\lambda'$. The transfer  of $\lambda$ is the $h'$-orbit of $\lambda'$.  Since any lift $\wt h'$ extends canonically to $\bd\wh L'$, we see that the transfer of $\lambda$ consists entirely of  pseudo-geodesics.
\end{proof}

\begin{cor}\label{primestrcl}

The laminations $\Lambda'_{\pm}$ are strongly closed.

\end{cor}

\begin{proof}
The corollary follows since $\wh\nu$ is a homeomorphism and the laminations $\Lambda_{\pm}$ are strongly closed.
\end{proof}

\subsection{Proofs of Axioms~\ref{muttran} and~\ref{eachmeets}. The positive/negative escaping sets}

We fix  choices of $L\in\FF|M^{\o}$ and $L'\in\FF'|M^{\o}$ and transfer the laminations $\Lambda_{\pm}$ on $L$ to laminations $\Lambda'_{\pm}$on $L'$. 

 By Corollary~\ref{pseudotransfer}, the leaves of $\Lambda'_{\pm}$ are pseudo-geodesics. By Corollary~\ref{(L,L')}, we obtain the following.
 
 \begin{lemma}\label{ax1}
 \emph{Axiom~\ref{muttran}} holds  for $\Lambda'_{\pm}$.
 \end{lemma}

Since we have  identified the lifts of $\Lambda_{\pm}$ with those of $\Lambda'_{\pm}$ via the homeomorphism $\nu$ (Corollary~\ref{LamLam'}), it follows that,

\begin{lemma}\label{ax3}
\emph{Axiom~\ref{eachmeets}} holds for $\Lambda'_{\pm}$.
\end{lemma}

\subsubsection{The positive/negative escaping sets $\UU_{\pm}'$}

By Lemma~\ref{endpmono}, $h':L'\to L'$ is an endperiodic automorphism. Even though we have not verified all the axioms for $\Lambda_{\pm}'$, we can still define the positive and negative escaping sets $\UU'_{\pm}$ as in Definition~\ref{pmesc}. That is for $e'$ an end of $L'$, choose an $h'$-neighborhood $U'_{e'}$ of $e'$ (Definition~\ref{fnbhd}) and define  $\UU'_{e'}=\bigcup_{n=-\infty}^{\infty}(h')^{np_{e'}}(U'_{e'})$. The union of the sets $\UU'_{e'}$ as $e'$ ranges over the positive (respectively  negative) ends will be denoted by $\UU'_{+}$ (respectively  $\UU'_{-}$).  We will call $\UU'_{+}$ the \emph{positive escaping set} and $\UU'_{-}$ the \emph{negative escaping set}.

Clearly,

\begin{lemma}

The set $\UU'_{+}$ \upn{(}respectively $\UU'_{-}$\upn{)} consists of the set of points $x\in L'$ such that the sequence $\{(h')^{n}(x)\}_{n\ge 0}$  \upn{(}respectively $\{(h')^{n}(x)\}_{n\le 0}$\upn{)} escapes \upn{(}{\rm Definition~\ref{seqesc}}\upn{)}.

\end{lemma}

\begin{lemma}\label{samesc'}
 The point $x\in L'$ belongs to $\UU'_{\pm}$ if and only if the leaf $\ell_{x}$ of $\LL$ through $x$ satisfies $\ell_{x}\cap L\ss\UU_{\pm}$.
\end{lemma}

\begin{proof}
The negative ends of $L$ and $L'$ are exactly the ones that wind in on inwardly oriented components of $\bd M$ as semi-coverings, the positive ends likewise winding in on the outwardly oriented components of $\bd M$.  Thus, the points $x\in\UU_{+}$ are characterized by the fact that flowing them forward along $\LL$ causes them to approach the outwardly oriented components of $\bd M$, the points of $\UU_{-}$ being characterized analogously.  Since the same characterizations hold for $\UU'_{\pm}$, the claim follows.
\end{proof}

Since all axioms hold in $L$ and since projections along $\LL$ define local homeomorphisms of $L$ to $L'$, Lemma~\ref{front'} for $\Lambda_{\pm}\ss L$ now implies:

\begin{cor}\label{U'fr}
The frontier of $\UU'_{\mp}$ is $|\Lambda'_{\pm}|$ and, consequently, the border leaves of $\UU'_{\mp}$ accumulate locally uniformly exactly on the leaves of $\Lambda'_{\pm}$.
\end{cor}

Likewise, Corollary~\ref{LambdadoesnotmeeUU'}  for $\Lambda_{\pm}\ss L$ implies:

\begin{cor}\label{staysout}
 No leaf of $\Lambda'_{+}$ \upn{(}respectively, $\Lambda'_{-}$\upn{)} meets $\UU'_{-}$ \upn{(}respectively, $\UU'_{+}$\upn{)}.
\end{cor}

\subsection{Semi-isolated leaves and escaping ends in $\Lambda'_{\pm}$} 

Let $e'$ be an end of $L'$ and denote its full $h'$-cycle by $c'=\{e'=e'_{1},e'_{2},\dots,e'_{p_{e'}}\}$.  Set 
$$\UU'_{c'}=\bigcup_{n=-\infty}^{\infty}(h')^{n}(U'_{e'})=\UU'_{e'_{1}}\cup\UU'_{e'_{2}}\cup\cdots\cup\UU'_{e'_{p_{e'}}}.$$  
Remark that $\UU'_{c'}$ is an open, $h'$-invariant set with no periodic points.  The connected components $\UU'_{e'_{i}}$ of $\UU'_{c'}$ are permuted cyclically by $h'$.    In $M$, the ends $e'=e'_{1},e'_{2},\dots,e'_{p_{e'}}$ are exactly the ends of $L'$ asymptotic to a compact leaf $F=F_{c'}\ss\tb M$.  There is an $h$-cycle $c$ of ends of $L$ asymptotic to this same compact leaf $F=F_{c}$ and one similarly defines an open $h$-invariant subset $\UU_{c}\ss L$.  As in Section~\ref{jnctrs}, one has infinite cyclic coverings
\begin{eqnarray*}
q':\UU'_{c'}\to F\\
q:\UU_{c}\to F.
\end{eqnarray*}
Here, these coverings are defined by projection along the compact subarcs of leaves of $\LL$ issuing from points of $\UU'_{c'}$ and of $\UU_{c}$, respectively,  and terminating on $F$. 

The group of deck transformations for $q'$ is generated by $h'|\UU_{c'}$ and for $q$ by $h|\UU_{c}$. Thus,

\begin{lemma}
We can identify the surface $F$  \upn{(}which is a leaf of the foliation of $M$\upn{)} with the surface $F$ of \emph{Section~\ref{jnctrs}}.
\end{lemma}

For definiteness, we consider the case that $c'$ is an $h'$-cycle of negative ends of $L'$, $c$ the corresponding $h$-cycle of negative ends of $L$, with $F=F_{c'}=F_{c}$.
Since $\UU'_{c'}$ and $\Lambda'_{-}$ (respectively  $\UU_{c}$ and $\Lambda_{-}$) are $h'$-invariant (respectively  $h$-invariant), it follows that the induced lamination $\Lambda'_{-}|\UU'_{c'}$ (respectively  $\Lambda_{-}|\UU_{c}$) is invariant under the group of deck transformations of $q':\UU'_{c'}\to F$ (respectively  $q:\UU_{c}\to F$).  As in Section~\ref{inducedlam}, the laminations $\Lambda'_{-}|\UU'_{c'}$ and  $\Lambda_{-}|\UU_{c}$ descend to  closed laminations  of  $F$. Since $\Lambda'_{-}|\UU'_{c'}$ and  $\Lambda_{-}|\UU_{c}$ are transfers of each other, they descend to the same lamination of $F$. We have,

\begin{lemma}\label{samesame}
The laminations $\Lambda'_{-}|\UU'_{c'}$ and  $\Lambda_{-}|\UU_{c}$ descend to the  same well defined closed lamination $\Lambda_{F}$ of  $F$. 
\end{lemma}

In Section~\ref{inducedlam}, we analyzed the structure of $\Lambda_{F}$ and the traintrack $T$ that carries it using well understood properties of semi-isolated leaves in $\Lambda_{+}$ and escaping ends in $\Lambda_{-}$.  By Lemma~\ref{samesame}, the same lamination $\Lambda_{F}$ and traintrack are induced by the laminations $\Lambda'_{-}$ and the covering map $q':\UU'_{c'}\to F$. Hence it is reasonable to expect that these properties of semi-isolated leaves also hold for  $\Lambda_{+}$ and that these properties of escaping ends also hold for $\Lambda'_{-}$.  Since the axioms have not all been verified for $\Lambda'_{\pm}$, this requires proof which we now provide.

Remark that the negative escaping set $\UU'_{-}\ss L'$ is the disjoint union of the $\UU'_{c'}$'s corresponding to $h'$-cycles $c'$ of negative ends, with the parallel assertion about $\UU'_{+}\ss L'$. 
The following discussion will be carried out for the negative escaping sets and the cycles of negative ends, the positive case  being entirely parallel.

 \begin{prop}\label{finmanyper}
The laminations $\Lambda'_{+}$     of $L'$ have only finitely many semi-isolated leaves.  They are periodic under $h'$ and each contains either one $h'$-periodic point or one nondegenerate, compact $h'$ periodic subarc. In the case of one $h'$-periodic point, it is  repelling    on both sides.  In the case of a compact, nondegenerate $h'$-periodic subarc $A'$, the endpoints of $A'$ are repelling     on the sides not containing $A'$.  Furthermore, $\intr A'$ meets no  leaves of $\Lambda'_{-}$. 
\end{prop}

\begin{proof}
 Because the transfer operation is locally a homeomorphism, it is clear that the semi-isolated leaves of $\Lambda'_{+}$ are exactly the transfers of the semi-isolated leaves of $\Lambda_{+}$.  Since any of these latter contains at least one $h$-periodic point $y$, the leaf of $\LL$ through $y$ is closed and can only intersect $L'$ in finitely many points.  Thus the laminations $\Lambda'_{+}$ have only finitely many semi-isolated leaves and they each contain $h'$-periodic points.  

If $\lambda\in\Lambda_{+}$ is semi-isolated and contains exactly one $h$-periodic point $y$,  then the leaves of $\LL$ passing through $\lambda$ are asymptotic (in backward time) in the $\LL$-saturation of $\lambda$ exactly to the closed leaf $\ell$ through $y$. Remark that $\ell$ meets $\lambda$ only in the point $y$. If $\lambda'$ is a transfer of $\lambda$, we claim that $\ell$ can only meet $\lambda'$ in a single point $y'$.  Otherwise, a transfer back to $\lambda$ would send a subarc of $\lambda'$ with distinct endpoints onto a nontrivial, compact curve lying in $\lambda$  and having the same endpoints.  This would imply that $\lambda$ is compact, contrary to Axiom~\ref{muttran}.   Thus, the leaves of $\LL$ passing through $\lambda'$ are asymptotic to $\ell$ in the $\LL$-saturation of $\lambda'$. 
Since $\lambda$ borders $\UU_{-}$, $y$ is a repelling periodic point of $h$ on $\lambda$ and is the only periodic point (cf.~Section~\ref{periodicleaves}).  Consequently $y'$ is a repelling periodic point of $h'$ on $\lambda'$. It is the only one as a second would would generate another fixed point in $\lambda$.

  In a similar way, the compact, nondegenerate, $h$-periodic subarc $A$ of a semi-isolated leaf $\lambda\in\Lambda_{+}$ determines a corresponding $h'$-periodic subarc $A'$ on any  transfer $\lambda'$ of $\lambda$, the endpoints of which are both  $h'$-repelling  on the sides not containing $A'$,   and $A'$ is the only $h'$-periodic subarc of $\lambda'$.

The final assertion transfers from the corresponding fact for $\Lambda_{+}$, where the leaves of $\Lambda_{-}$ passing through the endpoints of $A$ lie in the arms of  principal regions.
\end{proof}

The following is an easy consequence of the above and Lemma~\ref{samesc'}.

\begin{cor}
If $c'$ is a cycle of negative  ends  of $L'$, then the border $\delta\UU'_{c'}$ has finitely many components consisting   of $h'$-cycles of semi-isolated leaves of $\Lambda'_{+}$.
\end{cor}

\begin{prop}\label{escray}

The ends represented by rays of leaves of $\Lambda'_{-}$   issuing from $\delta\UU'_{-}$  into $\UU'_{-}$,  either from a unique $h'$-periodic point on a semi-isolated leaf $\lambda$ of $\Lambda'_{+}$   or from the endpoints of a unique nondegenerate, compact, $h'$-periodic subarc of  such a leaf are exactly the escaping ends \emph{(Definition~\ref{defnescpend})}  of leaves of $\Lambda'_{-}$. Only finitely many leaves of $\Lambda'_{-}$  have an escaping end.
\end{prop}

\begin{proof}
Such a ray $r'=[a',\infty)$ is clearly the transfer of such a ray $r$ in $L$.  In particular, $\intr r'=(a',\infty)$ lies in the negative $h'$-escaping set.  We will suppose it lies in  $\UU'_{e'}$ for a negative end $e'\in\EE(L')$.  As the transfer of an $h$-periodic ray, $r'$ is $h'$-periodic.  Let $p$ be an $h'$-period of $r'$.   (If $(h')^p$ is orientation reversing on $\lambda$, replace $p$ by $2p$.)

Let $x\in(a',\infty)$ and consider the compact arc  $\alpha\ss(a',\infty)$ bounded by $x$ and $(h')^{-p}(x)$.  Let $U_{e'}$ be an $h'$-neighborhood of $e'$ for the  automorphism $h'$ which by Lemma~\ref{endpmono} is endperiodic.   Since each point $y\in\alpha$ converges to $e'$ under negative iterations of $(h')^{p}$, the compactness of $\alpha$ implies that there is a sufficiently large integer $n\ge1$ such that $(h')^{-kp}(\alpha)\ss U_{e'}$, all $k\ge n$. Thus, $\bigcup_{k=n}^{\infty}(h')^{-kp}(\alpha)\ss U_{e'}$.  Therefore,  the ray $r'$ represents an escaping end as asserted.

Conversely, if the ray $r' =  [a',\infty)$ represents an escaping end, then $r'$ is the transfer of a ray $r = [a,\infty)$ with $(a,\infty)\ss\UU_{-}$ and $a\in|\delta\UU_{-}|$. By Lemma~\ref{onlynoncpt}, the ray $r$ represents an escaping end so by Lemma~\ref{doesnotreturn} $a$ is $h$-periodic. Thus, $a'$ is $h'$-periodic and the converse is proven.

Proposition~\ref{finmanyper} now implies that only finitely many leaves of $\Lambda'_{-}$  have an escaping end.
\end{proof}

By transferring from $L'$ to $L$,  we obtain the following.

\begin{prop}
The completions of the leaves of $\Lambda'_{-}|\UU'_{c'}$ are all compact arcs with endpoints in $\delta\UU'_{c'}$, except for the rays $[a,\infty)$ corresponding to escaping ends.  The compact completions fall into parallel packets permuted by $h'$ and there are only finitely many $h'$-orbits of parallel packets.
\end{prop}

We repeat that the analogous statements for $\Lambda'_{+}|\UU'_{c'}$, where $c'$ is an $h'$-cycle of positive ends of $L$, are true and have completely parallel proofs.

Thus, we see  exactly how $\Lambda'_{\pm}|\UU'_{c'}$ descends via the covering map $q'$ to $\Lambda_{F}$. 

For future reference, we also note the following.

\begin{prop}\label{borderdense'}
The union of the border components of $\UU'_{\mp}$ is dense in $|\Lambda'_{\pm}|$.
\end{prop}

\begin{proof}
Equivalently, the border components of $\UU'_{\mp}$ accumulate locally uniformly on every leaf of $\Lambda_{\pm}$.  Since the border leaves are the transfers of the border leaves of $\UU_{\mp}$, this will be true if the corresponding assertion is true for $\UU_{\mp}$ and $\Lambda_{\pm}$ in $L$.  Since the axioms are satisfied for $\Lambda_{\pm}$, this is given by Proposition~\ref{borderdense}.
\end{proof}

\subsection{The $h'$-junctures}\label{modjnct}

In this subsection we choose and fix a countable set of $h'$-junctures.  

Let $c'$ denote an $h'$-cycle of  positive or negative ends of $L'$.   The covering projection $q':\UU'_{c'}\to F$ defines a cohomology class $\kappa$ on $F$ exactly as in Section~\ref{jnctrs} which we represent by a compact, properly embedded, transversely oriented $1$-manifold $J_{\kappa}$ which is ``weakly groomed'' (Definition~\ref{groomed}).   Thus, \emph{as a cocycle}, we can replace  $J_{\kappa}$ with a transversely oriented $1$-manifold $|J_{\kappa}|$ to which each component is assigned a positive weight.  Each component $\sigma$ of $|J_{\kappa}|$ is a properly embedded, transversely oriented arc or circle, with a positive integral weight $w_{\sigma}$, representing a ``packet'' of $w_{\sigma}$ parallel arcs or parallel circles, coherently  transversely oriented. By an abuse of notation, we generally use $J_{\kappa}$ to signify this weighted $1$-manifold also.  For certain purposes, especially lifting $J_{\kappa}$ to define a system of $h'$-juncture components in $\UU'_{c'}$, it is best to view $J_{\kappa}$ in this way, but for other purposes, such as ``tunneling'' (Section~\ref{Tunnel}), it is better to view $J_{\kappa}$ as the honest $1$-manifold produced as in Lemma~\ref{Poincdu}.  For details, consult Sections~\ref{jnctrs} and~\ref{Tunnel}.

Regard $\T$ as a \textit{finite}, nonorientable 1-complex having as vertices the switches. Its structure is identical with that of the traintrack for $\Lambda_{F}$.  In particular, it has no ``Reeb annuli'' (see the remark on page~\pageref{noreebs}), although this is not essential for the following application of 
standard general position arguments. Clearly,

\begin{lemma}
An isotopy of $|J_{\kappa}|$ guarantees that $|J_{\kappa}|$ is transverse to the $1$-complex $\T$,  meets no vertex of $\T$ and intersects $\T$ so as to form no digons. 
\end{lemma}

\begin{cor}
The isotopy of $|J_{\kappa}|$ can be chosen so that this properly embedded $1$-manifold meets the leaves of $\Lambda_{F}$ transversely and so as to form no digons.
\end{cor}

\begin{proof}
Wherever $|J_{\kappa}|$ crosses $\T$, put a laminated chart for $\Lambda_{F}$.  An isotopy makes $|J_{\kappa}|$ cross the chart in a transversal without introducing digons.  
\end{proof}

 Since every escaping ray must cross junctures, we have the following.

\begin{lemma}
Each circle in $\T$ crosses at least one component of $J_{\kappa}$.
\end{lemma}

\begin{lemma}\label{poscross}
The juncture $J_{\kappa}$  can be chosen so that each oriented circle leaf $C$ of $\Lambda_{F}$ crosses  $J_{\kappa}$ only in the positive sense. 
\end{lemma}

\begin{proof}
By Proposition~\ref{tunnel}, one rechooses $J_{\kappa}$, if necessary, so that each $C$ either crosses it everywhere in the positive sense, or everywhere in the negative sense. This introduces no new intersections, hence does not undo any features we have established. But the transverse orientation of the negative  $h'$-junctures in $\UU'_{c'}$ are into the neighborhood of the end of $L'$ or $L$ that they cut off.  The orientation of an escaping ray is toward that end.  Since juncture and ray will have all intersections positive or all negative, they must have all intersections positive.  Down in $F$, this means that all circle leaves of $\Lambda_{F}$ have positive intersections with $J_{\kappa}$.  
 \end{proof}

 We fix  these properties of $J_{\kappa}$.

 As in Section~\ref{jnctrs}, we now lift $J_{\kappa}$ to a set $\{(h')^{n}(N'_{c'})\ |\ n\in\Z\}$ of $h'$-juncture components in $\UU'_{c'}$.  Letting $c'$ vary over all $h'$-cycles  of negative (respectively positive) ends of $L'$, we obtain a countable  set of  $h'$-junctures in $\UU'_{-}$ (respectively in $\UU'_{+}$) which we fix.  As in Definition~\ref{famNNg}, we define $\NN'_{-}$ to be the set of components of these negative $h'$-junctures, $\NN'_{+}$ to be the set of components of these positive $h'$-junctures, and $\NN' = \NN'_{-}\cup\NN'_{+}$.
 
 We define the set of juncture components $\JJ'$ accociated to $\NN'$ by $\JJ' = \NN'$ and the map $\iota:\NN'\to \JJ'$ to be the identity (see Definition~\ref{jnctfam}). Thus the set of junctures is identical to the set of $h'$-junctures.
 
 By Corollary~\ref{U'fr}, an element of $\JJ'_{-}$ $h'$-escapes if and only if it is disjoint from $|\Lambda'_{-}|$, with a similar statement for the elements of $\JJ'_{+}$. 
 
  \begin{lemma}\label{singleton}
 A component of a negative juncture intersects a negative escaping end $[a,\infty)$ in at most one point.
 \end{lemma}
 
 \begin{proof}
 Indeed, the intersection must be a singleton because, by Lemma~\ref{poscross}, an escaping ray can only meet a given juncture in positive intersections, hence only once.
 \end{proof}

\begin{defn}[$\YY'_{\pm}$]\label{yyprime}
The set $\YY'_{\pm}$ consists of the nonescaping  components of the elements of $\NN'_{\pm}$.
\end{defn}

Note that $\YY'_{\pm}$ is a discrete, nonclosed lamination.

\begin{lemma}\label{noaccpt}
If a sequence $\{y_{n}\}_{n=1}^{\infty}$ of points in distinct leaves of $\YY'_{\mp}$  converges  to a point  $y\in L'$, then $y\in\Lambda'_{\pm}$.
\end{lemma}

\begin{proof}
Evidently, $y\in\ol\UU'_{\mp}$ and, by Corollary~\ref{U'fr}, $\ol\UU'_{\mp}=\UU'_{\mp}\cup\Lambda'_{\pm}$.  
If  $y\in\UU'_{\pm}$, all but finitely many terms of the sequence lie in $\UU'_{c}$ for an $h'$-cycle of ends.  The covering projection  $q':\UU'_{c}\to F_{c}$ would then carry $y$ to a point $q'(y)=y'\in F_{c}$ such that infinitely many strands of $J_{\kappa}$ pass through a neigborhood of $y'$.  Since $J_{\kappa}$ is a compact $1$-manifold, this is impossible.
\end{proof}

\subsection{The last two axioms}

It remains that we verify   Axiom~\ref{ecorr} and Axiom~\ref{trnsvrs}.  

 Let $\NN'$ denote the set of $h'$-juncture components just constructed and let the set $\JJ'$ of juncture components be the same set, taking for $\iota:\NN'\to\JJ'$ the identity map.  Then, the condition that $\iota(N)$ be homotopic to $N$ for each $N\in\NN$ is  tautological.  By our construction, the following has already been proven.
 
 \begin{lemma}
 \emph{Axiom~\ref{trnsvrs}} holds for $\Lambda'_{\pm}$, with $\JJ'_{\pm}=\NN'_{\pm}$.
 \end{lemma}
 
  It remains to check Axiom~\ref{ecorr}, the endpoint correspondence property.

\subsubsection{Convergence properties of $\YY'_{\pm}$}

We work explicitly with the set $\YY'_{-}$ in $\UU'_{-}$, but everything carries through for the positive case in a completely parallel way.  
Let $\lambda$ be a border leaf of $\UU'_{e}$, where $e$ is a negative end of $L'$, and let $[a,\infty)$ be an escaping end issuing from $a\in\lambda$.  There may or may not be a second escaping end $[a',\infty)$ issuing from $a'\in\lambda$.  Let $\{\sigma_{n}\}_{n=-\infty}^{\infty}$ be an enumeration of the elements of $\YY'_{-}$ intersecting $[a,\infty)$.  By Lemma~\ref{singleton}, $\sigma_{n}\cap[a,\infty)$ is a singleton, $x_{n}$, $-\infty<n<\infty$.  The indexing is chosen so  that $\{x_{n}\}_{-\infty<n<\infty}$ is monotonically increasing in $[a,\infty)$.

\begin{defn}[assemblage]\label{assemblg}
The connected set
$$
\lambda\cup[a,\infty)\cup\bigcup_{n=-\infty}^{\infty}\sigma_{n}\ss L'
$$
will be called an \emph{assemblage}. 
\end{defn}

\begin{rem}
We can also view an assemblage as contained in the internal completion  $\ddot\UU'_{-}$ and define similar assemblages in $\ddot\UU'_{+}$.
 For an end $e$ of $L$, there are finitely many  assemblages in $\ddot \UU'_{e}$, hence in $\ddot\UU'_{\pm}$.   An assemblage in $\ddot\UU'_{-}$ may be called a negative assemblage and one in $\ddot\UU'_{+}$ a positive assemblage.  As mentioned above, we are explicitly considering the negative ones, but will mention the minor adjustment necessary in the positive case.
 \end{rem}

Let $p$ be the least positive integer such that $(h')^{p}$ carries $\lambda$ to itself, preserving orientation.  Thus $(h')^{p}$ carries $[a,\infty)$ to itself and permutes the set $\{x_{n}\}$ so as to preserve the order. The sequence $\{x_{n}\}_{n=-\infty}^{\infty}$ falls into finitely many $(h')^{p}$-orbits. The point $a$ is $(h')^{p}$-contracting in $[a,\infty)$.  If it is an isolated fixed point in $\lambda$, then $a$ is $(h')^{p}$-expanding in $\lambda$ on both sides.  Otherwise, there is an invariant subinterval $[a',a]\ss\lambda$ and $a$ and $a'$ are both $(h')^{p}$-expanding in $\lambda$  on the sides not containing $[a',a]$.  All of this is easily deduced from the corresponding properties in $L$, being known there since the axioms are satisfied in $L$.

\begin{lemma}\label{convergestobothends}
If $e$ is the end of $L'$ to which $[a,\infty)$ escapes, then $\lim_{n\to\infty}x_{n}=e$ and $\lim_{n\to\infty}x_{-n}=a$.
\end{lemma}

\begin{proof}
The first equality is due to the fact that the $x_{n}$'s lie in the component $\UU'_{e}$ of the negative $h'$-escaping set.  The second sequence is monotonically decreasing in $[a,\infty)$, hence converges to a point $b$ in that ray. Clearly $b$ is $h'$-periodic, hence cannot lie in $\UU'_{-}$.  Therefore, $b=a$.  Alternatively, $b=a$ is a consequence of  Lemma~\ref{noaccpt}.
\end{proof}

Let
$$
\lambda\cup[a,\infty)\cup\bigcup_{n=-\infty}^{\infty}\sigma_{n}\ss L'
$$
be an assemblage. A choice of lift $\wt\lambda$ gives a lift $\wt a$ of $a$, hence a lift $[\wt a,\infty)$ of $[a,\infty)$, hence lifts $\wt x_{n}\in[\wt a,\infty)$ and lifts $\wt\sigma_{n}$ through $\wt x_{n}$, $-\infty<n<\infty$. This defines a lifted assemblage  
$$
\wt \lambda\cup[\wt a,\infty)\cup\bigcup_{n=-\infty}^{\infty}\wt\sigma_{n}\ss\wt L'
$$
of which there are a countable infinity.
An application of $\nu^{-1}$ (Corollary~\ref{LamLam'}) transfers this assemblage to 
$$
\wt\gamma\cup[\wt b,\infty)\cup\bigcup_{n=-\infty}^{\infty}\wt\tau_{n}\ss\wt L,
$$
a lift of a \emph{particular} transfer of the  assemblage in $L'$ to $L$.  In this transfer,
$$
\gamma\cup[ b,\infty)\cup\bigcup_{n=-\infty}^{\infty}\tau_{n}\ss L,
$$
the respective transfers of the points $x_{n}\in[a,\infty)$ will be denoted by $y_{n}\in[b,\infty)$.

\begin{rem}
There is a possible surprise here.  Examples show that the curves $\sigma_{n}$ may be loops that open up under transfer.  As nonparametrized curves, there may be only finitely many distinct transferred curves $\tau_{n}$, each meeting $[b,\infty)$  in a bi-infinite sequence of points  each.  By parametrizing these transfers so that  $\tau_{n}(0)=y_{n}$, we view $\{\tau_{n}\}_{n\in\Z}$ as an infinite family of distinct parametrized curves. 
\end{rem}

There is a smallest positive integer $q$ such that $h^{q}$ carries $\gamma\cup[b,\infty)$ to itself, preserving orientation of $\gamma$. It is not necessarily true that $q=p$.  

\begin{lemma}
the map $h^{q}$ permutes the set $\{y_{n}\}$  and also the set $\{\tau_{n}\}$ of \emph{paramet-rized} curves, having finitely many distinct orbits in each.  
\end{lemma}

\begin{proof}
Denote by $\mu:[a,\infty)\to[b,\infty)$ the transfer such that $\mu(x_{n})=y_{n}$, $-\infty<n<\infty$.  Then $\mu^{-1}\o h^{q}\o\mu:[a,\infty)\to[a,\infty)$ is a transfer operation, hence is of the form $(h')^{kp}$, for some integer $k$.  That is, $h^{q}\o\mu=\mu\o(h')^{kp}$.  Thus, for each $n\in\Z$, there is $m\in\Z$ such that 
$$
h^{q}(y_{n})=h^{q}(\mu(x_{n})) = \mu ((h')^{kp}(x_{n})) = \mu(x_{m}) = y_{m}.
$$
That is, $h^{q}$ maps the set $\{y_{n}\}$ into itself.  The same argument applies to $h^{-q}$, hence $h^{q}$ maps $\{y_{n}\}$ bijectively to itself.  Evidently, the same argument applies to $\{\tau_{n}\}$.  That there are only finitely many $h^{q}$-orbits is evident.
\end{proof}

\begin{figure}
\begin{center}
\begin{picture}(300,120)(55,-205)
\rotatebox{270}{\includegraphics[width=300pt]{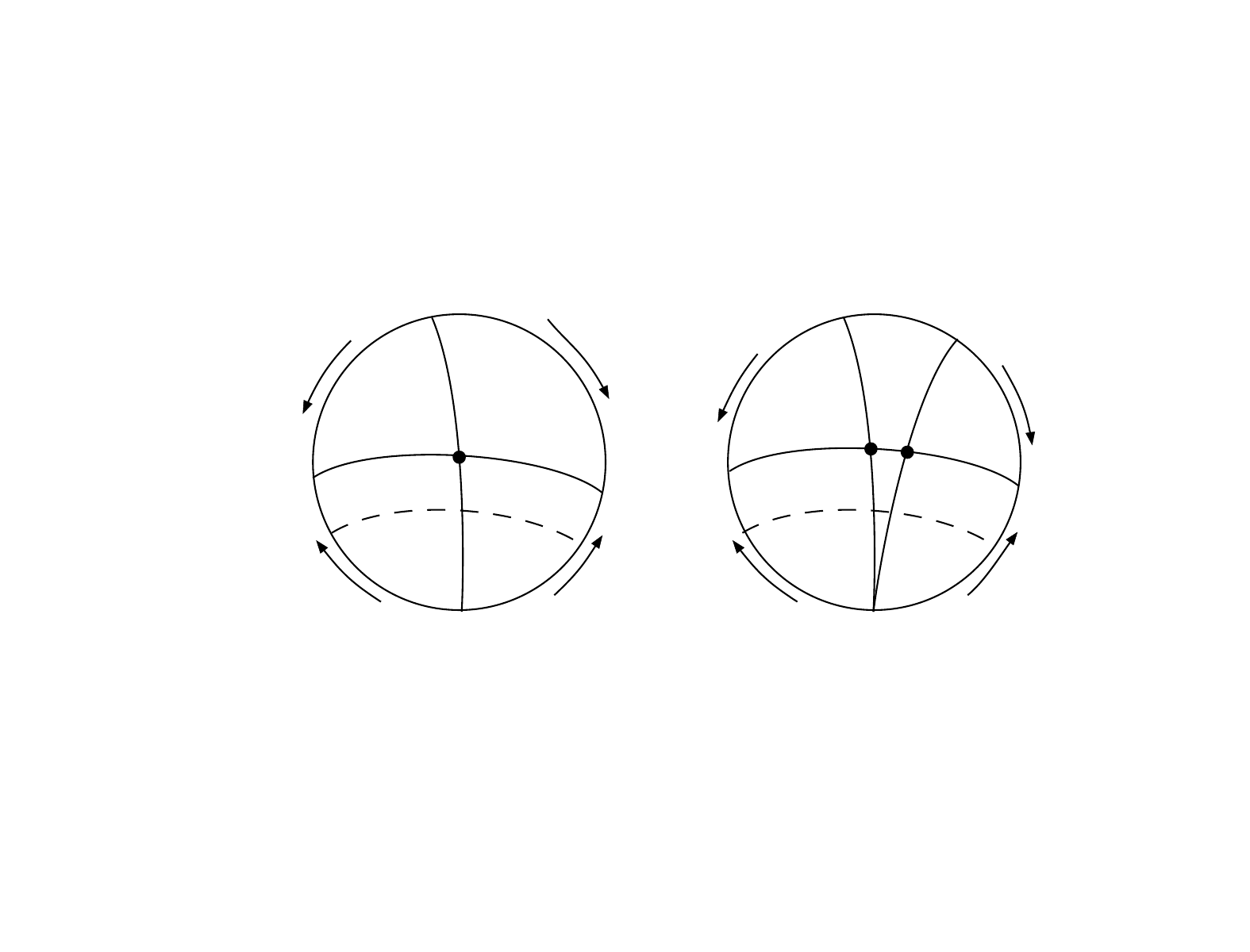}}

\put(-150,-150){\small$\wt\gamma$}
\put(-280,-152){\small$\wt\gamma$}
\put(-247,-155){\small$\wt b$}
\put(-107,-153){\small$\wt b$}
\put(-97,-173){\small$\wt\tau_{n}$}
\put(-230,-173){\small$\wt\tau_{n}$}

\end{picture}
\caption{Case of juncture components meeting one or two escaping ends of a leaf of $\Lambda_{-}$.}\label{2cases}
\end{center}
\end{figure}

Since the laminations on $L$ satisfy the axioms, Proposition~\ref{3cases} applies.   In Figure~\ref{2cases} we reproduce the pertinent two cases of Figure~\ref{PR}, relabeling with the current notation.  The arrows indicate the action  on $\Se$ of the completion $\wh g$ of a suitable lift $g$ of $h^{q}$ (one which fixes the ideal endpoints of $\wt\gamma$).  Since the  curves $\wt\tau_{n}$ are permuted by $g$, they have endpoints (finite or ideal) converging to the ideal endpoint of $[\wt b,\infty)$ as $n\to\infty$ and to the ideal endpoints of $\wt\gamma$ as $n\to-\infty$.  Under application of $\wh\nu$ (Proposition~\ref{main}), the same convergence property holds for the lifted set in $\wt L'$.  This establishes the following.

\begin{prop}\label{endpoints1}
Given an assemblage 
$$
\lambda\cup[a,\infty)\cup\bigcup_{n=-\infty}^{\infty}\sigma_{n}\ss L',
$$
and a lift of this assemblage to $\wt L$,
the endpoints of $\wt\sigma_{n}$ in $\Se$ converge to the ideal endpoints of $\wt\lambda$ as $n\to-\infty$ and to the ideal endpoint of $[\wt a,\infty)$ as $n\to\infty$.  
\end{prop}

\begin{rem}
The above discussion was for a negative assemblage.  In the positive case, $g$ should be a suitable lift of $h^{-q}$.  This gives Proposition~\ref{endpoints1} in both the positive and the negative case.
\end{rem}

\subsubsection{Verifying the endpoint correspondence property}

Let $\JJ^{\g}$ be the set of geodesic juncture componentss obtained as the geodesic tightening of the elements of $\JJ'$.  Let $\UU^{\g}_{-}$ denote the negative escaping set for $(\Lambda^{\g}_{+},\Lambda^{\g}_{-})$, the geodesic Handel-Miller bi\-lamination associated to $h'$.  The set $\YY^{\g}_{\mp}$ consists of the geodesic tightening of the elements of $\YY'_{\mp}$.  

In order to verify Axiom~\ref{ecorr} for $(\Lambda'_{+},\Lambda'_{-})$, we must show that $\Lambda^{\g}_{\pm}$ is exactly the set of geodesic tightenings of the elements of $\Lambda'_{\pm}$.

\begin{lemma} 
A component of $\JJ'$ escapes if and only if its geodesic tightening escapes.
Equivalently, the set $\YY^{\g}_{\mp}$ is exactly the set of nonescaping components of the geodesic junctures.
\end{lemma}

\begin{proof}
``Only if'' is given by Theorem~\ref{essc}.  Since there are no digon intersections of elements of $\JJ'_{\pm}$ with leaves of $\Lambda'_{\pm}$, any such intersection is essential, hence the same is true of the geodesic tightenings of such juncture components.  That is, nonescaping elements of $\JJ'$ tighten to nonescaping elements of $\JJ^{\g}$, proving the ``if'' assertion.
\end{proof}

By  the construction of the Handel-Miller geodesic bilamination in Section~\ref{constr}, we have the following.

\begin{cor}
The geodesic laminations and nonescaping juncture components satisfy the following:
\begin{enumerate}

\item $\Gamma^{\g}_{\pm}=\Lambda^{\g}_{\pm}\cup\YY^{\g}_{\mp}$\upn{;}

\item $\Gamma^{\g}_{\pm}$ is a closed geodesic lamination\upn{;}

\item $|\YY^{\g}_{\mp}|$ is dense in $|\Gamma^{\g}_{\pm}|$.

\end{enumerate}

\end{cor}

\begin{nota}
Let $X^{\g}_{\pm}$ denote the set of pairs $\{x,y\}\ss\Si$ such that there exists $\lambda\in\Lambda^{\g}_{\pm}$ having a lift $\wt\lambda$ with endpoints $x,y$.

\end{nota}

\begin{prop}\label{thesetXgeo}

The following two statements are equivalent,

\begin{enumerate}

\item There exists a sequence $\{\sigma_{n}\}_{n\ge 1}\ss\YY'_{\mp}$ and lifts $\wt\sigma_{n}$ having endpoints $x_{n},y_{n}\in\Se$ such that $x_{n}\to x$  and $y_{n}\to y$ as $n\to\infty$\upn{;}

\item $\{x,y\}\in X^{\g}_{\pm}$.

\end{enumerate}

\end{prop}

\begin{proof}
This follows immediately from the facts that lifts of $\sigma_{n}\in\YY'_{\mp}$ and $\sigma^{\g}_{n}\in\YY^{\g}_{\mp}$ have the same endpoints on $\Se$ and that the geodesic laminations $\Gamma^{\g}_{\pm}$ have the strongly closed property (Definition~\ref{strnglyclsed}).
\end{proof}

\begin{nota}
Let $X'_{\pm}\ss $ denote the set of pairs $\{x,y\}\ss\Si$ such that there exists $\lambda\in\Lambda'_{\pm}$ having a lift $\wt\lambda$ with endpoints $x,y$. We have,

\end{nota}

\begin{prop}\label{thesetXpr1}

\begin{enumerate}

\item For every pair $\{x,y\}\in X'_{\pm}$,  there exists a sequence $\{\sigma_{n}\}_{n\ge1}\ss\YY'_{\mp}$ and lifts $\wt\sigma_{n}$ having endpoints $x_{n},y_{n}\in\Se$ such that $x_{n}\to x$  and $y_{n}\to y$ as $n\to\infty$\upn{;}\label{itemoneprime}

\item $X'_{\pm}\ss X^{\g}_{\pm}$.\label{itemtwoprime} 

\end{enumerate}

\end{prop}

\begin{proof}
By Proposition~\ref{thesetXgeo}, $(\ref{itemoneprime})\Ra(\ref{itemtwoprime})$.  To prove (\ref{itemoneprime}), we prove  that for every $\{x,y\}\in X'_{+}$ there exists  a sequence $\{\sigma_{n}\}\ss\YY'_{-}$ and lifts $\wt\sigma_{n}$ having endpoints $x_{n},y_{n}\in\Se$ such that $x_{n}\to x$  and $y_{n}\to y$ as $n\to\infty$. The other case is analogous.  

If $x,y$ are the endpoints of the lift of a border leaf of $\UU'_{-}$ in $\Lambda'_{+}$, then such a sequence exists by Proposition~\ref{endpoints1}.   

 If $x,y$ are the endpoints of $\wt\lambda$, where $\lambda\in\Lambda'_{+}$ is not a border leaf of $\UU'_{-}$, then Proposition~\ref{borderdense'} implies that there is a sequence $\{\wt\lambda_{n}\}$ of lifts of border leaves that strongly converges to $\wt\lambda$.  Since $\Lambda'_{+}$ is strongly closed (Corollary~\ref{primestrcl}), the lifts $\wt\lambda_{n}$ have endpoints $a_{n},b_{n}\in\Si$ such that $a_{n}\to x$ and $b_{n}\to y$.  By Proposition~\ref{endpoints1},  there exists a sequence $\{\sigma_{n}\}\ss\YY'_{-}$ and lifts $\wt\sigma_{n}$ having endpoints $x_{n},y_{n}\in\Se$ such that $d(x_{n},a_{n})<1/n$ and $d(y_{n},b_{n})<1/n$. Here the distance is Euclidean distance on the closed unit disk.  Thus, $x_{n}\to x$ and $y_{n}\to y$ as required.
\end{proof}

\begin{cor}\label{correctside}

If $\ell\in\wt\Lambda'_{\pm}$ is the leaf with endpoints $\{x,y\}$ and a side of $\ell$ is specified, the lifts $\wt\sigma_{n}$ in part \emph{(\ref{itemoneprime})} of \emph{Proposition~\ref{thesetXpr1}} can be chosen to lie on that side unless $\ell$ is a lift of a border leaf of a positive principal region $P$ and the side in question borders a lift $\wt P$.

\end{cor}

\begin{proof}
If the specified side of the leaf $\ell$ is not isolated, the argument in the proof of (\ref{itemoneprime}) of  Proposition~\ref{thesetXpr1}  goes through on that side of $\ell$.  If the specified side of $\ell$ is isolated and borders $\UU'_{-}$ on that side, we   explicitly handled that case  in the proof of (\ref{itemoneprime}) of  Proposition~\ref{thesetXpr1}.  If the side of $\ell$ is isolated and the leaf $\ell$ borders a positive principal region $P$ on that side, there are no negative junctures meeting $P$, by definition.
\end{proof}

\begin{prop}\label{thesetXpr2}
$X^{\g}_{\pm}\ss X'_{\pm}$.
\end{prop}

\begin{proof}
We prove that $X^{\g}_{+}\ss X'_{+}$. The proof that $X^{\g}_{-}\ss X'_{-}$ is analogous. Suppose that $\{x,y\}\in X^{\g}_{+}$. By Proposition~\ref{thesetXgeo},  there exists  a sequence $\{\sigma_{n}\}\ss\YY'_{-}$ and lifts $\wt\sigma_{n}$ having endpoints $x_{n},y_{n}\in\Se$ such that $x_{n}\to x$  and $y_{n}\to y$ as $n\to\infty$. By definition, $\{x,y\}\in X^{\g}_{+}$ implies that there exists $\lambda^{\g}\in\Lambda^{\g}_{+}$ having a lift $\wt\lambda^{\g}$ with endpoints $x,y$. Since $x_{n}\to x$  and $y_{n}\to y$ as $n\to\infty$, it follows that the sequence of hyperbolic geodesics $\wt\sigma^{g}_{n}$ converges strongly to the hyperbolic geodesic $\wt\lambda^{\g}$. In particular, the sequence $\{\sigma^{g}_{n}\}$ does not escape so, by Theorem~\ref{essc}, the sequence $\{\sigma_{n}\}$ does not escape.

Remark that the points $x,y\not\in\bd\wt L'$, hence lie in $E$.  Let $B_{1}\ss B_{2}\ss\cdots\ss B_{j}\ss\cdots$ be an exhaustion of $\wt L'$ by compact sets homeomorphic to disks.  Since the sequence $\{\sigma_{n}\}_{n\ge 1}$ does not escape, for some $j_{0}\ge1$ and all large enough values of $n$, $\wt\sigma_{n}\cap B_{j_{0}}\ne\0$.  Thus, there exists points $z_{n}\in\wt\sigma_{n}\cap B_{j_{0}}$.  These points cluster on at least one point $z\in B_{j_{0}}$. By passing to a subsequence, we can assume that $z_{n}\to z$. The point $z$ projects to a point $w\in L'$ on which the sequence of negative juncture components $\{\sigma_{n}\}$ accumulates.  By Lemma~\ref{noaccpt}, there is a leaf $\ell\in\Lambda'_{+}$ containing $w$.  Thus, there is a lift $\wt\ell\in\wt\Lambda'_{+}$ containing $z$.  Let $a,b\in\Si$ be the endpoints of $\wt\ell$.

Without loss we can assume that the $\wt\sigma_{n}$ all lie on the same side of $\wt\ell$. By Corollary~\ref{correctside},  there exists a sequence $\{\tau_{k}\}_{k\ge1}\ss\YY'_{-}$ and lifts $\wt\tau_{k}$ having endpoints $a_{k},b_{k}\in\Se$ such that $a_{k}\to a$  and $b_{k}\to b$ as $k\to\infty$ with the $\wt\tau_{k}$ all lying on the same side  of $\wt\ell$ as the $\wt\sigma_{n}$'s. For each $k\ge 1$, the lifts $\wt\sigma_{n}$ must be caught between $\wt\tau_{k}$ and $\wt\ell$, for all large enough values of $n$.  This implies that the endpoints $x_{n}.y_{n}$ converge to the endpoints $a,b$ of $\wt\ell$. Choose notation so that $x_{n}\to a$ and $y_{n}\to b$ as $k\to\infty$. Therefore $\{x,y\} = \{a,b\}\in X'_{+}$. 
\end{proof}

By Propositions~\ref{thesetXpr1}  and~\ref{thesetXpr2},  $X^{\g}_{\pm} = X'_{\pm}$. Thus, the laminations $\Lambda'_{\pm}$ satisfy the endpoint correspondence  property and we have,

\begin{cor}\label{endpoints2222}
 \emph{Axiom~\ref{ecorr}} holds for  $\Lambda'_{\pm}$.
 \end{cor}
 
 All four axioms have been verified for $(\Lambda'_{+},\Lambda'_{-})$, hence the proof of Theorem~\ref{transfer} is complete.

\section{Foliations}\label{appfolns}

In Section~\ref{FoCones}, we give  a heuristic,  detailed outline of the first two authors' theory of ``foliation cones''~\cite{cc:cone,cc:almostnohol} for taut, depth one foliations of compact $3$-manifolds.   More generally, the monodromy of a proper leaf in an open saturated foliated subset of a $C^{2}$ foliation of a compact $3$-manifold $M$ is an endperiodic automorphism. If such a leaf is not at depth one it will have infinite endset. In Section~\ref{OpFolSet}, we give a sketch of the structure of open saturated foliated sets and in Section~\ref{endpinf} we show how the theory of endperiodic automorphisms can be extended to surfaces with infinite endset.

\subsection{A technical result about isotopic monodromies.}

Let $(M,\FF)=(M_{0},\FF_{0})$ be a depth one  (Definition~\ref{dpthonefoln}), compact, foliated $3$-manifold with $\tb M$ as sole compact leaves. As usual we set $M^{\o}=M\sm\tb M$. Let $f=f_{0}:L\to L$ be a monodromy for this foliation and let $f_{1}:L\to L$ be an endperiodic automorphism isotopic to $f_{0}$ by an isotopy $f_{t}$, $0\le t\le1$. If $f_{0}$ and $f_{1}$ are diffeomorphisms, the isotopy can be taken to be smooth. It will not be necessary to assume that $f_{t}$ is endperiodic, $0<t<1$. Let  $(M_{1},\FF_{1})$ be the depth one foliated manifold with monodromy $f_{1}$ given by Lemma~\ref{realize}.  

We are going to realize $M^{\o}=M_{0}^{\o}$ as $L\x[-1,1]/\{(x,1)\equiv(f_{0}(x),-1)\mid x\in L\}$.  The factors $\{x\}\x[-1,1]$ hook together to form a one dimensional foliation $\LL_{0}$ transverse to $\FF|M^{\o}$ inducing the monodromy $f=f_{0}$ on $L\x\{0\}$.  We modify $\LL_{0}$ on $L\x[-1,0]$   by replacing the segment $(f_{0}(x),t)$, $-1\le t\le0$, with the segment $(f_{t+1}(x),t)$, for all $x\in L$. Denote the new transverse foliation by $\LL_{1}$ and note that it induces monodromy $f_{1}$ on $L\x\{0\}$. In the smooth case, $\LL_{1}$ may have corners at $L\x\{-1\}$ and $L\x\{0\}$, but these can be smoothed by standard techniques. Thus, in constructing $(M_{1},\FF_{1})$ by Lemma~\ref{realize}, we can assume that $M^{\o}=M_{0}^{\o}=M_{1}^{\o}$ canonically.  Since the inclusions $M^{\o}\hra M_{0}$ and $M^{\o}\hra M_{1}$ are homotopy equivalences, we obtain the following result.

\begin{lemma}\label{homotequiv}
If $f_{0},f_{1}:L\to L$ are isotopic endperiodic automorphisms and $(M_{0},\FF_{0})$, $(M_{1},\FF_{1})$ are the corresponding depth one foliated $3$-manifolds given by \emph{Lemma~\ref{realize}}, Then there is a canonical identification $M_{0}^{\o}=M_{1}^{\o}$, $\FF_{0}|M_{0}^{\o}=\FF_{1}|M_{1}^{\o}$ and a canonical homotopy equivalence of $M_{0}$ to $M_{1}$.
\end{lemma}

In particular, $M_{0}$ and $M_{1}$ have the same homology and cohomology.  This will  be critical in what follows.

\begin{rem}
It seems certain that $M_{0}=M_{1}$ by a canonical homeomorphism (diffeomorphism if $f_{0}$ and $f_{1}$ are diffeomorphisms), but a proof seems surprisingly delicate.    Lemma~\ref{homotequiv} is adequate for our current purposes.
\end{rem}

\subsection{Foliation cones}\label{FoCones}

We give a brief synopsis of the results of~\cite{cc:cone,cc:almostnohol}, indicating how  Handel-Miller theory plays a key role in the proofs.

\begin{rem}
The theory of endperiodic automorphisms is basic to our theory of foliation cones. The development in~\cite[Section~5]{cc:cone}, particularly the proof of the transfer theorem~\cite[Theorem~5.8]{cc:cone}, is not adequate. We have rectified these problems in this paper and~\cite{cc:almostnohol}. In addition we have replaced the proof of~\cite[Lemma~4.10]{cc:cone} by an elementary proof of~\cite[Theorem~4.1]{cc:almostnohol}.

\end{rem}

Let $(M,\FF)$ be a depth one foliated $3$-manifold satisfying Hypothesis~\ref{ongo}.  That is, $M$ is  a compact connected $3$-manifold that is not a product such that  every component of $\tb M$ has negative Euler characteristic and the foliation $\FF$ is smooth, transversely oriented, taut, depth~one, and induces no 2-dimensional Reeb components on $\trb M$.

\begin{rem}
If $M$ has a tangential boundary component that is a toral leaf $T$, then each end of a leaf of $\FF$ that approaches $T$ has a neighborhood of the form $S^{1}\times [0,\infty)$ which spirals in on $T$. The torus $T\ss M$ has a neighborhood of the form $T\times [0,1]$ with $T\times\{0\} = T$ and $T\times\{1\} = T'$, a torus tansverse to $\FF$. Then $M' = M\sm T\times[0,1)$ has $T'$ as transverse boundary and $\FF|T'$ is a foliation of $T'$ by circles. The $3$-manifold $M$ has at most finitely many tangential toral boundary components all of which can be converted to transverse toral boundary components in this way to yield a $3$-manifold $M'$ that is either fibered by $\FF|M'$ or is such that $\FF|M'$ is a depth one foliation satisfying Hypothesis~\ref{ongo}.

\end{rem}

Since $(M,\FF)$ can be assumed to be of class $\Ci$,  we will carry out  all of our constructions in the smooth category. Let $\LL_{f}$ be a smooth, transverse, $1$-dimensional foliation inducing an endperiodic diffeomorphism $f:L\to L$ on a depth one leaf.  Let $\X_{f}$ denote the compact sublamination of $\LL_{f}$ consisting of the leaves that do not meet $\tb M$.  This is the $\LL_{f}$-saturation of the maximal compact $f$-invariant subset $X_{f}\ss L$. That is, $\X_{f}$ is the union of the leaves of $\LL_{f}$ which intersect $L$ in points of $X_{f}$. Since we assume that $f$ is not a translation, $X_{f}\ne\0$.  

\begin{defn}
The lamination $\X_{f}$ will be called the core (sub)lamination of $\LL_{f}$.
\end{defn}

The Schwartzmann-Sullivan theory of asymptotic cycles~\cite{sull:cycles,sch_cycles} associates to the core lamination $\X_{f} $ a closed convex cone with compact base in the infinite dimensional space of closed de~Rham $1$-currents.  For example, every closed leaf of $\X_{f}$ is a homology cycle asymptotic to $\X_{f}$.  More generally, any leaf $\ell\in\X_{f}$ defines ``long almost closed orbits'' which converge to asymptotic cycles.  One takes a sequence of longer and longer subarcs $P_{k}$ of $\ell$, divides by the length $\|P_{k}\|$ of the subarc, obtaining a sequence of  de~Rham 1-currents containing subsequences that converge to asymptotic cycles called ``homology directions''. All asymptotic cycles are limits of sequences of linear combinations of homology directions with positive coefficients.    One also allows $0$ to be considered an asymptotic cycle so that the cone has a vertex.  The cone of asymptotic cycles passes to a closed convex cone $\C'_{f}\ss H_{1}(M;\R)$ with compact base.  The dual cone $\C_{f}\ss H^{1}(M;\R)$ consists of exactly those classes which take all values $\ge0$ on $\C'_{f}$.  This cone is closed and convex, has nonempty interior, but does not necessarily have a compact base.

\begin{rem}
There are extreme cases in which $\C'_{f}$ reduces to a single ray issuing from the origin, in which case $\C_{f}$ is an entire half space.
\end{rem}

\begin{defn}[foliated form]\label{ff}
A 1-form $\eta\in A^{1}(M^{\o})$ is a foliated form if it is closed, nowhere vanishing and becomes unbounded at $\tb M$ in such a way that the corresponding foliation $\FF^{\o}_{\eta}$ that it defines on $M^{\o}$ extends  by adjunction of $\tb M$ to a transversely oriented $\Ci$ foliation ${\FF}_{\eta}$ of $M$, $\Ci$-flat  at $\tb M$.
\end{defn} 

The following is a compilation of~\cite[Theorem~4.9 and Theorem~7.1]{cc:almostnohol}, the last assertion about the structure of the foliations being rather elementary.

\begin{prop}\label{folforms}
 The  open cone $\intr\C_{f}$ consists of  classes in $H^{1}(M)$ that can be represented by foliated forms transverse to $\LL_{f}|M^{\o}$.  The ray $\left<\eta\right>$ issuing from the origin through any of these classes $[\eta]$ \upn{(}called a foliated ray\upn{)} determines the foliation $\FF_{\eta}$ uniquely up to a $\CO$ ambient isotopy.  Those foliated rays passing through nontrivial elements of the integer lattice $H^{1}(M;\Z)$ \upn{(}called rational rays\upn{)} determine foliations of depth one, while the irrational foliated rays define foliations which, in $M^{\o}$, are dense leaved without holonomy.
 \end{prop}

 \begin{defn}
 If $\GG=\FF_{\eta}$ for a foliated form $\eta$, we denote the foliated ray $\left<\eta\right>$ by $\left<\GG\right>$.
 \end{defn}
 
  \begin{defn}
  The cones $\C_{f}$, obtained from smooth, one dimensional foliations $\LL_{f}$ transverse to $\FF$, are called the \emph{foliation cones} associated to $\FF$. 
  \end{defn}

 One wants to find a monodromy $h$ for $\FF$ with  ``tightest'' dynamics in the sense that the   cone $\C'_{h}$ is contained in every    cone $\C'_{g}$ as $g$ varies over the smooth monodromies for $\FF$. Equivalently, $\C_{h}$ will contain every $\C_{g}$.   As the reader will have guessed, $h$ will be a Handel-Miller monodromy (Definition~\ref{HMmonofoln}). We are going to  sketch the proof after some preliminary considerations.
 
 To begin with, by Theorem~\ref{HMsmooth}, there is a smooth Handel-Miller  automorphism $h$, that is  a smooth endperiodic automorphism  $h:L\to L$  isotopic to a monodromy automorphism $f:L\to L$ for $\FF$ and preserving a Handel-Miller pseudo-geodesic bilamination $(\Lambda_{+},\Lambda_{-})$  associated to $f$.  Using Lemma~\ref{realize}, construct a depth one foliated manifold $(M_{h},\FF_{h})$ with monodromy $h$.  By Lemma~\ref{homotequiv}, we have $M^{\o}=M_{h}^{\o}$.  Given any smooth one dimensional foliation $\LL$ transverse to $\FF$, its core sublamination $\XX$ lives in $M^{\o}=M_{h}^{\o}$  The restriction of $\LL$ to $M_{h}^{\o}$ is integral to a smooth vector field which, near $\tb M_{h}$ and outside a neighborhood of $\X$ can be smoothly modified so as to be defined and nonsingular on $M_{h}$ and transverse to $\FF_{h}$ there.  This provides a one dimensional foliation $\LL'$ of $M_{h}$, transverse to $\FF_{h}$ and having $\X$ as its core sublamination.  By the equalities $H_{1}(M)=H_{1}(M_{h})$ and $H^{1}(M)=H^{1}(M_{h})$, we see that every foliation cone associated to $\FF$ is identical with a foliation cone associated to $\FF_{h}$.  Reasoning similarly, every foliation cone associated to $\FF_{h}$ is also a foliation cone associated to $\FF$.  We summarize.
 
 \begin{lemma}
 Let $h$ be a smooth Handel-Miller automorphism isotopic to a monodromy of $\FF$.  Then, under the canonical identification $H^{1}(M;\R)=H^{1}(M_{h};\R)$,  the foliation cones associated to $\FF$ are identical with those associated with $\FF_{h}$.
 \end{lemma}

\begin{*hyp}
 \textbf{From now on in Section~\ref{FoCones}, we assume that $\FF$ itself has a smooth Handel-Miller monodromy $h$.}
\end{*hyp}

   Let $\LL_{h}$ be a smooth, transverse, $1$-dimensional foliation defining $h$, $\X_{h}$ the compact sublamination of $\LL_{h}$ which is the $\LL_{h}$-saturation of the maximal, compact, $h$-invariant subset $X_{h}\ss L$.
  In general,  $X_{h}$ consists of $|\Lambda_{+}|\cap|\Lambda_{-}|\cup N$, where $N$ is a compact surface, generally not connected, which is the union of the nuclei of principal regions.  The components of $N$ are permuted by $h$.  An isotopy of $h$  supported in $N$ puts $h|N$ in Nielsen-Thurston canonical form~\cite{FLP} without destroying the fact that $h$ leaves the laminations invariant.  This means that $N$ is partitioned into connected subsurfaces $N_{i}$, each of which is invariant under $h^{p}$, some $p\ge1$, such that $h^{p}|N_{i}$ is either pseudo-Anosov or periodic.  These subsurfaces are bordered by annuli in $N$ and $h^{p}|N$ is smooth except at finitely many multi-pronged singularities.
  
  \begin{defn}
  If  $h$ is Handel-Miller and $h|N$  is in Nielsen-Thurston canonical form, $h$ is said to be a tight Handel-Miller monodromy automorphism for $\FF$.
  \end{defn}
 
 \begin{rem}
 The transfer theorem (Theorem~\ref{transfer}) easily extends to show that tight Handel-Miller monodromy $h$ on $L$ transfers to tight Handel-Miller monodromy $h'$ on $L'$~\cite[Theorem~6.20]{cc:almostnohol}.   Since $\X_{h}=\X_{h'}$, we see that $\C_{h}=\C_{h'}$.
 \end{rem}
 
  Remark that $h$ is not uniquely determined by its isotopy class.  We state without proof Theorem~6.18 of~\cite{cc:almostnohol}.
 
 \begin{prop}\label{HMuniq}
 The cone $\C_{h}\ss H^{1}(M)$ is independent of the choice of the representative $h$ of tight Handel-Miller monodromy automorphism for $\FF$.
 \end{prop}
 
 Because of this proposition, we will often denote $\C_{h}$ by $\C_{\FF}$.
 
 \begin{defn}[Handel-Miller foliation cone]\label{HMfolncone}
 The cone  $\C_{h}=\C_{\FF}$ is called the \emph{Handel-Miller foliation cone} associated to $\FF$.
 \end{defn}
 
 The Handel-Miller foliation cones have a nice geometric structure which often makes explicit computations possible.
 
\begin{prop}
Each Handel-Miller foliation cone $\C_{\FF}\ss H^{1}(M;\R)$ is polyhedral.
\end{prop}

\begin{proof}[Outline of the proof]
Let $X^{*}_{h}=|\Lambda_{+}|\cap|\Lambda_{-}|$, called the \emph{meager} invariant set,  and let $\X^{*}_{h}$ denote the $\LL_{h}$-saturation of the meager invariant set.  For each Nielsen-Thurston component $N_{i}$, let $\X_{N_{i}}$ denote the $\LL_{h}$-saturation.  If $A$ is the union of annuli bordering these components, as well as any annular or M\"obius nuclei of principal regions, let $\X_{A}$ denote the $\LL_{h}$-saturation of $A$.  We can ignore nuclei that are disks since their $\LL_{h}$-saturations contribute no asymptotic cycles not already contributed by $\X^{*}_{h}$.  The Markov partition of Section~\ref{cordyn} determines finitely many minimal period $h$-orbits in $X^{*}_{h}$ which correspond to minimal loops in $\X^{*}_{h}$.  Homologically, all loops in $X^{*}_{h}$ are linear combinations of minimal loops with coefficients nonnegative integers.  In turn, homology directions corresponding to leaves of $\X^{*}_{h}$ are limits, homologically, of sequences of nonnegative linear combinations of loops, while every asymptotic cycle for $\X^{*}_{h}$ is in the closure of the linear span with nonnegative coefficients  of the homology directions.  Bottom line, the finitely many minimal loops span the homology cone represented by asymptotic cycles  of $\X^{*}_{h}$.  Similarly, if $N_{i}$ is a pseudo-Anosov component of the Nielsen-Thurston decomposition of $N$, invariant under $h^{p}$, there is a Markov partition for $h^{p}|N_{i}$ and the same reasoning shows that the homology cone represented by  asymptotic cycles of $\X_{N_{i}}$ is spanned by finitely many minimal loops.  If $N_{i}$ is a periodic component, $\X_{N_{i}}$ is a compact $3$-manifold which is Seifert fibered by $\LL_{h}$.  The cone has a single generator.  It is elementary that $\X_{A}$ has finitely many generating cycles, at most $2$ for each annulus or M\"obius strip.  All of this gives a finite spanning set, with possible redundancies, for $\C'_{h}$.  It follows easily that $\C'_{h}$ and $\C_{h}=\C_{\FF}$ are polyhedral.
\end{proof}

 \begin{prop}\label{HMmax}
 Let $h$ be a tight Handel-Miller monodromy for $\FF$. Then $\C_{h}$ is the maximal foliation cone $\C_{u}$ as $u$ ranges over the \upn{(}smooth\upn{)} monodromies for $\FF$.
 \end{prop}
 
 \begin{proof}[Sketch of the proof]
 If not, one easily finds a rational foliated ray $\left<\GG\right>$ lying in the boundary of $\C_{h}$.  Let $g$  be a tight Handel-Miller monodromy for $\GG$.  Then  $\left<\GG\right>\ss\intr\C_{g}$ and so $\intr\C_{h}$ and $\intr\C_{g}$ intersect.  Let $\left<\HH\right>$ be a rational foliated ray in the interior of both cones.  By Proposition~\ref{folforms}, this ray is represented by a foliated form $\eta$ transverse to  $\LL_{h}|M^{\o}$. By~\cite{cc:isotopy}, it follows that $\HH$ is isotopic to a foliation $\HH'$ transverse to $\LL_{h}$ by an ambient isotopy that is smooth in $M^{\o}$.  By Theorem~\ref{transfer}, $\LL_{h}$ induces tight Handel-Miller monodromy $h'$ on the depth one leaves of $\HH'$.  Similarly, $\HH$ is isotopic to a foliation $\HH''$ transverse to $\LL_{g}$ which induces tight Handel-Miller monodromy on the depth one leaves of $\HH''$. An ambient isotopy of $\HH''$ to $\HH'$, applied to $\GG$ and $\LL_{g}$ allows us to assume that $\HH'$ is transverse to both $\LL_{h}$ and $\LL_{g}$, each inducing tight Handel-Miller monodromy $h'$ and $g'$, respectively, on the depth one leaves of $\HH'$.  By Proposition~\ref{HMuniq}, $\C_{g}=\C_{g'}=\C_{h'}=\C_{h}$, contrary to the assumption that $\left<\GG\right>$ lies on the boundary of $\C_{h}$.    
 \end{proof}
 
\begin{rem}
The use of the transfer theorem in the above argument is critical. We know of no other way to prove the maximality of the Handel-Miller foliation cones. 

\end{rem}
 
Finally, there are only finitely many Handel-Miller foliation cones by~\cite[Theorem~6.4]{cc:cone} or~\cite[Theorem~6.25]{cc:almostnohol},
hence we have classified all depth one foliations of $M$ up to isotopy by a finite set of combinatorial data.  Explicit computations of the cones for some knot complements have been made. For instance, cf.~\cite[Section~7]{cc:cone}.

\begin{rem}
This theory is  closely analogous to the classification of smooth foliations without holonomy transverse to $\bd M$.  These foliations are either fibrations of $M$ over $\SI$ or they are dense leaved.  A  well known theorem of W.~Thurston~\cite{th:norm} shows that, if $M$ has any such foliations, certain top dimensional faces of the ``Thurston ball'' (a convex polyhedron which  is the unit ball of the Thurston norm) subtend polyhedral cones $\C_{1},\C_{2},\dots,\C_{r}\ss H^{1}(M;\R)$ such that the rational rays in the interior of these cones correspond one-to-one to the smooth isotopy classes of fibrations.  Furthermore, combining the Laudenbach-Blank theorem~\cite{LB} with a theorem of the first two authors~\cite{cc:LB}, the irrational rays in the interiors of the Thurston cones correspond one-to-one to the $\CO$ isotopy classes of dense leaved foliations without holonomy.
\end{rem}

\begin{rem}
Recently,  I.~Altman~\cite{alt:depth1} has shown that, with some important restrictions on the sutured manifold $(M,\gamma)$, our maximal foliation  cones are subtended by certain top dimensional faces of the dual Juh\'asz polytope~\cite{juh:poly}, the unit ball for a \emph{nonsymmetric} norm defined via sutured Floer homology.  For these sutured manifolds,  the nonsymmetry of the norm implies that our cones are not permuted under multiplication by $-1$.  In the general case, this is a consequence of the fact that, as part of the definition of a sutured manifold, there is a given transverse orientation on the boundary leaves.  This poses a restraint on which transversely oriented foliations  are allowed.  Reversing the boundary orientations changes the set of maximal foliation cones by multiplication by $-1$.
\end{rem}

\subsection{Open foliated sets of relative depth one}\label{OpFolSet}

In this subsection, we shetch how the  relation between depth one foliations and endperiodic automorphisms  can be extended to open foliated sets without holonomy in compact $3$-manifolds. See~\cite[Section~5.2]{condel1} for a treatment of open foliated sets without holonomy. In~\cite{cc:withouthol} we begin a theory of foliation cones for open foliated sets without holonomy.

\subsubsection{The $\CII$ case}\label{C2case}

Let $\FF$ be a  $\CII$ foliation  of a compact $3$-manifold $M$.  We continue to require $\FF$ to be transversely oriented,  taut and without $2$-dimensional Reeb components in $\trb M$.  We also require that every compact leaf of $\FF$ has strictly negative Euler characteristic and that no noncompact leaf is a plane, open annulus or open M\"obius strip. We fix a transverse, $1$-dimensional foliation $\LL$.

Let $W\sseq M$ be an open, connected $\FF$-saturated set (i.e., an open, connected union of leaves). There is a   notion of ``transverse completion'' $\wh W$~\cite[Page~130]{condel1} of such a set, analogous to the notion ``internal completion''  (Definition~\ref{intcompl}).  Essentially, it is a generally noncompact, foliated $3$-manifold with finitely many boundary leaves and perhaps infinitely many transverse boundary components. We continue to write $\bd\wh W=\tb\wh W\cup\trb\wh W$.  Some or all of the finitely many boundary leaves may be noncompact and some or all of the  components of $\trb\wh W$ may be noncompact. Again, we require that the induced foliation of $\trb \wh W$ contain no Reeb components. Also,  $\tb\wh W$ and $\trb\wh W$ are separated by convex corners wherever they meet. The natural inclusion $\iota:W\hra M$ extends to a natural immersion $\wh \iota:\wh W\looparrowright M$ which may identify some boundary leaves pairwise.  We set $\wh\FF=\wh \iota^{-1}(\FF)$, and $\wh\LL=\wh \iota^{-1}(\LL)$, the foliations induced on $\wh W$ by $\FF$ and $\LL$, respectively. These foliations are $C^{2}$. 

\begin{defn}[relative depth one]\label{reldpthone}
The foliation $\wh \FF$ has relative depth one if $\wh\FF|W$ ($=\FF|W$) fibers $W$ over $\SI$.
\end{defn}

\begin{rem}
If $W$ is an open saturated set without holonomy  in a foliation $\FF$ with $\wh W$ not foliated as a product,  then $\wh\FF$ has relative depth one.

\end{rem}

In order to properly visualize these foliations, we need the notion of an ``octopus decomposition''~\cite[Definition~5.2.13]{condel1} of $\wh W=K\cup A_{1}\cup\cdots\cup A_{r}$.  Here, K is a compact, connected $3$-manifold with boundary and corners, foliated by $\wh\FF|K$, the corners dividing $\tb K=K\cap\tb\wh W$ from $\trb K$. We call $K$ the nucleus of the octopus decomposition and $A_{i}$ the arms.   The arms are of the form $A_{i}= L_{i}\x I$, the $I$-fibers being leaves of $\wh\LL|A_{i}$ and $L_{i}$ a noncompact, connected surface with boundary. The arms attach to $K$ along annular components of $\trb K$ and/or rectangular subsets of $\trb K$. 

As a consequence of the Generalized Kopell Lemma~\cite[Lemma~8.1.24]{condel1},  one proves that, when $\wh\FF$ has relative depth one, the junctures in $\tb\wh W$ are compactly supported cohomology classes $\kappa$. This requires differentiability of class at least $\CII$. This is a special case of~\cite[Theorem~8.1.26]{condel1}.  The following is an easy consequence.

\begin{prop}
If $\wh\FF$ has relative depth one, the nucleus of the octopus decomposition can be chosen large enough that, in the arms $A_{i}$, $\wh\FF$ is the product foliation with leaves $L_{i}\x\{t\}$.
\end{prop}

Thus, everything interesting happens in $\wh\FF|K$, which is an honest depth one foliation.  Since $\tb K\sseq\tb\wh W$, we can assume that every component $F$ of $\tb K$ has negative Euler characteristic.  Indeed, either $F$ is a full component of $\tb \wh W$, in which case it is a compact leaf of $\FF$ and has negative Euler characteristic by hypothesis, or $F$ has nonempty boundary and  is contained in a noncompact component $N$ of $\tb\wh W$.   By hypothesis, $N$ is not a plane, open annulus or open M\"obius strip.  Thus, choosing $K$ large enough guarantees that $F$ is not a disk, closed annulus or closed M\"obius strip.  This shows that the depth one foliation $\wh\FF|K$ of the compact $3$-manilold $K$ satisfies all the requirements of our previous discussion.

 Let $L$ be a leaf of $\FF|W$ and $L'=L\cap K$.  Since the leaves of $\wh \FF|A_{i}$ are of the form $L_{i}\x\{t\}\ss L_{i}\x I$, $1\le i\le r$, the fact that $L$ is connected implies that $L'$ is connected.  By Lemma~\ref{endpmono}, the monodromy of $L'$ is endperiodic and so the monodromy of $L$ is endperiodic (in the sense of Definition~\ref{epdefn}) if infinite endsets are allowed.  
 
 \begin{rem}
 In Section~\ref{endpinf} we show how the theory of endperiodic automorphisms can be extended to surfaces with infinite endset.
  
  \end{rem}
 
 \begin{prop}
If $f:L\to L$ is the monodromy of a leaf of $\FF|W$, where $\FF$ is $C^{2}$ and $\wh\FF$ has relative depth one, then $f$ is endperiodic.
\end{prop}

\begin{rem}
We refer to $L'$ as the \emph{soul} of $L$ (Definition~\ref{soul}). The above construction provides motivation for the construction of the soul  (Proposition~\ref{soulexists}) in the general situation of an endperiodic automorphism of a surface with infinite endset.   In the  construction of Proposition~\ref{soulexists}, the ends that are not periodic are pared off to obtain the soul. In the above construction  these  ends that are pared off of $L$ to obtain the soul $L'$ are the surfaces $L_{i}\x\{t\}$, $1\le i\le r$,  which attach to $L'$. In the above construction the choice of $L'$ is not unique but  clearly depends on how large we choose $K$. In the   construction of Proposition~\ref{soulexists} the soul also depends on choices.
\end{rem}

\subsubsection{Finite depth foliations}\label{fdfol}  

One says that a leaf of $\FF$ has depth~$0$ if it is compact.  Inductively, a leaf $L$ has depth~$r\ge1$ if $\ol L\sm L$ is a  union of leaves of depths~$\le r-1$, at least one of which has depth~$r-1$.  It is possible that a leaf has no well defined depth (for example, a leaf that is dense in $M$).  

\begin{defn}[depth $r$]\label{dpthr}
The foliation $\FF$ has depth $r$ if all leaves have finite depth and $r$ is the least upper bound of the depths.
\end{defn}

The leaves at depth~$r$ of a depth~$r$ foliation unite to form an open saturated set and one can establish the following.

\begin{prop}
If $\FF$ is a  depth~$r$ foliation and $W$ is a component   of the union of depth~$r$ leaves, then the foliation $\wh\FF$ of $\wh W$ has relative depth one.
\end{prop}

It is easy to construct $\Ci$ depth~$r$ foliations with all leaves proper for arbitrary $r>1$. For examples, see~\cite[Theorems~2,~3]{cc:lp}.  These examples have leaves of type $r-1$ (Definition~\ref{typek}) and thus monodromy an endperiodic automorphism of a surface with infinite endset.

In  many of the examples of finite depth foliations constructed by Gabai'~\cite{ga1}, it is possible to choose all the junctures to be compact. If all junctures are compact, ~\cite[Main Theorem]{cc:smth2} implies that the finite depth foliation can be $\Ci$ smoothed giving many examples of $\Ci$ finite depth foliations.

\subsubsection{Leaves at relative depth one with a Cantor set of ends}\label{duminy}

Let $X\ss M$ be an exceptional minimal set of a $C^{2}$ foliation $\FF$ of $M$.  This is a compact, nonempty, saturated set which does not properly contain another such and does not reduce to a single compact leaf nor to $M$ itself.  Such a set is transversely Cantor.  A theorem of G.~Duminy (unpublished, but cf.~\cite{cc:dum}) asserts that the semi-proper leaves in $X$ (i.e., those which border a gap in the Cantor set) have a Cantor set of ends.  If $W$ is a connected component of $M\sm X$ and $\wh\FF$ is of relative depth one in $\wh W$, then the leaves of $\FF|W$ will have a Cantor set of ends.

There are many examples of exceptional minimal sets. See, for example, Raymond's example~\cite[Corollary~8.4.2]{condel2}. In these examples the  leaves will have monodromy which is a normal endperiodic automorphism (Definition~\ref{normal}) and thus, by Propositions~\ref{finm} and~\ref{att-repinf},  finitely many of the ends will be attracting and finitely many will be repelling.

\subsubsection{Hyperbolic knots and the $\COO$ case}

A $\CO$ foliation is $\COO$ it its leaves are integrable to a $\CO$ $2$-plane field. If $\gamma\ss S^{3}$ is a hyperbolic knot, then $M=S^{3}\sm\gamma$ is a cusped hyperbolic $3$-manifold.  In general, a cusped hyperbolic $3$-manifold has finitely many cusps which are topologically of the form $T^{2}\x[0,\infty)$.  Amputating these cusps along $T^{2}\x\{0\}$ leaves behind a compact $3$-manifold $M'$ with finitely many new boundary components, all tori,  A finite depth foliation $\FF$ of a cusped $3$-manifold is to be a product foliation in each cusp $T^{2}\x[0,\infty)$, the leaves there being of the form $C\x[0,\infty)$ where $C$ ranges over a family of circles in $T^{2}$ which fiber that torus.  The foliation $\FF|M'$ will be finite depth in the usual sense, meeting the new tori in $\bd M'$ in circles that fiber those tori  and in general are only known to be  $\COO$.  If $\FF|M'$ is $\CII$, then the monodromy of the leaves of an open saturated set without holonomy will be endperiodic and the methods of Section~\ref{fdfol} apply. If $\FF|M'$ can not be $\CII$ smoothed,  then some junctures will not be compact (\cite[Main Theorem]{cc:smth2}) and the  mondromy of the leaves of an open saturated set without holonomy will not be endperiodic. An approach to the type of monodromy that occurs in these $\COO$ foliations that can not be $\CII$ smoothed is given in~\cite{cc:withouthol}. 

D.~Gabai~\cite[Theorem~3.1]{ga3} shows how to construct taut, finite depth foliations of a knot complements in $S^{3}$ which meet the boundary of the knot complement in circles. The construction of these foliations is intrinsically $\COO$ and not all of these foliations  can be smoothed. In fact, In~\cite[Theorem~III]{cc:depth} the first two authors give an example of a knot in $S^{3}$ that has no taut, finite depth, $C^{2}$ foliation which meets the boundary of the knot complement in circles.

\subsection{Endperiodic automorphisms of surfaces with infinite endsets}\label{endpinf}

In this subsection we treat endperiodic automorphisms of surfaces with infinite endset.  The definitions and results here are all exemplified by the foliations $\wh\FF$ of $\wh W$ of relative depth one of Section~\ref{OpFolSet}.
\medskip

Recall that $\EE(L)$   is a totally disconnected separable metric space. Define the $0^{\thh}$ derived endset to be $\EE^{(0)}(L)=\EE(L)$.  If, for an ordinal $\alpha\ge0$, the $\alpha^{\thh}$ derived endset $\EE^{(\alpha)}(L)$ has been defined and is a compact subset of $\EE(L)$, then the $(\alpha+1)^{\thh}$ derived set  $\EE^{(\alpha+1)}(L)$ is the set of cluster points in $\EE^{(\alpha)}(L)$.  If $\beta$ is a limit ordinal and $\EE^{(\alpha)}(L)$ has been defined for all $\alpha<\beta$, we define the $\beta^{\thh}$ derived set to be $\EE^{(\beta)}(L)=\bigcap_{\alpha<\beta}\EE^{(\alpha)}(L)$.  It can be shown that, for a first \emph{countable} ordinal $\gamma$, either $\EE^{(\gamma)}(L)$ is finite and nonempty, or $\EE^{(\gamma)}(L)=\EE^{(\Omega)}(L)$ is a Cantor set, where $\Omega$ is the first uncountable ordinal~\cite{pierce}.\label{endnote}

The following terminology has been used by the first two authors elsewhere  and will also be found  in~\cite[Section~4.1]{condel1}, but may not be considered standard.

\begin{defn}[type of surface]\label{typek}
Let $\gamma\ge0$ be the first countable ordinal such that $\EE^{(\gamma)}(L)$ is either finite or a Cantor set. If $\EE^{(\gamma)}(L)$ is finite and nonempty, the surface $L$ has \emph{topological type} $\gamma$.   Otherwise, $L$  has topological type $\Omega$.  If $L$ is  compact it is said to have  topological type $-1$.
\end{defn}

\begin{defn}[type of end]\label{typeend}
An end of $L$ is of type $\alpha$ if it is isolated in the $\alpha^{\thh}$ derived endset.  If the end lies in $\EE^{(\Omega)}(L)$, it has type $\Omega$.
\end{defn}

\begin{rem}
In  $C^{2}$ foliations, leaves at finite depth~$k$ have topological type~$k-1$ and growth type exactly polynomial of degree~$k$ \cite[Theorem 6.0]{cc:pb}.  This was one of the first theorems relating the topology of a leaf and its volume growth function.  It is valid for codimension~$1$ foliations of compact $n$-manifolds, $n\ge3$.

\end{rem}

In what follows $L$ is allowed to have any topological type $\le\Omega$.

\begin{lemma}\label{nin}
If $f:L\to L$ is endperiodic, then a neighborhood of a periodic end $e'$ cannot lie in an $f$-neighborhood $U_{e}$ for another periodic end $e$.
\end{lemma}

\begin{proof}
For then the iterates  $f^{k}(e')$ would all have to be distinct, contradicting the fact that $e'$ is periodic. 
\end{proof}

In our   applications to foliations in Section~\ref{OpFolSet}, it was necessary to consider endperiodic automorphisms $f:L\to L$ where $L$
has topological type $\Omega$ (cf.~Section~\ref{duminy}).  Without some added hypothesis, examples show that some very undesirable bizarre behavior can arise.   The following condition will always be verified in the foliation setting.

\begin{defn}[normal endperiodic automorphism]\label{normal}
An endperiodic automorphism $f:L\to L$ is normal if  the  $f$-minimal sets in $\EE(L)$ are the finite periodic orbits of prime period.
\end{defn}

Of course, the finite periodic orbits of prime period are $f$-minimal.  By the standard Zorn's Lemma argument, there are $f$-minimal sets.  By the following, normality only becomes a restriction if $L$ has type $\Omega$.

\begin{lemma}\label{alphanormal}
If $f:L\to L$ is endperiodic and $L$ has topological type $\gamma<\Omega$, then $f$ is normal.
\end{lemma}

\begin{proof}
Let $X\sseq\EE(L)$ be an $f$-minimal set.  Let $e\in X$.  If this is not a periodic end, then $\{f^{k}(e)\}_{k\in\Z}$ is infinite and clusters at every point of $X$, including $e$ itself.  If $e$ has type $\alpha$, then $\alpha\le\gamma<\Omega$ and every element of $X$ must have type $>\alpha$, contradicting the fact that $e$ has type $\alpha$. This contradiction means that the end $e$ is periodic, so $f$ is normal.
\end{proof}

\begin{rem}
Further, if $L$ with topological type $\Omega$ is a leaf of a $C^{2}$ foliation of an open saturated set $W$ of relative depth one as in Section~\ref{C2case}, then the monodromy map $f:L\to L$ is a normal endperiodic automorphism.

\end{rem} 

\begin{prop}\label{finm}
If $f:L\to L$ is endperiodic and normal, there are only finitely many periodic ends.
\end{prop}

\begin{proof}
Let $\PP(L)\sseq\EE(L)$ be the set of periodic ends.  If this set is infinite, the set $\QQ(L)\sseq\EE(L)$ of cluster points of $\PP(L)$ must be compact, $f$-invariant and nonempty. Thus, $\QQ(L)$ contains an $f$-minimal set.  Since $f$ is normal, this is a finite periodic orbit on which $\PP(L)$ accumulates, contradicting Lemma~\ref{nin}.
\end{proof}

\begin{example}\label{normal1}
The methods of~\cite[Section~5]{cc:examples} can be used to give an example of an endperiodic automorphism $f:L\to L$ in which the endset of $L$ is the union of one negative, isolated, nonplanar  end and a Cantor set of nonplanar ends containing a countable infinite set of positive ends and  no other periodic ends. Thus, Proposition~\ref{finm} is not true without the assumption that the endperiodic automorphism is normal.
\end{example}

\begin{prop}\label{att-repinf}
If $L$ has topological type $\alpha\ge1$, a normal endperiodic automorphism $f:L\to L$ has both positive  and negative ends.  
\end{prop}

\begin{proof}
 There will be finitely many periodic ends of type $\alpha$, each necessarily negative or positive. If there are no positive ends of type $\alpha$ replace $f$ by $f^{-1}$ in the following proof. Therefore we can assume there is a positive end $e$ of type $\alpha$.

The fact  that $\alpha\ge1$  implies that $\EE(L)$ is infinite. Hence Proposition~\ref{finm} implies that there are nonperiodic ends. Let $U_{e}$ be an $f$-neighborhood of $e$ and let $e_{0}\in U_{e}$ be an end with $e_0\ne e$. Then by Definition~\ref{perends}, $e_0$ is  a nonperiodic end. If $e_{i} = f^{ip_{e}}(e_{0})$, the set of accumulation points of $\{e_{i}\}_{i=0}^{-\infty}$ contains a minimal set which is a finite periodic  orbit since $f$ is normal. Clearly this orbit cannot contain $e$.  By Lemma~\ref{emptint}, the ends in this orbit cannot  be positive ends since each end in the orbit has an $f$-neighborhood meeting an $f$-neighborhood of the positive end $e$. Thus, the set of negative ends is nonempty.
\end{proof}

\begin{example}\label{normal2}
In~\cite[Section~5]{cc:examples}, we give an example of an endperiodic automorphism $f:L\to L$ in which the endset of $L$ is the union of one negative, isolated, nonplanar  end and a Cantor set of nonplanar ends, none of which are periodic. Thus, Proposition~\ref{att-repinf} is not true without the assumption that the endperiodic automorphism is normal.
\end{example}

 \begin{example}\label{normal3}
 
Let  $C\ss S^{2}$ be a Cantor set, $L = S^{2}\sm C$ be an the open, planar surface with $\EE(L) = C$, and $\nu:C\to C$ be the homeomorphism without periodic points  given in~\cite[Proposition~2]{cc:examples}. Then it is easy to define a homeomorphism of $S^2$ which induces the homeomorphism $\nu$ on $C$ and thus restricts to a homeomorphism  $f:L\to L$  which has no periodic ends. Thus $f$ is an endperiodic automorphism by default and has no positive nor negative ends. The endperiodic automorphism $f$ obviously cannot be normal.

\end{example}

\begin{rem}
Examples~\ref{normal1},~\ref{normal2},~\ref{normal3},  Lemma~\ref{alphanormal}, and the remark following it, indicate that the concept ``normal endperiodic automorphism'' is a natural concept.

\end{rem}

 \begin{defn}[escaping end of surface]\label{escpend}
 An end $e$ of $L$ is escaping if $\{f^{k}(e)\}_{k\in\Z}$ accumulates exactly on an $f$-cycle of positive periodic ends as $k\to\infty$ and on an $f$-cycle of negative ones as $k\to-\infty$.
 \end{defn}
 
 \begin{lemma}\label{clusterper}
 If $f:L\to L $ is a normal endperiodic automorphism, every nonperiodic end of $L$ is escaping.
  \end{lemma}
 
 \begin{proof}
  If $e\in\EE(L)$ is nonperiodic, the set $\QQ_{+}(e)$ of accumulation points of $\{f^{k}(e)\}_{k\ge0}$ is compact, nonempty and $f$-invariant.  It contains a periodic point $\epsilon$ such that every    $f$-neighborhood $U_{\epsilon}^{i}$ of $\epsilon$ in $L$ is a neighborhood of some $f^{k}(e)$, $k\ge0$.  Since this is true as $k\to\infty$, $\epsilon$ must be a positive end.  But once $f^{k}(e)\in U_{\epsilon}^{i}$, then $f^{k+pn_{\epsilon}}(e)\in U_{\epsilon}^{i+p}$, for all $p\ge0$.  It follows that $\QQ_{+}(e)$ is exactly the finite orbit of $\epsilon$.  In a similar way, define $\QQ_{-}(e)$ and prove that it is exactly the $f$-cycle of a negative end.
 \end{proof}

Finally, we turn to the process of ``paring off'' the nonperiodic ends of $L$ to reduce the ``interesting'' dynamics of $f$ to its action on an $f$-invariant subsurface $L'\ss L$ with finite endset, the Handel-Miller situation. 

\begin{prop}\label{soulexists}

If $f:L\to L$ is a normal endperiodic automorphism, then there exists a subsurface $L'\ss L$ with finite endset such that,

\begin{enumerate}

\item  $f|L':L'\to L'$ is endperiodic\upn{;}\label{item1}

\item $\fr L'$ in $L$ has only compact components\upn{;}\label{item2}

\item The components of $L\sm L'$ are neighborhoods of all the escaping ends of $L$.\label{item3}

\end{enumerate}

\end{prop}

\begin{proof}
Let $e_{0}$ be a negative end and consider the cycle 
$$c=\{e_{0},f(e_{0}),f^{2}(e_{0}),\dots,f^{p_{e_{0}}-1}(e_{0})\}.$$
of negative ends. Let $U_{c}$ be an $f$-neighborhood   of $e_{0}$ and let $X_{c} = U_{c}\sm\intr f^{p_{e_{0}}}(U_{c})$. Let $\EE_{c} \ss\EE(L)$ consists of the ends of $L$ contained in $X_{c}$.

For each $e\in\EE_{c}$ choose an open neighborhood $V_{e}$ of $e$ such that $\{f^{n}(\ol V_{e})\}_{n\in\Z}$ escapes, $\bd V_{c}$ consists of one simple closed geodesic, and $\ol V_{c}\ss\intr X_{c}$. The set $\{V_{e}\ |\ e\in\EE_{c}\}$ is an open cover of the compact set $\EE_{c}$ so there exists a finite subcover $\{V_{e_{1}},\ldots,V_{e_{k}}\}$. Then $O_{c} = \bigcup_{i=1}^{k}V_{e_{i}}$ has finitely many piecewise geodesic boundary components $\gamma_{1},\ldots,\gamma_{\ell}$, the set $\{f^{n}(\ol O_{c})\}_{n\in\Z}$ escapes, and $\ol O_{c}\ss\intr X_{c}$. 

Construct such an $O_{c}$ for every cycle $c$ of negative ends. Then $L' = L\sm \bigcup_{c}O_{c}$, where the union is over all cycles of negative ends, is a standard surface, satisfying (\ref{item1}),  (\ref{item2}), and  (\ref{item3}) of the proposition and thus having no escaping ends. By Propsition~\ref{finm}, $L$ has finitely many periodic ends. Thus, $L'$ has finite endset and the proposition is proven.
\end{proof}

\begin{defn}[soul]\label{soul}
 The $f$-invariant, type~$0$ subsurface $L'\ss L$ will be called the \emph{soul} of $L$.
\end{defn}

 Of  course, the soul is not unique.

\vs

\centering{\textbf{Glossary of Symbols}}

\begin{description}

\item[$\bd$, $\fr$] Page~\pageref{bdfr}
\item[$\delta$] Definition~\ref{bordcomp}
\item[$E$] Definition~\ref{idbd}
\item[$\EE(L)$] Page~\pageref{endsetL}
\item[$\EE_{\pm}(L)$] Definition~\ref{perends}
\item[$\EE^{\alpha}(L)$, $\EE^{\Omega}(L)$] Page~\pageref{endnote}
\item[$\fr$] Page~\pageref{bdfr}
\item [$\gamma^{\g}$] Definition~\ref{geotightpg}
\item[$\iota$] Definitions~\ref{gtm},~\ref{jnctfam}
\item[$\iota'$] Definition~\ref{gtm10}
\item[$\ddot\iota$] Definition~\ref{intcompl}
\item[$J_{\kappa}$] Definition~\ref{kapJunct}, Lemma~\ref{descends}
\item[$J_{n}$] Definition~\ref{Jndef},~\ref{Jndef10}
\item[$\JJ$, $\JJ_{+}$, $\JJ_{-}$] Definitions~\ref{junctdefn},~\ref{jnctfam}
\item[$\JJ_{W}$] Definition~\ref{jw}
\item[$K$, $K_{i}$] Definition~\ref{coredef}
\item[$\Lambda_{F}$] Lemma~\ref{descends}
\item[$\NN$, $\NN_{+}$, $\NN_{-}$]\ Definition~\ref{famNNg}
\item[$p_{e}$] Definition~\ref{perend}
\item[$\QQ^{+}$] Definition~\ref{QQ+}
\item[$\S$] Sections~\ref{redcurv},~\ref{redcircs},~\ref{redcrv2}, and Definition~\ref{famredc}
\item[$\Se$] Definition~\ref{se}
\item[$\Si$, $\Delta$, $\D^{2}$] Page~\pageref{s1inf}
\item[$\T$] Proposition~\ref{T'}
\item[$\mathfrak T^{\g}$, $\mathfrak T^{\g}_{*}$] Definition~\ref{defntile}
\item[$U_{e}$] Definition~\ref{hnbhd}
\item[$U_{e}^{n}$] Definition~\ref{disnbhdsys}
\item[$\UU_{e}$, $\UU_{\pm}$] Definitions~\ref{pmesc}
\item[$\UU$] Definition~\ref{espset}
\item[$\ddot U$] Definition~\ref{intcompl}
\item[$W^{\pm}$, $W_{i}^{\pm}$] Definition~\ref{coredef}
\item[$W$] Definition~\ref{jw}
\item[$\XX_{\pm}$] Definition~\ref{nonesc}
\item[$\YY'_{\pm}$] Definition~\ref{yyprime}

\end{description}

\centering{\textbf{Glossary of Terms}}

\begin{description}

\item[admissible]  Definition~\ref{admi}
\item[arm] Definition~\ref{defnarm}
\item[assemblage] Definition~\ref{assemblg}
\item[attracting/repelling ends]  Page~\ref{attrrep}
\item[bilamination] Definition~\ref{bilam}
\item[bilaminated chart] Definition~\ref{bichart}
\item[border component] Definition~\ref{bordcomp}
\item[border component of the first/second kind] Section~\ref{fskind}
\item[completely crosses] Definition~\ref{completelycrosses}
\item[converges strongly] Definition~\ref{convstrly}
\item[core and $i^{\thh}$ core] Definition~\ref{coredef}
\item[core dynamical system] Definition~\ref{CoreDy}
\item[depth one foliation] Definition~\ref{dpthonefoln}
\item[depth $r$] Definition~\ref{dpthr}
\item[distinguished neighborhood] Definition~\ref{hnbhd}
\item[dual crown sets] Definition~\ref{defndualSC}
\item[dual principal regions] Definition~\ref{defndualPR}
\item[end, endset] Page~\pageref{endendset}
\item[endperiodic automorphism] Definition~\ref{epdefn}
\item[endpoint correspondence property] Definition~\ref{epcorrp}
\item[escapes (juncture)] Definition~\ref{escapes}
\item[escapes (sequence of sets)] Definition~\ref{seqesc}
\item[escaping end of leaf] Definition~\ref{defnescpend}
\item[escaping end of surface] Definition~\ref{escpend}
\item[escaping set] Definition~\ref{espset}
\item[escaping set (positive/negative)] Definitions~\ref{pmesc}
\item[essentially intersecting loops] Definition~\ref{essintlps}
\item[extreme rectangle] Definition~\ref{extquad}
\item[$f$-domain] Definition~\ref{fdom}
\item[$f$-juncture] Definition~\ref{fjunct}, Hypothesis~\ref{hypjunct}
\item[$f$-junctures (fixed set of)] Definition~\ref{famNNg}
\item[$f$-neighborhood of an end] Definition~\ref{fnbhd}
\item[first/second kind] Definition~\ref{fskind}
\item[foliated form] Definition~\ref{ff}
\item[frontier] Page~\pageref{bdfr}
\item[geodesic lamination] Definition~\ref{geolam}
\item[geodesic tightening of a curve] Definition~\ref{geodtite},~\ref{geotightpg}
\item[geodesic tightening map] Definitions~\ref{gtm},~\ref{jnctfam},~\ref{gtm10}
\item[Handel-Miller (geodesic) bilaminations] Definitions~\ref{HMgeobilamin}
\item[Handel-Miller pseudo-geodesic bilaminations] Definitions~\ref{HMbilam}
\item[Handel-Miller foliation cone] Definition~\ref{HMfolncone}
\item[Handel-Miller monodromy] Definition~\ref{HMmonofoln}
\item[Hypothesis~\ref{hyp1}] page~\pageref{hyp1}
\item[Hypothesis~\ref{hypjunct}] page~\pageref{hypjunct}
\item[Hypothesis~\ref{noperpt}] page~\pageref{noperpt}
\item[Hypothesis~\ref{hypstan}] page~\pageref{hypstan}
\item[Hypothesis~\ref{hypadmiss}] page~\pageref{hypadmiss}
\item[Hypothesis~\ref{hyp5}] page~\pageref{hyp5}
\item[Hypothesis~\ref{ongo}] page~\pageref{ongo}
\item[ideal boundary] Definition~\ref{idbd}
\item[internal completion] Definition~\ref{intcompl}
\item[invariant set] Definition~\ref{invset}
\item[juncture] Definitions~\ref{junctdefn},~\ref{jnctfam}
\item[juncture intersection property] Definition~\ref{intprop}
\item[$\kappa$-juncture] Definition~\ref{kapJunct}
\item[lamination] Definition~\ref{lamination}
\item[laminated atlas, laminated partial atlas] Definition~\ref{lamatlas}
\item[laminated chart] Definition~\ref{lamchart}
\item[locally uniform accumulation] Definition~\ref{locunif}
\item[Markov chain] Definition~\ref{markchn}
\item[Markov family, pre-Markov family] Definition~\ref{marfam}
\item[mesh] Definition~\ref{gridmesh}
\item[monodromy] Definition~\ref{monofoln}
\item[neighborhood of an end] Page~\pageref{nhbend}
\item[normal endperiodic automorphism] Definition~\ref{normal}
\item[nucleus] Definition~\ref{defnnucleus}
\item[parallel packets] Definition~\ref{parpackets}
\item[passes arbitrarily near] Definition~\ref{arbitrarilynear}
\item[peripheral border components] Definition~\ref{perif}
\item[positive/negative ends] Definition~\ref{perends}
\item[principal regions (positive/negative)] Definition~\ref{pnprinreg}
\item[pseudo-anosov] Definition~\ref{psdansv}
\item[pseudo-geodesic] Definition~\ref{pseudogeodesic}
\item[pseudo-geodesic lamination] Definition~\ref{psgeolam}
\item[quadrilateral and geodesic quadrilateral] Definition~\ref{quadltrl}
\item[rectangle, rectangular] Definition~\ref{rctnglr}
\item[reducing curves] Section~\ref{redcurv}
\item[relative depth one] Definition~\ref{reldpthone}
\item[rim]\ Definition~\ref{prreduce}
\item[semi-isolated]\ Definition~\ref{defnsemiis}
\item[simple end] Definition~\ref{simpend}
\item[sliding isotopy] Definition~\ref{slidingisotopy}
\item[smooth lamination, bilamination] Definition~\ref{Cilam}
\item[soul] Definition~\ref{soul}
\item[strongly closed property] Definition~\ref{strnglyclsed}
\item[standard metric, standard surface] Definition~\ref{standard}
\item[stump] Definition~\ref{stump}
\item[taut foliation] Definition~\ref{tautfoln}
\item[tile, tiling] Definition~\ref{defntile}
\item[transfer of a path] Definition~\ref{transfpath}
\item[transfer of parametrized paths]  page~\pageref{transfpath}
\item[transfer of a set of curves] Definition~\ref{trfamcur}
\item[translation] Definition~\ref{U'=L}
\item[transversely totally disconnected] Definition~\ref{trtotdis} 
\item[type of end] Definition~\ref{typeend}
\item[type of surface] Definition~\ref{typek}
\item[vertex/edge] Lemma~\ref{veredg}
\item[virtually escapes] Definition~\ref{vescapes}
\item[weakly groomed] Definition~\ref{groomed}
\end{description}

\centering{\textbf{Glossary of Axioms, Main Results, and Main Theorems}}
\begin{description}

\item[Axioms] Pages~\pageref{muttran} - \pageref{trnsvrs}
\item[constructing the geodesic bilaminations $(\Lambda_{+},\Lambda_{-})$] Section~\ref{HMconstruct}
\item[core dynamics conjugate Markov shift] Theorem~\ref{basicmar}
\item[defining $h$] Theorem~\ref{geodext}
\item[defining the reducing curves] Sections~\ref{redcurv},~\ref{redcircs}, and~\ref{redcrv2}
\item[Epstein-Baer Theorems] Theorem~\ref{2.1} and~~\ref{3.1}
\item[escaping ends of leaves] Section~\ref{ee}
\item[escapes implies virtually escapes] Theorem~\ref{essc} (see also Theorem~\ref{esctoe})
\item[finitely many semi-isolated leaves] Theorem~\ref{finmany}
\item[Isotopy Theorem] Theorem~\ref{isotlams}
\item[periodic points and leaves] Section~\ref{periodicleaves}
\item[Smoothing Theorem] Theorem~\ref{HMsmooth}
\item[Transfer Theorem] Theorem~\ref{transfer}
\item[Uniqueness Theorem] Theorem~\ref{uniqlams}

\end{description}

\providecommand{\bysame}{\leavevmode\hbox to3em{\hrulefill}\thinspace}
\providecommand{\MR}{\relax\ifhmode\unskip\space\fi MR }
\providecommand{\MRhref}[2]{%
  \href{http://www.ams.org/mathscinet-getitem?mr=#1}{#2}
}
\providecommand{\href}[2]{#2}

\end{document}